\newif\ifprint
\renewcommand{\mathbf}[1]{\bm{#1}} 
	\definecolor{linkred}{rgb}{0,0,0} 
	\definecolor{linkblue}{rgb}{0,0,0} 
	\definecolor{linkred}{rgb}{0.5,0.1,0.1}
	\definecolor{linkblue}{rgb}{0,0.1,0.3}
\numberwithin{equation}{section} 
\def\ps@handbook{\def\@oddhead{\hfill \leftmark \hfill\thepage }
\def\@evenhead{\thepage \hfill \rightmark \hfill}
\def\@oddfoot{}
\def\@evenfoot{}}
\def\@evenhead{}
\def\@oddfoot{}
\def\@evenfoot{\hfill\copyright\ China Higher Education Press}
\def\list#1#2{\ifnum \@listdepth >5\relax \@toodeep \else \global
\advance \@listdepth\@ne \fi \rightmargin \z@ \listparindent\z@
\itemindent\z@ \csname @list\romannumeral\the\@listdepth\endcsname
\def\@itemlabel{#1}\let\makelabel\@mklab \@nmbrlistfalse #2\relax
\@trivlist \parskip -\parsep \parindent\listparindent \advance
\linewidth -\rightmargin \advance\linewidth -\leftmargin \advance
\@totalleftmargin \leftmargin \parshape \@ne \@totalleftmargin
\linewidth \ignorespaces}
\renewcommand*\l@section{\@tocline{1}{0pt}{0em}{1em}{}}
\renewcommand*\l@subsection{\@tocline{2}{0pt}{1.5em}{2em}{}} 
\renewcommand{\theequation}{\thesection.\arabic{equation}}
\def\thebibliography#1{\section*{References}
\list{[\arabic{enumi}]}{\settowidth \labelwidth{[#1]} \leftmargin
\labelwidth \advance \leftmargin \labelsep \usecounter{enumi}}
\def\newblock{\hskip .11em plus .33em minus .07em} \sloppy
\clubpenalty 4000 \widowpenalty 4000 \sfcode`\.=1000 \relax}
\titleformat{\section}{\normalfont\large\bfseries}{\thesection.}{0.5em}{}[\kern0.em]
\titleformat{\subsection}{\normalfont\bfseries}{\thesubsection.}{0.3em}{}[\kern0.em]
\titleformat{\subsubsection}[runin]{\normalfont\bfseries}{\thesubsubsection.}{0.5em}{}[\kern0.5em]
\def\fofsubsubsection#1{\refstepcounter{equation}\subsubsection*{\theequation.\kern0.25em #1}}
\def\foisubsubsection#1{\refstepcounter{equation}\subsubsection*{\kern\parindent\theequation.\kern0.25em #1}}
\newtheorem{theorem}[equation]{Theorem}
\newtheorem{proposition}[equation]{Proposition}
\newtheorem{problem}[equation]{Problem}
\newtheorem{definition}[equation]{Definition}
\newtheorem{lemma}[equation]{Lemma}
\newtheorem{remark}[equation]{Remark}
\newtheorem{corollary}[equation]{Corollary}
\newtheorem{conjecture}[equation]{Conjecture}
\newtheorem{question}[equation]{Question}
\newtheorem{principle}[equation]{Principle}
\theoremstyle{definition}
\newtheorem{example}[equation]{Example}
\newtheorem{exercise}[equation]{Exercise}
\DeclareMathOperator{\SL}{SL}
\DeclareMathOperator{\Chow}{Chow}
\newcommand{\gitq}{/\hspace{-0.25pc}/}
\newcommand{\s}{\mathrm{s}}
\renewcommand{\ss}{\mathrm{ss}}
\def\co{\colon\thinspace} 
\def\A{\mathcal{A}}
\def\Aut{\text{Aut}}
\def\rst{\text{r}}
\def\C{\mathcal{C}}
\DeclareMathOperator{\Exc}{Exc}
\DeclareMathOperator{\Eff}{Eff}
\DeclareMathOperator{\PEff}{\overline{Eff}}
\DeclareMathOperator{\Amp}{Amp}
\DeclareMathOperator{\Nef}{Nef}
\DeclareMathOperator{\N}{N}
\DeclareMathOperator{\Pic}{Pic}
\DeclareMathOperator{\eff}{eff}
\DeclareMathOperator{\Hilb}{Hilb}
\def\L{\mathcal{L}}
\def\M{\overline{M}}
\def\SM{\overline{\mathcal{M}}}
\def\O{\mathcal{O}}
\def\T{\mathcal{T}}
\def\P{\mathbb{P}}
\def\Q{\mathbb{Q}}
\def\X{\mathcal{X}}
\def\U{\mathcal{U}}
\def\S{\mathcal{S}}
\def\Spec{\text{\rm Spec\,}}
\def\Proj{\text{\rm Proj\,}}
\def\Supp{\text{\rm Supp\,}}
\def\Mor{\text{\rm Mor}}
\def\Pic{\text{\rm Pic\,}}
\def\pm{\{p_i\}_{i=1}^{m}}
\def\qm{\{q_i\}_{i=1}^{m}}
\def\pn{\{p_i\}_{i=1}^{n}}
\def\pnprime{\{p_i'\}_{i=1}^{n}}
\def\sigman{\{\sigma_{i}\}_{i=1}^{n}}
\def\sigmanprime{\{\sigma'_{i}\}_{i=1}^{n}}
\def\S{\mathcal{S}}
\def\Spec{\text{\rm Spec\,}}
\def\Proj{\text{\rm Proj\,}}
\def\Supp{\text{\rm Supp\,}}
\def\Mor{\text{\rm Mor}}
\def\Pic{\text{\rm Pic\,}}
\def\ra{\rightarrow}
\def\co{\colon\thinspace} 
\DeclareMathOperator{\Def}{Def}
\DeclareMathOperator{\BIG}{Big}
\DeclareMathOperator{\spec}{Spec}
\DeclareMathOperator{\proj}{Proj}
\DeclareMathOperator{\Bl}{Bl}
\DeclareMathOperator{\Sym}{Sym}
\DeclareMathOperator{\codim}{codim}
\DeclareMathOperator{\pr}{pr}
\def\Hn1{\mathcal{H}_{n,1}}
\renewcommand\S{\mathcal{S}}
\DeclareMathOperator{\HH}{\mathrm{H}}
\def\irr{\text{irr}}
\def\A{\mathcal{A}}
\def\Abar{\overline{\A}}
\def\C{\mathcal{C}}
\def\O{\mathcal{O}}
\newcommand\Tl[1]{\mathcal{T}_{#1}}
\newcommand\Mg[1]{\overline{\mathcal{M}}_{#1}}
\def\L{\mathcal{L}}
\def\X{\mathcal{X}}
\def\Y{\mathcal{Y}}
\def\T{\mathcal{T}}
\def\AA{\mathbb{A}}
\def\QQ{\mathbb{Q}}
\def\ZZ{\mathbb{Z}}
\def\PP{\mathbb{P}}
\def\ZZ{\mathbb{Z}}
\def\VV{\mathbb{V}}
\def\GG{\mathbb{G}}
\def\nb{\nobreakdash}
\begin{document}
\setcounter{page}{1}
%
%
\long\def\replace#1{#1}

%
%
\title[Moduli of Curves]{Alternate Compactifications of Moduli Spaces of Curves}
%
%
\author{Maksym Fedorchuk}
\address[Fedorchuk]{Department of Mathematics\\
Columbia University\\
2990 Broadway\\
New York, NY 10027}
\email{mfedorch@math.columbia.edu}
\author{David Ishii Smyth}
\address[Smyth]{Department of Mathematics\\
Harvard University\\
One Oxford Street\\
Cambridge, MA 01238}
\email{dsmyth@math.harvard.edu}

%
%
\subjclass[2000]{Primary 14H10; Secondary 14E30}
\keywords{moduli of curves, singularities, minimal model program}

\begin{abstract}
	
We give an informal survey, emphasizing examples and open problems, of two interconnected research programs in moduli of curves: the systematic classification of modular compactifications of $M_{g,n}$, and the study of Mori chamber decompositions of $\M_{g,n}$.

\end{abstract}  

\maketitle
\tableofcontents
\thispagestyle{empty}
%
%

\section{Introduction}
The purpose of this article is to survey recent developments in two interconnected research programs within the study of moduli of curves: the systematic construction and classification of modular compactifications of $M_{g,n}$, and the study of the Mori theory of $\M_{g,n}$. In this section, we will informally describe the overall goal of each of these programs as well as outline the contents of this article.

One of the most fundamental and influential theorems in modern algebraic geometry is the construction of a 
modular compactification $M_{g} \subset \M_{g}$ for the moduli space of smooth complete curves of genus $g$ \cite{DM}. In this context, the word \emph{modular} indicates that the compactification $\M_{g}$ is itself a parameter space for a suitable class of geometric objects, namely stable curves of genus $g$.
\begin{definition}[Stable curve]\label{D:StableCurve} A complete curve $C$ is \emph{stable} if
\begin{itemize}
\item[(1)]$C$ has only nodes $(y^2=x^2)$ as singularities.
\item[(2)]$\omega_{C}$ is ample.
\end{itemize}
\end{definition}
The class of stable curves satisfies two essential properties, namely deformation openness (Definition \ref{D:DefOpen}) and the unique limit property (Definition \ref{D:ULP}). As we shall see in Theorem \ref{T:Construction}, any class of curves satisfying these two properties gives rise to a modular compactification of $M_{g}$.

While the class of stable curves gives a natural modular compactification of the space of smooth curves, it is not unique in this respect. The simplest example of an alternate class of curves satisfying these two properties is the class of pseudostable curves, introduced by Schubert \cite{Schubert}.

\begin{definition}[Pseudostable curve]\label{D:pseudostable} A curve $C$ is \emph{pseudostable} if
\begin{itemize}
\item[(1)] $C$ has only nodes ($y^2=x^2$) and cusps ($y^2=x^3$) as singularities.
\item[(2)] $\omega_{C}$ ample.
\item[(3)] If $E \subset C$ is any connected subcurve of arithmetic genus one,\\ then $|E \cap \overline{C \setminus E}|  \geq 2.$
\end{itemize}
\end{definition}
Notice that the definition of pseudostability involves a trade-off: cusps are exchanged for elliptic tails. It is easy to see how this trade-off comes about: As one ranges over all one-parameter smoothings of a cuspidal curve $C$, the associated limits in $\M_{g}$ are precisely curves of the form $\tilde{C} \cup E$, where $\tilde{C}$ is the normalization of $C$ at the cusp and $E$ is an elliptic curve (of arbitrary $j$-invariant) attached to $\tilde{C}$ at the point lying above the cusp (see Example \ref{E:stable-reduction-A2k}). Thus, any separated moduli problem must exclude either elliptic tails or cusps. Schubert's construction naturally raises the question:

\begin{problem}\label{P:classify-stability-conditions}
Classify all possible stability conditions for curves, i.e. classes of possibly singular 
curves which are deformation
open and satisfy the property that any one-parameter family of smooth curves contains a unique limit contained in that
class.
\end{problem} 

Of course, we can ask the same question with respect to classes of marked curves, i.e. modular compactifications of $M_{g,n}$, and the first goal of this article is to survey the various methods used to construct alternate modular compactifications of $M_{g,n}$. In Section \ref{S:compactifying}, we describe constructions using Mori theory, the combinatorics of curves on surfaces, and geometric invariant theory, and, in Section \ref{S:Classification}, we assemble a (nearly) comprehensive list of the alternate modular compactifications that have so far appeared in the literature.

Since each stability condition for $n$-pointed curves of genus $g$ gives rise to a proper birational model of $\M_{g,n}$, it is natural to wonder whether these models can be exploited to study the divisor theory of $\M_{g,n}$. This brings us to the second subject of this survey, namely the study of the birational geometry of $\M_{g,n}$ from the perspective of Mori theory. In order to explain the overall goal of this program, let us recall some definitions from birational geometry. For any normal, $\Q$-factorial, projective variety $X$, let $\Amp(X)$, $\Nef(X)$, and $\Eff(X)$ denote the cones of ample, nef, and effective divisors respectively (these are defined in Section \ref{S:birational-geometry}). Recall that a \emph{birational contraction} $\phi\co X \dashrightarrow Y$ is simply a birational map to a normal, projective, $\Q$\nb-factorial variety such that $\codim(\Exc(\phi^{-1})) \geq 2$. If $\phi$ is any birational contraction with exceptional divisors $E_1, \ldots, E_k$, then the \emph{Mori chamber of }$\phi$ is defined as:
$$
\Mor(\phi):=\phi^*\Amp(Y) + \sum_{a_i \geq 0}a_iE_i \subset \Eff(X).
$$
The birational contraction $\phi$ can be recovered from any divisor $D$ in $\Mor(\phi)$ as the contraction associated to the section ring of $D$, i.e. $X \dashrightarrow \Proj \bigoplus\limits_{m \geq 0}\HH^0(X,mD)$. In good cases, e.g. when $X$ is a toric or Fano variety, these Mori chambers are polytopes and there exist finitely many contractions $\phi_i$ which partition the entire effective cone, i.e. one has
$$
\PEff(X) = \overline{\Mor(\phi_1)} \cup \ldots \cup \overline{\Mor(\phi_k)}.
$$
Understanding the Mori chamber decomposition of the effective cone of a variety $X$ is therefore tantamount to understanding the classification of 
birational contractions of $X$. This collection of ideas, which will be amplified in Section \ref{S:birational-geometry}, suggests the following problems concerning $\M_{g,n}$.

\begin{problem}\label{P:birational-problems}
\begin{enumerate}
\item[]
\item Describe the nef and effective cone of $\M_{g,n}$.
\item Describe the Mori chamber decomposition of $\overline{\Eff}(\M_{g,n})$.
\end{enumerate}
\end{problem}

These questions merit study for two reasons: First, the attempt to prove results concerning the geometry of $\M_{g,n}$ has often enriched our understanding of algebraic curves in fundamental ways. Second, even partial results obtained along these lines may provide an interesting collection of examples of phenomena in higher-dimensional geometry. In Section \ref{S:nef-cone}, we will briefly survey the present state of knowledge concerning the nef and effective cones of 
$\M_{g,n}$, and many examples of partial Mori chamber decompositions will be given in Section \ref{S:log-MMP}. We should emphasize that Problem \ref{P:birational-problems} (1) has generated a large body of work over the past 30 years. While we hope that Section \ref{S:nef-cone} can serve as a reasonable guide to the literature, we will not cover these results in the detail they merit. The interested reader who wishes to push further in this direction is strongly encouraged to read the recent surveys of Farkas \cite{farkas-seattle, farkas-aspects} and Morrison \cite{morrison-mori}.  

A major focus of the present survey is the program initiated by Hassett and Keel to study certain log canonical models of $\M_{g}$ and $\M_{g,n}$.
For any rational number $\alpha$ such that $K_{\SM_{g,n}}+\alpha\delta$ is effective, they define
\begin{align*}
\M_{g,n}(\alpha)= \Proj \bigoplus_{m \geq 0} \HH^0\bigl(\SM_{g,n}, m (K_{\SM_{g,n}}+\alpha\delta)\bigr),
\end{align*}
where the sum is taken over $m$ sufficiently divisible, and ask whether the spaces $\M_{g,n}(\alpha)$ admit a modular description. When $g$ and $n$ are sufficiently large so that $\M_{g,n}$ is of general type, $\M_{g,n}(1)$ is simply $\M_{g,n}$, while $\M_{g,n}(0)$ is the canonical model of $\M_{g,n}$. Thus, the series of birational transformations accomplished as $\alpha$ decreases from 1 to 0 constitute a complete minimal model program for $\M_{g,n}$. In terms of Problem \ref{P:birational-problems} above, this program can be understood as asking:
\begin{enumerate}
\item Describe the Mori chamber decomposition within the two dimensional subspace of $\Eff(\M_{g,n})$ spanned by $K_{\SM_{g,n}}$ and $\delta$ (where it is known to exist by theorems of higher-dimensional geometry \cite{BCHM}).
\item Find modular interpretations for the resulting birational contractions.
\end{enumerate}
Though a complete description of the log minimal model program for $\M_{g}$ ($g\gg 0$) is still out of reach, the first two steps have been carried out
by Hassett and Hyeon \cite{HH1, HH2}. Furthermore, the program has been completely carried out for $\M_{2}$, $\M_{3}$, 
$\M_{0,n}$, and $\M_{1,n}$ \cite{Hgenus2, HL, FedSmyth, AS, SmythEI, SmythEII}, and we will describe the resulting chamber decompositions in Section \ref{S:log-MMP}. 
Finally, in Section \ref{S:heuristic} we will present some informal heuristics for making predictions about future stages of the program.

In closing, let us note that there are many fascinating topics in the study of birational geometry of $\M_{g,n}$ and related spaces which receive little or no attention whatsoever in the present survey. These include  
rationality, unirationality and rational connectedness of $\M_{g,n}$ \cite{barron-4, barron-6, katsylo-5, katsylo-3, mori-mukai-11, verra14, bruno-verra},
complete subvarieties of $M_{g,n}$ \cite{diaz, faber-complete, krichever}, birational geometry of Prym varieties \cite{donagi-prym, catanese-prym, verra-prym-6, verra-prym-5, katsylo-prym, dolgachevR23, izadi-etal, farkas-prym}, Kodaira dimension of $\M_{g,n}$ for $n\geq 1$ \cite{logan-kodaira, farkas-koszul} and of the universal Jacobian over $\M_g$ \cite{farkas-verra-jacobian}, moduli spaces birational 
to complex ball quotients \cite{kondo-3, kondo-4}, etc.

A truly comprehensive survey of the birational geometry of $\M_{g,n}$ should certainly contain a discussion of these studies, and these omissions reflect nothing more than (alas!) the inevitable limitations of space, time, and our own knowledge.

\subsection{Notation}\label{S:notation}

Unless otherwise noted, we work over a fixed algebraically closed field $k$ of characteristic zero.

A \emph{curve} is a complete, reduced, one-dimensional scheme of finite-type over an algebraically closed field. An \emph{$n$-pointed curve $(C, \pn)$} is a curve $C$, together with $n$ distinguished smooth 
points $p_1, \ldots, p_n \in C$. A \emph{family of $n$-pointed curves} $(f\co \C \rightarrow T, \sigman)$ is a flat, proper, finitely-presented morphism $f\co \C \rightarrow T$, together with $n$ sections $\sigma_1, \ldots, \sigma_n$, such that the geometric fibers are $n$-pointed curves, and with the total space $\C$ an algebraic
space over $k$. A \emph{class of curves} $\S$ is simply a set of isomorphism classes of $n$-pointed curves of arithmetic genus $g$ (for some fixed $g,n \geq 0$), which is invariant under base extensions, i.e. if $k \hookrightarrow l$ is an inclusion of algebraically closed fields, then a curve $C$ over $k$ is in a class $\S$ iff $C \times_k l$ is. If $\S$ is a class of curves, a \emph{family of $\S$-curves} is a family of $n$-pointed curves $(f\co \C \rightarrow T, \sigman)$ whose geometric fibers $(C_{\overline{t}},\{\sigma_i(\overline{t})\}_{i=1}^{n})$ are contained in $\S$.

$\Delta$ will always denote the spectrum of a discrete valuation ring $R$ with algebraically closed residue field $l$ and field of fractions $L$. When we speak of a finite base change $\Delta' \rightarrow \Delta$, we mean that $\Delta'$ is the spectrum of a discrete valuation ring $R' \supset R$ with field of fractions $L'$, where $L' \supset L$ is a finite separable extension. We use the notation
\begin{align*}
0&:=\Spec l \rightarrow \Delta,\\
\Delta^*&:=\Spec L \rightarrow \Delta,
\end{align*}
for the closed point and generic point respectively. The complex-analytically-minded reader will lose nothing by thinking of $\Delta$ as the unit disc in $\mathbb{C}$, and $\Delta^*$ as the punctured unit disc.

While a nuanced understanding of stacks should not be necessary for reading these notes, we will assume the reader is comfortable with the idea that moduli stacks represent functors. If the functor of families of $\S$-curves is representable by a stack, this stack will be denoted in script, e.g. $\SM_{g,n}[\S]$. If this stack has a coarse 
(or good) moduli space, it will be noted in regular font, e.g. $\M_{g,n}[\S]$. We call $\SM_{g,n}[\S]$ the \emph{moduli stack of $\S$-curves} and $\M_{g,n}[\S]$ the \emph{moduli space of $\S$-curves}.

Starting in Section \ref{S:birational-geometry-moduli}, we will assume the reader is familiar with intersection on $\Mg{g,n}$ and $\M_{g,n}$, as described in \cite{HM} (the original reference for the theory 
is \cite[Section 6]{HMKodaira}).  
The key facts which we use repeatedly are these: For any $g, n$ satisfying $2g-2+n > 0$, there are canonical identifications
$$\N^1(\M_{g,n})\otimes \QQ=\Pic(\M_{g,n}) \otimes \QQ=\Pic(\SM_{g,n})\otimes \QQ,$$
so we may consider any line bundle on $\SM_{g,n}$ as a numerical divisor class on $\M_{g,n}$ without ambiguity. Furthermore, by 
\cite{AC-picard} and \cite{AC}, $\N^1(\M_{g,n})\otimes \QQ$ is generated by the classes of $\lambda, \psi_1, \ldots, \psi_n$, and the boundary divisors $\delta_{0}$ and $\delta_{h,S}$, which are defined as follows. If $\pi\co \Mg{g,n+1} \ra \Mg{g,n}$ is the universal curve
with sections $\sigman$, then
$\lambda=c_1(\pi_*\omega_\pi)$ and $\psi_i=\sigma_i^*(\omega_{\pi})$. Let $\Delta=\Mg{g,n}\smallsetminus \mathcal{M}_{g,n}$ be the boundary with class $\delta$. Nodal irreducible curves form a divisor $\Delta_{0}$ whose class we denote 
$\delta_{0}$. For any $0\leq h\leq g$ 
and $S\subset \{1,\dots, n\}$, the boundary divisor 
$\Delta_{h, S}\subset \Mg{g,n}$ is the locus of reducible curves with a node separating 
a connected component of arithmetic genus $h$ marked by the points in $S$ from the rest of the curve, 
and we use $\delta_{h,S}$ to denote its class. 
Clearly, $\Delta_{h,S}=\Delta_{g-h, \{1,\dots, n\}\smallsetminus S}$ and $\Delta_{h,S}=\varnothing$ if $h=0$ and $|S|\leq 1$. When $g\geq 3$, these are the only relations
and $\Pic(\Mg{g,n})$ is a free abelian group generated by $\lambda, \psi_1, \ldots, \psi_n$ and the boundary divisors \cite[Theorem 2]{AC-picard}. We refer 
the reader to \cite[Theorem 2.2]{AC} for the list of relations in $g\leq 2$. 

Finally, the canonical divisor of $\Mg{g,n}$ is given by the formula
\begin{equation}\label{E:canonical}
K_{\Mg{g,n}}=13\lambda-2\delta+\psi.
\end{equation}
This is a consequence of the Grothendieck-Riemann-Roch formula applied to the cotangent bundle of $\Mg{g,n}$
\[
\Omega_{\Mg{g,n}}=\pi_*\bigl(\Omega_{\pi}(-\sum_{i=1}^n\sigma_i)\otimes \omega_{\pi}\bigr).
\]
The computation in the unpointed case is in \cite[Section 2]{HMKodaira}, and the computation in the pointed case follows similarly. Alternately, the formula for $\Mg{g,n}$ with $n\geq 1$ can be deduced directly from that of $\Mg{g}$ as in \cite[Theorem 2.6]{logan-kodaira}. Note that we will often use $K_{\Mg{g,n}}$ to denote a divisor class on $\M_{g,n}$ using the canonical identification indicated above.

\section{Compactifying the moduli space of curves}\label{S:compactifying}

\subsection{Modular and weakly modular birational models of $M_{g,n}$}\label{S:modular-models}

What does it mean for a proper birational model $X$ of $M_{g,n}$ to be modular? 
Roughly speaking, $X$ should coarsely represent a geometrically defined functor of 
$n$\nb-pointed curves. To make this precise, we introduce 
$\U_{g,n}$, the moduli stack of all $n$\nb-pointed curves (hereafter, we always assume $2g-2+n>0$).
More precisely, if we define a functor from schemes over $k$ to groupoids by
\begin{equation*}
\U_{g,n}(T):=
\left\{
\begin{matrix}
\text{ Flat, proper, finitely-presented morphisms $\C \rightarrow T$, with $n$ sections}\\
\text{$\{ \sigma_i\}_{i=1}^{n}$, and connected, reduced, one-dimensional geometric fibers}\\
\end{matrix}
\right\}
\end{equation*}
then $\U_{g,n}$ is an algebraic stack, locally of finite type over $\Spec k$ \cite[Theorem B.1]{Z-stability};
note that we allow $\C$ to be an algebraic space over $k$.
Now we may make the following definition.
\begin{definition}\label{D:Modular}
Let $X$ be a proper birational model of $M_{g,n}$. We say that $X$ is \emph{modular} if there exists a diagram
\[
\xymatrix{
\U_{g,n}&\\
\X \ar[u]^{i} \ar[r]_{\pi}&X\\
}
\]
satisfying:
\begin{enumerate}
\item $i$ is an open immersion,
\item  $\pi$ is a coarse moduli map.
\end{enumerate}
\end{definition}

Recall that if $\X$ is any algebraic stack, we say that $\pi\co \X \rightarrow X$ is a \emph{coarse moduli map} if $\pi$ satisfies the following properties:
\begin{enumerate}
\item $\pi$ is categorical for maps to algebraic spaces, i.e. if $\phi\co \X \rightarrow Y$ is any map from $\X$ to an algebraic space, then $\phi$ factors through $\pi$.
\item $\pi$ is bijective on $k$-points.
\end{enumerate}
For a more complete discussion of the properties enjoyed by the coarse moduli space of an algebraic stack, we refer the reader to \cite{keel-mori}.

The key point of Definition \ref{D:Modular} is that modular birational models of $M_{g,n}$ may be constructed purely by considering geometric properties of curves. 
More precisely, if $\S$ is any class of curves which is \emph{deformation open} and satisfies the \emph{unique limit property}, then there is an associated modular birational model $\M_{g,n}[\S]$. The precise definition of these two conditions is given below:

\begin{definition}[Deformation open]\label{D:DefOpen} A class of curves $\S$ is \emph{deformation open} if the following condition holds: Given any family of curves $(\C \rightarrow T, \sigman)$, the set $\{ t \in T \,:\, (C_{\overline{t}}, \{\sigma_i(\overline{t})\}_{i=1}^{n}) \in \S \}$ is open in $T$.
\end{definition}

\begin{definition}[Unique limit property]\label{D:ULP}
A class of curves $\S$ satisfies the \emph{unique limit property} if the following two conditions hold:
\begin{itemize}
\item[(1)] (Existence of $\S$-limits) Let $\Delta$ be the spectrum of a discrete valuation ring. If $(\C\ra \Delta^*,\sigman)$ is a family of $\S$-curves,
 there exists a finite base-change $\Delta' \rightarrow \Delta$, and a family of $\S$-curves $(\C' \rightarrow \Delta', \sigmanprime)$, such that
$$(\C', \sigmanprime)|_{(\Delta')^*} \simeq (\C,\sigman)\times_{\Delta} (\Delta')^*.$$
\item[(2)] (Uniqueness of $\S$-limits) Let $\Delta$ be the spectrum of a discrete valuation ring. Suppose that $(\C \rightarrow \Delta,\sigman)$  and $(\C' \rightarrow \Delta, \sigmanprime)$ are two families of $\S$-curves. Then any isomorphism over the generic fiber
$$(\C,\sigman)|_{\Delta^*} \simeq (\C',\sigmanprime)|_{\Delta^*}$$ extends to an isomorphism over $\Delta$:
$$(\C,\sigman) \simeq (\C',\sigmanprime).$$
\end{itemize}
\end{definition}

\begin{theorem}\label{T:Construction} Let $\S$ be a class of curves which is deformation open and satisfies the unique limit property. Then there exists a 
proper Deligne-Mumford stack of $\S$\nb-curves $\SM_{g,n}[\S]$ and an associated coarse moduli space $\M_{g,n}[\S]$, which is a modular birational model of $M_{g,n}$.
\end{theorem}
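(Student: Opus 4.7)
The plan is to realize $\SM_{g,n}[\S]$ as an open substack of the ambient algebraic stack $\U_{g,n}$ introduced in the preceding paragraph, verify properness through the valuative criterion by exactly translating the two unique limit axioms, and finally invoke the Keel--Mori theorem to extract the coarse moduli space. Concretely, I would define $\SM_{g,n}[\S] \subset \U_{g,n}$ as the full subcategory whose $T$-points are families $(\C \to T, \{\sigma_i\}_{i=1}^n)$ all of whose geometric fibers lie in $\S$. Deformation openness (Definition \ref{D:DefOpen}) is precisely the condition needed to ensure that the inclusion $\SM_{g,n}[\S] \hookrightarrow \U_{g,n}$ is representable by open immersions, so $\SM_{g,n}[\S]$ inherits the structure of an algebraic stack locally of finite type over $k$.

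Next I would verify that $\SM_{g,n}[\S]$ is of finite type and Deligne--Mumford. Quasi-compactness should follow from a boundedness argument: using the existence half of the unique limit property, every $\S$-curve is a degeneration of a smooth $n$\nb-pointed curve, which, combined with standard boundedness for moduli of pluricanonically embedded curves, shows $\SM_{g,n}[\S]$ is covered by a quasi-compact scheme. The Deligne--Mumford condition requires the diagonal to be unramified, i.e.\ that $\S$-curves have finite, reduced automorphism groups; this is the most delicate point, since the stated hypotheses do not obviously imply it. One can however extract finiteness of $\Aut(C,\{p_i\})$ for $C \in \S$ from the uniqueness half of Definition \ref{D:ULP} applied to the trivial family $(C \times \Delta, \{p_i\times \Delta\})$: any one-parameter subgroup of automorphisms would furnish two non-isomorphic extensions of a generic identification, violating uniqueness. (In practice each particular $\S$ in the literature is designed so this check is immediate from $\omega_C(\sum p_i)$ being ample or similar.)

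With algebraicity and Deligne--Mumford-ness in hand, I would verify properness using the valuative criterion for stacks. Existence of $\S$-limits after a finite base change is exactly the existence half of the valuative criterion (up to the finite base change $\Delta' \to \Delta$, which is harmless for DM stacks), and uniqueness of $\S$-limits is exactly separatedness in its valuative form. Thus $\SM_{g,n}[\S]$ is a proper Deligne--Mumford stack. The Keel--Mori theorem \cite{keel-mori} then produces a coarse moduli space $\pi\co \SM_{g,n}[\S] \to \M_{g,n}[\S]$ with $\M_{g,n}[\S]$ a proper algebraic space.

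Finally, to match Definition \ref{D:Modular}, I would take $i\co \SM_{g,n}[\S]\hookrightarrow \U_{g,n}$ (the open immersion produced above) and $\pi\co \SM_{g,n}[\S]\to \M_{g,n}[\S]$ (the Keel--Mori coarse moduli map); since every smooth $n$\nb-pointed curve of genus $g$ lies in $\S$ (implicit in the requirement that $\S$ compactifies $M_{g,n}$), this exhibits $\M_{g,n}[\S]$ as a proper birational model of $M_{g,n}$ together with the modular structure demanded by Definition \ref{D:Modular}. The main obstacle in the above plan is not any individual diagram chase but the Deligne--Mumford verification: the two stated axioms are geometric statements about one-parameter families and do not directly bound infinitesimal automorphisms, so one must either fold finite automorphisms into the working definition of a class of curves or argue, as sketched above, that the uniqueness axiom already forces finiteness.
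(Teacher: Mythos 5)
Your overall architecture is the same as the paper's: cut out $\SM_{g,n}[\S]$ as an open substack of $\U_{g,n}$ using deformation openness, verify it is Deligne--Mumford, use the unique limit property for the valuative criterion of properness, and invoke Keel--Mori. You also correctly flag the Deligne--Mumford step as the non-automatic one, which is where the genuine issue lies.

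The gap is in your finiteness-of-automorphisms argument. What you extract from the uniqueness axiom applied to the trivial family $(C \times \Delta, \{p_i \times \Delta\})$ is precisely that $\Aut(C,\pn)$ satisfies the valuative criterion of \emph{properness}: an isomorphism of the constant family over $\Delta^*$ is a map $\Delta^* \to \Aut(C,\pn)$, and uniqueness forces it to extend. But properness of a group scheme over a field is strictly weaker than finiteness — an abelian variety is proper, contains no nontrivial one-parameter subgroup $\GG_m \to A$ or $\GG_a \to A$, and is nonetheless positive-dimensional. So "no one-parameter subgroup of automorphisms" does not rule out an abelian-variety component of $\Aut(C,\pn)^0$, and the conclusion of finiteness does not follow from uniqueness alone. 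The paper supplies the missing ingredient: it shows $\Aut(C,\pn)$ is also \emph{affine}, via the injection $\Aut(C,\pn) \hookrightarrow \Aut(\widetilde{C}, \qm, \pn)$ into the automorphisms of the pointed normalization, together with the observation that (since $C$ is connected and $2g-2+n>0$) no component of $\widetilde{C}$ can be an unpointed genus $1$ curve, so that the latter group is a linear algebraic group. Proper and affine then gives finite, and finite in characteristic $0$ gives unramified, hence Deligne--Mumford. Note this affineness input is not a special feature of the class $\S$ (so is not "folded into the definition" as you speculate); it is a structural fact about \emph{all} connected $n$-pointed curves of genus $g$ with $2g-2+n>0$ in $\U_{g,n}$. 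Without it, your proof as written does not establish the Deligne--Mumford property. Your remark on quasi-compactness is a reasonable supplementary point that the paper leaves implicit.
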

\begin{proof}
Set
$$
\SM_{g,n}[\S]:=\{t \in \U_{g,n}  \ | \ (C_{\overline{t}}, \{\sigma_i(\overline{t})\}_{i=1}^{n}) \in \S \} \subset \U_{g,n}.
$$
The fact that $\S$ is deformation open implies that $\SM_{g,n}[\S] \subset \U_{g,n}$ is open, so $\SM_{g,n}[\S]$ automatically inherits the structure of an algebraic stack over $k$. 

Uniqueness of $\S$\nb-limits implies that the automorphism group of each curve $\left[C,\pn\right]\in \SM_{g,n}[\S]$ is proper. If $\widetilde{C} \ra C$ is 
the normalization and $\qm$ are points lying over the singularities of $C$, then we have an injection $\Aut(C,\pn) \hookrightarrow \Aut(\widetilde{C}, \qm, \pn)$.
Since none of the components of $\widetilde{C}$ is an unpointed genus $1$ curve, the latter group is affine. 
We conclude that $\Aut(C,\pn)$ is finite, hence unramified (since we are working in characteristic $0$). It follows by \cite[Th\'{e}or\`{e}me 8.1]{LM} that $\SM_{g,n}[\S]$ is a Deligne-Mumford stack. 
The unique limit property implies that $\SM_{g,n}[\S]$ is proper, via the valuative criterion \cite[Proposition 7.12]{LM}. Finally, $\SM_{g,n}[\S]$ has a coarse moduli space $\M_{g,n}[\S]$ by \cite[Corollary 1.3]{keel-mori}, and it follows immediately that $\M_{g,n}[\S]$ is a modular birational model of $M_{g,n}$.
\end{proof}

While most of the birational models of $M_{g,n}$ that we construct will be modular in the sense of Definition \ref{D:Modular}, geometric invariant theory constructions (Section \ref{S:GIT}) give rise to models which are modular in a slightly weaker sense. Roughly speaking, these models are obtained from mildly non-separated functors of curves by imposing sufficiently many identifications to force a separated moduli space.

To formalize this, recall that if $\X$ is an algebraic stack over an algebraically closed field $k$, it is possible for one $k$\nb-point of $\X$ to be in the closure of another. In the context of moduli stacks of curves, one can visualize this via the valuative criterion of specialization, i.e. if $[C,p]$ and $[C',p']$ are two points of $\U_{g,n}$, then $[C',p'] \in \overline{[C,p]}$ if and only if there exists an isotrivial specialization of $(C,p)$ to $(C',p')$, i.e. a family $\C \rightarrow \Delta$ whose fibers over the punctured disc are isomorphic to $(C,p)$ and whose special fiber is isomorphic to  $(C',p')$.

\begin{exercise}\label{E:M11}
Let $\U_{1,1}$ be the stack of all $1$\nb-pointed curves of arithmetic genus one. Let $[C,p] \in \U_{1,1}$ be the unique isomorphism class of a rational cuspidal curve with a smooth marked point. Show that if $(E, p) \in \U_{1,1}$ is a smooth curve, then we have $[C,p] \in \overline{[E,p]}$. 
(Hint: Consider a trivial family $E\times \Delta\ra \Delta$, blow-up the point $p$ in $E\times 0$, 
and contract the strict transform of $E\times 0$. See also Lemma \ref{L:Contraction} and Exercise \ref{E:Cusp} below.)
\end{exercise}

Now if $\X$ is an algebraic stack with a $k$-point in the closure of another, it is clear (for topological reasons) that $\X$ cannot have a coarse moduli map. However, if we let $\sim$ denote the equivalence relation generated by the relations $x \in \overline{y}$, then we might hope for a map $\X \rightarrow X$ such that the points of $X$ correspond to equivalence classes of points of $\X$. We formalize this as follows: If $\X$ is any algebraic stack, we say that $\pi\co \X \rightarrow X$ is a \emph{good moduli map} if $\pi$ satisfies the following properties:
\begin{enumerate}
\item $\pi$ is categorical for maps to algebraic spaces.
\item $\pi$ is bijective on closed $k$-points, i.e. if $x \in X$ is any $k$ point, then $\phi^{-1}(x) \subset \X$ contains a unique closed $k$-point.
\end{enumerate}
Here, a closed $k$-point is, of course, simply a point $x \in \X$ satisfying $\overline{x}=x$.
The notion of a good moduli space was introduced and studied by Alper \cite{alper}. 
We should note that, for ease of exposition, we have adopted a slightly weaker definition than that appearing in \cite{alper}. Now we may define
\begin{definition}\label{D:weakly-modular}
Let $X$ be a proper birational model of $M_{g,n}$. We say that $X$ is \emph{weakly modular} if there exists a diagram
\[
\xymatrix{
\U_{g,n}&\\
\X \ar[u]^{i} \ar[r]_{\pi}&X\\
}
\]
satisfying:
\begin{enumerate}
\item $i$ is an open immersion,
\item  $\pi$ is a good moduli map.
\end{enumerate}
\end{definition}

\begin{exercise}[4 points on $\P^1$]\label{E:4points}
Consider the moduli space $M_{0,4}$ of 4 distinct ordered points on $\PP^1$, up to isomorphism. Of course, $M_{0,4} \simeq \PP^1\smallsetminus \{0,1,\infty\}$ 
where the isomorphism is given by the classical cross-ratio $$(\PP^{1}, \{p_i\}_{i=1}^{4}) \mapsto (p_1-p_3)(p_2-p_4)/(p_1-p_2)(p_3-p_4).$$
In this exercise, we will see that $\PP^1$ can be considered as a weakly modular birational model of $M_{0,4}$.

The most na\"ive way to enrich the moduli functor of $4$\nb-pointed $\PP^{1}$'s 
is to allow up to two points to collide, i.e. set
$$\S:=\{(\PP^{1}, \{p_i\}_{i=1}^{4}) \ | \text{ no three of $\{p_i\}_{i=1}^{4}$ coincident }\}.$$ 
The class of $\S$-curves is clearly deformation open so there is an associated moduli stack $\M_{0,4}[\S]$. The reader may check that the following assertions hold:
\begin{enumerate} 
\item There exists a diagram
\[
\xymatrix{
M_{0,4} \ar[rd] \ar[d]& \\
\M_{0,4}[\S] \ar[r]^{\phi}& \P^{1}
}
\]
extending the usual cross-ratio map.
\item The fibers $\phi^{-1}(0)$, $\phi^{-1}(1)$ and $\phi^{-1}(\infty)$ comprise
the set of all curves where two points coincide. Furthermore, each of these fibers contains
a unique closed point, e.g. the unique closed point of $\phi^{-1}(0)$ given by $p_1=p_3$ and $p_2=p_4$. 
\end{enumerate}
This shows that $\phi$ is a good moduli map, i.e. that $\PP^{1}$ is a weakly proper birational model of $M_{0,4}$.
\end{exercise}

It is natural to wonder whether there is an analogue of Theorem \ref{T:Construction} for weakly modular compactifications, i.e. whether one can formulate suitable conditions on a class of curves to guarantee that the associated stack has a good moduli space and that this moduli space is proper. For the purpose of this exposition, however, the only general method for constructing weakly modular birational models will be geometric invariant theory, as described in Section \ref{S:GIT}.

\subsection{Modular birational models via MMP}\label{S:modularMMP}
How can we construct a class of curves which is deformation open and satisfies the unique limit property? From the point of view of general theory, the most satisfactory construction proceeds using ideas from higher dimensional geometry. Indeed, one of the foundational theorems of higher dimensional geometry
 is that the canonical ring of a smooth projective variety of general type is finitely generated (\cite{BCHM}, \cite{siu}, \cite{lazic}). 
 A major impetus for work on this theorem was the fact that finite generation of canonical rings in dimension $n+1$ gives a canonical limiting process for one-parameter families of canonically polarized varieties of dimension $n$. In what follows, we explain how this canonical limiting process works and why it leads (in dimension one) to the standard definition of a stable curve.

Let us state the precise version of finite generation that we need:
\begin{proposition}\label{P:CanonicalRing}
Let $\pi\co \C \rightarrow \Delta$ be a flat, projective family of varieties satisfying:
\begin{enumerate}
\item $\C$ is smooth,
\item The special fiber $C \subset \C$ is a normal crossing divisor,
\item $\omega_{\C/\Delta}$ is ample on the generic fiber.
\end{enumerate}
Then
\begin{enumerate}
\item The relative canonical ring $\bigoplus\limits_{m \geq 0}\pi_*(\omega_{\C/\Delta}^m)$ is a birational invariant.
\item The relative canonical ring $\bigoplus\limits_{m \geq 0}\pi_*(\omega_{\C/\Delta}^m)$ is a finitely generated $\O_\Delta$-algebra.
\end{enumerate}
\end{proposition}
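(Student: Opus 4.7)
The plan is to split the two statements cleanly: part (1) is a formal consequence of the smoothness hypothesis and the projection formula, while part (2) reduces, via part (1), to a model on which finite generation is essentially tautological.

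For part (1), I would first note that given two families $\pi\co \C \ra \Delta$ and $\pi'\co \C' \ra \Delta$ satisfying (1)--(3) and birational over $\Delta$, Hironaka's resolution of indeterminacies produces a third smooth family dominating both and still satisfying (1)--(3). It therefore suffices to treat a projective birational morphism $f\co \C' \ra \C$ between two such smooth models. Since both $\C$ and $\C'$ are smooth, one has $\omega_{\C'/\Delta} = f^*\omega_{\C/\Delta} + E$ with $E$ effective and $f$-exceptional. Tensoring with $f^*\omega_{\C/\Delta}^{\,m-1}$, applying the projection formula, and using the standard identity $f_*\O_{\C'}(mE) = \O_\C$ for an effective exceptional divisor on a smooth variety yields $f_*\omega_{\C'/\Delta}^{\,m} = \omega_{\C/\Delta}^{\,m}$, so the relative canonical rings agree after pushing down to $\Delta$.

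For part (2), I would use that $\omega_\Delta$ is trivial, so $\omega_{\C/\Delta} = \omega_\C$, and then exploit $\dim \C = 2$ to run a relative minimal model program over $\Delta$. The steps are: contract $(-1)$-components of the special fiber (smooth rational components $E \subset C$ with $E^2=-1$, equivalently $\omega_{\C/\Delta}\cdot E<0$) via Castelnuovo's criterion applied fiberwise, reaching a relative minimal model $\C^{\min}$ on which $\omega_{\C^{\min}/\Delta}$ is $\pi^{\min}$-nef; then contract the $(-2)$-configurations to reach a relative canonical model $\C^c$, a normal surface with at worst rational double points on which $\omega_{\C^c/\Delta}$ is $\pi^c$-ample. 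Termination is immediate from the finiteness of the Picard rank of the special fiber. The contraction $g\co \C^{\min} \ra \C^c$ is crepant, i.e.\ $\omega_{\C^{\min}/\Delta} = g^*\omega_{\C^c/\Delta}$, and satisfies $g_*\O_{\C^{\min}} = \O_{\C^c}$ because rational double points are rational singularities, so the projection formula gives $g_*\omega_{\C^{\min}/\Delta}^{\,m} = \omega_{\C^c/\Delta}^{\,m}$. Combining with part (1) (applied to $\C$ and $\C^{\min}$, both smooth), the relative canonical rings of $\C$, $\C^{\min}$, and $\C^c$ all coincide, and $\bigoplus_{m\geq 0} \pi^c_*\omega_{\C^c/\Delta}^{\,m}$ is a finitely generated $\O_\Delta$-algebra by the standard argument for a relatively ample line bundle (pass to a relatively very ample power, where the graded algebra is a quotient of the symmetric algebra on $\pi^c_*\omega_{\C^c/\Delta}^{\,N}$).

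The main obstacle, in my view, is not any single ingredient but rather the careful bookkeeping of the relative surface MMP over a one-dimensional local base with a possibly reducible snc special fiber: one must verify that the $(-1)$- and $(-2)$-contractions can always be realized as morphisms over $\Delta$ and that they terminate. All of this is classical in dimension two (Zariski, Kodaira, Mumford, Lichtenbaum--Shafarevich for the arithmetic analogue) and does not invoke the full strength of BCHM; the point of stating the proposition in this form is precisely that in the curve setting, the canonical limiting process promised by higher-dimensional MMP is already available through surface theory.
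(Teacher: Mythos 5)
Your argument is correct, and it diverges from the paper's in two instructive ways. For part~(1) the paper invokes the surface-specific factorization of a birational map of smooth projective surfaces into elementary blow-ups and blow-downs, reducing to invariance under a single blow-up; you instead use Hironaka to dominate both models by a common smooth one and then argue via $\omega_{\C'/\Delta}=f^*\omega_{\C/\Delta}+E$ with $E$ effective and $f$-exceptional, together with $f_*\O_{\C'}(mE)=\O_\C$. Your route has the advantage of being dimension-independent (it is the argument one would need for the higher-dimensional version of this proposition), at the cost of invoking Hironaka rather than the elementary surface factorization theorem. For part~(2) the two constructions of $\C^{\min}$ agree, but then the paper takes a shortcut: it observes $\omega_{\C^{\min}/\Delta}$ is big and relatively nef and appeals to Kawamata basepoint freeness to define $\C^{\operatorname{can}}$ as a $\Proj$. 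You instead describe $\C^{\operatorname{can}}$ explicitly as the contraction of the $(-2)$-configurations to Du Val singularities and verify the projection formula via the crepant property and normality of the target, which is the classical Artin--Mumford picture. Your version is more self-contained in dimension two (no vanishing theorems) and makes the geometry of the canonical model visible, whereas the paper's Kawamata argument is shorter and parallels the proof that would be given in higher dimension, which is apt given that the proposition is stated as a special case of \cite{BCHM}. One small point worth making explicit if you expand this into a full proof: the existence of the contraction of the $(-2)$-configurations as a \emph{projective} morphism over $\Delta$ (not merely as a map to an algebraic space) is itself a theorem, classically due to Artin for rational surface singularities, and this is essentially the content that Kawamata basepoint freeness is supplying in the paper's version.
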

\begin{proof}
This theorem is proved in \cite{BCHM}. We will sketch a proof in the case where the relative dimension of $\pi$ is one. To show that $\bigoplus\limits_{m \geq 0}\pi_*\omega_{\C/\Delta}^m$ is a birational invariant, we must show that if $\pi\co \C \rightarrow \Delta$ and $\pi'\co\C' \rightarrow \Delta$ are two families satisfying hypotheses (1)-(3), and they are isomorphic over the generic fiber, then $\pi_*\omega_{\C/\Delta}^m=\pi'_*\omega_{\C'/\Delta}^m$ for each integer $m \geq 0$. Since $\C$ and $\C'$ are smooth surfaces, any birational map $\C \dashrightarrow \C'$ can be resolved into a sequence of blow-ups and blow-downs, and it suffices to show that the relative canonical ring does not change under a simple blow-up $\phi\co \C' \rightarrow \C$. In this case, one has $\omega_{\C'/\Delta}^m=\phi^*\omega_{\C/\Delta}^m(mE)$ and so by the projection 
formula $\phi_*\omega_{\C'/\Delta}^m=\omega_{\C/\Delta}^m$. 

To show that the ring is $\bigoplus\limits_{m \geq 0}\pi_*\omega_{\C/\Delta}^m$ is finitely generated, we will show that it is the homogenous coordinate ring of a certain birational model of $\C$, the so-called {\em canonical model} $\C^{can}$. To construct $\C^{can}$, we begin by contracting curves on which $\omega_{\C/\Delta}$ has negative degree. Observe that if $E \subset C$ is any irreducible curve in the special fiber on which $\omega_{\C/\Delta}$ has negative degree, then in fact $E \simeq \P^{1}$ and $\omega_{\C/\Delta}\cdot E=E^2=-1$ (this is a simple consequence of adjunction). By Castelnuovo's contractibility criterion, there exists a birational morphism $\phi\co \C \rightarrow \C'$, with $\Exc(\phi)=E$ and $\C'$  a smooth surface. After finitely many repetitions of this procedure, we obtain a birational morphism $\C \rightarrow \C^{min}$, such that the special fiber $C^{min}$ has no smooth rational curves $E$ satisfying $E^2=-1$; we call $\C^{min}$ the {\em minimal model} of $\C$.

Now $\omega_{\C^{min}/\Delta}$ is big and relatively nef, so the Kawamata basepoint freeness theorem \cite[Theorem 3.3]{kollar-mori} implies $\omega_{\C^{min}/\Delta}$ is
relatively base point free, i.e. there exists a projective, birational morphism over $\Delta$

$$
\C^{min} \rightarrow \C^{can}:=\Proj \bigoplus_{m \geq 0}\pi_*\omega_{\C^{min}/\Delta}^m
$$
contracting precisely those curves on which $\omega_{\C^{min}/\Delta}$ has degree zero. In particular, $\bigoplus\limits_{m \geq 0}\pi_*\omega_{\C^{min}/\Delta}^m$ is finitely generated. 

\end{proof}

Given Proposition \ref{P:CanonicalRing}, it is easy to describe the canonical limiting procedure.
Let $\C^* \rightarrow \Delta^*$ be a family of smooth projective varieties over the unit disc whose relative canonical bundle $\omega_{\C^*/\Delta^*}$ is ample (in the case of curves, this condition simply means $g \geq 2$). We obtain a canonical limit for this family as follows:
\begin{enumerate}
\item Let $\C \rightarrow \Delta$ be the flat limit with respect to an arbitrary projective embedding $\C^{*} \hookrightarrow \P^{n}_{\Delta^*} \subset \P^{n}_{\Delta}$. 
\item By the semistable reduction theorem \cite[Theorem 7.17]{kollar-mori}, there exists a finite base-change $\Delta' \rightarrow \Delta$ and a family $\C' \rightarrow \Delta'$ such that
\begin{itemize}
\item $\C'|_{(\Delta')^*} \simeq \C \times_{\Delta} (\Delta')^*$
\item The special fiber $C \subset \C'$ is a reduced normal crossing divisor. In the case of curves, this simply means that $C$ is a nodal curve.
\end{itemize}
\item By Proposition \ref{P:CanonicalRing}, the $\O_{\Delta'}$-algebra
$\bigoplus\limits_{m \geq 0}\pi_*\omega_{\C'/\Delta'}^m$ is finitely generated. Thus, we may consider the natural rational map 
$$\C' \dashrightarrow (\C')^{can}:=\Proj \bigoplus\limits_{m}\pi_*\omega_{\C'/\Delta'}^m,$$
which is an isomorphism over $\Delta'$.
\item The special fiber $C^{can} \subset (\C')^{can}$ is the desired canonical limit.
\end{enumerate}
The fact that the section ring $\bigoplus\limits_{m \geq 0}\pi_*\omega_{\C'/\Delta'}^m$ is a birational invariant implies that $C^{can}$ does not depend on the chosen resolution of singularities. Furthermore, the reader may easily check that this construction is independent of the base change in the sense that if $(\C')^{can} \rightarrow \Delta'$ and $(\C'')^{can} \rightarrow \Delta''$ are families produced by this process using two different base-changes, then $$(\C')^{can} \times_{\Delta} \Delta'' \simeq (\C'')^{can} \times_{\Delta}\Delta'.$$

Of course, this construction raises the question: which varieties actually show up as limits of smooth varieties in this procedure? When the relative dimension of $\C^* \rightarrow \Delta^*$ is $2$ or greater, this is an extremely difficult question which has only been answered in certain special cases. In the case of curves, our explicit construction of the map $\C \dashrightarrow \C^{can}$ in the proof of Proposition \ref{P:CanonicalRing} makes it easy to show that:

\begin{proposition}\label{P:LimitStable}
The set of canonical limits of smooth curves is precisely the class of stable curves (Definition \ref{D:StableCurve}).
\end{proposition}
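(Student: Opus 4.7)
My plan is to prove both inclusions using the explicit construction of the canonical model from the proof of Proposition \ref{P:CanonicalRing}. The key computational input is the adjunction formula: for a component $E$ of the nodal special fiber $C \subset \C^{min}$ (with $\C^{min}$ smooth), one has $\omega_{\C^{min}/\Delta'}|_E = \omega_E(D_E)$, where $D_E$ is the divisor of nodes of $C$ lying on $E$, so $\deg(\omega_{\C^{min}/\Delta'} \cdot E) = 2g(E) - 2 + |D_E|$.

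For the forward direction, I would show that every canonical limit is stable. The morphism $\C^{min} \to \C^{can}$ contracts exactly those components $E$ of the special fiber on which $\omega_{\C^{min}/\Delta'}$ has degree zero. Since all rational $(-1)$-curves were already contracted in forming the minimal model, every remaining component satisfies $2g(E) - 2 + |D_E| \geq 0$, and equality forces either $g(E) = 0$ with $|D_E| = 2$, or $g(E) = 1$ with $|D_E| = 0$. The latter would mean $E$ is a disconnected component of the fiber, which cannot occur when the generic fiber has genus $g \geq 2$. Hence the contracted components are smooth rational bridges meeting the rest of the fiber in exactly two nodes, and each such contraction collapses the bridge to a single node of the image. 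The surviving components all satisfy $\omega \cdot E > 0$, so $C^{can}$ is nodal with $\omega_{C^{can}}$ ample, that is, a stable curve.

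For the reverse direction, given a stable curve $C_0$ I would construct a smoothing $\C \to \Delta$ with special fiber $C_0$ and smooth total space, using the local model $xy = t$ at each node of $C_0$ to guarantee smoothness of $\C$. Since stability forbids smooth rational components with fewer than three nodes, no component of $C_0$ is a $(-1)$- or $(-2)$-curve in this smoothing; hence $\C = \C^{min}$, every component of the special fiber has $\omega_{\C/\Delta} \cdot E > 0$, and the canonical model map $\C^{min} \to \C^{can}$ is an isomorphism. Therefore $C_0 = C^{can}$. The main point requiring care is the adjunction computation, together with the observation that although contracting a rational bridge (a $(-2)$-curve) introduces an $A_1$ singularity in $\C^{can}$ itself, this singularity of the total space does not affect the abstract geometry of the special fiber as a nodal curve — the only datum relevant for stability.
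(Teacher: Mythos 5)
Your proof is correct and takes essentially the same route as the paper's: both directions are reduced to the explicit description of $\C \to \C^{min} \to \C^{can}$ from Proposition \ref{P:CanonicalRing}, with the adjunction formula $\deg\bigl(\omega_{\C^{min}/\Delta'}|_E\bigr) = 2p_a(E) - 2 + |D_E|$ supplying the translation between $\omega$-degree-zero curves and $(-2)$-rational bridges. Your added remarks — ruling out the $g(E)=1$, $|D_E|=0$ case via connectedness of the fiber, and noting that the $A_1$ surface singularity created by contracting a $(-2)$-curve still produces an ordinary node in the special fiber — usefully spell out steps the paper compresses into ``it follows immediately.''
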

\begin{proof}
After steps (1) and (2) of the canonical limiting procedure, we have a family of curves $\C \rightarrow \Delta$ with smooth generic fiber, nodal special fiber, and smooth total space. In the proof of Proposition \ref{P:CanonicalRing}, we showed that the map $\C \dashrightarrow \C^{can}$ is actually regular, and is obtained in two steps: the map $\C \rightarrow \C^{min}$ was obtained by successively contracting smooth rational curves satisfying $E^2=-1$ and the map $\C^{min} \rightarrow \C^{can}$ was obtained  by contracting all smooth rational curves E satisfying $E^2=-2$. It follows immediately that the special fiber $C^{can} \subset \C^{can}$ has no rational curves meeting the rest of the curve in one or two points, i.e. $C^{can}$ is stable.

Conversely, any stable curve arises as the canonical limit of a family of smooth curves. Indeed, given a stable curve $C$, let $\C \rightarrow \Delta$ be a
 smoothing of $C$ with a smooth total space. Since $\C$ has no smooth rational curves satisfying $E^2=-1$ or $E^2=-2$, we see that $\omega_{\C/\Delta}$ has positive degree on every curve contained in a fiber, and is therefore relatively ample. Thus, $\C=\Proj \bigoplus\limits_{m\geq 0}\pi_*\omega_{\C/\Delta}^m$ is its own canonical model, and $C$ is the canonical limit of the family $\C^* \rightarrow \Delta^*$.
\end{proof}
\begin{corollary}\label{C:stack-Mg}
There exists a proper moduli stack $\SM_{g}$ of stable curves, with an associated coarse moduli space $\M_{g}$.
\end{corollary}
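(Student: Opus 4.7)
The plan is to verify that the class $\S$ of stable curves satisfies the two hypotheses of Theorem \ref{T:Construction}, namely deformation openness and the unique limit property, and then invoke the theorem (applied with $n=0$) to produce $\SM_g$ as a proper Deligne--Mumford stack with coarse moduli space $\M_g$.

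First I would check deformation openness fiberwise. The condition that $C_t$ have only nodes as singularities is open because the node $k[[x,y]]/(xy)$ is a hypersurface singularity whose miniversal deformation $k[[x,y,t]]/(xy-t)$ has all non-special fibers smooth; hence the locus of nodal fibers in any family is open. Ampleness of $\omega_{C_t}$ is open by standard semicontinuity applied to the relative dualizing sheaf $\omega_{\C/T}$, which is well-defined for any flat family of nodal curves. Existence of stable limits is already contained in the proof of Proposition \ref{P:LimitStable}: given a family $\C^* \to \Delta^*$ of stable (in particular smoothable) curves, I would pass to a finite base change, run semistable reduction to obtain a regular total space with reduced normal-crossings special fiber, and take the relative canonical model to produce a stable special fiber.

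For uniqueness, suppose $\C \to \Delta$ and $\C' \to \Delta$ are two families of stable curves agreeing on $\Delta^*$. I would resolve each to a family with smooth total space and normal-crossings special fiber, yielding two smooth surfaces birational over $\Delta$. By Proposition \ref{P:CanonicalRing}(1) their relative canonical rings coincide, and hence so do the resulting canonical models. But a stable curve is its own canonical model: ampleness of $\omega_C$ on the special fiber propagates to relative ampleness of $\omega_{\C/\Delta}$ on a neighborhood, so $\C \simeq \Proj \bigoplus_m \pi_*\omega_{\C/\Delta}^m$ and similarly for $\C'$. This gives $\C \simeq \C'$. The main point requiring care is that the resulting isomorphism genuinely extends the given generic identification, rather than being some unrelated one; this follows from the functoriality of the resolution, minimal-model, and canonical-model constructions over $\Delta$, but it is the delicate step. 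Once both hypotheses of Theorem \ref{T:Construction} are in hand, the theorem immediately yields the proper Deligne--Mumford stack $\SM_g$ and its coarse moduli space $\M_g$, completing the proof.
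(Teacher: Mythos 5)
Your proposal is correct and follows the same route as the paper, which verifies that stable curves are deformation open and have the unique limit property (the latter by citing Proposition \ref{P:LimitStable} and the canonical-limit discussion preceding it) and then invokes Theorem \ref{T:Construction}. You merely unpack the semistable-reduction/canonical-model argument that the paper subsumes under those references, including the point you rightly flag as delicate — that the isomorphism of canonical models actually extends the given generic identification — which is exactly what the birational invariance of Proposition \ref{P:CanonicalRing}(1) supplies.
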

\begin{proof}
The two conditions defining stable curves are evidently deformation open. The class of stable curves has the unique limit property by Proposition \ref{P:LimitStable}
 so Theorem \ref{T:Construction} gives a proper moduli stack of stable curves $\SM_{g}$, with corresponding moduli space $\M_{g}$.
\end{proof}

\begin{remark}
We only checked the existence and uniqueness of limits for one-parameter families with smooth generic fiber. In general, to show that an algebraic stack $\X$ is proper, it is sufficient to check the valuative criterion for maps $\Delta \rightarrow \X$ which send the generic point of $\Delta$ into a fixed open dense subset $\U \subset \X$ \cite[Theorem 4.19]{DM}. Thus, when verifying that a stack of curves has the unique limit property, it is sufficient to consider generically smooth families.
\end{remark}

Next, we explain how this canonical limiting process can be adapted to the case of pointed curves. The basic idea is that, instead of using
$\bigoplus\limits_{m \geq 0}\pi_*\omega_{\C/\Delta}^m$ as the canonical completion of a family, we use  
$\bigoplus\limits_{m \geq 0}\pi_*(\omega_{\C/\Delta}(\Sigma_{i=1}^n \sigma_i)^m)$. Again, 
the key point is that this is finitely-generated and a birational invariant. In fact, there is a further variation, due to Hassett \cite{Hweights}, worth considering. Namely, 
if we replace $\omega_{\C/\Delta}(\Sigma \sigma_i)$ by the $\Q$\nb-line-bundle $\omega_{\C/\Delta}(\Sigma a_i\sigma_i)$ for any
$a_i \in \Q\cap [0,1]$ satisfying $2g-2+\sum a_i\geq 0$, the same statements hold, i.e.  $\bigoplus\limits_{m \geq 0}\pi_*(\omega_{\C/\Delta}(\Sigma a_i\sigma_i)^m)$ is a
finitely generated algebra and it is a birational invariant. This generalization may be appear odd to those who are unversed in the yoga of higher dimensional geometry; the point is that it is precisely divisors of this form for which the general machinery of Kodaira vanishing, Kawamata basepoint freeness, etc. goes through to guarantee finite generation. For the sake of completeness, we will sketch the proof.
\begin{proposition}\label{P:LCRing}
Let $(\C \rightarrow \Delta, \sigman)$ be a family of $n$-pointed curves satisfying:
\begin{enumerate}
\item[(a)] $\C$ is smooth,
\item[(b)] $C \subset \C$ is a nodal curve,
\item[(c)]  The sections $\sigman$ are disjoint.
\end{enumerate}
Then
\begin{enumerate}
\item\label{invariant} The algebra $\bigoplus\limits_{m \geq 0}\omega_{\C/\Delta}(\Sigma_i a_i\sigma_i)^m$ is a birational invariant,
\item\label{fin-gen} The algebra $\bigoplus\limits_{m \geq 0}\omega_{\C/\Delta}(\Sigma_i a_i\sigma_i)^m$ is finitely generated.
\end{enumerate}
\end{proposition}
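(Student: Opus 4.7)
The plan is to imitate the proof of Proposition \ref{P:CanonicalRing}, threading the extra boundary divisor $\sum_i a_i\sigma_i$ through each step. For (\ref{invariant}), I would factor any birational map between two models satisfying (a)--(c) as a sequence of blow-ups and blow-downs at closed points of the special fiber, so that it suffices to check invariance of $\pi_*\omega_{\C/\Delta}(\sum_i a_i\sigma_i)^m$ under a single blow-up $\phi\co\C'\to\C$ at a point $p$. The disjointness of the sections guarantees that at most one section $\sigma_{i_0}$ passes through $p$, so the strict transforms satisfy $\phi^*\sigma_j = \sigma_j' + \epsilon_j E$ with $\epsilon_j \in \{0,1\}$ and $\sum_j \epsilon_j \leq 1$. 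Combined with $\omega_{\C'/\Delta} = \phi^*\omega_{\C/\Delta}(E)$ this yields
\[
\omega_{\C'/\Delta}\bigl(\textstyle\sum_j a_j\sigma_j'\bigr) \;=\; \phi^*\omega_{\C/\Delta}\bigl(\textstyle\sum_j a_j\sigma_j\bigr) + c_p E, \qquad c_p := 1 - \textstyle\sum_j a_j\epsilon_j \geq 0,
\]
where the inequality uses $a_i \leq 1$. Since $c_p \geq 0$ and $E$ is $\phi$-exceptional, for $m$ sufficiently divisible the projection formula gives $\phi_*\omega_{\C'/\Delta}(\sum_j a_j\sigma_j')^m = \omega_{\C/\Delta}(\sum_j a_j\sigma_j)^m$, establishing birational invariance.

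For (\ref{fin-gen}), I would run the relative log minimal model program on the pair $(\C,\sum_i a_i\sigma_i)$ over $\Delta$, writing $L := \omega_{\C/\Delta}(\sum_i a_i\sigma_i)$. The surface-theoretic observation from Proposition \ref{P:CanonicalRing} extends verbatim: if a fiber component $E$ has $L\cdot E<0$, then $\sigma_i\cdot E\geq 0$ forces $\omega_{\C/\Delta}\cdot E < 0$, and adjunction on the smooth surface $\C$ pins down $E\cong\P^1$ with $E^2=\omega_{\C/\Delta}\cdot E=-1$. Castelnuovo's criterion then contracts $E$ to a point on a smooth surface; invoking (\ref{invariant}) at each step to preserve the algebra, I iterate until arriving at a smooth minimal model $\C^{\mathrm{lmin}}$ on which $L^{\mathrm{lmin}}$ is relatively nef. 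Relative bigness is automatic because $L|_{C_\eta}$ has degree $2g-2+\sum a_i > 0$ on the generic fiber. The pair $(\C^{\mathrm{lmin}},\sum_i a_i\sigma_i^{\mathrm{lmin}})$ is klt (snc boundary with coefficients in $[0,1)$ on a smooth surface; the $a_i=1$ case is handled by the log canonical version of the theorem), so relative Kawamata basepoint freeness applies to $L^{\mathrm{lmin}}$ and yields a birational morphism $\C^{\mathrm{lmin}}\to\C^{\mathrm{lcan}}:=\Proj\bigoplus_{m\geq 0}\pi_*(L^{\mathrm{lmin}})^m$. This exhibits the log canonical algebra as the relative homogeneous coordinate ring of a projective scheme over $\Delta$, hence finitely generated.

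The main obstacle I anticipate is the bookkeeping of exceptional coefficients in the birational-invariance step: it is precisely the combination $a_i\leq 1$ together with disjointness of the sections that forces $c_p\geq 0$, and without either hypothesis the projection formula would detect $E$ after pushforward and the algebra would genuinely change under a blow-up. By contrast, the finite-generation step is conceptually routine once one trusts Kawamata basepoint freeness in the log setting for surface pairs, though some care is needed at the $a_i=1$ boundary of the ample cone.
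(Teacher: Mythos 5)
Your proof is correct and follows essentially the same route as the paper: reduce birational invariance to a single point blow-up where the discrepancy $(1-a_{i_0})E$ has non-negative coefficient by $a_i\le 1$ and disjointness of sections, then for finite generation contract $(-1)$-curves (using the sign observations $\sigma_i\cdot E\ge 0$ and $\omega_{\C/\Delta}\cdot E<0$ plus adjunction) to reach a minimal model on which $\omega_{\C/\Delta}(\sum a_i\sigma_i)$ is nef and big and apply Kawamata basepoint freeness. Your write-up is in fact somewhat more careful than the paper's sketch in spelling out the coefficient bookkeeping and the klt hypothesis, but there is no genuinely different idea.
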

\begin{proof}
We follow the steps in the proof of Proposition \ref{P:CanonicalRing}. To prove that canonical algebra 
is invariant under birational transformations, we reduce to the case of a simple blow-up $\phi\co \C'\ra \C$. If the center of the blow-up does not lie 
on any section, then the proof proceeds as in Proposition \ref{P:CanonicalRing}. If the center of the blow-up lies on $\sigma_i$, then
we have $\omega_{\C'/\Delta}(\sum a_i\sigma'_i)=\phi^*\omega_{\C/\Delta}(\sum a_i\sigma_i)+(1-a_i)E$. By the projection formula,
$\phi_*(\omega_{\C'/\Delta}(\sum a_i\sigma'_i))=\omega_{\C/\Delta}(\sum a_i\sigma_i)$ if and only if $1-a_i\geq 0$.
This shows that the condition $a_i\leq 1$ is necessary and sufficient for \eqref{invariant}.

For (2), we construct the canonical model $\C^{can}$ precisely as in the proof of Proposition \ref{P:CanonicalRing}. We have only to observe that if $E \subset C$ is any curve such that $\omega_{\C/\Delta}(\sum a_i\sigma_i)\cdot E<0$, then the assumption that $a_i \geq 0$ implies $E$ is a smooth rational curve with $E \cdot E=-1$ and so can be blown down. Thus, we can construct a minimal model $\C^{min}$ on which $\omega_{\C/\Delta}(\sum a_i\sigma_i)$ is big and nef. On $\C^{min}$, we may apply the Kawamata basepoint freeness theorem to arrive at a family arrive at a family with $\omega_{\C/\Delta}(\sum a_i\sigma_i)$ relatively ample and \eqref{fin-gen} follows.
\end{proof}

For any fixed weight vector $\A=(a_1, \ldots, a_n) \in \Q^{n} \cap [0,1]$, we may now describe a canonical limiting process for a family $(\C\ra \Delta^*, \sigman)$ of 
pointed curves over a punctured disk as follows:
\begin{enumerate} \item Let $(\C \rightarrow \Delta, \sigman)$ be the flat limit with respect to an arbitrary projective embedding $\C^{*} \hookrightarrow \P^{n}_{\Delta^*} \subset \P^{n}_{\Delta}$. 
\item By the semistable reduction theorem, there exists a finite base-change $\Delta' \rightarrow \Delta$ and a family $(\C' \rightarrow \Delta', \sigman)$ such that
\begin{itemize}
\item $\C'|_{(\Delta')^*} \simeq \C \times_{\Delta} (\Delta')^*$
\item The special fiber $C' \subset \C'$ is a nodal curve and the sections meet the special fiber $C'$ at distinct smooth points of $C'$.
\end{itemize}
\item By Proposition \ref{P:LCRing}, $\bigoplus\limits_{m \geq 0}\pi_*(\omega_{\C'/\Delta'}(\Sigma_ia_i\sigma_i)^m)$ is a finitely generated $\O_{\Delta'}$\nb-algebra. Thus, we may consider the natural rational map 
$$\C' \dashrightarrow (\C')^{can}:=\Proj \bigoplus_{m \geq 0}\pi_*(\omega_{\C'/\Delta'}(\Sigma a_i\sigma_i)^m).$$
\item The special fiber $C^{can} \subset (\C')^{can}$ is the desired canonical limit.
\end{enumerate}

Using the proof of Proposition \ref{P:LCRing}, it is easy to see which curves actually arise as limits of smooth curves under this process, for a fixed weight vector $\A$. The key point is that the map
$$
\C' \rightarrow \Proj \bigoplus_{m \geq 0}\pi_*(\omega_{\C'/\Delta'}(\Sigma a_i\sigma_i)^m)
$$
successively contracts curves on which $\omega_{\C'/\Delta'}(\Sigma_ia_i\sigma_i)$ has non-positive degree. By adjunction, any such curve $E \subset C$ satisfies: 
\begin{enumerate}
\item $E$ is smooth rational, $E \cdot E=-1$, and $\sum_{\{i: p_i \in E\}}a_i \leq 1$.
\item $E$ is smooth rational, $E \cdot E=-2$, and there are no marked points supported on $E$.
\end{enumerate}
Contracting curves of the first type creates smooth points where the marked points $\{p_i \in E\}$ coincide, while contracting curves of the second type creates nodes. Thus, the special fibers $C^{can} \subset \C$ are precisely the \emph{$\A$-stable curves}, defined below.
\begin{definition}[$\A$-stable curves]\label{D:A-stable}
An $n$-pointed curve $(C, \pn)$ is $\A$-stable if it satisfies
\begin{enumerate}
\item The only singularities of $C$ are nodes,
\item If $p_{i_1}, \ldots, p_{i_k}$ coincide in $C$, then $\sum_{j=1}^{k}a_{i_j} \leq 1$,
\item $\omega_{C}(\Sigma_{i=1}^n a_ip_i)$ is ample.
\end{enumerate}
\end{definition}

\begin{corollary}[\cite{Hweights}]
There exist proper moduli stacks $\SM_{g,\A}$ for $\A$-stable curves, with corresponding moduli spaces $\M_{g,\A}$.
\end{corollary}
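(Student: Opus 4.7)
The plan is to invoke Theorem \ref{T:Construction}: it suffices to check that, for each fixed weight vector $\A \in \Q^n \cap [0,1]$ with $2g-2+\sum a_i > 0$, the class of $\A$-stable curves is deformation open and satisfies the unique limit property. Once both are verified, Theorem \ref{T:Construction} immediately yields a proper Deligne-Mumford stack $\SM_{g,\A}$ together with its coarse moduli space $\M_{g,\A}$, and the fact that the generic smooth pointed curve is $\A$-stable makes it a modular birational model of $M_{g,n}$.

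For deformation openness, I would argue that each of the three defining conditions in Definition \ref{D:A-stable} is open in families. Having only nodal singularities is a classical open condition. The condition that marked sections $\sigma_{i_1},\ldots,\sigma_{i_k}$ coincide at a point is closed, so the complement (imposing $\sum a_{i_j}\leq 1$ whenever they do coincide) is open. Finally, ampleness of $\omega_{\C/T}(\sum a_i\sigma_i)$ on geometric fibers is open by standard properties of relative ampleness on proper flat families. The intersection of these three open loci is open.

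The heart of the argument is the unique limit property, and this is where the bulk of the work lies; both halves are essentially already contained in the discussion preceding the corollary. For \emph{existence} of $\A$-stable limits, I would take a family $(\C\to\Delta^*,\sigman)$ of $\A$-stable curves and run the canonical limiting process described in the four-step recipe just before Definition \ref{D:A-stable}: extend to a flat family, apply semistable reduction (after a finite base change) to arrange that $\C'\to\Delta'$ is a regular total space with nodal special fiber and disjoint sections through smooth points, then pass to $(\C')^{\mathrm{can}}:=\Proj\bigoplus_m \pi_*\omega_{\C'/\Delta'}(\sum a_i\sigma_i)^m$, whose finite generation is Proposition \ref{P:LCRing}\eqref{fin-gen}. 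The analysis via adjunction identifies exactly the contracted curves and shows that the resulting special fiber meets the two adjunction conditions characterizing $\A$-stability, as already observed in the paragraph preceding Definition \ref{D:A-stable}.

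The main obstacle---and the key place to be careful---is \emph{uniqueness} of $\A$-stable limits. Here the plan is to exploit birational invariance: given two fillings $(\C_1\to\Delta,\sigma^{(1)})$ and $(\C_2\to\Delta,\sigma^{(2)})$ by $\A$-stable families that agree on $\Delta^*$, resolve the rational map $\C_1\dashrightarrow\C_2$ by repeated blow-ups to obtain a common smooth model $\widetilde\C\to\Delta$ dominating both. By Proposition \ref{P:LCRing}\eqref{invariant}, and crucially by the condition $a_i\leq 1$ which makes the projection formula work at centers lying on a section, one has
\[
\bigoplus_{m\geq 0}\pi_*\omega_{\widetilde\C/\Delta}(\textstyle\sum a_i\tilde\sigma_i)^m
\;=\;\bigoplus_{m\geq 0}\pi_{j,*}\omega_{\C_j/\Delta}(\textstyle\sum a_i\sigma^{(j)}_i)^m,\quad j=1,2.
\]
Since each $\C_j$ is its own canonical model under $\omega(\sum a_i\sigma^{(j)}_i)$---this follows from the ampleness clause in Definition \ref{D:A-stable} applied to the special fiber together with relative ampleness on the generic fiber---both $\C_1$ and $\C_2$ are canonically identified with $\Proj$ of this common graded algebra, yielding the required isomorphism $(\C_1,\sigma^{(1)})\simeq(\C_2,\sigma^{(2)})$ over $\Delta$ extending the given isomorphism on $\Delta^*$. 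With both properties established, Theorem \ref{T:Construction} finishes the proof.
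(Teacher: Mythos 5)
Your proof is correct and follows the same route the paper takes: verify deformation openness (which the paper calls ``obvious''), establish the unique limit property via the canonical limiting process of Proposition \ref{P:LCRing}, and invoke Theorem \ref{T:Construction}. The paper's proof is a one-line pointer to the discussion immediately preceding Definition \ref{D:A-stable}; you have simply written out the details that discussion leaves implicit, including a careful resolution-and-descent argument for uniqueness that the paper elides.
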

\begin{proof}
The conditions defining $\A$-stability are obviously deformation open, and we have just seen that they satisfy the unique limit property. The corollary follows from Theorem \ref{T:Construction}.
\end{proof}

\begin{remark}
An stable $n$-pointed curve is simply an $\A$-stable curve with weights $\A=(1, \ldots, 1)$, and the corresponding moduli space is typically denoted $\M_{g,n}$.
\end{remark}

Before moving on to the next section, let us take a break from all this high theory and do some concrete examples. We have just seen that it is always  possible to fill in a one-parameter family of smooth curves with a stable limit. As we will see in subsequent sections, a question that arises constantly is how to tell \emph{which} stable curve arises when applying this procedure to a family of smooth curves degenerating to a given singular curve. In practice, the difficult step in the procedure outlined above is finding a resolution of singularities of the total space of the family. Those cases in which it is possible to do stable reduction by hand typically involve a trick that allows one to bypass an explicit resolution of singularities. For example, here is a trick that allows one to perform stable reduction on any family of curves acquiring an $A_{2k}$-singularity ($y^2=x^{2k+1}$).
\begin{example}[Stable reduction for an $A_{2k}$-singularity]
\label{E:stable-reduction-A2k}
 Begin with a generically smooth family $\mathcal{C}\ra \Delta$ of pointed curves
whose special fiber $C_0$ has an isolated singularity of type $A_{2k}$ at a point $s\in C_0$, i.e. $\hat{\O}_{C_0,s} \simeq 
k[[x,y]]/(y^2-x^{2k+1})$. By the deformation theory of complete intersection singularities, the local equation of $\C$ near $s=(0,0)$ is 
\[
y^2=x^{2k+1}+a_{2k-1}(t)x^{2k-1}+\cdots+a_1(t)x+a_0(t),
\]
where $a_{2k-1}(0)=\cdots=a_0(0)=0$. Therefore, we can regard $\C$ as a double cover of $\Y:=\AA^{1}_x \times \Delta$. After a finite flat 
base change we can assume that 
\[x^{2k+1}+a_{2k-1}(t)x^{2k-1}+\cdots+a_1(t)x+a_0(t)=(x-\sigma_1(t))\cdots (x-\sigma_{2k+1}(t)).
\]

\noindent
First, we perform stable reduction for the family $(\AA^1_x\times \{t\}, \sigma_1(t), \dots, \sigma_{2k+1}(t))$ of pointed rational curves. 
Assume for simplicity that the sections $\sigma_i(t)$ intersect pairwise transversely at the point $(0,0)\in \AA^{1}_x\times \Delta$. Let 
 $\Bl_{(0,0)}\Y \ra \Y$ be the ordinary blow-up with the center $(0,0)$ and the exceptional divisor $E$. 
 Then the strict transforms $\overline{\sigma_i}$ of 
 sections $\{\sigma_i\}_{i=1}^{2k+1}$ meet $E$ in $2k+1$ distinct points. Unfortunately, the divisor 
 $\sum_{i=1}^{2k+1} \overline{\sigma_i}$ is not
 divisible by $2$ in $\Pic(\Bl_{(0,0)}\Y)$ and so we cannot simply construct a double cover of $\Bl_{(0,0)}\Y$ branched over it. 
 To circumvent
 this difficulty, we make another base change $\Delta'\ra \Delta$ of degree $2$ ramified over $0\in \Delta$. The surface 
 $\Y':=\Bl_{(0,0)}\Y\times_{\Delta}\Delta'$ has an $A_1$-singularity over the node in the central fiber 
 of $\Bl_{(0,0)}\Y\ra \Delta$. 
 Make an ordinary blow-up $\Y''\ra \Y'$ to resolve this singularity. Denote by $F$ the 
exceptional $(-2)$\nobreak-curve, by $\Sigma$ the preimage of  
$\sum_{i=1}^{2k+1} \overline{\sigma_i}$, and continue to denote by $E$ the preimage of $E$. 
The divisor $\Sigma+F$ is easily seen to be divisible by $2$ in $\Pic(\Y'')$, so we can construct the cyclic cover of degree $2$ branched over $\Sigma$ and $F$.
The normalization of the cyclic cover is a smooth surface. Its central fiber over $\Delta'$ 
decomposes as $\widetilde{C}_0 \cup F' \cup T$, where $\widetilde{C}_0$ is the normalization of $C_0$ at $s$, $F'$ is a smooth rational curve mapping $2:1$ onto $F$, and $T$ is a smooth hyperelliptic curve of genus $k$ mapping $2:1$ onto $E$ and ramified over 
$E\cap (\Sigma \cup F)$. Blowing down $F'$, we obtain the requisite stable limit $\widetilde{C}_0 \cup T$. 

Note that \emph{which} hyperelliptic curve actually arises as the tail of the stable limit depends on the 
initial family, i.e. on the slopes of the sections $\sigma_i(t)$. Varying the slopes of these sections, we can evidently arrange for any smooth hyperelliptic curve, attached to $\widetilde{C}_0$ along a Weierstrass point, to appear as the stable tail.

\end{example}

\subsection{Modular birational models via combinatorics}\label{S:modular-combinatorics}

While the moduli spaces $\M_{g,n}$ and $\M_{g,\A}$ are the most natural modular compactifications of $M_{g,n}$ from the standpoint of general Mori theory, they are far from unique in this respect. In this section, we will explain how to bootstrap from the stable reduction theorem to obtain many alternate deformation open classes of curves satisfying the unique limit property. The starting point for all these construction is the obvious, yet useful, observation that one can do stable reduction in reverse. For example, we have already seen that applying stable reduction to a family of curves acquiring a cusp has the effect of replacing the cusp by an elliptic tail (Example \ref{E:stable-reduction-A2k}). Conversely, starting with a family of curves whose special fiber has an elliptic tail, one can contract the elliptic tail to get a cusp. More generally, we have

\begin{lemma}[Contraction Lemma]\label{L:Contraction}
Let $\C \rightarrow \Delta$ be a family of curves with smooth generic fiber and nodal special fiber. Let $Z \subset C$ be a connected proper subcurve of the special fiber $C$. Then there exists a diagram
\[
\xymatrix{
\C \ar[rr]^{\phi} \ar[rd]_{\pi}&&\C' \ar[ld]^{\pi'}\\
&\Delta&
}
\]
satisfying
\begin{enumerate}
\item $\phi$ is proper and birational morphism of algebraic spaces, with $\Exc(\phi)=Z$.
\item $\pi'\co \C' \rightarrow \Delta$ is a flat proper family of curves, with connected reduced special fiber $C'$.
\item $\phi|_{\overline{C\setminus Z}}\co \overline{C \setminus Z} \rightarrow C'$ is the normalization of $C'$ at $p:=\phi(Z)$.
\item The singularity $p \in C'$ has the following numerical invariants:
\begin{align*}
m(p)&=|\overline{C \setminus Z} \cap Z|,\\
\delta(p)&=p_a(Z)+m(p)-1,
\end{align*}
\end{enumerate}
where $m(p)$ is the number of branches of $p$ and $\delta(p)$ is the $\delta$-invariant of $p$.
\end{lemma}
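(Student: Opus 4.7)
The plan is to construct $\phi$ first as a morphism of algebraic spaces over $\Delta$, then verify the local geometry near the contracted point, and finally obtain the numerical invariants by an arithmetic-genus bookkeeping argument. The starting point is to produce a line bundle $\L$ on $\C$ with $\deg(\L|_{Z'})=0$ for every irreducible component $Z'\subset Z$ and $\deg(\L|_E)>0$ for every component $E$ of $\overline{C\setminus Z}$. Such an $\L$ can be manufactured by taking a sum of multisections of $\pi$ supported inside $\overline{C\setminus Z}$ (étale-locally $\C$ is a smooth surface, so this is unproblematic), or by twisting a $\pi$-ample line bundle down along $Z$ using the fact that $\O_\C(Z)\otimes \O_\C(\overline{C\setminus Z})\simeq \O_\C$. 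For $m\gg 0$, $\L^m$ is $\pi$-globally generated and strictly positive off $Z$; the associated morphism $\C\to \Proj_\Delta\bigoplus_{m\geq 0}\pi_*\L^m$, glued from étale-local pieces and interpreted via Artin's contractibility theorem for algebraic spaces, produces the desired $\phi\co \C\to \C'$ with $\Exc(\phi)=Z$.

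Granted $\phi$, flatness of $\pi'$ is essentially automatic: over $\Delta^*$ the map $\phi$ is an isomorphism, and over the closed point the fiber $C'$ is the one-dimensional image $\phi(C)$; since $\Delta$ is a DVR and $\C'$ has no embedded associated points at $p$, flatness follows. The restriction $\phi|_{\overline{C\setminus Z}}$ is birational, an isomorphism off $Z\cap \overline{C\setminus Z}$, and identifies exactly the $|Z\cap \overline{C\setminus Z}|$ points lying over $p$. Because $\overline{C\setminus Z}$ is smooth at each of these points (they are nodes of $C$, hence smooth on either branch), $\phi|_{\overline{C\setminus Z}}$ factors through — and in fact equals — the normalization of $C'$ at $p$. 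In particular the number of analytic branches of $p$ is $m(p)=|\overline{C\setminus Z}\cap Z|$.

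For $\delta(p)$, I would exploit invariance of arithmetic genus in the flat families $\pi$ and $\pi'$, giving $p_a(C)=p_a(C')$. Decomposing $C$ along its $m(p)$ nodes that lie in $Z\cap \overline{C\setminus Z}$ yields
$$p_a(C)=p_a(Z)+p_a(\overline{C\setminus Z})+m(p)-1,$$
while the normalization sequence at the unique singularity $p\in C'$ gives
$$p_a(C')=p_a(\overline{C\setminus Z})+\delta(p).$$
Subtracting produces $\delta(p)=p_a(Z)+m(p)-1$, as required.

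The step I expect to be the main obstacle is the construction of $\phi$ itself: one must verify that the contraction exists as a morphism to an algebraic space (not merely a meromorphic map) and that the resulting $\C'$ is a \emph{flat} family of reduced curves. Once $\phi$ is in hand with the correct exceptional locus, the local description of $p$ and the genus identities are essentially formal consequences.
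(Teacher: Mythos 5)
Your plan correctly isolates where the real difficulty lies (constructing $\phi$ as a genuine morphism to an algebraic space with flat, reduced special fiber), and your genus bookkeeping for (3)--(4) matches the paper's: invariance of $p_a$ under flat degeneration plus the normalization/decomposition identities.

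However, the mechanism you propose for building $\phi$ does not work, and the paper itself flags exactly why. You want a line bundle $\L$ on $\C$ of degree zero on every component of $Z$ and positive on the rest, and then take $\Proj_\Delta\bigoplus_m\pi_*\L^m$. But having degree zero on each component of $Z$ is far from having $\L|_Z$ trivial: when $Z$ has positive arithmetic genus (or a non-tree dual graph), $\Pic^{\,0}(Z)$ is nontrivial, and a generically constructed $\L$ will restrict to a nontrivial degree-zero bundle on $Z$. In that case no power of $\L$ is $\pi$-globally generated along $Z$, so the $\Proj$ does not define the contraction at all. The paper's remark immediately after the lemma makes this precise: $\C'$ is only an algebraic space in general, and a \emph{necessary} condition for it to be a scheme is the existence of a line bundle near $Z$ restricting to $\O_Z$ --- which is exactly what your construction would produce if it existed, so your approach would prove more than is true. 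Appealing to Artin's contractibility theorem ``for gluing'' doesn't rescue this: Artin's criterion is the whole argument, and it has nothing to do with a global $\L$. The actual hypothesis you need to verify is that any effective $\Q$-divisor supported on $Z$ has negative self-intersection (the negative-definiteness of the intersection form on $Z$, cf.\ Proposition 2.6 of \cite{Z-stability}), at which point Artin's theorem hands you a proper birational morphism to a normal algebraic space with $\Exc(\phi)=Z$, full stop. Your write-up never invokes this negativity condition, which is the one hypothesis that is actually being used.

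Two smaller points. First, flatness of $\pi'$ is simplest to see from the fact that $\C'$ is integral and dominates the DVR $\Delta$; your formulation via ``no embedded associated points at $p$'' is pointing at the right fact but is more delicate than needed. Second, you should say something explicit about why $C'$ is \emph{reduced}: the paper observes that $C'\subset\C'$ is a Cartier divisor (hence has no embedded points) and that every component of $C'$ is the birational image of a component of $\overline{C\setminus Z}$ (hence is generically reduced), and both observations are needed.
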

(For definition of $\delta(p)$ see \cite[Ex. IV.1.8]{Hartshorne}.)
\begin{proof}
If $Z$ is any proper subcurve of the special fiber $C$, then any effective $\Q$-divisor supported on $Z$ has negative self-intersection \cite[Proposition 2.6]{Z-stability}. Artin's contractibility criterion then implies the existence of a proper birational morphism $\phi\co \C \rightarrow \C'$ with $\Exc(\phi)=Z$, to 
a normal algebraic space  $\C'$ \cite[Corollary 6.12]{artin-formal-moduli-II}. Evidently, $\pi$ factors through $\phi$ so we may regard $\pi'\co \C' \rightarrow \Delta$ as a family of curves. (Flatness of $\pi'$ is automatic since the generic point of $\C'$ lies over the generic point of $\Delta$.)

To see that the special fiber $C'$ is reduced, first observe that $C'$ is a Cartier divisor in $\C'$, hence has no embedded points. On the other hand, no component of $C'$ can be generically non-reduced because it is a birational image of some component of $\overline{C \setminus Z}$. This completes the proof of (1) and (2).

Conclusion (3) is immediate from the observation that $\overline{C \setminus Z}$ is smooth along the points $Z \cap \overline{C \setminus Z}$ and maps isomorphically to $C'$ elsewhere. Since the number of branches of the singular point $p \in C'$ is, by definition, the number of points lying above $p$ in the normalization, we have
$$m(p)=|\overline{C \setminus Z} \cap Z|.$$
Finally, the formula for $\delta(p)$ is a consequence of the fact that $C$ and $C'$ have the same arithmetic genus (since they occur in flat families with the same generic fiber).
\end{proof}
We would like to emphasize that even when applying the Contraction Lemma to a family $\C\ra \Delta$
whose total space $\C$ is a scheme, we can expect $\C'$ to be only an algebraic space in general.
(This explains our definition of a family in Section \ref{S:notation}). Clearly, a necessary
condition for $\C'$ to be a scheme is an existence of a line bundle in a Zariski open 
neighborhood of $Z$ on $\C$ that restricts to a trivial line bundle on $Z$. 
Indeed, if $\C'$ is a scheme, the pullback 
of the trivial line bundle from an affine neighborhood of $p\in \C'$ restricts to $\O_Z$ on $Z$. 
\begin{exercise}
Let $C$ be a smooth curve of genus $g\geq 2$ and let $x\in C$ be a general point. Consider the family 
$\C\ra \Delta$ obtained by blowing up the point $x$ in the central fiber of the constant family 
$C\times \Delta$. Prove that the algebraic space obtained by contracting the strict transform of 
$C\times 0$ in $\C$ is not a scheme. 
\end{exercise}

Lemma \ref{L:Contraction} raises the question: given a family $\C \rightarrow \Delta$ and a subcurve $Z \subset C$, how do you figure out which singularity $p \in C'$ arises from contracting $Z$? When contracting low genus curves, the easiest way to answer this question is simply to classify all singularities with the requisite numerical invariants. For example, Lemma \ref{L:Contraction} (3) implies that contracting an elliptic tail must produce a singularity with one branch and $\delta$\nb-invariant $1$. However, the reader should have no difficulty verifying that the cusp is the unique singularity with these invariants (Exercise \ref{E:Cusp}). Thus, Lemma \ref{L:Contraction} (3) implies that contracting an elliptic tail must produce a cusp!

\begin{exercise}\label{E:Cusp}
Prove that the unique curve singularity $p \in C$ with one branch and $\delta$\nb-invariant $1$ is the standard cusp $y^2=x^3$ (up to analytic isomorphism). (Hint: the hypotheses imply that $\hat{\O}_{C,p} \subset k[[t]]$ is a codimension one $k$-subspace.)
\end{exercise}

For a second example, consider a family $\C \rightarrow \Delta$ with smooth generic fiber and nodal special fiber, and suppose $Z \subset C$ is an arithmetic genus zero subcurve meeting the complement $\overline{C \setminus Z}$ in $m$ points. What singularity arises when we contract $Z$? Well, according to Lemma \ref{L:Contraction} (3), we must classify singularities with $m$ branches and $\delta$-invariant $m-1$.

\begin{exercise}\label{E:rationalmfold}
Prove that the unique curve singularity $p \in C$ with $m$ branches and $\delta$-invariant $m-1$ is simply the union of the $m$ coordinates axes in $m$-space. More precisely,
\begin{align*}
\hat{\O}_{C,p} &\simeq k[[x_1, \ldots, x_m]]/I_{m},\\
I_{m}&:=(x_ix_j: 1 \leq i<j \leq m).
\end{align*}
We call this singularity the \emph{rational $m$-fold point}. Conclude that, in the situation described above, contracting $Z \subset C$ produces a rational $m$-fold point.
\end{exercise}

A special feature of these two examples is that the singularity produced by Lemma \ref{L:Contraction} depends only on the curve $Z$ and not on the family $\C \rightarrow \Delta$. In general, this will not be the case. We can already see this in the case of elliptic bridges, i.e. smooth elliptic curves meeting the complement in precisely two points. According to Lemma \ref{L:Contraction}, contracting an elliptic bridge must produce a singularity with two branches and $\delta$-invariant $2$. It is not difficult to show that there are exactly two singularities with these numerical invariants: the tacnode and the spatial singularity obtained by passing a smooth branch transversely through a planar cusp.
\begin{definition}
We say that $p \in C$ is a \emph{punctured cusp} if $$\hat{\O}_{C,p} \simeq k[[x,y,z]]/(zx, zy, y^2-x^3).$$
We say that $ p \in C$ is a \emph{tacnode} if $$\hat{\O}_{C,p} \simeq k[[x,y]]/(y^2-x^4).$$
\end{definition}

It turns out that contracting an elliptic bridge can produce \emph{either} a punctured cusp or a tacnode, depending on the choice of smoothing. The most convenient way to express the relevant data in the choice of smoothing is the indices of the singularities of the total space at the attaching nodes of the elliptic bridge. More precisely, suppose $C=E \cup F$ is a curve with two components, where $E$ is a smooth elliptic curve meeting $F$ at two nodes, say $p_1$ and $p_2$, and let $\C \rightarrow \Delta$ be a smoothing of $C$. Let $m_1$, $m_2$ be the integers uniquely defined by the property that $\hat{\O}_{\C,p_i} \simeq k[[x,y,t]]/(xy-t^{m_i})$. Then we have

\begin{proposition}\label{P:BridgeContraction}
With notation as above, apply Lemma \ref{L:Contraction} to produce a contraction $\phi\co \C \rightarrow \C'$ with $\Exc(\phi)=E$. Then
\begin{enumerate}
\item If $m_1=m_2$, $p:=\phi(E) \in C'$ is a tacnode.
\item If $m_1 \neq m_2$, $p:=\phi(E) \in C'$ is a punctured cusp.
\end{enumerate}
\end{proposition}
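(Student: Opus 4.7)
The plan is to pass to the semistable model of $\C\ra \Delta$ and reduce the question to a local computation that depends only on the data $(E,p_1,p_2,m_1,m_2)$.

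First I would resolve the $A_{m_i-1}$ surface singularities of $\C$ at $p_1$ and $p_2$ to obtain a smooth semistable model $\tilde{\C}\ra \Delta$. Its special fiber is a cycle of smooth curves: the strict transforms $E'$ and $F'$ joined by two chains of smooth rational $(-2)$-curves $R_1^{(i)},\dots,R_{m_i-1}^{(i)}$ over $p_i$ for $i=1,2$ (where the $i$-th chain is empty when $m_i=1$, so that $E'$ meets $F'$ directly). Since the whole special fiber has trivial intersection with $E'$, and $E'$ meets exactly two other components of the fiber transversely, adjunction yields $(E')^2=-2$. The original contraction $\phi$ factors as $\tilde{\C}\ra \C\ra \C'$, and the induced map $\tilde{\C}\ra \C'$ is the contraction of the connected subcurve $Z':=E'\cup\bigcup_{i,j}R_j^{(i)}$ to $p=\phi(E)$. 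In particular, the germ of $\C'$ at $p$ is determined entirely by the formal neighborhood of $Z'$ in $\tilde{\C}$, which depends only on $(E,p_1,p_2,m_1,m_2)$. It therefore suffices, for each configuration of indices, to exhibit one convenient model and compute the contraction there.

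In the symmetric case $m_1=m_2=m$, the cycle $Z'$ admits a natural $\ZZ/2$-involution interchanging the two chains and fixing $E'$ setwise. To exploit this, I would construct $\tilde{\C}$ as the minimal resolution of a double cover of $\PP^1\times\Delta$ ramified along four sections that come together tangentially over $0\in\Delta$, in the spirit of Example~\ref{E:stable-reduction-A2k}. In this model, the two branches of the central fiber $C'$ at $p$ arise from the two sections of the double cover lying over the tangency locus; they are both smooth and share a common tangent direction, so that $p$ is a tacnode $y^2=x^4$. In the asymmetric case $m_1\neq m_2$, I would build a local model by blowing up the constant family $E\times\Delta$ at $(p_1,0)$ and $(p_2,0)$ with sequences of blow-ups tailored to produce the required indices, then contracting the proper transform of $E\times 0$. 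A direct computation of the affine ring of $\C'$ near $p$ in this model should yield the punctured-cusp equations $zx=zy=y^2-x^3=0$; heuristically, the extra multiplicity on the side with larger $m_i$ prevents the image of one branch of $F'$ from being an immersion at $p$, so this branch acquires a cusp while the other remains smooth.

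The main obstacle is the bookkeeping in the asymmetric case, where $\C'$ fails to be Gorenstein at $p$ and $\hat{\O}_{\C',p}$ has embedding dimension at least four, forcing one to track several generators of the local ring simultaneously. A useful sanity check throughout is the Gorenstein property of the fiber singularity at $p$: the tacnode is a planar hypersurface, hence Gorenstein, while the punctured cusp is not even a complete intersection. This dichotomy can ultimately be attributed to the $\ZZ/2$-symmetry of the cycle $Z'$ — available precisely when $m_1=m_2$ — which governs whether a relative dualizing generator on $\C'$ near $p$ can be constructed by averaging across the two chains.
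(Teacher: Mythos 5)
Your strategy is genuinely different from the paper's. The paper argues almost entirely through the relative dualizing sheaf: the tacnode is Gorenstein while the punctured cusp is not, so it suffices to decide when $\omega_{\C'/\Delta}$ is invertible at $p$. For $m_1\neq m_2$ the paper shows no Cartier divisor $D$ supported on $E$ can satisfy $\omega_{\C/\Delta}(D)|_E\simeq\O_E$ --- the numerical condition reduces to $a\bigl(\tfrac{1}{m_1}+\tfrac{1}{m_2}\bigr)=2$ with $\mathrm{lcm}(m_1,m_2)\mid a$, which forces $m_1=m_2$ --- while for $m_1=m_2=m$ the divisor $mE$ does the job and $\omega_{\C/\Delta}(mE)$ descends to $\omega_{\C'/\Delta}$, proving Gorensteinness. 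You instead pass to the semistable resolution and propose to identify the singularity by direct computation on explicit models. Both routes lead to the same destination, but the paper's argument is shorter and avoids resolving the total space; yours, if completed, would give an independent and more concrete verification, and would also exhibit models.

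That said, there are two real gaps. First, your reduction hinges on the claim that the formal neighborhood of $Z'=E'\cup\bigcup R^{(i)}_j$ in $\tilde\C$, and hence $\hat\O_{\C',p}$, depends only on $(E,p_1,p_2,m_1,m_2)$; this is plausible (the normal bundle of $E'$ is $\O_{E'}(-p_1-p_2)$, whose negative powers have vanishing $\HH^1$ on a genus-one curve, and chains of $(-2)$-curves are taut), but you assert it without proof, and it is precisely the step that lets you check a single convenient model in each case. Without it you would only establish that tacnodes and punctured cusps \emph{can} occur, not that they always do. Second, neither local computation is actually carried out: the symmetric case is sketched via a double cover but the verification that the two branches of the central fiber are planar with a common tangent is not done, and the asymmetric case is left at ``should yield.'' Finally, the suggested explanation of the dichotomy by ``averaging a dualizing generator across the two chains'' using the $\ZZ/2$-symmetry is not the right mechanism: what controls Gorensteinness is the Diophantine constraint above on when a Cartier multiple of $E$ trivializes $\omega_{\C/\Delta}|_E$, not any averaging, and the $\ZZ/2$-symmetry of $Z'$ is an incidental feature of the symmetric case rather than the source of the dichotomy.
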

\begin{proof}
We will sketch an ad-hoc argument exploiting the fact that the tacnode is Gorenstein, while the punctured cusp is not \cite[Appendix A]{SmythEI}. This means that if $C$ has a tacnode then the dualizing sheaf $\omega_{C}$ is invertible, while if $C$ has a punctured cusp, it is not. First, let us show that if $m_1 \neq m_2$, $p:=\phi(E) \in C'$ is a punctured cusp. Suppose, on the contrary that the contraction $\phi\co \C \rightarrow \C'$ produces a tacnode. Then the relative dualizing sheaf $\omega_{\C'/\Delta}$ is invertible, and the pull back $\phi^*\omega_{\C'/\Delta}$ is an invertible sheaf on $\C$, canonically isomorphic to $\omega_{\C/\Delta}$ away from $E$. It follows that
$$
\phi^*\omega_{\C'/\Delta}=\omega_{\C/\Delta}(D),
$$
where $D$ is a Cartier divisor supported on $E$ and $\omega_{\C/\Delta}(D)|_{E} \simeq \O_{E}$. It is an easy to exercise to check that it is impossible to satisfy both these conditions if $m_1 \neq m_2$.

Conversely if $m_1=m_2=m$, then $\omega_{\C/\Delta}(mE)$ is a line bundle on $\C$ and $\omega_{\C/\Delta}(mE)|_{E} \simeq \O_{E}.$ One can show that $\omega_{\C/\Delta}(mE)$ is semiample, so that the contraction $\phi$ is actually induced by a high power of  $\omega_{\C/\Delta}(mE)$ \cite[Lemma 2.12]{SmythEI}. It follows that there is a line bundle $\L$ on $\C'$ which pulls back to $\omega_{\C/\Delta}(mE)$. We claim that $\L \simeq \omega_{\C'/\Delta}$, which implies that $\omega_{C'}$ is invertible, hence that $p \in C'$ is Gorenstein (hence a tacnode). To see this, simply note that $\L \simeq \omega_{\C'/\Delta}$ away from $p$ and then use the fact that  $\omega_{\C'/\Delta}$ is an $S_2$-sheaf \cite[Corollary 5.69]{kollar-mori}.
\end{proof}

Using the contraction lemma, it is easy to use the original stable reduction theorem to construct new classes of curves which are deformation open and satisfy the unique limit property. For example, let us consider the class of pseudostable curves which was defined in the introduction (Definition \ref{D:pseudostable}).
\begin{proposition}\label{P:Pseudostablelimits}
For $g \geq 3$, the class of pseudostable curves is deformation open and satisfies the unique limit property.
\end{proposition}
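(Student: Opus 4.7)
The plan is to verify the two properties separately, with the Contraction Lemma providing the bridge between pseudostable and stable families.

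\emph{Deformation openness.} Each of the three defining conditions is separately open on the base. Condition (1) follows from upper semicontinuity of the $\delta$\nb-invariant together with the classification of curve singularities with $\delta\leq 1$ as nodes or cusps. Condition (2) is the standard openness of ampleness of a line bundle on fibers of a proper morphism. For (3), I would use a relative Hilbert scheme argument: the locus of connected subcurves $E\subset C_t$ with $p_a(E)=1$ and $|E\cap\overline{C_t\setminus E}|\leq 1$ is a closed subscheme $H'$ of the relative Hilbert scheme, proper over $T$; its image in $T$ is closed, and the pseudostable locus is the complement.

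\emph{Unique limit property.} By the remark following Corollary \ref{C:stack-Mg}, it suffices to consider families with smooth generic fiber. For existence, I would apply stable reduction to obtain an extension $\mathcal{C}^s\to\Delta$ (after finite base change), and then apply the Contraction Lemma in turn to each elliptic tail $E_1,\dots,E_k$ of the special fiber. By Lemma \ref{L:Contraction}(3) and Exercise \ref{E:Cusp}, each $E_i$ contracts to a cusp (one branch, $\delta=1$), producing $\phi\co \mathcal{C}^s\to\mathcal{C}^{ps}$. The special fiber $C^{ps}$ visibly satisfies (1), and (2) follows from positivity of $\omega_{C^s}$ on the non-contracted components together with adjunction across the contracted tails. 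For (3), if $E'\subset C^{ps}$ were a hypothetical violating subcurve of arithmetic genus one, its preimage $E:=\phi^{-1}(E')$ consists of the partial normalization of $E'$ at its cusps together with all elliptic tails $T_i$ mapping onto those cusps; a dual-graph computation (partial normalization at $c$ cusps drops $p_a$ by $c$, and reattaching $c$ elliptic tails along a tree adds $c$) yields $p_a(E)=1$, while $|E\cap\overline{C^s\setminus E}|=|E'\cap\overline{C^{ps}\setminus E'}|\leq 1$. This contradicts stability of $C^s$, using $g\geq 3$ to rule out $E=C^s$.

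For uniqueness, suppose $\mathcal{C}^{ps}_1,\mathcal{C}^{ps}_2\to\Delta$ are two pseudostable extensions of a common generically smooth family. Passing to a common finite base change $\Delta'\to\Delta$, I would apply the stable reduction procedure of Example \ref{E:stable-reduction-A2k} (with $k=1$) at each cusp of the central fibers, obtaining stable extensions $\mathcal{C}^s_i\to\Delta'$ equipped with birational contractions $\mathcal{C}^s_i\to\mathcal{C}^{ps}_i\times_\Delta\Delta'$ that collapse elliptic tails. By the unique limit property for stable curves (Corollary \ref{C:stack-Mg}), $\mathcal{C}^s_1\cong\mathcal{C}^s_2$. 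Since the set of components to be contracted—the elliptic tails—is intrinsic to the stable curve, both contractions factor through a common target, yielding $\mathcal{C}^{ps}_1\cong\mathcal{C}^{ps}_2$.

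The main obstacle is making the uniqueness argument canonical: one must check that the stable resolution $\mathcal{C}^s_i\to\mathcal{C}^{ps}_i$ depends only on the pseudostable family, not on incidental choices in Example \ref{E:stable-reduction-A2k}. The cleanest route is to realize the contraction intrinsically as $\Proj$ of a suitable twist of $\omega_{\mathcal{C}^s/\Delta'}$, in the spirit of Proposition \ref{P:CanonicalRing}, so the construction is manifestly canonical and the isomorphism of stable limits transports to an isomorphism of pseudostable limits.
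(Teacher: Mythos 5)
Your proposal is correct and takes essentially the same approach as the paper: reduce both existence and uniqueness of pseudostable limits to the corresponding facts for stable curves via Lemma~\ref{L:Contraction} (contracting the disjoint elliptic tails to cusps) and, in the other direction, stable reduction at cusps. You fill in details the paper declares ``clear'' or ``evident'' (deformation openness, pseudostability of the contracted fiber), and the canonicity concern you raise at the end is automatically satisfied: the stable limit over $\Delta'$ is determined by the generic fiber alone, and the set of elliptic tails to be contracted is intrinsic to it, so no appeal to a $\Proj$ construction is actually needed.
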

\begin{proof}
The conditions defining pseudostability are clearly deformation open, so it suffices to check the unique limit property. To prove existence of limits, let $\C^* \rightarrow \Delta^*$ be any family of smooth curves over the punctured disc, and let $\C \rightarrow \Delta$ be the stable limit of the family. Now, assuming $g \geq 3$, the elliptic tails in $C$ are all disjoint. Let $Z \subset C$ be the union of the elliptic tails, and apply Lemma \ref{L:Contraction} to obtain a contraction $\phi\co \C \rightarrow \C'$ replacing elliptic tails by cusps. The special fiber $C'$ is evidently pseudostable, so $\C' \rightarrow \Delta$ is the desired family. The uniqueness of pseudostable limits follows immediately from the uniqueness of the stable limits (If $C_1$ and $C_2$ are two pseudostable limits to the same family $\C^{*} \rightarrow \Delta^*$, then their associated stable limits $C_1^s$ and $C_2^s$ are obtained by replacing the cusps by elliptic tails. Obviously, $C_1^s \simeq C_2^s$ implies $C_1 \simeq C_2$).
\end{proof}

\begin{corollary}[\text{\cite{Schubert, HH1}}]\label{C:pseudostable-stack}
For $g\geq 3$, there exists a proper Deligne-Mumford moduli stack $\SM_{g}^{\, ps}$ of pseudostable curves, with corresponding moduli space $\M_{g}^{\, ps}$. \end{corollary}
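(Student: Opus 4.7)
The plan is to deduce this corollary as a direct application of Theorem \ref{T:Construction}, using Proposition \ref{P:Pseudostablelimits} as the essential input. Theorem \ref{T:Construction} asserts that any class of curves which is deformation open and satisfies the unique limit property gives rise to a proper Deligne--Mumford stack admitting a coarse moduli space; thus the bulk of the work is already carried out in Proposition \ref{P:Pseudostablelimits}, and what remains is to verify that the hypotheses of Theorem \ref{T:Construction} genuinely apply to pseudostable curves.

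First, I would take the class $\S$ of pseudostable curves and set $\SM_{g}^{\,ps}:=\SM_{g,0}[\S]$ in the notation of Theorem \ref{T:Construction}. By Proposition \ref{P:Pseudostablelimits}, $\S$ is deformation open, so $\SM_{g}^{\,ps}$ is an open substack of the universal stack $\U_{g,0}$ and hence inherits the structure of an algebraic stack locally of finite type over $k$. The unique limit property, also verified in Proposition \ref{P:Pseudostablelimits}, gives properness via the valuative criterion exactly as in the proof of Theorem \ref{T:Construction}.

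Next, I would check that $\SM_{g}^{\,ps}$ is Deligne--Mumford, i.e.\ that automorphism groups of pseudostable curves are finite. This is where the hypothesis $g\geq 3$ is used in a subtle way: the Deligne--Mumford property argument in Theorem \ref{T:Construction} requires that no connected component of the normalization of a pseudostable curve be an unpointed smooth genus $1$ curve. For a pseudostable curve $C$, the only way such a component could arise is from an elliptic tail, but elliptic tails are excluded by condition (3) in Definition \ref{D:pseudostable}. Hence the normalization has no unpointed smooth elliptic components, the automorphism group injects into an affine group, and finiteness (together with unramifiedness in characteristic zero) follows as in Theorem \ref{T:Construction}. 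Applying \cite[Th\'eor\`eme 8.1]{LM} then yields that $\SM_{g}^{\,ps}$ is Deligne--Mumford.

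Finally, the Keel--Mori theorem \cite[Corollary 1.3]{keel-mori} produces the coarse moduli space $\M_{g}^{\,ps}$, which is a proper algebraic space, and the map $M_g\hookrightarrow \M_g^{\,ps}$ extending the inclusion of smooth curves exhibits $\M_g^{\,ps}$ as a modular birational model of $M_g$. The main ``obstacle'' is not really an obstacle at all, since the work has been front-loaded into Proposition \ref{P:Pseudostablelimits}; the only point that deserves genuine attention is the use of $g\geq 3$ to rule out elliptic tails and thereby ensure that condition (3) of pseudostability together with the absence of cusps on smooth unpointed rational components actually forces finite automorphism groups across the entire stack.
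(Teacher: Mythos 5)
Your overall structure is identical to the paper's (the paper's proof is literally the single sentence ``Immediate from Theorem \ref{T:Construction} and Proposition \ref{P:Pseudostablelimits},'' which you unpack), but your explanation of where and why the hypothesis $g\geq 3$ enters is wrong in two ways. First, an elliptic tail does not produce an \emph{unpointed} smooth genus-one component in the normalization: the normalization of an elliptic tail $E$ carries the marked point lying over the attaching node, so it is a $1$-pointed genus-one curve, and $\Aut(\widetilde{C},\qm,\pn)$ would still be affine. The exclusion of elliptic tails by condition (3) of Definition \ref{D:pseudostable} is thus irrelevant to the Deligne--Mumford verification (and in any case that condition holds for all $g$, not just $g\geq 3$). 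In fact the affineness step in the proof of Theorem \ref{T:Construction} is automatic for connected curves of genus $g\geq 2$ with no marked points, and finiteness of automorphisms then follows from properness, which is a consequence of the unique limit property.

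The actual place where $g\geq 3$ is used is in the proof of Proposition \ref{P:Pseudostablelimits} itself: one constructs the pseudostable limit by contracting all elliptic tails of the Deligne--Mumford stable limit at once, and the hypothesis $g\geq 3$ guarantees that these elliptic tails are pairwise disjoint. When $g=2$ a stable curve can consist of two elliptic curves $E_1\cup E_2$ meeting at a single node, so that $E_1$ and $E_2$ are \emph{both} elliptic tails that share a node; contracting one or the other yields non-isomorphic cuspidal limits, destroying uniqueness. Worse still, for $g=2$ the rational curve with two cusps is pseudostable by Definition \ref{D:pseudostable} (condition (3) is vacuous for an irreducible genus-two curve) yet has automorphism group $\GG_m$, as noted in Remark \ref{R:M2}; this is a genuine failure of the Deligne--Mumford property for $g=2$, and it is not detected by your argument because it has nothing to do with elliptic tails or genus-one components of the normalization. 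So your claim that $g\geq 3$ is needed to rule out unpointed elliptic components should be replaced by the observation that $g\geq 3$ is needed to make the elliptic tails disjoint in the existence step, with the DM property then following for free from uniqueness.
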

\begin{proof}
Immediate from Theorem \ref{T:Construction} and Proposition \ref{P:Pseudostablelimits}.
\end{proof}

\begin{remark}[Reduction morphism $\M_{g} \rightarrow \M_{g}^{\, ps}$]\label{R:reduction-pseudostable}
For $g\geq 3$, there is a natural transformation of 
functors $\eta\co \Mg{g}\ra \Mg{g}^{\, ps}$ which is an 
isomorphism away from $\Delta_1$. It induces a corresponding 
morphism $\M_g\ra \M_g^{\, ps}$ on the coarse moduli spaces. The map is defined by sending a curve $C\cup E_1\cup \ldots\cup E_m$, where $E_i$ 
are elliptic tails attached to $C$ at points $q_1,\dots, q_m$ to the unique curve $C'$ with cusps $q'_1,\dots, q'_m\in C'$ and pointed 
normalization  $(C, q_1, \dots, q_m)$. This replacement procedure defines $\eta$ on points, and one can check that it extends to families. 
\end{remark}

The class of pseudostable curves was first defined in the context of GIT, and we shall return to GIT construction of $\M_{g}^{\, ps}$ in Section \ref{S:GIT}. However, we can also use these ideas to construct examples which have no GIT construction (at least none currently known). One such example is the following, in which we use Exercise \ref{E:rationalmfold} to define an alternate modular compactification of $M_{0,n}$ using rational $m$-fold points. 
\begin{definition}[$\psi$-stable]\label{D:psi-stable} Let $(C, \pn)$ be an $n$-pointed curve of arithmetic genus zero. We say that $(C, \pn)$  is $\psi$-stable if it satisfies: 
\begin{enumerate}
\item $C$ has only nodes and rational $m$-fold points as singularities.
\item Every component of $C$ has at least three distinguished points (i.e. singular or marked points).
\item Every component of $C$ has at least one marked point.
\end{enumerate}
\end{definition}
\noindent
Our motivation for calling this condition $\psi$-stability will become clear in Section \ref{S:MMPM0n}.

\begin{proposition}\label{P:Psistablelimits}
The class of $\psi$-stable curves is deformation open and satisfies the unique limit property.
\end{proposition}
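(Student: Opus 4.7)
The plan is to follow the same two-step template used for Proposition~\ref{P:Pseudostablelimits}: verify deformation openness directly from the definition, and then prove existence and uniqueness of $\psi$-stable limits by comparing them to the familiar Deligne--Mumford stable limit via the Contraction Lemma.

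\textbf{Deformation openness.} Condition (1) of Definition~\ref{D:psi-stable} is open because, by Exercise~\ref{E:rationalmfold}, a rational $m$-fold point is the unique curve singularity with $m$ branches and $\delta$-invariant $m-1$; both of these numerical invariants are upper semicontinuous in families, so nearby singularities must again be rational $k$-fold points for some $k\leq m$ (with $k=2$ corresponding to a node). Conditions (2) and (3) are preserved under deformation by a tree-combinatorics argument: since $p_a(C)=0$ the dual graph of $C$ is a tree, so if $r$ components $E_1,\dots,E_r$ of the special fiber (each with $d_i\geq 3$ distinguished points) merge into a single component of a nearby fiber by smoothing $r-1$ internal singularities, the merged component has at least $\sum_{i=1}^r d_i-2(r-1)\geq 3r-2(r-1)=r+2\geq 3$ distinguished points and at least $r\geq 1$ marked points.

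\textbf{Existence of limits.} Given a family $(\C^{*}\ra\Delta^{*},\sigman)$ of smooth rational $n$-pointed curves, let $(\C^{s}\ra\Delta,\sigman)$ be its Deligne--Mumford stable limit. Let $Z_1,\ldots,Z_k\subset C^{s}$ be the maximal connected subcurves consisting entirely of components carrying no marked points. Each $Z_j$ is a subtree of $\PP^{1}$'s, hence has arithmetic genus zero and meets $\overline{C^{s}\setminus Z_j}$ in $m_j:=|\overline{C^{s}\setminus Z_j}\cap Z_j|$ nodes. Applying Lemma~\ref{L:Contraction} to contract each $Z_j$ successively, Exercise~\ref{E:rationalmfold} identifies the resulting singularity as a rational $m_j$-fold point. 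The new special fiber $C'$ has only nodes and rational $m$-fold points as singularities, each of its components inherits at least three distinguished points from $C^{s}$ (because the tree structure forces each component outside $Z_j$ to meet $Z_j$ at exactly one node, so no distinguished point is lost), and each component contains a marked point by the maximality of the $Z_j$. Hence $C'$ is $\psi$-stable.

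\textbf{Uniqueness of limits.} Suppose $(\C_1\ra\Delta,\sigman)$ and $(\C_2\ra\Delta,\sigman)$ are two $\psi$-stable limits of the same family. For each $i$, resolve the rational $m$-fold singularities of $\C_i$ (after a finite base change) to produce a family with smooth total space and nodal special fiber, then run stable reduction as in Proposition~\ref{P:CanonicalRing}. By uniqueness of the Deligne--Mumford stable limit, this process yields the common stable model $\C^{s}$ together with a birational morphism $\C^{s}\ra\C_i$ over $\Delta$. Its exceptional locus consists of components of $C^{s}$ without marked points (marked points being preserved by the map), and by condition (3) of $\psi$-stability it must in fact be the union of \emph{all} maximal marked-point-free subtrees of $C^{s}$. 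Since this union depends only on $C^{s}$, the curves $C_1$ and $C_2$ are obtained from $C^{s}$ via the same contraction, so $C_1\simeq C_2$.

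\textbf{Main obstacle.} The delicate step is the uniqueness argument: producing the morphism $\C^{s}\ra\C_i$ from each $\psi$-stable limit and verifying that it contracts exactly the maximal marked-point-free subtrees of $C^{s}$. This hinges on showing that normalizing the rational $m$-fold points of $\C_i$ and then running stable reduction recovers the Deligne--Mumford stable limit of the original family, rather than some other model; once this identification is made, the rest of the uniqueness argument is a combinatorial consequence of the definition of $\psi$-stability.
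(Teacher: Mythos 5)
Your proof follows the same strategy as the paper's: deformation openness is handled directly, existence is obtained by contracting the unmarked connected subcurves of the Deligne--Mumford stable limit via Lemma~\ref{L:Contraction} and Exercise~\ref{E:rationalmfold}, and uniqueness is reduced to uniqueness of the stable limit. You fill in details the paper leaves implicit (the paper declares deformation openness "evident" and leaves the uniqueness bookkeeping to the reader), and you correctly apply the Contraction Lemma one connected subcurve at a time, where the paper's phrasing "the union of all unmarked components" is slightly loose since Lemma~\ref{L:Contraction} requires $Z$ to be connected.
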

\begin{proof}
The conditions defining $\psi$-stability are evidently deformation open, so we need only check the unique limit property. Let $\C^* \rightarrow \Delta^*$ be any family of smooth curves over
a punctured disk, and let $\C \rightarrow \Delta$ be the stable limit of the family. Now let $Z \subset C$ be the union of all the unmarked components of $C$, and apply Lemma \ref{L:Contraction} to obtain a contraction $\phi\co \C \rightarrow \C'$ replacing these components by rational $m$-fold points. The special fiber $C'$ is evidently $\psi$-stable, so $\C' \rightarrow \Delta$ is the desired limit family. The uniqueness of $\psi$-stable limits follows immediately from uniqueness of stable limits as in Proposition \ref{P:Pseudostablelimits}, and we leave the details to the reader.
\end{proof}

\begin{corollary}\label{C:psi-stack}
There exists a proper Deligne-Mumford moduli stack $\SM_{0,n}[\psi]$ of $\psi$\nb-stable curves, and an associated moduli space $\M_{0,n}[\psi]$. 
\end{corollary}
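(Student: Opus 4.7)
The statement is an immediate corollary of what has already been established, so the plan is essentially a one-line invocation of the general machinery. The key observation is that Proposition \ref{P:Psistablelimits} verifies exactly the two hypotheses needed to apply Theorem \ref{T:Construction}, namely deformation openness and the unique limit property for the class of $\psi$-stable curves of arithmetic genus zero.

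My proof plan is therefore: first, take $\S$ to be the class of $\psi$-stable $n$-pointed curves of arithmetic genus zero, as defined in Definition \ref{D:psi-stable}. Second, observe that Proposition \ref{P:Psistablelimits} shows $\S$ is deformation open and satisfies the unique limit property. Third, apply Theorem \ref{T:Construction} with $g=0$ to conclude that $\SM_{0,n}[\psi]:=\SM_{0,n}[\S]$ is a proper Deligne-Mumford stack with an associated coarse moduli space $\M_{0,n}[\psi]:=\M_{0,n}[\S]$.

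There is no real obstacle here: every ingredient has been set up in advance. The only minor sanity check worth noting (implicit in the definition but not an obstacle) is that the stability condition on marked points in Definition \ref{D:psi-stable} guarantees each $\psi$-stable curve has finitely many automorphisms, so the resulting stack is genuinely Deligne-Mumford rather than merely Artin; this is however automatic from the proof of Theorem \ref{T:Construction}, since the normalization of a $\psi$-stable curve consists of smooth rational components each carrying at least one marked point, so every automorphism group embeds into a product of automorphism groups of stably marked $\mathbb{P}^1$'s and is therefore finite.
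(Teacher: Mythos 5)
Your proof is correct and matches the paper's own, which simply cites Proposition \ref{P:Psistablelimits} and Theorem \ref{T:Construction}. The additional remark about finiteness of automorphism groups is accurate but, as you note, already handled inside the proof of Theorem \ref{T:Construction}.
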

\begin{proof}
Immediate from Theorem \ref{T:Construction} and Proposition \ref{P:Psistablelimits}.
\end{proof}
\begin{remark}
Since $\psi$\nb-stable curves have no automorphisms, one actually has $\SM_{0,n}[\psi]
 \simeq \M_{0,n}[\psi]$, and so $\M_{0,n}[\psi]$ is a fine moduli space.
\end{remark}
\begin{remark}\label{R:psi-contraction}
As in Remark \ref{R:reduction-pseudostable}, there is a natural transformation of functors 
$\psi\co \M_{0,n}\ra \M_{0,n}[\psi]$. If $[C]\in \M_{0,n}$, then $\psi([C])$ is obtained from $C$ by contracting every maximal unmarked connected component
of $C$ meeting the rest of the curve in $m$ points to the unique $m$\nb-fold rational singularity. This replacement procedure defines $\psi$ on points, and one can check that it extends to families. Clearly, positive dimensional fibers of $\psi$ are families of curves with an unmarked moving component.
\end{remark}
At this point, the reader may be wondering how far one can take this method of simply selecting and contracting various subcurves of stable curves. This idea is investigated in \cite{Z-stability}, and we may summarize the conclusions of the study as follows:
\begin{enumerate}
\item As $g$ and $n$ become large, this procedure gives rise to an enormous number of modular birational models of $\M_{g,n}$, involving all manner of exotic singularities.
\item Nevertheless, this approach is not adequate for constructing moduli spaces of curves with certain natural classes of singularities, e.g. this procedure never gives rise to a class of curves containing only nodes ($y^2=x^2$), cusps ($y^2=x^3$), and tacnodes ($y^2=x^4$).
\end{enumerate}

Let us examine problem (2) a little more closely. What goes wrong if we simply consider the class of curves with nodes, cusps, and tacnodes, and disallow elliptic tails and elliptic bridges. Why doesn't this class of curves have the unique limit property? There are two issues: First, one cannot always replace elliptic bridges by tacnodes in a one-parameter family. As we saw in Proposition \ref{P:BridgeContraction}, contracting elliptic bridges sometimes gives rise to a punctured cusp. Second, whereas the elliptic tails of a stable curve are disjoint, so that one can contract them all simultaneously (at least when $g \geq 3$), elliptic bridges may interact with each other in such a way as to make this impossible. Consider, for example,  a one-parameter family of smooth genus three curves specializing to a pair of elliptic bridges (Figure \ref{F:manylimits}). How can one modify the special fiber to obtain a tacnodal limit for this family? Assuming the total space of the family is smooth, one can contract either $E_1$ or $E_2$ to obtain two non-isomorphic tacnodal special fibers, but there is no canonical way to distinguish between these two limits. 

Neither of these problems is necessarily insoluble. Regarding the first problem, suppose that $\C \rightarrow \Delta$ is a one-parameter family of smooth curves acquiring an elliptic bridge in the special fiber. As in the set-up of Proposition \ref{P:BridgeContraction}, suppose that the elliptic bridge of the special fiber is attached by two nodes, and that one of these nodes is a smooth point of $\C$ while the other is a singularity of the form $xy-t^2$. Proposition \ref{P:BridgeContraction} implies that contracting $E$ outirght would create a punctured cusp, but we can circumvent this problem as follows: First, desingularize the total space with a single blow-up, so that the strict transform of the elliptic bridge is connected at two nodes with smooth total space. Now we may contract the elliptic bridge to obtain a tacnodal special fiber, at the expense of inserting a $\P^{1}$ along one of the branches.

Regarding the second problem, one can try a similar trick: Given the family pictured in Figure \ref{F:manylimits}, one may blow-up the two points of intersection $E_1 \cap E_2$, make a base-change to reduce the multiplicities of the exceptional divisors, and then contract \emph{both} elliptic bridges to obtain a bi-tacnodal limit whose normalization comprises a pair of smooth rational curves. This limit curve certainly appears canonical, but it has an infinite automorphism group and contains the other two pictured limits as deformations. Of course, it is impossible to have a curve with an infinite automorphism group in a class of curves with the unique limit property. These examples suggest is that if we want a compact moduli space for curves with nodes, cusps, and tacnodes, we must make do with a weakly modular compactification, i.e. consider a slightly non-separated moduli functor. In Section \ref{S:GIT}, we will see that geometric invariant theory identifies just such a functor.

\begin{figure}
\scalebox{.50}{\includegraphics{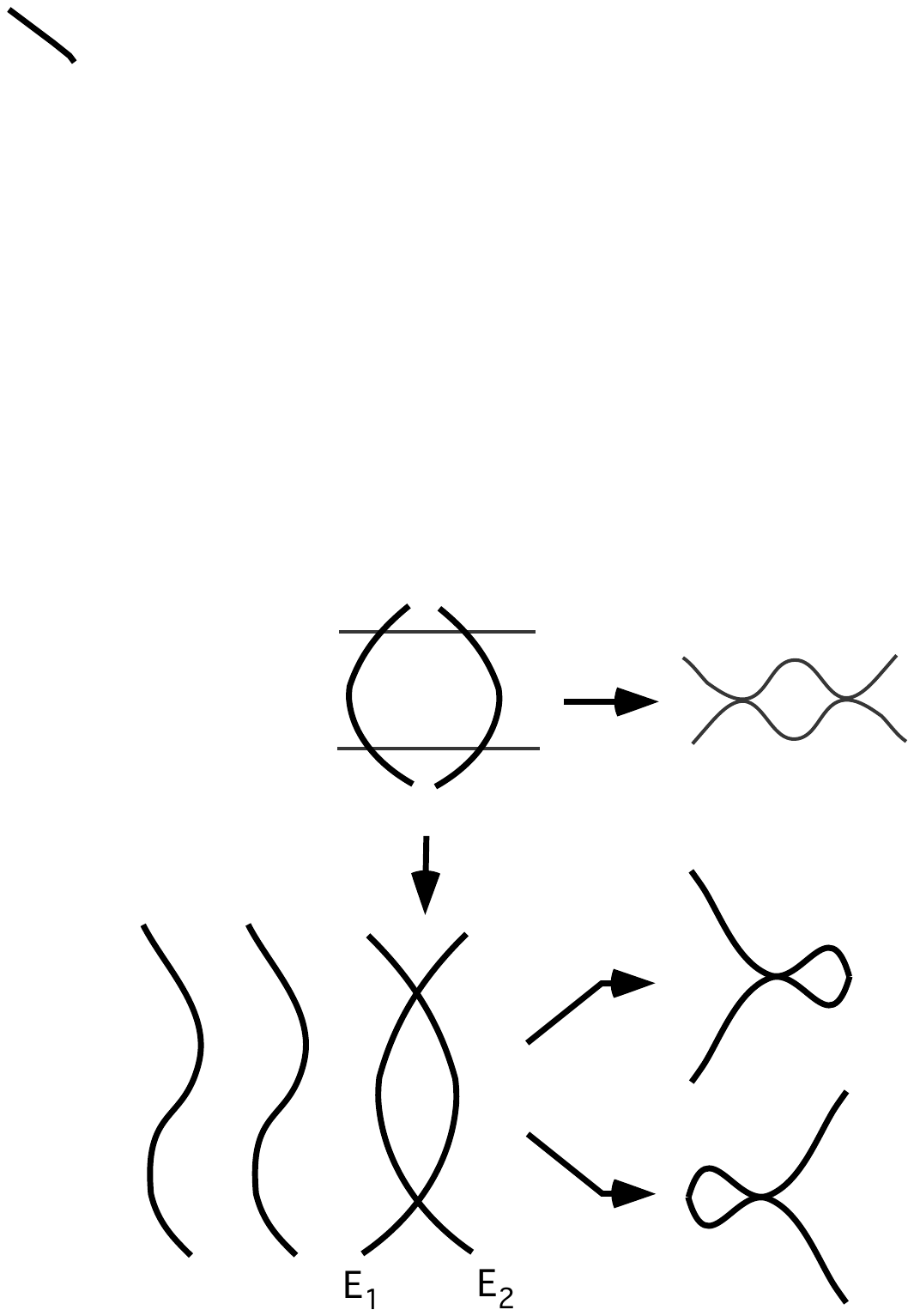}}
\caption{Three candidates for the tacnodal limit of a one-parameter family of genus three curves specializing to a pair of elliptic bridges.}\label{F:manylimits}
\end{figure}

It is worth pointing out that there is one family of examples, where the problem of interacting elliptic components does not arise and the na\"ive definition of a tacnodal functor actually gives rise to a class of curves with the unique limit property. This is the case of $n$-pointed curves of genus one. In fact, it turns out that there is a beautiful sequence of modular compactifications of $M_{1,n}$ which can be constructed by purely combinatorial methods, although the required techniques are somewhat more subtle than what we have described thus far. For the sake of completeness, and also because these spaces arise in the log MMP for $M_{1,n}$ 
(Section \ref{S:MMPM1n}), we define this sequence of stability conditions below.

The singularities introduced in this sequence of stability conditions are the so-called elliptic $m$-fold points, whose name derives from the fact that they are the unique Gorenstein singularities which can appear on a curve of arithmetic genus one \cite[Appendix A]{SmythEI}.

\begin{definition}[The elliptic $m$-fold point]\label{D:ellipticmfold} We say that $p \in C$ is an \emph{elliptic $m$-fold point} if
\begin{align*}
\hat{\O}_{C,p} \simeq &
\begin{cases}
k[[x,y]]/(y^2-x^3) & m=1\,\,\,\text{(ordinary cusp)}\\
k[[x,y]]/(y^2-yx^2) & m=2 \,\,\,\text{(ordinary tacnode)} \\
k[[x,y]]/(x^2y-xy^2) & m=3 \,\,\, \text{(planar triple point)}\\
k[[x_1, \ldots, x_{m-1}]]/J_m & m \geq 4, \text{($m$ general lines through $0\in \mathbb{A}^{m-1}$),}\\
\end{cases}\\
&\,\,\,\,\, J_{m}:= \left(  x_{h}x_i-x_{h}x_j \, : \,\, i,j,h \in \{1, \ldots, m-1\} \text{ distinct} \right).
\end{align*}
\end{definition}

We now define the following stability conditions.

\begin{definition}[$m$-stability]
Fix positive integers $m < n$. Let $(C,\pn)$ be an $n$-pointed curve of arithmetic genus one. We say that $(C, \pn)$ is \emph{$m$-stable} if
\begin{itemize}
\item[(1)] $C$ has only nodes and elliptic $l$-fold points, $l \leq m$, as singularities.
\item[(2)] If $E \subset C$ is any connected subcurve of arithmetic genus one, then $$|E \cap \overline{C \setminus E}|+|\{p_i \,|\, p_i \in E\}|>m.$$
\item[(3)] $\HH^0(C, \Omega_{C}^{\vee}(-\Sigma_{i=1}^n p_i))=0,$ i.e. $(C, \pn)$ has no infintesimal automorphisms.
\end{itemize}
\end{definition}

In order to provide some intuition for the definition of $m$-stability, Figure \ref{F:BoundaryM14} displays all topological types of curves in $\M_{1,4}(3)$, as well as the specialization relations between them.
\begin{figure}
\scalebox{.70}{\includegraphics{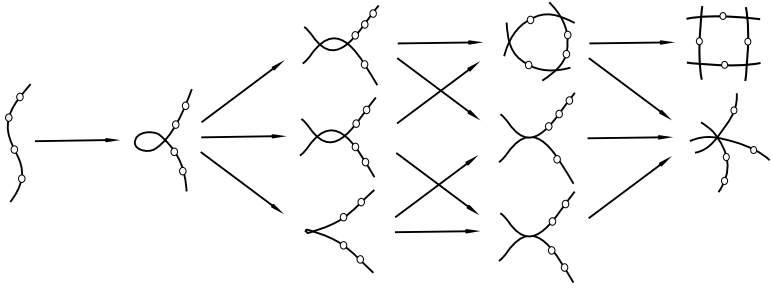}}
\caption{Topological types of curves in $\M_{1,4}(3)$. Every pictured component is rational, except the smooth elliptic curve pictured on the far left.}\label{F:BoundaryM14}
\end{figure}

\begin{proposition}[\cite{SmythEI}]
There exists a moduli stack $\SM_{1,n}[m]$ of $m$-stable curves, with corresponding moduli space $\M_{1,n}[m]$. 
\end{proposition}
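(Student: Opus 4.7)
The strategy is to invoke Theorem \ref{T:Construction}, so the heart of the proof is to verify that the class of $m$\nb-stable curves is deformation open and satisfies the unique limit property. The condition that $\HH^{0}(C,\Omega_C^\vee(-\sum\sigma_i))=0$ will also ensure that automorphism groups are finite, so that $\SM_{1,n}[m]$ is indeed Deligne-Mumford and admits a coarse moduli space $\M_{1,n}[m]$ by \cite{keel-mori}.

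For deformation openness, I would check the three defining conditions in turn. Condition (1), that only nodes and elliptic $l$\nb-fold points with $l\leq m$ appear, follows from the local deformation theory of these singularities: the versal deformation of an elliptic $l$\nb-fold point has only nodes and elliptic $l'$\nb-fold points with $l'\leq l$ as nearby singularity types, which can be checked directly from the explicit equations in Definition \ref{D:ellipticmfold}. Condition (2) is deformation open because its failure is a closed condition; a genus one subcurve violating (2) in a special fiber must specialize from a genus one subcurve with at most the same number of distinguished points in the nearby fibers, since in a flat family one cannot generically increase attaching or marked points by specialization. Condition (3) is the standard upper semicontinuity of $\dim\HH^{0}(C,\Omega_{C}^\vee(-\sum\sigma_i))$ in a flat family.

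For the unique limit property, I would begin with a family $(\C^*\ra \Delta^*,\sigman)$ of smooth $n$\nb-pointed genus one curves and pass to its Deligne-Mumford stable limit $(\C\ra \Delta,\sigman)$ produced by Corollary \ref{C:stack-Mg}. Since $p_{a}(C)=1$, the special fiber $C$ contains a unique minimal connected arithmetic genus one subcurve; I would then select the maximal connected arithmetic genus one subcurve $Z\subseteq C$ satisfying
\[
l := |Z\cap \overline{C\setminus Z}| + |\{i:\sigma_i(0)\in Z\}|\leq m,
\]
and apply Lemma \ref{L:Contraction} to contract $Z$ to a single point. By the numerical part of that lemma the contracted point has exactly $l$ branches and $\delta$\nb-invariant $l$, matching the invariants of the elliptic $l$\nb-fold point. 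After further contracting any remaining unstable rational tails of $C\setminus Z$, the resulting special fiber is $m$\nb-stable by construction. Uniqueness then follows because $Z$ is canonically determined by $C$, and any other $m$\nb-stable filling would, after blowing up its elliptic singularity to a genus one curve, reproduce the same Deligne-Mumford stable limit, which is itself unique.

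The main obstacle is that the contraction in Lemma \ref{L:Contraction} does not automatically produce the \emph{Gorenstein} singularity specified in Definition \ref{D:ellipticmfold}; as in Proposition \ref{P:BridgeContraction}, other singularities (analogous to the punctured cusp) can share the same numerical invariants. To force $\phi(Z)$ to be the elliptic $l$\nb-fold point, I would first perform a finite base change $\Delta'\ra \Delta$ followed by blow-ups of the total space supported over $Z$ in order to arrange equal local indices $xy=t^{N}$ at each of the attaching nodes of $Z$. A line bundle argument parallel to the one sketched in Proposition \ref{P:BridgeContraction}, exploiting that $\omega_{\C'/\Delta'}(NZ)|_Z\simeq \O_Z$ in the balanced case, should show that $\omega_{\C'/\Delta'}$ is invertible near $\phi(Z)$, forcing this point to be Gorenstein; a check of embedding dimension and branch number then pins down its analytic type as the elliptic $l$\nb-fold point. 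Verifying that this construction is independent of the chosen base change and blow-ups, and separately handling the case $Z=C$ where one must prescribe the analytic type of the singular fiber through ``crimping'' data on the pointed normalization, is the most technically delicate step.
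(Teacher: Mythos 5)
Your plan to verify deformation openness and the unique limit property and then cite Theorem \ref{T:Construction} is sound, and you correctly identify the Gorenstein issue as a major obstacle. However, the core of your existence argument — selecting a single maximal genus-one subcurve $Z$ with $l=|Z\cap\overline{C\smallsetminus Z}|+|\{i:\sigma_i(0)\in Z\}|\leq m$ and contracting it once — does not work, and the paper flags exactly this: the $m$-stable limit is obtained by an \emph{iterative} process that alternates blow-ups and contractions, not by a single application of Lemma~\ref{L:Contraction}.

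Concretely, there are three problems. First, your numerical claim is wrong: Lemma~\ref{L:Contraction}(4) gives $m(p)=|Z\cap\overline{C\smallsetminus Z}|$ branches and $\delta(p)=p_a(Z)+m(p)-1$; marked points on $Z$ contribute nothing to $m(p)$, so the contracted point has $|Z\cap\overline{C\smallsetminus Z}|$ branches, not $l$. Second, if $Z$ carries marked points then contracting $Z$ directly sends those sections to the singular point, which is forbidden; the paper avoids this by first blowing up the marked points on $Z$, trading each for an extra exceptional $\PP^1$ which then becomes an additional branch of the singularity — only then do the invariants match an elliptic $l$-fold point. You never mention blowing up the marked sections, only blow-ups to balance local indices at the attaching nodes. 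Third, and most seriously, the "maximal'' $Z$ with $l\leq m$ is not unique (take $m=3$ and a smooth elliptic core $E$ with no marked points attached to two rational tails, each carrying two markings: both $E\cup R_1$ and $E\cup R_2$ are maximal), so your construction is not even well-defined; and even after choosing one and blowing up its markings, the contraction of the resulting two-component subcurve from a smooth total space generically produces a non-Gorenstein point, not the elliptic $l$-fold point, because $\omega_{\C/\Delta}(D)\vert_Z$ cannot be made trivial for any Cartier $D$ supported on $Z$. The actual construction from \cite{SmythEI}, sketched in the paper, contracts only the \emph{minimal} elliptic subcurve at each stage (after blowing up its markings), then re-examines the new minimal elliptic subcurve of the resulting special fiber, and repeats until the defining inequality $|Z\cap\overline{C\smallsetminus Z}|+|\{p_i\in Z\}|>m$ holds, followed by a final stabilization; each intermediate contraction then satisfies the balancing that forces the Gorenstein property. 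Your "maximal $Z$'' cannot be replaced by this iteration without changing the argument at its core.
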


While the proof of this theorem would take us too far afield, we may at least explain how to obtain the $m$-stable limit of a generically smooth family of $n$-pointed elliptic curves. The essential feature which makes this process more subtle than the constructions of $\M_{g}^{\, ps}$ and $\M_{0,n}[\psi]$ is that one must alternate between blowing-up and contracting, rather than simply contracting subcurves of the stable limit.

Given  a family of smooth $n$-pointed curves of genus one over a punctured disc $(\C^* \rightarrow \Delta^*, \sigman)$, we obtain the $m$-stable limit as follows. (The process is pictured in Figure \ref{F:valuativecriterion}.)
\begin{enumerate}
\item First, complete $(\C^* \rightarrow \Delta^*, \sigman)$ to a semistable family $(\C \rightarrow \Delta, \sigman)$ with smooth total space, i.e. take the stable limit and desingularize the total space.
\item Isolate the minimal elliptic subcurve $Z \subset C$, i.e. the unique elliptic subcurve which contains no proper elliptic subcurves, blow-up the marked points on $Z$, then contract the strict transform $Z$.
\item Repeat step (2) until the minimal elliptic subcurve satisfies
$$
|Z \cap \overline{C \setminus Z}| + |\{p_i \,|\, p_i \in Z\}| > m.
$$
\item Stabilize, i.e. blow-down all smooth $\P^{1}$'s which meet the rest of the fiber in two nodes and have no marked points, or meet the rest of the fiber in a single node and have one marked point. \
\end{enumerate}

\begin{figure}[h]
\scalebox{.55}{\includegraphics{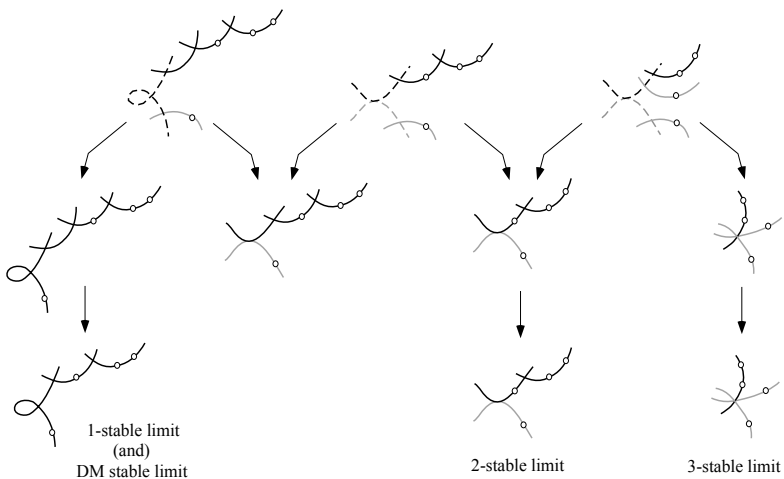}}
\caption{The process of blow-up/contraction/stabilization in order to extract the $m$-stable limit for each $m=1,2,3$. Every irreducible component pictured above is rational. The left-diagonal maps are simple blow-ups along the marked points of the minimal elliptic subcurve, and exceptional divisors of these blow-ups are colored grey. The right-diagonal maps contract the minimal elliptic subcurve of the special fiber, and exceptional components of these contractions are dotted. The vertical maps are stabilization morphisms, blowing down all semistable components of the special fiber.}\label{F:valuativecriterion}
\end{figure}

Finally, we should also note that the $m$-stability is compatible with the definition of $\A$-stability described above, thus yielding an even more general collection of stability conditions.

\begin{definition}[$(m,\A)$-stability]
Fix positive integers $m < n$, and let $\A=(a_1, \ldots, a_n) \in (0,1]^n$ be an $n$-tuple of rational weights. Let $C$ be a connected reduced complete curve of arithmetic genus one, and let $p_1, \ldots, p_n \in C$ be smooth (not necessarily distinct) points of $C$. We say that $(C,p_1, \ldots, p_n)$ is $(m, \A)$-stable if
\begin{itemize}
\item[(1)] $C$ has only nodes and elliptic $l$-fold points, $l \leq m$, as singularities.
\item[(2)] If $E \subset C$ is any connected subcurve of arithmetic genus one, then $$|E \cap \overline{C \setminus E}|+|\{p_i \,|\, p_i \in E\}|  >m.$$
\item[(3)] $\HH^0(C, \Omega_{C}^{\vee}(-\Sigma_{i}p_i))=0.$
\item[(4)] If $p_{i_1}=\ldots=p_{i_k} \in C$ coincide, then $\sum_{j=1}^{k}a_{i_j} \leq 1.$ 
\item[(5)] $\omega_{C}(\sum_{i}a_ip_i)$ is an ample $\Q$-divisor.
\end{itemize}
\end{definition}

In \cite{SmythEI}, it is shown that the class of $(m,\A)$-stable curves is deformation open and satisfies the unique limit property. Thus, there exist corresponding moduli stacks $\SM_{1,\A}(m)$ and spaces $\M_{1,\A}(m)$.

\subsection{Modular birational models via GIT}\label{S:GIT}

In this section, we discuss the use of geometric invariant theory (GIT) to construct modular and weakly modular 
birational models of $M_{g,n}$. Following Hassett and Hyeon \cite{HH2}, we will explain certain heuristics for 
interpreting GIT quotients as log canonical models of $\M_{g}$, and describe how to use these heuristics, in 
conjunction with intersection theory on $\M_{g}$, to predict the GIT-stability of certain Hilbert points.

GIT was invented by Mumford \cite{GIT} to solve the following problem: Suppose $G$ is a linearly reductive group 
acting on a normal projective variety $X$. We wish to form a projective quotient $X \rightarrow X\gitq G$, whose fibers 
are precisely the $G$\nb-orbits of $X$. Unfortunately, there are obvious topological obstructions to the existence of 
such a quotient. For example, if one orbit is in the closure of another, both are necessarily mapped to the same point of 
$X\gitq G$. GIT gives a systematic method for constructing projective varieties which can be thought of as best-
possible approximations to the desired quotient space. The GIT construction is not unique and depends on a choice of 
\emph{linearization} of the action \cite[p. 30]{GIT}. A linearization of the $G$-action simply consists of an ample line 
bundle $\L$ on $X$, and a $G$-action on the total space of $\L$ which is compatible with the given $G$-action on $X$. 
More precisely, if the action on $X$ is given by a morphism $G \times X \rightarrow X$, and the action on the line bundle 
by a morphism $G \times \underline{\Spec}_{\O_{X}} \Sym^{\ast} \L \rightarrow \underline{\Spec}_{\O_{X}} \Sym^{\ast} 
\L$, then we require the following diagram to commute:
\[
\xymatrix{
G \times  \underline{\Spec}_{\O_{X}}\! \Sym^{\ast} \L \ar[r] \ar[d]& \underline{\Spec}_{\O_{X}}\! \Sym^{\ast} \L\ar[d]\\
G \times X \ar[r] &  X\\
}
\]
Given a linearization of the $G$-action on $X$, the GIT quotient is constructed as follows. First, note that since $\L$ is ample, we have $$X=\Proj \bigoplus_{m \geq 0}\HH^0(X, \L^m).$$ The linearization gives a
$G$\nb-action on $\bigoplus_{m \geq 0}\HH^0(X, \L^m)$, so we may consider the ring of invariants
$$
\bigoplus_{m \geq 0}\HH^0(X, \L^m)^G \subset \bigoplus_{m \geq 0}\HH^0(X, \L^m).
$$
The GIT quotient is simply defined to be the associated rational map
$$
\mathrm{q}\co X= \Proj \bigoplus_{m \geq 0}\HH^0(X, \L^m) \dashrightarrow \Proj \bigoplus_{m \geq q}\HH^0(X, \L^m)^G.
$$
In order to describe the properties of this rational map, it is useful to make the following definitions.
\begin{definition}\label{D:stable-semistable} The \emph{semistable locus} and \emph{stable locus} of the linearized $G$-action on $X$ are defined by
\begin{align*}
X^{\ss}:=&\{x \in X \ | \ \exists f \in \HH^0(X, \L^{m})^{G} \text{ such that  $f(x)\neq 0$} \}, \\
X^{\s}:=&\{x \in X^{\ss}\  | \ G\cdot x \text{ is closed in $X^{\ss}$ and $\dim G \cdot x=\dim G$}\}.
\end{align*}
\end{definition}
It is elementary to check that $X^{\s} \subset X^{\ss} \subset X$ is a sequence of $G$\nb-invariant open immersions, 
and that $X^{\ss}$ is the locus where $\mathrm{q}$ is regular.
For this reason, it is customary to denote the GIT quotient by $X^{\ss} \gitq G$. Now we have a diagram:
\begin{equation}
\xymatrix{
X^\s \ar[d]^{\mathrm{q}} \ar[r] & X^{\ss}\ar[d]^{\mathrm{q}} \ar[r] & X\ar@{-->}[ld] \\
X^\s\gitq G \ar[r] & X^{\ss}\gitq G & 
}
\end{equation}
Here, $\mathrm{q}$ is a geometric quotient on the stable locus $X^{\s}$ and a categorical quotient on $X^{\ss}$ \cite[pp. 4, 38]{GIT}. This is made precise in the following proposition.
\begin{proposition}\label{P:stable-semistable} With notation as above, we have
\begin{enumerate}
\item For any $z \in X^{s} \gitq G$, the fiber $\phi^{-1}(z)$ is a single closed $G$-orbit.
\item For any $z \in X^{\ss} \gitq G$, the fiber $\phi^{-1}(z)$ contains a unique closed $G$-orbit.
\end{enumerate}
\end{proposition}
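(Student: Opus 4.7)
The plan is to deduce both parts from the affine-local description of the GIT quotient, together with the separation property enjoyed by invariants of a linearly reductive group on an affine variety. I would start by recalling the construction: for each homogeneous $f\in \HH^0(X,\L^m)^G$, the non-vanishing locus $X_f:=\{x\in X : f(x)\neq 0\}$ is a $G$-invariant affine open in $X^{\ss}$, these cover $X^{\ss}$ as $f$ varies, and the morphism $\mathrm{q}$ restricts over each $X_f$ to the affine quotient $X_f \to \Spec(A[f^{-1}])_0^G$, where $A=\bigoplus_m \HH^0(X,\L^m)$. The crucial input, supplied by the linear reductivity of $G$, is the following separation property: on any affine $G$-scheme $\Spec B$ with $G$ linearly reductive, two disjoint closed $G$-invariant subsets $Y_1, Y_2$ can be separated by an invariant function, i.e.\ there exists $h\in B^G$ with $h|_{Y_1}=0$ and $h|_{Y_2}=1$. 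This is the technical heart of the proposition and, I expect, the main obstacle: one proves it by taking ideals $I_1, I_2$ cutting out $Y_1, Y_2$, observing $I_1+I_2=B$ so one may choose $h_0\in I_1$ with $h_0\equiv 1$ on $Y_2$, and then applying the Reynolds operator $R\co B\twoheadrightarrow B^G$ to produce an invariant $h=R(h_0)$ with the required properties.

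Granting this, I would prove (2) as follows. Fix $z\in X^{\ss}\gitq G$ and choose an affine chart $\Spec B^G$ around $z$ with preimage an invariant affine open $U=\Spec B\subset X^{\ss}$. The fiber $\mathrm{q}^{-1}(z)\cap U$ is a closed $G$-invariant subset of $U$. Existence of a closed orbit inside it is a routine dimension argument: take any $x\in \mathrm{q}^{-1}(z)$ and let $Gx_0\subset \overline{Gx}$ be an orbit of minimum dimension among those contained in $\overline{Gx}\cap \mathrm{q}^{-1}(z)$; such an orbit must be closed in $U$. For uniqueness, suppose $O_1,O_2$ were two distinct closed orbits in $\mathrm{q}^{-1}(z)$. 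They are then disjoint and both closed in $U$, so the separation property produces $h\in B^G$ with $h|_{O_1}=0$ and $h|_{O_2}=1$. But $h$ descends to a regular function on $\Spec B^G$ that takes distinct values on $\mathrm{q}(O_1)$ and $\mathrm{q}(O_2)$, contradicting $\mathrm{q}(O_1)=\mathrm{q}(O_2)=z$.

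For (1), I would additionally use the characterization of stability. Let $z\in X^{\s}\gitq G$ be the image of some $x\in X^{\s}$; by definition, $Gx$ is closed in $X^{\ss}$ and of dimension $\dim G$. Let $y\in \mathrm{q}^{-1}(z)$ be arbitrary, and work again in an invariant affine open $U=\Spec B$ containing both $Gx$ and $y$. If $Gy \neq Gx$, then since $Gx$ is closed in $U$, the sets $\overline{Gy}$ and $Gx$ would be either disjoint or satisfy $Gx\subsetneq \overline{Gy}$. The disjoint case is excluded by the separation argument above (applied to $Gx$ and any closed $G$-invariant subset of $\overline{Gy}\smallsetminus Gx$, or more directly to $Gx$ and $\overline{Gy}$ itself), forcing $Gx\subset \overline{Gy}$. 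But then $\dim Gy \geq \dim Gx + 1 > \dim G$, which is impossible since orbit dimensions are bounded by $\dim G$. Hence $Gy=Gx$, and the fiber consists of the single closed orbit $Gx$, as claimed.
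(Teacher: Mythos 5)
Your proposal is correct and follows essentially the same route as the paper: it establishes that disjoint closed $G$-invariant subsets of an invariant affine chart can be separated by invariants (the paper deduces this from exactness of the invariants functor on sections of $\L^m$ over $X_f$, you via the Reynolds operator on the coordinate ring — these are the same use of linear reductivity), and then derives (1) from the fact that a stable orbit is closed and of maximal dimension, so it cannot lie in the boundary $\overline{Gy}\smallsetminus Gy$ of any other orbit. Your write-up spells out the topological steps (existence of a minimal-dimension closed orbit in $\overline{Gx}$, the dimension contradiction for (1)) that the paper leaves implicit, but there is no substantive difference in method.
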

\begin{proof}
Since $G$ is linearly reductive, every $G$\nb-representation is completely 
reducible. Hence, taking invariants is an exact functor on the category of $G$-representations. In particular, if $Z_1, Z_2$ are disjoint $G$\nb-invariant closed subschemes of a distinguished open affine $X_f$, for some $f\in \HH^0(X,\L^k)^G$, 
then 
the surjection of $G$\nb-representations
$$\HH^0(X_f,\L^m) \ra \HH^0(Z_1,\L^m\vert_{Z_1}) \oplus \HH^0(Z_2, \L^m\vert_{Z_2}) \ra 0$$
gives rise to
$$\HH^0(X_f,\L^m)^G \ra \HH^0(Z_1,\L^m\vert_{Z_1})^G\oplus \HH^0(Z_2, \L^m\vert_{Z_2})^G \ra 0.
$$
By considering the preimages of $(1,0)$ and $(0,1)$, and scaling by a power of $f$, we deduce the existence of $G$\nb-invariant
sections of $\L^n$, $n\gg 0$, separating $Z_1$ and $Z_2$. 

Note that if $x\in X^{\s}$, the orbit 
$G\cdot x$ is closed of maximal dimension inside some $X_f$, hence for any other point $y\notin G\cdot x$, we have
$G\cdot x\cap \overline{G\cdot y}=\varnothing$. 
Both (1) and (2) now follow from the just established fact that
$G$\nb-invariant sections of $\L^n$, $n\gg 0$,
separate any two closed disjoint $G$\nb-invariant subschemes of $X^{\ss}$.
\end{proof}

While Definition \ref{D:stable-semistable} in theory determines the semistable locus $X^{\ss}$,
it requires the knowledge of {\em all} invariant sections of $\L^m$ that we rarely possess in practice. In order to use GIT, we clearly need a more algorithmic method to determine the stability of points in $X$. It was for this purpose that Mumford developed the so-called {\em numerical criterion for stability} \cite[Chapter 2.1]{GIT}. The first step is to observe that $x\in X$ is semistable if and only if for every nonzero lift $\bar{x}$ of $x$ to the affine cone $\Spec \bigoplus_{m\geq 0} \HH^0(X,\L^m)$, the closure of the $G$-orbit of $\bar{x}$ does not contain the origin. Mumford's key insight was that {\em this} property can be tested by looking at one-parameter subgroups of $G$, i.e. all subgroups $k^* \subset G$.
More precisely, if we are given any one-parameter subgroup $\rho\co k^* \ra G$, we may diagonalize the action of $\rho$ on 
$\HH^0(X,\L)$ and hence on the 
$\L$-coordinates $(x_0,\ldots, x_n)$ of $\overline{x}$. We then define the {\em Hilbert-Mumford index} of $x$ with respect to $\rho$
by 
\[
\mu_{\rho}(x):=\min_{x_i\neq 0} \{w \ : \ \rho(t)\cdot x_i=t^wx_i\},
\]
and we say that $x\in X$
is {\em stable} (resp. {\em semistable, nonsemistable}) {\em with respect to $\rho$} if $\mu_{\rho}(x)<0$ (resp. $\mu_{\rho}(x)\leq 0$, $\mu_{\rho}(x)>0$). With this terminology, Mumford's numerical criterion is easy to state.

\begin{proposition}[Hilbert-Mumford numerical criterion]\label{P:numerical}
A point $x\in X$ is {\em stable} (resp. {\em semistable}) if and only if it is stable (resp. semistable) with respect to all one-parameter subgroups of $G$. A point $x\in X$ is {\em nonsemistable} if it is $\rho$-nonsemistable for some one-parameter subgroup $\rho$.
\end{proposition}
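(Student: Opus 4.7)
The plan is to reduce the criterion to a statement about orbit closures in the affine cone and then invoke the fundamental theorem on one-parameter subgroups. Let $\overline{x}\in \Spec \bigoplus_{m\geq 0}\HH^0(X,\L^m)$ be a nonzero lift of $x$. The first step is the well-known reformulation:
\begin{itemize}
\item $x$ is semistable $\Longleftrightarrow$ $0\notin \overline{G\cdot \overline{x}}$;
\item $x$ is stable $\Longleftrightarrow$ $G\cdot \overline{x}$ is closed in the affine cone minus the origin, with $\dim G\cdot \overline{x}=\dim G$.
\end{itemize}
The semistable equivalence follows because a $G$-invariant section $f\in \HH^0(X,\L^m)^G$ nonvanishing at $x$ is the same as a $G$-invariant function on the affine cone nonvanishing at $\overline{x}$, which exists if and only if $\overline{x}$ is separated from $0$ by invariants---i.e., by Proposition \ref{P:stable-semistable} (applied to the cone), if and only if $0$ is not in the orbit closure.

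Next I would handle the easy direction of the numerical criterion. Suppose a one-parameter subgroup $\rho\co k^\ast\ra G$ satisfies $\mu_\rho(x)>0$. Diagonalizing the $\rho$-action on $\HH^0(X,\L)$ and writing $\overline{x}=(x_0,\ldots,x_n)$ in the resulting eigencoordinates, the condition $\mu_\rho(x)>0$ says every nonzero $x_i$ has strictly positive weight $w_i$, so
\[
\rho(t)\cdot \overline{x}=(t^{w_0}x_0,\ldots, t^{w_n}x_n) \xrightarrow{\;t\to 0\;} 0.
\]
Thus $0\in \overline{G\cdot \overline{x}}$ and $x$ is nonsemistable. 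Similarly, if $\mu_\rho(x)\leq 0$ (resp. $<0$) for all $\rho$, one checks that no 1-PS can drag $\overline{x}$ into $0$ (resp. into a point of strictly smaller orbit dimension inside the affine cone), giving one implication of each statement.

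The main obstacle is the converse: if $0\in \overline{G\cdot \overline{x}}$, one must produce an actual one-parameter subgroup $\rho$ realizing this specialization. One cannot in general find a morphism $\AA^1\to \overline{G\cdot \overline{x}}$ sending $0\mapsto 0$ and $1\mapsto \overline{x}$ by pure dimension-counting; the content here is genuinely nontrivial. The plan is to invoke the \emph{fundamental theorem in geometric invariant theory} (sometimes called the Iwahori/Birkhoff decomposition argument, or the Kempf--Ness/Hilbert--Mumford limit theorem): given a point in the closure of a $G$-orbit, one can always reach it by a one-parameter subgroup after passing to the normalization of a curve inside the orbit closure and applying the valuative criterion, together with the structure theory of $G$ (which reduces the statement, via the Iwahori decomposition of $G(k(\!(t)\!))$, to producing the desired cocharacter). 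I would cite this theorem from \cite[Chapter 2.1]{GIT} rather than reprove it. Granted this, if $0\in \overline{G\cdot \overline{x}}$ we obtain $\rho$ with $\lim_{t\to 0}\rho(t)\cdot \overline{x}=0$, which forces $\mu_\rho(x)>0$, completing the semistable case. For the stable case, if $x$ is semistable but not stable, then either $\dim G\cdot \overline{x}<\dim G$ (giving a 1-PS in the stabilizer, hence $\mu_\rho(x)=0$) or $G\cdot \overline{x}$ is not closed in the cone off the origin, in which case the same fundamental theorem produces a $\rho$ with $\mu_\rho(x)=0$ by specializing into a boundary orbit of the same level set of invariants. In both sub-cases some $\rho$ has $\mu_\rho(x)\geq 0$, as required.
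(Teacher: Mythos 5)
The paper does not actually prove Proposition~\ref{P:numerical}: it records the reduction to orbit closures in the affine cone in the paragraph preceding the statement, and then simply cites \cite[Chapter~2.1]{GIT}. There is therefore no ``paper's proof'' to match your argument against; what you have written is a sketch of Mumford's own proof, and the first step (semistability of $x$ is equivalent to $0\notin\overline{G\cdot\overline{x}}$) coincides with the one reduction the paper does spell out. Your easy-direction computation is correct with the paper's sign convention: $\mu_\rho(x)>0$ means every nonzero coordinate has strictly positive $\rho$\nb-weight, so $\rho(t)\cdot\overline{x}\to 0$ as $t\to 0$ and $0\in\overline{G\cdot\overline{x}}$. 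For the hard direction you correctly locate the real content in Mumford's Iwahori/valuative-criterion argument producing the destabilizing one-parameter subgroup, which is exactly what the paper defers to \cite{GIT} for. So in effect you and the paper land in the same place, except that you flesh out the surrounding structure.

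Two small cautions worth flagging. First, the sentence beginning ``Similarly, if $\mu_\rho(x)\leq 0$\dots'' only shows that no one-parameter subgroup drags $\overline{x}$ to $0$; by itself that is not yet any implication of the proposition, since the implication $0\notin\overline{G\cdot\overline{x}}$ requires precisely the hard theorem you invoke in the following paragraph. Your logic ultimately closes correctly, but the phrase ``giving one implication of each statement'' overstates what that sentence accomplishes. Second, in the stable case the argument ``$\dim G\cdot\overline{x}<\dim G$ gives a one-parameter subgroup in the stabilizer'' is not automatic: a positive-dimensional stabilizer can be unipotent and hence contain no nontrivial cocharacter. When the orbit is closed, Matsushima's criterion makes the stabilizer reductive and the argument goes through; when the orbit is not closed one must again fall back on the fundamental theorem (or Kempf's worst one-parameter subgroup) to manufacture a $\rho$ with $\mu_\rho(x)\geq 0$. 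Since you are already citing that machinery, this is a gap in exposition rather than in substance, but it should be acknowledged rather than treated as an easy dichotomy.
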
 
In words, $x\in X$ is stable if {\em every} one-parameter subgroup of $G$ acts on the nonzero $\L$-coordinates of $x$ with both positive and negative weights, and $x\in X$ is nonsemistable if there exists a one-parameter subgroup 
of $G$ acting on the nonzero $\L$-coordinates of $x$ with either all positive or all negative weights. We will see an example of how to use the numerical criterion in Example \ref{E:NCriterion}. First, however, let us step back and explain how the entire geometric invariant theory is applied to construct compactifications of the moduli space of curves.

First, fix 
an integer $n \geq 2$. If $C$ is any smooth curve of genus $g \geq 2$, a choice of basis for the vector space $
\HH^0(C, \omega_{C}^n)$ determines an embedding
$$
|\omega_{C}^{n}|:C \hookrightarrow \P^{N},
$$
where $N=(2n-1)(g-1)-1$. The subscheme $C \subset \P^{N}$ determines a point $[C] \in \Hilb_{P(x)}(\PP^{N})$, the 
Hilbert scheme parametrizing subschemes of $\P^{N}$ with Hilbert polynomial $P(x):=n(2g-2)x+(1-g).$ Now let
$$
\Hilb_{g,n} \subset \Hilb_{P(x)}(\P^N)
$$
denote the locally closed subscheme of all such $n$-canonically embedded smooth curves, and let $\overline{\Hilb}_{g,n}$ denote the closure of $\Hilb_{g,n}$ in $\Hilb_{P(x)}(\P^N)$. 

The natural action of $\SL(N+1)$ on $\P^{N}$ induces an action on $\Hilb_{P(x)}(\P^N)$, hence also on $\overline{\Hilb}_{g,n}$ and $\Hilb_{g,n}$.
\begin{exercise} Verify that the action of $\SL(N+1)$ on $\Hilb_{g,n}$ is proper and the stack quotient 
$\left[\Hilb_{g,n} / \SL(N+1)\right]$ is canonically isomorphic to $\mathcal{M}_{g}$.
\end{exercise}
Our plan therefore is to apply GIT to the action of $\SL(N+1)$ on the projective variety 
$\overline{\Hilb}_{g,n}$ and \emph{hope} that $\Hilb_{g,n}$ is contained in the corresponding stable locus $\overline{\Hilb}_{g,n}^{\, \s}$. If this can be verified then the GIT quotient 
$\overline{\Hilb}_{g,n}^{\, \ss} \gitq \SL(N+1)$ will give a compactification of $M_{g}$. 

In order to apply GIT, we must linearize the $\SL(N+1)$-action on $\overline{\Hilb}_{g,n}$. This is accomplished as follows. First, by Castelnuovo-Mumford regularity \cite[Lecture 14]{mumford-curves}, there exists an integer 
$m \geq 0$ such that $\HH^1(\P^N, I_{C}(m))=0$ for \emph{any} curve $C \subset \P^{N}$ with Hilbert polynomial $P(x)$. Now set 
$$W_m=\bigwedge^{P(m)} \HH^0\left(\PP^N, \O_{\P^N}(m)\right).$$
We obtain an embedding
$$\Hilb_{P(x)}(\P^N)\hookrightarrow \PP W_m$$
as follows: Corresponding to a point $[C] \subset \Hilb_{P(x)}(\P^N)$, we have the surjection 
\begin{align}\label{E:hilbert-point}
\HH^0(\P^N,\O_{\P^N}(m)) \rightarrow \HH^0(C, \O_{C}(m)) \rightarrow 0,
\end{align}
which is called the {\em $m^{th}$ Hilbert point} of $C\hookrightarrow \PP^N$. 
Taking the $P(m)^{th}$ exterior power of this sequence gives a one-dimensional quotient of $W_m$, hence a point of 
$\P(W_m)$. Now $\O_{\P(W_m)}(1)$ restricts to an ample line bundle on $\Hilb_{P(x)}(\P^N)$, and the $\SL(N+1)$-action extends to an action on the total space of $\O_{\P(W_m)}(1)$ because $W_m$ is an $\SL(N+1)$\nb-representation.
Corresponding to this linearization, we obtain a GIT quotient $\overline{\Hilb}_{g,n}^{\, \ss, m} \gitq \SL(N+1).$

This is an undeniably slick construction, but one glaring question remains: Can we actually determine the stable and semistable locus $\overline{\Hilb}_{g,n}^{\, \s, m} \subset \overline{\Hilb}_{g,n}^{\, \ss, m} \subset \overline{\Hilb}_{g,n}$? Before addressing this issue, let us take a step back and review the following two aspects of the construction that we've just presented
\begin{enumerate}
\item What choices 
went into the construction of $\overline{\Hilb}_{g,n}^{\, \ss, m}\gitq SL(N+1)$?
\item To what extent is $\overline{\Hilb}_{g,n}^{\, \ss, m}\gitq \SL(N+1)$ modular?
\end{enumerate}

Regarding (1), the construction depended on two numerical parameters: the integer $n$ which determined the Hilbert scheme, and the integer $m$ which determined the linearization. An interesting problem, which will be addressed presently, is to understand how the quotient $\overline{\Hilb}_{g,n}^{\, \ss, m}\gitq \SL(N+1)$ changes as one varies the parameters $n$ and $m$.

Regarding (2), it is not a priori obvious that one should be able to identify $\left[\overline{\Hilb}_{g,n}^{\, \ss, m} / \SL(N+1)\right]$ with an open substack of $\U_g$, the stack of genus $g$ curves. It turns out, however, that under relatively mild hypotheses on $\overline{\Hilb}_{g,n}^{\, \ss, m}$, this will be the case. More precisely, we have

\begin{proposition}\label{P:ModularInterp}
Assume that
\begin{enumerate}
\item $\overline{\Hilb}_{g,n}^{\, \ss, m} \neq \varnothing$,
\item Every point $[C] \in \overline{\Hilb}_{g,n}^{\, \ss, m}$ corresponds to a Gorenstein curve.
\end{enumerate}
Then we have
\begin{enumerate}
\item $\overline{\Hilb}_{g,n}^{\, \ss, m} \gitq \SL(N+1)$ is a weakly modular birational model of $M_{g,n}$,
\item If $\overline{\Hilb}_{g,n}^{\, \ss, m}=\overline{\Hilb}_{g,n}^{\, \s, m}$, then $\overline{\Hilb}_{g,n}^{\, \ss, m} \gitq \SL(N+1)$ is a modular birational model of $M_{g,n}$.
\end{enumerate}
\end{proposition}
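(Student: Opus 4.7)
The plan is to exhibit the stack quotient $\X := [\overline{\Hilb}_{g,n}^{\,\ss,m}/\SL(N+1)]$ as an open substack of $\U_g$ via the universal embedded curve, and then to identify $\overline{\Hilb}_{g,n}^{\,\ss,m} \gitq \SL(N+1)$ with a good (respectively coarse) moduli space of $\X$. The two statements of the proposition will then follow from Proposition \ref{P:stable-semistable} applied in the semistable and stable settings, respectively.

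First I would construct a morphism $\pi\co \X \to \U_g$. The universal family on $\Hilb_{P(x)}(\P^N)$ restricts to an $\SL(N+1)$-equivariant flat family of curves $\C \to \overline{\Hilb}_{g,n}^{\,\ss,m}$; forgetting the embedding, this family descends to the stack quotient and yields $\pi$. The core technical step is the claim that $\O_C(1) \cong \omega_C^n$ for every $[C] \in \overline{\Hilb}_{g,n}^{\,\ss,m}$. The Gorenstein hypothesis ensures that $\omega_{\C/\overline{\Hilb}_{g,n}^{\,\ss,m}}$ is a line bundle, so the ratio $\L := \O_\C(1) \otimes \omega_{\C/\overline{\Hilb}_{g,n}^{\,\ss,m}}^{-n}$ is a line bundle on $\C$ whose restriction to each fiber over the dense open $\Hilb_{g,n} \cap \overline{\Hilb}_{g,n}^{\,\ss,m}$ is trivial; a semicontinuity argument applied to $\L$ and $\L^{-1}$ simultaneously (the product of a nonzero section of each gives a nonvanishing section of $\O_{C_t}$) shows that $\L$ remains fiberwise trivial everywhere.

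Once this identification is in place, every closed fiber $C \subset \P^N$ is determined, up to $\PGL(N+1)$, by the intrinsic data $(C, \omega_C^n)$ together with a basis of $\HH^0(C, \omega_C^n)$ recovering the embedding. Comparing stabilizers under $\SL(N+1)$ with automorphism groups of abstract curves (accounting for the finite kernel $\mu_{N+1} \subset \SL(N+1) \twoheadrightarrow \PGL(N+1)$) shows $\pi$ is representable, and étaleness follows from the universal property of the Hilbert scheme; hence $\pi$ is an open immersion. Its image is open in $\U_g$ and contains a dense open of $\M_g$ (since $\Hilb_{g,n} \cap \overline{\Hilb}_{g,n}^{\,\ss,m}$ is a nonempty open in the irreducible space $\overline{\Hilb}_{g,n}$), so any moduli space of $\X$ is automatically a birational model of $M_g$. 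To finish, Proposition \ref{P:stable-semistable} identifies $\overline{\Hilb}_{g,n}^{\,\ss,m} \gitq \SL(N+1)$ as a good moduli space of $\X$ in general, and as a coarse moduli space in the stable case, yielding the weakly modular and modular conclusions respectively.

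The main obstacle is the closedness statement $\O_C(1) \cong \omega_C^n$ on the closure: this depends crucially on the Gorenstein hypothesis to make $\omega_C^n$ a line bundle, for otherwise the tensor product $\O_\C(1) \otimes \omega_{\C/B}^{-n}$ is at best a reflexive sheaf and the semicontinuity argument collapses. A secondary subtlety is the careful bookkeeping of the $\SL(N+1)$ versus $\PGL(N+1)$ distinction, which introduces a $\mu_{N+1}$\nb-gerbe structure on the moduli map but has no effect on the underlying coarse moduli space.
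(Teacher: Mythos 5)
Your overall structure matches the paper's: build the map $\pi\colon \X := \left[\overline{\Hilb}^{\,\ss,m}_{g,n}/\SL(N+1)\right] \to \U_g$ from the universal family, show $\pi$ is an open immersion, and then invoke Proposition \ref{P:stable-semistable} to conclude. However, the central step is not justified. You write that ``\'etaleness follows from the universal property of the Hilbert scheme,'' and this is where the proof has a genuine gap.

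The universal property of the Hilbert scheme says only that $\Hilb_{P(x)}(\P^N)$ represents the functor of flat families of closed subschemes with fixed Hilbert polynomial. It tells you nothing about the comparison map from \emph{embedded} deformations of $C\subset \P^N$ to \emph{abstract} deformations of $C$; in particular it does not by itself imply that the natural map $\X\to\U_g$ is \'etale or open. What one actually needs to prove is that for every $[C]\in\overline{\Hilb}^{\,\ss,m}_{g,n}$, \emph{every abstract deformation of $C$ lifts to an embedded deformation} of $C\subset\P^N$, so that the image of $\X$ in $\U_g$ is open. The substantive input here — which your proof never produces — is the cohomological vanishing $\HH^1\bigl(C,\O_C(1)\bigr)=\HH^1\bigl(C,\omega_C^n\bigr)=0$ for $n\ge 2$ (a consequence of Serre duality together with the ampleness of $\omega_C$, which is part of the Gorenstein setup). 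This vanishing is what guarantees that the polarization $\O_C(1)$ and its sections extend along any infinitesimal deformation of the abstract curve, hence that the embedding deforms. The paper cites this as \cite[Corollary B.8]{Z-stability}. Your phrase ``\'etaleness follows from the universal property'' substitutes a non-argument for the key lemma, and the conclusion that $\pi$ is an open immersion is not earned.

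A secondary, more minor point: your see-saw argument that $\L := \O_\C(1)\otimes \omega_{\C/B}^{-n}$ is trivial on every closed fiber is incomplete as stated. Semicontinuity gives nonzero sections of $\L_t$ and of $\L_t^{-1}$, but on a \emph{reducible} fiber $C_t$ these two sections can vanish on complementary unions of components, so that their product in $\HH^0(C_t,\O_{C_t})$ is zero; one needs an additional argument (using connectedness of $C_t$ and the degree-zero condition on each component, or a see-saw argument on the irreducible total space $\overline{\Hilb}_{g,n}$) to rule this out. The paper is also terse at this step, so this is not a fatal defect, but the claim ``the product of a nonzero section of each gives a nonvanishing section of $\O_{C_t}$'' is not literally correct without further justification.
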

\begin{proof}
Consider the universal curve
\[
\xymatrix{
\C \ar[r] \ar[d]^{\pi}&\P^{N} \times \overline{\Hilb}^{\, \ss, m}_{g,n} \ar[dl]\\
\overline{\Hilb}^{\, \ss, m}_{g,n}&\\
}
\]
Since the fibers of $\pi$ are Gorenstein, the relative dualizing sheaf $\omega_{\C/\overline{\Hilb}^{\, \ss, m}_{g,n}}$ is a line bundle, and we claim that $\omega_{\C/\overline{\Hilb}^{\, \ss, m}_{g,n}}^{n} \simeq \O_{\P^N}(1) \otimes \pi^*\L$ for some line bundle $\L$ on $\overline{\Hilb}^{\, \ss, m}_{g,n}$. This is immediate from the fact that $\omega_{\C/\overline{\Hilb}^{\, \ss, m}_{g,n}}^{n} \simeq \O_{\P^N}(1)$ on the fibers of $\C$ over the nonempty open set $\Hilb_{g,n} \cap \overline{\Hilb}_{g,n}^{\, \ss, m}$.

Now consider the natural map from $\overline{\Hilb}^{\, \ss, m}_{g,n}$ to the stack $\U_{g}$ (of all reduced connected genus $g$ curves) induced by the universal family. Since any two $n$-canonically embedded curves $C \subset \P^N$ and $C' \subset \P^N$ are isomorphic iff there exists a projective linear transformation taking $C$ to $C'$, this map factors through the $\SL(N+1)$\nb-quotient to give $\left[ \overline{\Hilb}^{\, \ss, m}_{g,n} / \SL(N+1) \right] \rightarrow \U_{g}$. We claim that this map is an open immersion. Equivalently, that $\overline{\Hilb}^{\, \ss, m}_{g,n}$ parameterizes a deformation open class of curves. For this, we must see that if $[C] \in \overline{\Hilb}^{\, \ss, m}_{g,n}$ then every abstract deformation of $C$ can be realized as an embedded deformation of $C \subset \P^N$. By \cite[Corollary B.8]{Z-stability}, this follows from the fact that $\HH^1(C, \O_{C}(1))=\HH^1(C, \omega_{C}^n)=0$ for $n \geq 2$.

Thus, we may identify $\left[\overline{\Hilb}_{g,n}^{\, \ss, m} / \SL(N+1)\right]$ with an open substack of the stack of curves, and we have a diagram 
\[
\xymatrix{
\U_{g} & \\
\left[\overline{\Hilb}_{g,n}^{\, \ss, m} / \SL(N+1)\right] \ar[r]^{\phi} \ar[u]^{i}&\overline{\Hilb}_{g,n}^{\, \ss, m} \gitq \SL(N+1)
}
\]
It only remains to check that $\phi$ is a good moduli map. The first property of good moduli maps, that they are categorical with respect to algebraic spaces, is a general feature of GIT \cite{GIT}, \cite{alper}. The second property, namely that $\phi^{-1}(x)$ contains a unique closed point for each $x \in \overline{\Hilb}_{g,n}^{\, \ss, m} \gitq \SL(N+1)$ is an immediate consequence of Proposition \ref{P:stable-semistable}.

\end{proof}

Finally, it remains to consider the problem of how one actually computes the semistable locus $$\overline{\Hilb}^{\, \ss, m}_{g,n}  \subset \Hilb_{P(x)}(\P^N).$$ Historically, the important breakthrough was Gieseker's asymptotic stability result: Gieseker showed that for any $n \geq 10$, a generic smooth $n$-canonically embedded curve $C \subset \P^N$ is asymptotically Hilbert stable \cite{gieseker} and Mumford extended this result\footnote{Strictly speaking, Mumford
works with the Chow variety.}
 to $5$\nb-canonically embedded curves \cite{mumford-stability}:\begin{theorem}
For $n \geq 5$ and $m \gg 0$, 
$$\overline{\Hilb}^{\, \ss, m}_{g,n} =\overline{\Hilb}^{s,m}_{g,n}=\{[C] \in \Hilb_{P(x)}(\P^N) \,|\, \emph{$C \subset \P^N$ is stable and $\O_{C}(1) \simeq \omega_{C}^n$ }\}.$$
Equivalently, $\overline{\Hilb}^{\, \ss, m}_{g,n} \gitq \SL(N+1) \simeq \M_{g}$. 
\end{theorem}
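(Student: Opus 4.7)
The plan is to apply the Hilbert-Mumford numerical criterion (Proposition \ref{P:numerical}) to the $\SL(N+1)$-action linearized by the Plücker embedding $\overline{\Hilb}_{g,n}\hookrightarrow \mathbb{P}(W_m)$. For a one-parameter subgroup $\rho\co k^*\to \SL(N+1)$ with weights $r_0\leq\cdots\leq r_N$ on a diagonal basis $x_0,\ldots,x_N$ of $\HH^0(\P^N,\O(1))$, one computes the Hilbert-Mumford index $\mu_\rho([C]_m)$ of the $m$-th Hilbert point \eqref{E:hilbert-point} as the minimum $\rho$-weight over monomial bases of $\HH^0(C,\O_C(m))$ compatible with the filtration $V_\bullet\subset \HH^0(\P^N,\O(1))$ induced by $\rho$. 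A standard calculation shows that for $m\gg 0$, $\mu_\rho([C]_m)$ is a quadratic polynomial in $m$ whose leading coefficient is an integral-type expression in the degrees of the subsheaves $\langle V_i|_C\rangle\subset \O_C(1)$ cut out by successive pieces of the filtration. Asymptotic Hilbert (semi)stability thus reduces to the strict (resp. weak) negativity of this leading coefficient for every nontrivial $\rho$.

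Step one is the forward direction (stable curves are Hilbert stable). For a Deligne-Mumford stable curve $C$ embedded by $|\omega_C^n|$, and any 1-PS $\rho$, I would rewrite the asymptotic leading coefficient as a sum of contributions indexed by the pieces of the $\rho$-filtration on $\O_C(1)$, each piece contributing its degree on the subcurve $Y_i\subset C$ where the $V_i|_C$-sections generate. The key input is a \emph{basic inequality} bounding $\deg(\omega_C^n|_Y)$ from below in terms of the arithmetic genus of $Y$ and $|Y\cap\overline{C\setminus Y}|$, provided that $C$ is nodal with $\omega_C$ ample. For $n\geq 5$ this inequality is strict on all nontrivial subcurves, giving strict negativity of $\mu_\rho$ and hence stability; the bound $n\geq 5$ is exactly what is needed to keep elliptic tails and similar low-degree configurations from appearing as closed orbits of strictly semistable points.

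Step two is the converse (potential stability): every Hilbert semistable point $[C]\subset \overline{\Hilb}_{g,n}^{\,\ss,m}$ arises from an $n$-canonically embedded stable curve. I would exhibit, for each possible pathology of $C$ or its embedding, an explicit destabilizing 1-PS. Specifically: (i) if $C$ has a non-node singularity or a non-reduced component, a 1-PS assigning extra weight to coordinates vanishing at the bad locus gives $\mu_\rho>0$; (ii) if $C$ contains a rational tail meeting the rest in one or two points, analogous test 1-PSs are destabilizing; (iii) a comparison of the Hilbert polynomial contributions of subcurves forces the embedding line bundle to satisfy $\O_C(1)\simeq \omega_C^n$. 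After ruling out these configurations, one is left precisely with the asserted class. Together with Proposition \ref{P:ModularInterp}, equality $\overline{\Hilb}^{\,\ss,m}_{g,n}=\overline{\Hilb}^{s,m}_{g,n}$ gives a coarse moduli space agreeing with $\SM_g$, and hence the quotient is $\M_g$.

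The main obstacle is establishing the basic inequality with the sharpness needed for $n\geq 5$: the asymptotic computation of $\mu_\rho$ is combinatorially intricate because one must track how the filtration of $\HH^0(\P^N,\O(1))$ restricts to each subcurve produced by a degeneration under $\rho$, and the inequality must be strict simultaneously on every subcurve and every 1-PS. A secondary difficulty is enumerating exhaustively the configurations arising in the potential stability analysis — for small $n$ (e.g.\ $n=4$) additional semistable limits genuinely appear, so the argument must exploit the numerical slack afforded by $n\geq 5$ at every step.
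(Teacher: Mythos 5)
The paper does not actually prove this theorem: immediately after stating it, the authors write that the proof ``is well covered in other surveys'' and refer to Gieseker \cite{gieseker}, Mumford \cite{mumford-stability}, Harris--Morrison \cite{HM}, and Morrison \cite{morrison-git}. So there is no in-paper proof to compare against; what you have written is a compressed sketch of the Gieseker--Mumford strategy used in those references, and in outline it is faithful to that strategy.

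A few cautionary remarks on the sketch itself. The way you present it, ``Gieseker's basic inequality for $n\geq 5$'' is the crux, but historically Gieseker's Hilbert-stability argument required $n\geq 10$; the sharpening to $n\geq 5$ is Mumford's and is carried out via Chow stability of the cycle underlying $C\subset\PP^N$, from which asymptotic Hilbert stability is deduced. If you want to argue directly with Hilbert points at the threshold $n=5$ you will need more than a restatement of the degree-lower-bound on subsheaves; this is where the bulk of the technical work lies, and it is not clear that the uniform strictness you assert is available without passing through the Chow argument. Second, the reduction from ``$m\gg 0$'' to the sign of the leading coefficient requires a uniformity-in-$\rho$ argument: one must show that a single $m$ works for all one-parameter subgroups simultaneously, which does not follow formally from the asymptotic sign statement and is handled in the references by boundedness/compactness of the relevant flag data. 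Third, your step (iii) of potential stability --- that the embedding line bundle is forced to be $\omega_C^n$ --- actually requires checking multidegrees component by component and handling the possibility of ``extra'' sections carefully; it is not a single numerical comparison. None of these are fatal to the approach, but your proposal glosses over them, and in the literature each of them occupies many pages. As written, the proposal is a correct high-level roadmap of the Gieseker--Mumford proof, not a proof.
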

Rather than describe the proof of this theorem, which is well covered in other surveys (e.g. \cite{HM} and \cite{morrison-git}), we shall consider here the natural remaining question: what happens for smaller values of $n$ and $m$? Recently, Hassett, Hyeon, Lee, Morrison, and Swinarski have taken the first steps toward understanding this problem, and a beautfiul picture is emerging in which the GIT quotients corresponding to various values of $n$ and $m$ admit natural interpretations as log canonical models of $\M_{g}$
\cite{HH1, HH2, HL, HHL, morrison-swinarski}.  Before describing their results in detail, however, let us present an important heuristic for predicting which curves $C \subset \P^N$ \emph{ought} to be semistable for given values of $n$ and $m$. 
It is worth nothing that, in practice, having an educated guess for what the semistable locus $\overline{\Hilb}^{\, \ss, m}_{g,n}$ ought to be is the most important step in actually describing it.

To begin, let us assume that $\overline{\Hilb}_{g,n}^{\, \ss, m}$ satisfies the hypotheses
\begin{enumerate}
\item $\overline{\Hilb}_{g,n}^{\, \ss, m} \neq \varnothing$,
\item Every point $[C] \in \overline{\Hilb}_{g,n}^{\, \ss, m}$ corresponds to a Gorenstein curve,
\item The locus of moduli non-stable curves in  
$\overline{\Hilb}_{g,n}^{\, \ss, m} \gitq \SL(N+1)$ has codimension at least two.
\end{enumerate}
Under assumptions (1) and (2), Proposition \ref{P:ModularInterp} says that we may identify the quotient stack $\left[ \overline{\Hilb}^{\, \ss, m}_{g,n} / \SL(N+1)\right]$ with a weakly modular birational model of $\M_{g}$. Assumption (3) implies that we have a birational contraction (cf. Definition \ref{D:contraction})
$$
\phi\co \M_{g} \dashrightarrow \overline{\Hilb}^{\, \ss, m}_{g,n} \gitq \SL(N+1). 
$$
The key idea is to interpret this contraction as the rational map associated to a certain divisor on $\M_{g}$. To set this up, first note that we may define \emph{Weil divisor} classes $\lambda$ and $\delta$ on $\overline{\Hilb}^{\, \ss, m}_{g,n} \gitq \SL(N+1)$ simply by pushing forward the corresponding classes from $\M_{g}$. (The fact that $\phi$ is a birational contraction implies that push-forward of Weil divisors is well-defined;  since we have \emph{a priori}  no control over the singularities of $\overline{\Hilb}^{\, \ss, m}_{g,n} \gitq \SL(N+1)$, $\lambda$ and $\delta$ may very well fail to be Cartier.) Next, we observe that the divisor class of the polarization of $\overline{\Hilb}^{\, \ss, m}_{g,n} \gitq \SL(N+1)$ coming from the GIT construction can be computed as an explicit linear combination of $\lambda$ and $\delta$. 
\begin{proposition}[\text{\cite[Section 5]{HH2}}]
\label{P:Hilbert-polarization}
Suppose that assumptions (1)--(3) above hold. Then there is an ample line bundle 
$\Lambda_{m, n}$ on $\overline{\Hilb}^{\, \ss, m}_{g,n} \gitq \SL(N+1)$ with divisor class given by:
 \begin{align}\label{E:git-hilbert-polarizations}
& \Lambda_{m,n}=
 \begin{cases}
\lambda+(m-1)\left[ (4g+2)m-g+1)\lambda-\frac{gm}{2}\delta\right], & \ \text{if $n=1$}, \\
 (m-1)(g-1)\left[ (6mn^2-2mn-2n+1)\lambda-\frac{mn^2}{2}\delta \right], & \ \text{if $n>1$.}\\
 \end{cases}
 \end{align}
\end{proposition}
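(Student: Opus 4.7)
The plan is to identify $\Lambda_{m,n}$ as a multiple of the determinant line bundle $\det \pi_* \O_\C(m)$, where $\pi \co \C \to B := \overline{\Hilb}^{\, ss, m}_{g,n}$ is the universal curve and $\O_\C(1)$ is the restriction of $\O_{\PP^N}(1)$. Indeed, the Pl\"ucker embedding $B \hookrightarrow \PP(W_m)$ realizes $\O_{\PP(W_m)}(1)|_{B}$ as $\det \pi_* \O_\C(m)$, since the fiber at $[C \subset \PP^N]$ is tautologically $\det H^0(C, \O_C(m))$. The descent of this line bundle to the GIT quotient (after raising to the $(N+1)$-th power, if necessary, to trivialize the $\mu_{N+1}$-character on the stabilizers) yields $\Lambda_{m,n}$.

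Having made this identification, the next step is to express everything in terms of the standard classes $\lambda, \delta$. Since each fiber of $\pi$ is $n$-canonically embedded, we have $\O_\C(1) \cong \omega_{\C/B}^{\otimes n} \otimes \pi^* L$ for some line bundle $L$ on $B$, whose class is pinned down rationally by the tautological identification $\pi_* \O_\C(1) \cong V \otimes \O_B$ (with $V$ the standard $\SL(N+1)$-representation): taking determinants gives the key relation $(N+1)\,c_1(L) = -\lambda_n$, where $\lambda_k := c_1(\pi_* \omega_{\C/B}^{\otimes k})$. Applying the projection formula to $\pi_* \O_\C(m) \cong \pi_*\omega_{\C/B}^{\otimes nm} \otimes L^{\otimes m}$ yields
\[
\det \pi_* \O_\C(m) = \lambda_{nm} + m\cdot h^0(\omega_C^{\otimes nm})\cdot c_1(L).
\]

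Mumford's formula, a consequence of Grothendieck-Riemann-Roch for families of stable curves, gives $\lambda_k = (6k^2-6k+1)\lambda - \binom{k}{2}\delta$ for all $k\geq 1$. Substituting together with $N+1 = (g-1)(2n-1)$ and $P(m) = (g-1)(2nm-1)$ (valid for $nm \geq 2$), and multiplying through by $N+1$ to clear denominators, one finds that $(N+1)\Lambda_{m,n}$ equals $(N+1)\lambda_{nm} - mP(m)\lambda_n$. The claimed formula for $n\geq 2$ then follows from a direct algebraic manipulation, hinging on the identity
\[
(2n-1)(6n^2m^2 - 6nm + 1) - m(2nm-1)(6n^2-6n+1) = (m-1)(6mn^2 - 2mn - 2n + 1)
\]
for the $\lambda$-coefficient, and an analogous simplification for the $\delta$-coefficient.

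The case $n=1$ is a boundary case: because $h^1(\omega_C) \neq 0$, one has $h^0(\omega_C) = g \neq P(1) = g-1$, so the rank of $\pi_* \omega_C$ does not equal the Hilbert polynomial at $m=1$. The extra $\lambda$ summand appearing in the formula for $\Lambda_{m,1}$ records precisely this discrepancy; otherwise the same line of argument goes through. The main technical obstacle throughout is the careful bookkeeping of the $\SL(N+1)$-linearization --- in particular the $\mu_{N+1}$-weight on $\det \pi_* \O_\C(m)$ --- in order to determine the correct multiplicative normalization needed for the Hilbert polarization to descend to the GIT quotient; once this is pinned down, the remaining work is a routine (if tedious) polynomial manipulation.
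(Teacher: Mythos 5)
Your proof is correct and follows essentially the same route as the paper: identify the GIT polarization with (a power of) $\det\pi_*\O_{\C}(m)$ via the Pl\"ucker embedding into $\PP(W_m)$, use $\O_\C(1)\cong\omega_{\C/B}^{\otimes n}\otimes\pi^*L$ together with the triviality of $\pi_*\O_\C(1)$ to pin down $c_1(L)$, and then compute via Grothendieck--Riemann--Roch (in the guise of Mumford's formula $\lambda_k=(6k^2-6k+1)\lambda-\binom{k}{2}\delta$). The paper sketches exactly this and leaves the GRR bookkeeping implicit, which you have fleshed out correctly, including the verified polynomial identity for the $\lambda$-coefficient and the separate treatment of $n=1$ where $N+1=g$ and $\lambda_1=\lambda$. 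One small wrinkle: you write ``$(N+1)\Lambda_{m,n}=(N+1)\lambda_{nm}-mP(m)\lambda_n$,'' but the right-hand side \emph{is} the class asserted for $\Lambda_{m,n}$ in the Proposition; so the formula in the statement is really the class of $(N+1)\det\pi_*\O_\C(m)$, i.e.\ your $\Lambda_{m,n}$ and the Proposition's differ by the harmless overall factor $N+1$ (the one needed for the descent). Since an ample class is only determined up to positive scaling in this context, this is a labeling, not a mathematical, discrepancy --- and you already flag normalization as the delicate point.
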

\begin{proof}
Consider the universal curve
\[
\xymatrix{
\C \ar[r] \ar[d]^{\pi}& \P^N \times \overline{\Hilb}^{\, \ss, m}_{g,n} \ar[dl]\\
\overline{\Hilb}^{\, \ss, m}_{g,n} &\\
}
\]
As we have described above, the linearization on the Hilbert scheme comes from
its embedding into $\PP(W_m)$, where $W_m=\bigwedge^{P(m)} \HH^0\left(\PP^N, \O_{\P^N}(m)\right))$. It follows from the construction of this embedding that the tautological quotient line bundle $\O_{\PP(W_m)}(1)$ 
pulls back to the line bundle $\bigwedge^{P(m)}\pi_*\O_{\C}(m)$ on $\overline{\Hilb}_{g,n}$.
Hence, $\bigwedge^{P(m)}\pi_*\O_{\C}(m)$
is an $\SL(N+1)$-equivariant line bundle on 
$\overline{\Hilb}^{\, \ss, m}_{g,n}$ which descends to an ample line bundle, denoted $\Lambda_{m,n}$, on the quotient. 
To compute $c_1\bigr(\bigwedge^{P(m)}\pi_*\O_{\C}(m) \bigl)$, and hence the class of  $\Lambda_{m,n}$, one 
observes that 
\begin{enumerate}
\item[(a)] $\O_{\C}(1)=\omega_{\pi}^{n}\otimes \pi^*\L$ for some line bundle $\L$ on $\overline{\Hilb}^{\, \ss, m}_{g,n}$ (this is simply the definition of being a family of $n$\nb-canonically embedded curves), 
\item[(b)] $\pi_*\O_{\C}(1)$ is a trivial $\PP^N$\nb-bundle (this says that all curves 
in $\overline{\Hilb}^{\ss,m}_{g,n}$ are embedded by complete linear systems).
\end{enumerate}
 As we've remarked in the proof of Proposition \ref{P:ModularInterp}, assumptions (1)-(2) imply that $R^1\pi_*\O_\C(1)=0$. The class of $\Lambda_{m,n}$ now can be computed using the Grothendieck-Riemann-Roch formula.
\end{proof}
\begin{corollary}\label{C:GIT-ample}
If $\Lambda_{m,n}$ denotes the divisor class on $\M_{g}$ as above, then $\phi_*\Lambda_{m,n}$ is Cartier and ample on $\overline{\Hilb}^{\, \ss, m}_{g,n} \gitq \SL(N+1)$.
\end{corollary}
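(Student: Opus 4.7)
The plan is to reduce the statement essentially to a formal consequence of Proposition \ref{P:Hilbert-polarization} together with the linearity of pushforward for Weil divisors under a birational contraction.

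First, I would recall the setup established in the paragraph preceding Proposition \ref{P:Hilbert-polarization}: under assumptions (1)--(3), there is a birational contraction
\[
\phi\co \M_{g} \dashrightarrow Y := \overline{\Hilb}^{\, \ss, m}_{g,n} \gitq \SL(N+1),
\]
and the Weil divisor classes $\lambda_{Y}$ and $\delta_{Y}$ on $Y$ are, by definition, the pushforwards $\phi_{*}\lambda_{\M_{g}}$ and $\phi_{*}\delta_{\M_{g}}$. Since the indeterminacy locus of $\phi$ has codimension at least $2$ (this is exactly what it means for $\phi$ to be a birational contraction, together with assumption (3)), pushforward of Weil divisors is well-defined and $\mathbb{Q}$-linear: we can throw away the indeterminacy locus without affecting divisor classes, and then $\phi$ restricts to a morphism on an open subset whose complement in $\M_{g}$ has codimension $\geq 2$.

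Next, I would apply this linearity to the explicit formulas of Proposition \ref{P:Hilbert-polarization}. Denote by $\Lambda_{m,n}^{\M_{g}}$ and $\Lambda_{m,n}^{Y}$ the divisor classes obtained by substituting the corresponding $\lambda,\delta$ into \eqref{E:git-hilbert-polarizations}. Because pushforward is $\mathbb{Q}$-linear and commutes with the substitution,
\[
\phi_{*}\Lambda_{m,n}^{\M_{g}}
= \phi_{*}\bigl(a\lambda_{\M_{g}} + b\delta_{\M_{g}}\bigr)
= a\,\phi_{*}\lambda_{\M_{g}} + b\,\phi_{*}\delta_{\M_{g}}
= a\lambda_{Y} + b\delta_{Y}
= \Lambda_{m,n}^{Y},
\]
where $a,b$ are the rational coefficients appearing in \eqref{E:git-hilbert-polarizations}. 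Since Proposition \ref{P:Hilbert-polarization} identifies $\Lambda_{m,n}^{Y}$ with the divisor class of the ample line bundle on $Y$ induced by the $\SL(N+1)$-equivariant descent of $\bigwedge^{P(m)}\pi_{*}\O_{\C}(m)$, this immediately yields both Cartierness and ampleness of $\phi_{*}\Lambda_{m,n}^{\M_{g}}$.

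I do not anticipate a serious obstacle: the only subtle point is justifying that $\phi_{*}$ is well-defined on the relevant Weil divisors and respects linear combinations, and this is guaranteed by assumption (3) together with the definition of a birational contraction (Definition in Section \ref{S:birational-geometry}). In particular, no further intersection-theoretic input is needed beyond Proposition \ref{P:Hilbert-polarization}; the corollary is a compatibility statement packaging that computation together with the fact that $\lambda$ and $\delta$ on $Y$ were defined as pushforwards from $\M_{g}$.
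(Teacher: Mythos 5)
Your argument is correct and takes essentially the same route as the paper, whose entire proof of the corollary is the single sentence ``This is just a restatement of the proposition.'' Your elaboration---that $\phi_*$ is well-defined and $\QQ$-linear on Weil divisor classes because $\phi$ is a birational contraction, and that $\lambda,\delta$ on the quotient were \emph{defined} as $\phi_*\lambda,\phi_*\delta$ so that the formula in Proposition \ref{P:Hilbert-polarization} identifies $\phi_*\Lambda_{m,n}$ with the class of the descended ample line bundle---is exactly the unstated content the paper is relying on.
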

\begin{proof}
This is just a restatement of the proposition.
\end{proof}

According to the corollary, $\M_{g} \dashrightarrow \overline{\Hilb}^{\, \ss, m}_{g,n} \gitq \SL(N+1)$ will 
be precisely the contraction associated to $\Lambda_{m,n}$ once we verify that 
\begin{align}
\Lambda_{m,n}-\phi^*\phi_*\Lambda_{m,n} \geq 0 \in \N^1(\M_g).
\end{align}
We may now state our heuristic informally as: 
\begin{principle}[GIT Heuristic]\label{principle-git}
 The singularities of curves appearing in the GIT quotient of $n$-canonically embedded curves using the 
 $m^{th}$ Hilbert point polarization should be precisely those whose variety of stable limits in $\M_{g}$ is covered by curves on which $\Lambda_{m,n}$ is non-positive.
 \end{principle}

For a more precise description of the {\em variety of stable limits} associated to a singularity, as well as a more detailed justification for this heuristics, we refer the reader to Section \ref{S:heuristic}. Here, let us just give a simple example. Using the formulae for $\Lambda_{m,n}$, the reader may easily check that  
$$
\lim_{m\to \infty} \Lambda_{m,3} \sim \frac{32}{3}\lambda-\delta.
$$
Now it is easy to see that this line bundle is negative on the family of varying elliptic tails
 (cf. Example \ref{E:elliptic-bridges}), 
which is precisely the variety of stable limits of the cusp (cf. Example \ref{E:stable-reduction-A2k}). Our heuristic thus suggests that for $m \gg 0$, cuspidal curves should be contained in the semistable locus $\overline{\Hilb}_{g,3}^{\, \ss, m}$. In Section \ref{S:test-families}, we describe the variety of stable limits of any ADE singularity (among others) and describe the threshold slopes at which these varieties are covered by curves on which $s\lambda-\delta$ is negative. Using these computations, it is easy to compute the threshold values of $m$ and $n$ at which we may expect curves with given singularities to become Hilbert (semi)stable. 

The intersection theory heuristic, however useful, 
can be used only as a guide and will not by itself establish stability of any given curve. 
Methods used by Gieseker to 
prove stability of $5$-canonically embedded stable curves are asymptotic in nature and 
are of no help in the case of finite Hilbert stability. Having said this, we note that
there is also a precise if slightly unwieldy method for determining stability of Hilbert points 
of an embedded curve $C$. 
By tracing through the construction of the $m^{th}$ Hilbert point in Equation \eqref{E:hilbert-point} and the definition 
of the resulting linearization on 
$\overline{\Hilb}_{g,n}^{\, m}$, and applying Proposition \ref{P:numerical} we deduce the following result 
(see \cite[Proposition 4.23]{HarMor}).
\begin{proposition}[Stability of Hilbert points]\label{P:hilbert-stability}
The $m^{th}$ Hilbert point 
of an embedded curve $C\subset \PP^N$ is stable (resp. semistable) 
if and only if for every one-parameter subgroup $\rho$ of $\SL(N+1)$ and a basis $(x_0,\dots, x_N)$
of $\HH^0(C, \O_C(1))$ diagonalizing the action of $\rho$, there is a {\em monomial basis}
of $\HH^0(C,\O_C(m))$
consisting of degree $m$ monomials in $x_0,\dots, x_N$ such that the sum of the $\rho$-weights of these 
monomials is negative (resp. non-positive). 
\end{proposition}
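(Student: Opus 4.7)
The plan is to apply the Hilbert-Mumford numerical criterion (Proposition \ref{P:numerical}) directly to the Plücker-type embedding $\Hilb_{P(x)}(\PP^N) \hookrightarrow \PP(W_m)$ used to define the linearization, with $W_m = \bigwedge^{P(m)} \HH^0(\PP^N, \O(m))$. Since every one-parameter subgroup of $\SL(N+1)$ is conjugate to a diagonal one, and conjugation simply amounts to changing the basis of $\HH^0(\PP^N, \O(1))$, it suffices to test $\rho$-stability for subgroups $\rho\co k^* \to \SL(N+1)$ that act diagonally on a chosen basis $(x_0, \dots, x_N)$ of $\HH^0(\PP^N, \O(1))$ with weights $(r_0, \dots, r_N)$ satisfying $\sum_i r_i = 0$.

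Once $\rho$ is diagonalized, the degree-$m$ monomials $x^{\mathbf{a}} = x_0^{a_0} \cdots x_N^{a_N}$ with $\sum_i a_i = m$ form a $\rho$-eigenbasis of $\HH^0(\PP^N, \O(m))$, with $x^{\mathbf{a}}$ having $\rho$-weight $\sum_i a_i r_i$. Passing to the exterior power, $W_m$ acquires a $\rho$-eigenbasis indexed by $P(m)$-element sets $I = \{x^{\mathbf{a}^1}, \dots, x^{\mathbf{a}^{P(m)}}\}$ of distinct degree-$m$ monomials, the corresponding basis vector $x^{\mathbf{a}^1} \wedge \cdots \wedge x^{\mathbf{a}^{P(m)}}$ having $\rho$-weight equal to $\sum_{j=1}^{P(m)} \sum_i a_i^j r_i$, that is, the sum of the weights of its constituent monomials.

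The crux of the argument is then to identify the nonzero Plücker coordinates of the $m^{th}$ Hilbert point in this eigenbasis. By construction, this Hilbert point is the one-dimensional quotient of $W_m$ obtained by applying $\bigwedge^{P(m)}$ to the restriction surjection $\HH^0(\PP^N, \O(m)) \twoheadrightarrow \HH^0(C, \O_C(m))$. An eigenvector $x^{\mathbf{a}^1} \wedge \cdots \wedge x^{\mathbf{a}^{P(m)}}$ has nonzero image in this one-dimensional quotient precisely when the restrictions of $x^{\mathbf{a}^1}, \dots, x^{\mathbf{a}^{P(m)}}$ to $C$ are linearly independent in $\HH^0(C, \O_C(m))$; since $P(m) = \dim \HH^0(C, \O_C(m))$, this is the same as these restrictions forming a \emph{monomial basis} of $\HH^0(C, \O_C(m))$ in the sense of the proposition. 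This correspondence between nonzero Plücker coordinates and monomial bases is the key combinatorial observation.

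To conclude, invoke Proposition \ref{P:numerical}: $\rho$-stability (resp. $\rho$-semistability) is equivalent to $\mu_\rho < 0$ (resp. $\leq 0$), i.e., to the existence of a nonzero Plücker coordinate of negative (resp. non-positive) $\rho$-weight. By the dictionary established in the previous paragraph, this is exactly the existence of a monomial basis of $\HH^0(C, \O_C(m))$ whose total $\rho$-weight is negative (resp. non-positive). Quantifying over all $\rho$ yields the stated criterion. The main obstacle is not a deep one: it is the linear-algebraic bookkeeping of identifying the $\rho$-weight on the one-dimensional quotient with the weighted sum of the constituent monomial weights, together with tracking the sign convention between $W_m$ and its dual line; this is routine once the Plücker-coordinate picture above is set up correctly.
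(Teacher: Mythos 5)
Your proposal is correct and follows exactly the path the paper outlines (and outsources to \cite[Proposition 4.23]{HarMor}): diagonalize $\rho$, observe that the Pl\"ucker coordinates of the $m^{\text{th}}$ Hilbert point in $\PP(W_m)$ are indexed by $P(m)$-element sets of degree-$m$ monomials and are nonzero precisely when the set restricts to a basis of $\HH^0(C,\O_C(m))$, then invoke Proposition \ref{P:numerical}. Your parenthetical warning about the $W_m$ versus $W_m^*$ sign is well taken: the lift lives in the dual, so literally the nonzero coordinate of negative weight corresponds to a monomial basis of \emph{positive} total weight, but since the criterion is quantified over all $\rho$ (and hence all $\rho^{-1}$) the two formulations coincide.
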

While the criterion of Proposition \ref{P:hilbert-stability} gives in theory a way to prove stability, 
in practice it can be efficiently used only to prove nonsemistability or to prove stability with respect
to a fixed torus. Occasionally, in the presence of nontrivial automorphisms, 
the theory of worst one-parameter subgroups developed by Kempf \cite{kempf} 
allows to reduce verification of stability to a fixed torus, where the numerical criterion can be applied. 
This technique is used in \cite{morrison-swinarski} to prove finite Hilbert stability of certain low genus 
bicanonical curves.  We will settle here for an illustration
of how the numerical criterion of Proposition \ref{P:hilbert-stability} 
can be used to {\em destabilize} a curve. We do so on the example of a generically non-reduced
curve of arithmetic genus $4$. 

\begin{example}[Genus $4$ ribbon]\label{E:NCriterion}
A ribbon is a non-reduced curve locally isomorphic to $\AA_1\times \spec k[\epsilon]/(\epsilon^2)$.
We refer to \cite{bayer-eisenbud} for the general theory of such curves and focus here on the ribbon $C$ of genus $4$ 
defined by the ideal $$I_C=(xz-y^2, z^3+xw^2-2yzw)$$ in $\PP^4$. This curve is interesting because it is 
a flat limit of one-parameter families of canonically embedded curves whose stable limit is a hyperelliptic curve in $\Mg{4}$.
Since hyperelliptic curves are not canonically embedded, $C$ is a natural candidate for a semistable point 
that replaces hyperelliptic curves in the GIT quotient $\overline{\Hilb}_{4,1}^{\, m, \ss}$. However, as the following
application of the numerical criterion shows, 
$C$ has nonsemistable $m^{th}$ Hilbert point for all $m$.
(This was first observed by Fong \cite[p.298]{fong}; see also \cite[Example 8.10]{morrison-swinarski}.)

The generators of $I$ are homogeneous with respect to the one-parameter subgroup $\rho\co \spec k[\lambda,\lambda^{-1}] \ra \SL(4)$ acting by 
$$\lambda\cdot (x,y,z,w)=(\lambda^{-3}x, \lambda^{-1} y, \lambda z, \lambda^{3}w).$$
Since $\rho$ is a subgroup of $\Aut(C)\subset \SL(4)$,
every monomial basis
of $\HH^0(C,\O_C(m))$ has the same $\rho$-weight, which we now compute. We pick the basis of 
$\HH^0(C,\O_C(m))$
consisting of monomials not divisible by $y^2$ and $xw^2$:
\begin{multline*}
\{ x^iz^{m-i} \}_{i=0}^m, \{x^iz^{m-i-1}w\}_{i=0}^{m-1}, \{ z^{m-i}w^i \}_{i=2}^m, \\
 \{x^iyz^{m-i-1} \}_{i=0}^{m-1}, \{x^iyz^{m-i-2}w\}_{i=0}^{m-2}, \{ yz^{m-i-1}w^i \}_{i=2}^{m-1}.
\end{multline*}
 The $\rho$-weights of the monomials in this basis sum up to $3m-4>0$.
We conclude by Proposition \ref{P:hilbert-stability} 
that the $m^{th}$ Hilbert point of $X$ is {\em nonsemistable for all $m$}.  
\end{example}

Having discussed heuristics for verifying finite Hilbert stability and having seen one example 
of the numerical criterion in action, we now summarize what is known and what is yet to be done.
We display the known results on the GIT quotients of $n$-canonically embedded curves and the predictions based on Principle \ref{principle-git} in Table 1. 
In order to understand the results displayed there, we need only to make three additional remarks: First, one can allow fractional values of the polarization;  
 these are discussed in \cite[Remark 3.2]{morrison-swinarski}.
 Second, rather than using the Hilbert scheme, one can also use the Chow variety of $n$\nb-canonically embedded curves. 
 Following a procedure similar to that which we have sketched above, one obtains the following ample divisor on $\overline{\Chow}^{\, \ss}_{n}\gitq \SL(N+1)$ (see. \cite[Theorem 5.15]{mumford-stability} or \cite[Proposition 5.2]{HH2}):
  \begin{align}
 \label{E:git-chow-polarizations}
\begin{cases} (4g+2)\lambda-\frac{g}{2}\delta, & \ \text{if $n=1$}, \\
 (g-1)n\left[ (6n-2)\lambda-\frac{n}{2}\delta\right], & \ \text{if $n>1$.}
 \end{cases}
 \end{align}
Third, we must describe the meaning of the notation $A_{k}^{\dag}$ and $D_k^{\dag}$ that appears in Table 1. 
Note that, in general, the semistability of an $n$-canonically embedded curve is not a local analytic question: It depends not only on the singularities of the curve but also on their global arrangement. For example, 
the classic stability analysis of plane quartics shows that a quartic with a node ($A_1$) and an oscnode ($A_5$) is semistable if it is a union of two conics, and not semistable if it is a union of a cubic and its flex line \cite[Chapter 3.2]{GIT}. In fact, Principle \ref{principle-git} can be refined to understand the thresholds at which different global arrangements of singularities appear, provided one understands the different varieties of stable limits associated to these different arrangements. For the purposes of Table 1, 
the one essential global distinction is the following:
\begin{definition}[Dangling $A_{2k-1}$ and $D_{2k+2}$] \label{D:danglingA}
We say that a proper curve 
$C$ of genus $g$ 
has a {\em dangling $A_{2k-1}$-singularity (or simply, an $A_{2k-1}^{\dag}$-singularity)} if $C=R\cup S$ where $R\simeq \PP^1$, $p:=R\cap S$ in a smooth point of $S$ and the intersection multiplicity of $R$ and $S$ is $k$ ($k \geq 3$). Similarly, we say that a proper curve
$C$ of genus $g$ 
has a {\em dangling $D_{2k+2}$-singularity (or simply, a $D_{2k+2}^{\dag}$-singularity)} if $C=R\cup S$ 
where $R\simeq \PP^1$, $p:=R\cap S$ is a node of $S$ and the intersection multiplicity of $R$ with one of the branches of $S$ at $p$ is $k$ ($k \geq 2$).
\end{definition}
\begin{exercise} Show that a general genus $g\geq k$ curve $C$ with an $A_{2k-1}^{\dag}$ or $D_{2k+2}^{\dag}$-singularity has $\omega_{C}$ ample and $\Aut(C)$ finite.
\end{exercise}
It is not difficult to see that the locus of stable limits of a curve with an $A_{2k-1}^{\dag}$ or $D_{2k+2}^{\dag}$-singularity is covered by curves with different numerical properties than the locus of stable limits of an irreducible curve with an $A_{2k-1}$ or $D_{2k+2}$-singularity. This is why these two curves must be considered separately and arise at different threshold values in Table 1. 

\begin{table}[p]\label{T:GITPredictions}
\begin{center}
\renewcommand{\arraystretch}{2}
\begin{tabular}{|p{1.3in}|c|c|c|c|}
\hline 
GIT quotient & Status & Moduli space & Singularities \\
\hline
$\overline{\Hilb}^{\, \ss, m}_{g,n} \gitq \SL(N+1)$, $n \geq 5, m\gg 0$
& classical                                                                                                               & $\M_{g}$                                                                             & $A_1$ \\
 \hline

$\overline{\Chow}^{\, \ss}_{g,4} \gitq \SL(7g-7)$         
& established \cite{hyeon-morrison}                                                                    &$\M_{g}(\frac{9}{11})=\M_{g}\left[A_2^* \right]$         & $A_1-A_2$ \\
\hline
 
$\overline{\Hilb}^{\, \ss, m}_{g,4} \gitq \SL(7g-7)$, $m\gg0$          
& established \cite{hyeon-morrison}                                                                 &$\M_{g}(\frac{9}{11}-\epsilon)=\M_{g}\left[A_2\right]$         & $A_1-A_2$ \\
\hline

$\overline{\Chow}^{\, \ss}_{g,3}\gitq \SL(5g-5)$
& established \cite{Schubert}                                                             & $\M_{g}(\frac{25}{32})=\M_{g}\left[A_2\right]$                     & $A_1-A_2$  \\
\hline

$\overline{\Hilb}^{\, \ss, m}_{g,3} \gitq \SL(5g-5)$,  $m\gg 0$        
& conjectural                                                               &$\M_{g}(\frac{25}{32}-\epsilon)=\M_{g}\left[A_2\right]$         & $A_1-A_2$ \\
\hline
 
$\overline{\Chow}^{\, \ss}_{g,2}\gitq \SL(3g-3)$
& established \cite{HH2}                                                                                 & $\M_{g}(\frac{7}{10})=\M_{g}\left[A_3^* \right]$          & $A_1-A_3$  \\
\hline
 
$\overline{\Hilb}^{\, \ss, m}_{g,2}\gitq \SL(3g-3)$,  $m\gg 0$
& established \cite{HH2}                                                                                 & $\M_{g}(\frac{7}{10}-\epsilon)=\M_{g}\left[A_3\right]$          & $A_1-A_3$  \\
\hline

 $\overline{\Hilb}^{\, \ss, 6}_{g,2}\gitq \SL(3g-3)$ 
& conjectural                                                                                                            & $\M_{g}(\frac{2}{3})$                   & $A_1-A_4$  \\
\hline

$\overline{\Hilb}^{\, \ss, \frac{9}{2}}_{g,2}\gitq \SL(3g-3)$ 
& conjectural                                                                                                            & $\M_{g}(\frac{19}{29})$                & $A_1-A_4, A_{5}^{\dag}$ \\                                                                                                                                                                                
\hline

$\overline{\Hilb}^{\, \ss, \frac{9}{4}}_{g,2}\gitq \SL(3g-3)$ 
& conjectural                                                                                                           & $\M_{g}(\frac{17}{28})$                    & $A_1-A_5$  \\
\hline

$\overline{\Hilb}^{\, \ss, \frac{27}{14}}_{g,2}\gitq \SL(3g-3)$ 
& conjectural                                                                                                            & $\M_{g}(\frac{49}{83})$                    & $A_1-A_6$  \\
\hline
       $\overline{\Hilb}^{\, \ss, \frac{3}{2}}_{g,2}\gitq \SL(3g-3)$ 
       & conjectural                                                                                                     & $\M_{g}(\frac{5}{9})$                    & $A_1-A_6, D_4, D_{5}^{\dag}, D_{6}^{\dag}$ \\

  \hline
 & $\cdots$ &   $\cdots$ & $\cdots$   \\
\hline
$\overline{\Chow}^{\, \ss}_{g,1}\gitq \SL(g)$
& conjectural                                                                               & $\M_{g}(\frac{3g+8}{8g+4})$        & \multirow{2}{1in}{ADE, Ribbons, $X_9$, $J_{10}$, $E_{12}$, Others?}  \\
& & & \\
\hline

$\overline{\Hilb}^{\, \ss, m}_{g,1}\gitq \SL(g)$,  $m\gg 0$
& conjectural                                                                        & $\M_{g}(\frac{3g+8}{8g+4}-\epsilon)$        &  \multirow{2}{1in}{ADE, Ribbons, $X_9$, $J_{10}$, $E_{12}$, Others?}   \\
& & & \\
  \hline
  
\end{tabular}
\smallskip
\caption{GIT quotients of $n$-canonically embedded curves}
\end{center}
\end{table}

\subsubsection{Established results on GIT} 

As noted in Table 1, 
the semistable locus has $\overline{\Hilb}_{g,n}^{\, \ss, m}$ has actually been completely determined in a number of cases
by Hassett, Hyeon, Lee, Morrison and Schubert. To finish out our discussion of GIT, we present their 
descriptions of these loci and the connections between the resulting moduli spaces.

\begin{definition} Let $C$ be any curve. We say that
\begin{enumerate}
\item \emph{$C$ has an elliptic tail} if there exists a morphism $i\co (E,p) \hookrightarrow C$ satisfying
\begin{itemize}
\item $(E,p)$ is a $1$-pointed curve of arithmetic genus one,
\item $i|_{E-p}$ is an open immersion.
\end{itemize}
\item \emph{$C$ has an elliptic bridge} if there exists a morphism $i\co (E,p,q) \hookrightarrow C$ satisfying
\begin{itemize}
\item $(E,p,q)$ is a $2$-pointed curve of arithmetic genus one,
\item $i|_{E-\{p,q\}}$ is an open immersion,
\item $i(p)$, $i(q)$ are nodes of $C$.
\end{itemize}
\item \emph{$C$ has an elliptic chain} if there exists a morphism $i\co \bigcup_{i=1}^{k}(E_i,p_i,q_i) \hookrightarrow C$ satisfying
\begin{itemize}
\item $(E_i,p_i,q_i)$ is a 2-pointed curve of arithmetic genus one,
\item $i|_{E_i-\{p_i, q_i\}}$ is an open immersion.
\item $i(q_i)=i(p_{i+1}) \in C$ is a tacnode of $C$, for $i=1, \ldots, k-1$.
\item $i(p_1)$ and $i(q_k) \in C$ are nodes of $C$.
\end{itemize}
\end{enumerate}
\end{definition}

\begin{definition}[Stability conditions]\label{D:stability-conditions}
If $C$ is any curve with $\omega_{C}$ ample, we say that
\begin{enumerate}
\item $C$ is $A_2^*$-stable if
\begin{enumerate}
\item $C$ has only nodes and cusps as singularities.
\end{enumerate}
\item $C$ is $A_2$-stable if
\begin{enumerate}
\item $C$ has only nodes and cusps as singularities,
\item $C$ has no elliptic tails.
\end{enumerate}
\item $C$ is $A_3^*$-stable if
\begin{enumerate}
\item $C$ has only nodes, cusps, and tacnodes as singularities,
\item $C$ has no elliptic tails.
\end{enumerate}
\item $C$ is $A_3$-stable if
\begin{enumerate}
\item $C$ has only nodes, cusps, and tacnodes as singularities,
\item $C$ has no elliptic tails and no elliptic bridges or chains.
\end{enumerate}
\end{enumerate}
\end{definition}
\begin{remark}\label{R:comparing-stability-conditions}
These stability conditions appear with different names in the literature: pseudostability for $A_2$\nb-stability, $c$-semistability for $A_3^*$\nb-stability, and $h$\nb-semistability for $A_3$\nb-stability. Pseudostability was introduced in \cite{Schubert}, as we discussed in Section \ref{S:modular-combinatorics}, while the latter two stability conditions were introduced in \cite{HH2}. Our change of notation is motivated by the desire to have the name of the stability condition indicate the singularities allowed.
\end{remark}
The main results of GIT so far, as indicated in Table 1 
is the following theorem, complementing the results of Gieseker and Mumford.
\begin{theorem}\label{T:GIT-construction} For any $g \geq 3$:
\begin{enumerate}
\item For $n=4$:
$$\overline{\Chow}^{\, \ss}_{g,n} =\{[C] \in \Chow(\P^N) \,|\, \emph{$C \subset \P^N$ is $A_2^*$-stable and $\O_{C}(1) \simeq \omega_{C}^n$ }\}.$$
Equivalently, $\overline{\Chow}^{\, \ss}_{g,n} \gitq \SL(N+1) \simeq \M_{g}[A_2^*]$.
\item For $n=3$: 
$$\overline{\Chow}^{\, \ss}_{g,n} =\{[C] \in \Chow(\P^N) \,|\, \emph{$C \subset \P^N$ is $A_2$-stable and $\O_{C}(1) \simeq \omega_{C}^n$ }\}.$$
Equivalently, $\overline{\Chow}^{\, \ss}_{g,n} \gitq \SL(N+1) \simeq \M_{g}[A_2]$ for $m \gg 0$.
\item For $n = 2$: 
$$\overline{\Chow}^{\, \ss}_{g,n} =\{[C] \in \Chow(\P^N) \,|\, \emph{$C \subset \P^N$ is $A_3^*$-stable and $\O_{C}(1) \simeq \omega_{C}^n$ }\}.$$
Equivalently, $\overline{\Chow}^{\, \ss}_{g,n} \gitq \SL(N+1) \simeq \M_{g}[A^*_3]$.
\item For $n = 2$ and $m \gg 0$: 
$$\overline{\Hilb}^{\, \ss,m}_{g,2} =\{[C] \in \Hilb_{P(x)}(\P^N) \,|\, \emph{$C \subset \P^N$ is $A_3$-stable and $\O_{C}(1) \simeq \omega_{C}^n$ }\}.$$
Equivalently, $\overline{\Hilb}^{\, \ss, m}_{g,n} \gitq \SL(N+1) \simeq \M_{g}[A_3]$ for $m \gg 0$.
\end{enumerate}
\end{theorem}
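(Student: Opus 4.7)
The plan is to apply Proposition~\ref{P:ModularInterp} in each of the four cases, reducing the theorem to an identification of the relevant GIT semistable locus with the locus of $n$-canonically embedded curves satisfying the corresponding stability condition of Definition~\ref{D:stability-conditions}. First, I would verify that each of the four classes is deformation open and has the unique limit property; the required limits are constructed from the stable limit by successively contracting elliptic tails to cusps (as in Example~\ref{E:stable-reduction-A2k}) and elliptic bridges to tacnodes (as in Proposition~\ref{P:BridgeContraction}), with the attaching node configuration forcing the Gorenstein tacnodal case in each step. By Theorem~\ref{T:Construction} this produces proper Deligne-Mumford stacks $\SM_g[A_k^*]$ and $\SM_g[A_k]$ with coarse moduli spaces and natural reduction morphisms from $\M_g$ analogous to Remark~\ref{R:reduction-pseudostable}. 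It then remains to match these abstract moduli spaces with the appropriate GIT quotients.

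For the non-semistability of excluded configurations (tacnodes when they are not yet permitted, elliptic tails, elliptic bridges, and elliptic chains), I would exhibit explicit destabilizing one-parameter subgroups through the numerical criterion of Proposition~\ref{P:hilbert-stability}, following the template of Example~\ref{E:NCriterion}. The natural choices are the $\mathbb{G}_m$-actions making the local equation of a forbidden singularity weighted-homogeneous, or one-parameter subgroups that contract a problematic subcurve. That the thresholds match the asserted values of $n$ is predicted by pairing the polarization classes $\Lambda_{m,n}$ from Proposition~\ref{P:Hilbert-polarization} against suitable test families in $\M_g$, in accordance with the GIT heuristic of Principle~\ref{principle-git}.

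For the harder semistability direction, the Chow cases (parts 1--3) can be handled by leveraging Mumford's asymptotic Chow stability of Deligne-Mumford stable curves, combined with a degeneration argument: each permitted $A_k^{(*)}$-stable curve arises as a flat Chow limit of a family whose generic point is $A_1$-stable, and since no destabilizing one-parameter subgroup exists (established in the previous step), the closed-orbit principle of Proposition~\ref{P:stable-semistable} forces the limit to be semistable. The finite Hilbert case (part 4) requires a more delicate potential stability analysis: one shows that any $m$-th Hilbert-semistable point $[C\subset\PP^N]$ must satisfy sharp basic inequalities on subcurve Hilbert functions, forcing $\O_C(1)\simeq \omega_C^n$, the singularities to lie in the allowed class, and the absence of elliptic tails, bridges, and chains. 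Once the semistable locus is pinned down, applying Proposition~\ref{P:ModularInterp}~(2) concludes the identification with $\M_g[A_k^{(*)}]$.

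The main obstacle is the semistability half, since Hilbert-Mumford is effective only for proving non-semistability. For the Chow quotients one can piggyback on Gieseker-Mumford, but for the finite $m$ Hilbert quotient of part~(4) one needs refined tools such as Kempf's theory of worst one-parameter subgroups or the Hassett-Hyeon method of comparing Chow and Hilbert polarizations across wall-crossings. The distinction between $A_3^*$-stability (part 3) and $A_3$-stability (part 4) encodes a birational flip trading elliptic bridges and chains for tacnodes and elliptic tails for cusps, and verifying that the GIT transition at this threshold correctly reflects the modular flip predicted by the stable reduction computations of Section~\ref{S:modular-combinatorics} is the technical heart of the proof.
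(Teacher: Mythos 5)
The paper's own ``proof'' of this theorem is a one-line citation to \cite{hyeon-morrison}, \cite{Schubert}, and \cite{HH2}, so there is no argument in the paper to compare yours against directly. Judged on its own merits, though, your proposal contains a genuine conceptual error in its first step that would derail the whole construction.

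You propose to ``verify that each of the four classes is deformation open and has the unique limit property'' and then invoke Theorem~\ref{T:Construction} to build proper Deligne--Mumford stacks $\SM_g[A_3^*]$ and $\SM_g[A_3]$ with coarse moduli spaces. This is false. The paper is explicit (see the discussion surrounding Figure~\ref{F:manylimits}, Table~2, and Proposition~\ref{P:birational-maps}) that for general $g$ these classes do \emph{not} satisfy the unique limit property and the resulting stacks are \emph{not} separated Deligne--Mumford stacks. Interacting elliptic bridges give incomparable tacnodal limits for a single smoothing; contracting an elliptic bridge yields a non-Gorenstein punctured cusp unless $m_1=m_2$ (Proposition~\ref{P:BridgeContraction}), a condition you cannot always arrange canonically; and the maximally degenerate bi-tacnodal curve has automorphism group $\GG_m$, so Theorem~\ref{T:Construction} simply does not apply. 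The spaces $\M_g[A_3^*]$ and $\M_g[A_3]$ exist only as \emph{good} moduli spaces (weakly modular compactifications), and they are manufactured directly out of the GIT quotient via Proposition~\ref{P:ModularInterp}, not handed to GIT from a pre-existing combinatorial construction. Your plan thus has the logical order reversed: you cannot ``match abstract moduli spaces with GIT quotients'' because in the tacnodal cases there is no abstract separated moduli space to match until GIT has produced the good moduli space.

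Your argument for Chow semistability in parts (1)--(3) is also pointed in the wrong direction. You write that each allowed curve is a flat Chow limit of a family with $A_1$-stable generic fiber and that, absent a destabilizing one-parameter subgroup, ``the closed-orbit principle forces the limit to be semistable.'' Semistability is an \emph{open} condition, so a degeneration from a semistable generic fiber does not inherit semistability in the special fiber; what the references actually establish is that every point of the intended locus survives a potential-stability (or Kempf/worst-one-parameter-subgroup, or careful Chow--Hilbert wall-crossing) analysis, not that semistability specializes. You correctly flag the difficulty for the finite-$m$ Hilbert case in part (4), but the same kind of argument is needed for (1)--(3); Gieseker--Mumford asymptotic stability of smooth or nodal curves by itself does not certify semistability of the extra cuspidal or tacnodal curves you need.
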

\begin{proof}
The case of $n=4$ is in \cite{hyeon-morrison}; the case of $n=3$ is in \cite{Schubert}; the case of $n=2$ is in \cite{HH2}.
\end{proof}

For the purposes of the log MMP for $\M_{g}$, the important point is how these spaces relate to each other. The essential points are summarized in the following proposition:
\begin{proposition}\label{P:birational-maps}
\begin{enumerate}
\item[]
\item There exists a birational contraction $\eta\co \M_{g} \rightarrow \M_{g}[A_2]$ with $\Exc(\eta)=\Delta_1$. Furthermore, the restriction of $\eta$ to $\Delta_1$ contracts the fiber of the projection
$$
\Delta_1 \simeq \M_{g-1,1} \times \M_{1,1} \rightarrow \M_{g-1,1}.
$$

\item There exists a small birational contraction $\phi^{-}\co \M_{g}[A_2] \rightarrow \M_{g}[A_3^*]$ with $\Exc(\phi^{-1})$ being the locus of elliptic bridges, defined to be the union of the images of the following maps:
\begin{align*}
\M_{1,2} \times \M_{g-2,2} &\ra  \M_{g} \rightarrow \M_{g}[A_2]  \\
 \M_{1,2}\times \M_{i,1} \times \M_{j,1} &\ra \M_{g} \rightarrow \M_{g}[A_2], \quad i+j=g-1.
\end{align*}
Furthermore, the restriction of $\phi^{-}$ to each of these strata contracts
the (images under $\eta$ of) fibers of projections \begin{align*}
\M_{1,2} \times \M_{g-2,2} &\rightarrow \M_{g-2,2}\\
M_{1,2} \times \M_{i,1} \times \M_{j,1} &\rightarrow \M_{i,1} \times \M_{j,1}, \quad i+j=g-1.
\end{align*}
\item There exists a small birational contraction $\phi^{+}\co \M_{g}[A_3] \rightarrow \M_{g}[A_3^*]$ with $\Exc(\phi^{+})$ precisely the locus of tacnodal curves.
\end{enumerate}
\end{proposition}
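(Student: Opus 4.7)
My plan is to handle the three maps in turn, using the Contraction Lemma for (1) and the GIT-based good moduli structure of $\M_g[A_3^*]$ for (2) and (3).

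For (1), I would construct $\eta$ as a natural transformation of functors extending Remark \ref{R:reduction-pseudostable} to families. Given a flat family $\pi\co \C \to T$ of stable curves, the locus of elliptic tails $\E \subset \C$ is closed, and for $g \geq 3$ the tails within each fiber are pairwise disjoint. Applying Lemma \ref{L:Contraction} \'{e}tale-locally on $T$ simultaneously replaces each elliptic tail by its attaching cusp (cf.~Exercise \ref{E:Cusp}), yielding a family of pseudostable curves. Functoriality in $T$ gives a natural transformation $\SM_g \to \SM_g[A_2]$, descending to $\eta\co \M_g \to \M_g[A_2]$. The identification $\Exc(\eta) = \Delta_1$ and the fiber structure are immediate: the construction is the identity off $\Delta_1$, and on $\Delta_1 \simeq \M_{g-1,1} \times \M_{1,1}$ the cusp retains no moduli, so the $\M_{1,1}$ factor collapses.

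For (2) and (3), the key point is that $\M_g[A_3^*]$ is a good moduli space (Definition \ref{D:weakly-modular}) for the algebraic stack $\SM_g[A_3^*]$ of $A_3^*$-stable curves, obtained via GIT in Theorem \ref{T:GIT-construction}(3). Since $A_2$-stability and $A_3$-stability are both strictly stronger than $A_3^*$-stability, there are open immersions $\SM_g[A_2] \hookrightarrow \SM_g[A_3^*]$ and $\SM_g[A_3] \hookrightarrow \SM_g[A_3^*]$. Composing with the good moduli map and descending through the coarse moduli (by its universal property) yields birational morphisms $\phi^-$ and $\phi^+$. By Proposition \ref{P:BridgeContraction}, each elliptic bridge in $\SM_g[A_3^*]$ isotrivially specializes to its tacnodal contraction; consequently, the good moduli map identifies each such bridge with its tacnodal image, and on each bridge stratum $\phi^-$ collapses the $\M_{1,2}$ factor, giving the stated fiber structure and smallness. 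Symmetrically, $\phi^+$ fails to be an isomorphism precisely on the tacnodal stratum, because tacnodal points in $\SM_g[A_3^*]$ admit extra (elliptic-bridge) specialization directions absent in $\SM_g[A_3]$.

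The hardest step will be making the exceptional-locus assertion in (3) rigorous: since $\phi^+$ is set-theoretically essentially bijective (each $A_3^*$-equivalence class has a unique $A_3$-stable representative), the failure to be an isomorphism must be detected at the formal/scheme-theoretic level. A careful computation of the formal deformation space of a tacnodal point in $\SM_g[A_3^*]$, compared with the corresponding space in $\SM_g[A_3]$, should reveal the extra directions coming from collapsed elliptic bridges; combined with the codimension count of the tacnodal locus, this yields the small contraction. By contrast, parts (1) and (2) reduce to direct application of the Contraction Lemma and Proposition \ref{P:BridgeContraction} once the stack-level maps are in place.
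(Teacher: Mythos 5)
Parts (1) and (2) of your proposal track the paper's argument closely: $\eta$ is induced by the natural transformation of functors replacing elliptic tails by cusps (as in Remark~\ref{R:reduction-pseudostable}), and $\phi^\pm$ arise from the open immersions $\SM_g[A_2]\hookrightarrow\SM_g[A_3^*]\hookleftarrow\SM_g[A_3]$ composed with the good moduli map. Two imprecisions in your version of (1): Lemma~\ref{L:Contraction} is stated only for one-parameter families over a DVR with smooth generic fiber, so invoking it ``\'{e}tale-locally on $T$'' to define $\eta$ on an arbitrary family is not a literal application; and the real work in this step, as in Remark~\ref{R:reduction-pseudostable}, is verifying that the pointwise cusp-replacement extends to families, which your sketch asserts rather than proves.

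The serious gap is in (3). You assert that $\phi^+$ is ``set-theoretically essentially bijective,'' with the contraction visible only at the formal or scheme-theoretic level. This cannot be right: $\phi^+$ is a proper birational morphism between normal projective varieties, so if it were set-theoretically bijective it would be finite and hence (by Zariski's Main Theorem) an isomorphism, contradicting the claim that $\Exc(\phi^+)$ is nonempty. In fact the contraction over the tacnodal locus is a genuine set-theoretic collapse: Figure~\ref{F:manylimits} in the paper already exhibits two non-isomorphic tacnodal $A_3$-stable curves (obtained by contracting $E_1$ or $E_2$ in a bi-bridged stable curve) that become identified in $\M_g[A_3^*]$, and more generally the tacnodal crimping data that $\M_g[A_3]$ records is forgotten by $\M_g[A_3^*]$. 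So the proposed comparison of formal deformation spaces is a misdirection; the paper instead appeals to the GIT weights behind the Hilbert versus Chow linearizations to see that the semistable orbit structures differ, and obtains smallness directly from the codimension count of the tacnodal locus. Separately, neither your (2) nor your (3) addresses the step the paper flags as substantive: showing that $\phi^-$ (resp.\ $\phi^+$) is an isomorphism \emph{away} from the elliptic-bridge (resp.\ tacnodal) locus, which requires analyzing all isotrivial degenerations of $A_3^*$-stable curves and showing that a bridge-free $A_2$-stable curve (resp.\ tacnode-free $A_3$-stable curve) admits none within $\SM_g[A_3^*]$.
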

\begin{proof}
For the full proof of this fact, we must refer the reader to \cite{HH2}. Nevertheless, we will sketch the argument. For (1), the key observation is that there exists a natural transformation at the level of stacks $\SM_{g} \rightarrow \SM_{g}[A_2]$, replacing elliptic tails by cusps, which induces the map $\eta$. The fact that $\eta$ is an isomorphism outside $\Delta_1$ is then clear, while the fact that $\eta|_{\Delta_1}$ induces the given contraction is a consequence of the fact that the data of the $j$-invariant of the elliptic tail is not retained in the cuspidal curve which replaces it.

The analysis of $\phi^-$ and $\phi^+$ is more subtle. At the level of stacks, there are obvious open immersions
$$
\SM_{g}[A_2] \hookrightarrow \SM_{g}[A_3^*] \hookleftarrow \SM_{g}[A_3].
$$
Yet at the level of coarse moduli spaces, these maps induce birational contractions. How can this be? The key point is that although $\SM_{g}[A_3^*]$ contains more curves than $\SM_{g}[A_2]$ or $\SM_{g}[A_3]$, it is only \emph{weakly} modular and many curves which represent separate points of $\M_{g}[A_2]$ or $\M_{g}[A_3]$ are identified in $\M_{g}[A_3^*]$.

Indeed, all curves of the form $C \cup E$, where $(C,p_1,p_2)$ is a fixed smooth 2-pointed curve and $(E, q_1, q_2)$ ranges over all smooth 2-pointed elliptic curves glued along $p_i \sim q_i$, become identified in $\M_{g}[A_3^*]$ because they all specialize isotrivially to the curve $C \cup E_0$, where $E_0$ is the union of two $\PP^{1}$s meeting tacnodally (See Figure 
\ref{F:isotrivial-elliptic-bridge}). This is the essential reason that $\phi^-$ induces the stated contractions.

\begin{figure}[htb]\label{F:isotrivial-elliptic-bridge}
\includegraphics{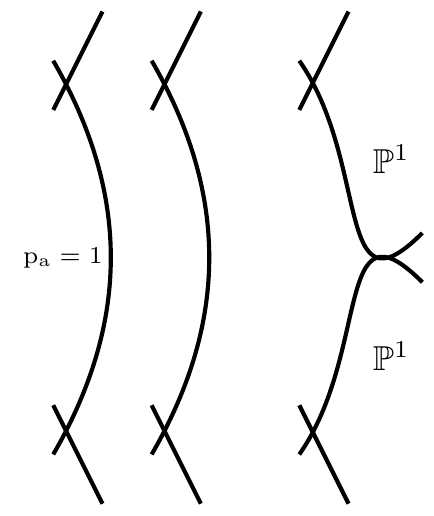}
\caption{Isotrivial specialization of an elliptic bridge to a maximally degenerate tacnodal curve}
\end{figure}

To show that $\phi^{-}$ and $\phi^+$ are isomorphisms outside the locus of elliptic bridges and tacnodes respectively, one shows that if $C$ is any $A_2$-stable curve (resp. $A_3$-stable curve) without elliptic bridges (resp. without tacnodes) then $C$ does not specialize isotrivially to any $A_3^*$-stable curve, i.e. there is no curve through $C$ that is contracted in the above manner. This requires a careful analysis of the isotrivial degenerations of $A_3^*$-stable curves, which is carried out in \cite{HH2}. Finally, the fact that $\phi^-$ and $\phi^+$ are small, i.e. isomorphisms in codimension one, follows immediately from a simple dimension count: the locus of elliptic bridges (resp. tacnodal curves) evidently have codimension two (resp. three) in moduli.
\end{proof}

\subsection{Looking forward}\label{S:Classification}

A comprehensive list of all the modular and weakly modular compactifications we have encountered thus far is displayed in Table 2.
 \begin{table}[htb]\label{T:Classification}
\begin{center}
\renewcommand{\arraystretch}{1.5}
\begin{tabular}{p{2in} p{2.5in}}
Modular Compactifications & Weakly Modular Compactifications  \\
\hline
$\M_{g,n}$,  $\M_{g,\A}$ , $\M_{g}[A_2]$ $(g \geq 3)$, & $\M_{2}[A_2]$, $\M_{g}[A_2^*]$, $\M_{g}[A_3^*]$ ($g \geq 3$), \\

$\M_{0,n}[\psi]$, $\M_{1,n}[m]$, $\M_{4}[A_3]$  & $\M_{g}[A_3]$ $(g=3, g \geq 5)$ 

\end{tabular}
\caption{Modular and Weakly Modular Compactifications}
\end{center}
\end{table} 
Staring at this table, the curious reader is bound to wonder: How comprehensive it? Is it possible to obtain a complete classification of modular and weakly modular birational models of $\M_{g,n}$ for each $g$ and $n$? On the one hand, there is certainly no reason to expect this list to be comprehensive. There are many natural classes of singularities (e.g. deformation open classes of ADE singularities, deformation open classes of toric singularities) generalizing the class $\{\text{nodes, cusps, tacnodes}\}$ and we see no obvious obstruction to constructing stability conditions for every such class. On the other hand, judging by the phenomena encountered in the tacnodal case, it seems clear that, in formulating the appropriate stability conditions for these classes of singularities, we will almost certainly have to settle for weakly modular compactifications.

Since the only general method for constructing weakly modular compactifications is GIT, the most straightforward approach for producing new stability conditions is to push the GIT machinery further. 
On the other hand, there are  compactifications of $M_{g,n}$ for which no GIT construction is known ($\M_{1,n}[m]$ being one example).  
For this reason, it would be interesting to have a more systematic method of constructing weakly modular birational models of $\M_{g,n}$, along the lines of what we have sketched in Section \ref{S:modular-combinatorics}. Indeed, using the ideas of Section \ref{S:modular-combinatorics}, the second author has come very close to giving a complete classification of modular compactifications of 
$\M_{g,n}$ \cite{Z-stability}. Given systematic methods for constructing mildly non-separated functors combinatorially, we might approach such a classification for weakly modular compactifications. The natural starting point for such a program would be the be the construction of pointed versions of the stability conditions discussed above, i.e. the construction of moduli spaces $\M_{g,n}[A_2^*]$, $\M_{g,n}[A_2]$, $\M_{g,n}[A_3^*]$, $\M_{g,n}[A_3]$.

Finally, we should discuss the projectivity of these alternate birational models. All the models in the displayed table are projective, though the methods of proof vary. The models $\M_{g}[A_2^*], \M_{g}[A_2], \M_{g}[A_3^*]$ and $\M_{g}[A_3]$ are constructed via GIT and therefore automatically come with a polarization. For the models 
$\M_{g,\A}$, Koll\'{a}r's semipositivity provides a direct proof as we will
see in Section \ref{S:kollar}. On the other hand, for the models $\M_{0,n}[\psi]$ and $\M_{1,n}[m]$, the only proofs of projectivity rely on direct intersection theory techniques. In light of these differences, it seems natural to wonder whether Koll\'{a}r's techniques be generalized to curves with worse-than-nodal singularities, e.g. could they be used to give a direct proof of projectivity of $\M_{g}[A_2]$ or $\M_{1,n}[m]$? Even more ambitiously, we can ask whether they could be applied in the case of weakly modular compactifications, e.g. to give a direct proof of the projectivity of $\M_{g}[A_3]$. Such techniques would be interesting inasmuch as they might be applicable to moduli functors where no GIT construction exists.

\section{Birational geometry of moduli spaces of curves}
\label{S:birational-geometry-moduli}
\subsection{Birational geometry in a nutshell}
\label{S:birational-geometry}
The purpose of this section is to recall those definitions from higher-dimensional geometry which will be used repeatedly in the sequel. Let $X$ be a normal, projective, $\Q$-factorial variety. We would like to classify the set of all projective birational models of $X$. Of course, no such classification is feasible if we allow birational models dominating $X$, so it is reasonable to consider models which are, in some sense, smaller than $X$. This gives rise to the following definition.
\begin{definition}\label{D:contraction}
A \emph{birational contraction} is a birational map $\phi \co X \dashrightarrow Y$ 
such that $Y$ is a normal projective variety and $\Exc(\phi^{-1})\subset Y$ has codimension at least
$2$. A birational contraction is \emph{$\QQ$\nb-factorial} if $Y$ is. 
\end{definition}

Two birational contractions $\phi_1\co X \dashrightarrow Y_1$ and $\phi_2\co X \dashrightarrow Y_2$ are equivalent if there is an isomorphism $Y_1 \simeq Y_2$ making the obvious diagram commute. Remarkably, the set of all birational contractions up to equivalence, while not necessarily finite, can in some sense be classified purely in terms of the numerical divisor theory of $X$. To explain this, we recall a few standard definitions from higher dimensional geometry.

If $C_1, C_2 $ are proper curves in the Chow group $A_1(X)$, we say that $C_1$ and $C_2$ are numerically equivalent (and write $C_1 \equiv C_2$) if $\L.C_1 =\L.C_2$ for all line bundles $\L \in \Pic(X)$. Dually, we say that two line bundles $\L_1, \L_2 \in \Pic(X)$ are numerically equivalent (and write $\L_1 \equiv \L_2$) if $\L_1.C=\L_2.C$ for all curves 
$C \subset X$. We set
\begin{align*}
\N_1(X):=&A_1(X) \otimes \Q / \equiv \\
\N^1(X):=&\Pic(X) \otimes \Q /\equiv
\end{align*}
It follows immediately from the definition, that there is a perfect pairing
$$
\N^{1}(X) \times \N_{1}(X) \rightarrow \Q,
$$
induced by the intersection pairing $(\L, C) \rightarrow \L.C$, so $\N^{1}(X)$ and $\N_{1}(X)$ are dual. The theorem of the base asserts that $\N^{1}(X)$ and $\N_{1}(X)$ are finite-dimensional vector spaces \cite[Proposition 1.1.16]{Laz1}. 

For understanding the birational geometry of $X$, two closed convex cones in $\N^1(X)$ play a central role, namely the \emph{nef cone} $\Nef(X)$ and the \emph{pseudoeffective cone} $\overline{\Eff}(X)$. The nef cone is simply the closed convex cone in $\N^1(X)$ generated by all nef divisor classes\footnote{Recall that a $\Q$-divisor $D$ is \emph{nef} if $D.C \geq 0$ for all curves $C \subset X$.}, while the pseudoeffective cone is the closed convex cone in $\N^1(X)$ generated by all effective divisor classes. Note that the cone generated by all effective divisor classes (typically denoted $\Eff(X)$) is, in general, neither open nor closed so the pseudoeffective cone is the closure of $\Eff(X)$. A standard result characterizes the interior of the nef cone as the open cone $\Amp(X)$ generated by classes of ample divisors, and the interior of the pseudoeffective cone as the open cone $\BIG(X)$ generated by classes of big divisors \footnote{Recall that a $\Q$-divisor $D$ is \emph{big} if $\dim \HH^0(X, mD)=cm^{n}+O(m^{n-1})$ for some constant $c$ and all $m\gg 0$ sufficiently divisible.}, i.e. we have $\overline{\Amp(X)}=\Nef(X)$ and $\overline{\BIG}(X)=\overline{\Eff}(X)$ \cite[Theorems 1.4.23 and 2.2.26]{Laz1}. The reader who is not familiar with these definitions can find several examples of these cones in \cite[Sections 1.5 and 2.3]{Laz1}.

Now if $\phi \co X \dashrightarrow Y$ is any $\QQ$\nb-factorial birational contraction, there exist open sets $U \subset X$, $V \subset Y$ satisfying 
\begin{itemize}
\item $\phi$ restricts to a regular morphism $\phi|_{U} \co U \rightarrow V$
\item $\codim (X-U) \geq 2$ and $\codim(Y-V) \geq 2$.
\end{itemize}
Then there are natural homomorphisms $\phi_*\co \N^{1}(X) \rightarrow \N^{1}(Y)$ induced by restricting a codimension one cycle to $U$, pushing forward to $V$ and then taking the closure in $Y$. Similarly, there is a pull-back 
$\phi^* \co \N^{1}(Y) \rightarrow \N^{1}(X)$ induced by restricting a $\QQ$\nb-line-bundle on $Y$ to $V$, pulling back along 
$\phi\vert_{U}$ and then extending to a $\QQ$\nb-line-bundle on $X$.  
It is an exercise to check that these operations respect numerical equivalence \cite[1.2.12 and 1.2.18]{rulla}.

\begin{definition}
Let $\phi\co X \dashrightarrow Y$ be a birational contraction with exceptional divisors $E_1, \ldots, E_m$.
The \emph{Mori chamber of $\phi$} is defined as:
$$
\Mor(\phi):=\{\phi^*D+\sum_{i=1}^{m}a_iE_i \ |\ a_i \geq 0, \, D \in \Amp(Y) \} \subset \N^1(X).
$$
\end{definition}
\begin{lemma}\label{L:Mori-chamber}
\par
\noindent
\begin{enumerate}
\item $\overline{\Mor(\phi)}$ is a convex set of dimension $\dim \N^1(Y)$.
\item $\phi_1 \sim \phi_2$ iff $\Mor(\phi_1) \cap \Mor(\phi_2) \neq \varnothing$ iff $\Mor(\phi_1) = \Mor(\phi_2)$.
\item $D \in \Mor(\phi)$ iff $R(X,D)$ is finitely generated and $Y=\Proj R(X,D)$.
\end{enumerate}
\end{lemma}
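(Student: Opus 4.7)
The three claims are largely formal consequences of part (3), so the plan is to prove (3) first and deduce (1), (2) from it.

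For the forward direction of (3), given $D = \phi^{*}A + F$ with $A \in \Amp(Y)$ and $F = \sum_{i} a_{i}E_{i}$, $a_{i}\geq 0$, the essential computation is the identity $\phi_{*}\O_{X}(\lfloor mF \rfloor) = \O_{Y}$ for every $m\geq 0$. On the open $V\subset Y$ where $\phi^{-1}$ is regular, $F$ pulls back trivially and $\phi_{*}\O_{X}(\lfloor mF \rfloor)$ is a rank-one reflexive subsheaf of the sheaf of rational functions on $Y$ that coincides with $\O_{Y}$ on $V$; since $\codim(Y\smallsetminus V) \geq 2$ and $Y$ is normal, the two sheaves agree globally. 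Combined with the projection formula, this gives $\phi_{*}\O_{X}(mD) = \O_{Y}(mA)$ for $m$ sufficiently divisible, hence $R(X,D) = R(Y,A)$, which is finitely generated with $\Proj R(Y,A) = Y$ because $A$ is ample.

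For the reverse direction, suppose $R(X,D)$ is finitely generated with $Y = \Proj R(X,D)$ realizing $\phi$. For $m$ sufficiently divisible the linear series $|mD|$ defines $\phi$, so on the maximal open $U\subset X$ where $\phi$ is regular, $D|_{U} = \phi^{*}A|_{U}$ for an ample Cartier divisor $A$ on $Y$. Writing $D = \phi^{*}A + F$ with $F$ supported on $\Exc(\phi)$, the inequalities $a_{i}\geq 0$ would follow from the negativity lemma: both $D$ and $\phi^{*}A$ are $\phi$-trivial (since $|mD|$ contracts precisely the fibers of $\phi$), so $F$ is $\phi$-numerically trivial, and any effective-complemented divisor supported on $\Exc(\phi)$ that is $\phi$-nef has nonnegative coefficients. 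This places $D$ in $\Mor(\phi)$.

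Part (1) is then immediate: $\Mor(\phi)$ is the Minkowski sum of the convex cones $\phi^{*}\Amp(Y)$ and $\sum_{i}\RR_{\geq 0}E_{i}$, hence convex; its dimension equals $\dim \N^{1}(Y) + \#\{E_{i}\} = \dim \N^{1}(X)$, using that $\phi^{*}\co \N^{1}(Y)\hookrightarrow \N^{1}(X)$ is injective (pair $\phi^{*}D$ with proper transforms of generic curves on $Y$ avoiding $\Exc(\phi^{-1})$ and apply the projection formula) and that $[E_{i}]$ are independent modulo $\phi^{*}\N^{1}(Y)$ since $\phi_{*}[E_{i}]=0$. Part (2) also follows readily: if $\phi_{1}\sim \phi_{2}$ via an isomorphism $Y_{1}\cong Y_{2}$, the chambers coincide tautologically; conversely any $D\in \Mor(\phi_{1})\cap \Mor(\phi_{2})$ recovers both $Y_{1}$ and $Y_{2}$ canonically as $\Proj R(X,D)$ by (3), and both $\phi_{i}$ are defined by $|mD|$, yielding the required commuting isomorphism. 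The principal obstacle is the sheaf-theoretic computation in the forward direction of (3) and the negativity lemma in the reverse direction; both rely crucially on the codimension-two hypothesis on $\Exc(\phi^{-1})$ together with normality of $Y$.
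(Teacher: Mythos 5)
The paper itself offers no proof of this lemma beyond the citation to Hu--Keel, so there is no ``paper's argument'' to compare against; what you have written is a genuine proof sketch where the source text has none. The forward direction of (3) is essentially correct: the computation $\phi_{*}\O_{X}(\lfloor mF\rfloor)=\O_{Y}$ via normality of $Y$ and $\codim(Y\smallsetminus V)\geq 2$ is the right move, and combined with the projection formula it gives $R(X,D)\cong R(Y,A)$. Parts (1) and (2) are also fine modulo (3), though note a discrepancy worth flagging: you compute $\dim\overline{\Mor(\phi)}=\dim\N^{1}(Y)+\#\{E_{i}\}=\dim\N^{1}(X)$, whereas the statement in the text claims $\dim\N^{1}(Y)$. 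Your answer is the correct one \emph{for $\Q$-factorial contractions}; the text's statement appears to be a slip. In general (e.g.\ for a small contraction, where there are no $E_{i}$) the dimension is $\dim\N^{1}(Y)+\#\{E_{i}\}$, which is strictly smaller than $\dim\N^{1}(X)$, so neither formula is universally valid, and you should state the $\Q$-factoriality hypothesis explicitly.

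The reverse direction of (3) has a real gap. You assert that ``both $D$ and $\phi^{*}A$ are $\phi$-trivial (since $|mD|$ contracts precisely the fibers of $\phi$),'' and from this deduce that $F$ is $\phi$-numerically trivial so that a negativity-lemma-type statement finishes the job. But the linear system $|mD|$ defines $\phi$ only as a \emph{rational} map; a curve $C$ contracted by $\phi$ can have $D\cdot C\neq 0$, indeed $D\cdot C<0$, if $C$ sits inside the base locus or a fixed component of $|mD|$. This is exactly the situation for the $E_{i}$ themselves, so the premise fails precisely where you need it. Passing to a resolution $p\co W\to X$, $q\co W\to Y$, the negativity lemma applied to $q$ would require $p^{*}D-q^{*}A$ to be $q$-nonpositive, i.e.\ $p^{*}D$ to be $q$-nef; there is no reason for this to hold on $q$-contracted curves inside the fixed locus of $|mp^{*}D|$. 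The standard argument avoids this entirely: for $m$ divisible enough one has $|mp^{*}D|=q^{*}|mA|+\mathrm{Fix}$, where $\mathrm{Fix}$ is by definition the effective fixed divisor of the linear system; pushing forward by $p$ gives $mD=m\phi^{*}A+p_{*}\mathrm{Fix}$, so $F=\tfrac{1}{m}p_{*}\mathrm{Fix}\geq 0$, and $q_{*}\mathrm{Fix}=m(\phi_{*}D-A)=0$ shows $F$ is supported on $\Exc(\phi)$. This ``fixed part is effective'' observation is both more elementary and actually correct; the negativity lemma is the wrong tool here and should be replaced by it.
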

\begin{proof}
This is standard, see e.g. \cite{hu-keel}.
\end{proof}
This lemma suggests an approach to the classification of birational contractions
of $X$. If one can find a set of birational contractions 
$\phi_j \co X \dashrightarrow Y_j$ such that the associated Mori chambers partition 
$\PEff(X)$:
$$
\PEff(X) = \overline{\Mor(\phi_1)} \cup \ldots \cup \overline{\Mor(\phi_k)},
$$
then Lemma \ref{L:Mori-chamber} guarantees that these must be all the birational contractions of $X$ and this decomposition of the pseudoeffective cone $\overline{\Eff}(X)$ is called the 
{\em Mori chamber decomposition}. If a Mori chamber decomposition exists for the entire pseudoeffective cone $\overline{\Eff}(X)$, we call $X$ a \emph{Mori dream space} \cite{hu-keel}. Such a chamber decomposition may fail to exist, however, for two reasons. First, $X$ may have infinitely many birational contractions, in which case the chamber decomposition is not finite. Second, a divisor $D$ may fail to have a finitely generated section ring, in which case it belongs to no Mori chamber \cite[Example 2.1.30]{Laz1}. The following powerful result (which will be used repeatedly in the sequel) gives a sufficient condition for basepoint freeness. 
\begin{theorem}[\text{Kawamata basepoint freeness theorem}] \label{T:kawamata}
Let $(X, \Delta)$ be a proper klt pair with $\Delta$ effective. Let $D$ be a nef Cartier divisor such that $aD-K_X-\Delta$ is nef and big for 
some rational $a>0$. Then $|mD|$ is basepoint free for $m\gg 0$.
\end{theorem}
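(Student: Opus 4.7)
The plan is to combine Shokurov's non-vanishing theorem with an inductive reduction of the base locus by means of the Kawamata--Viehweg vanishing theorem. First I would establish non-vanishing, namely that $H^0(X,\O_X(mD))\neq 0$ for some $m>0$. By Kodaira's lemma applied to the nef and big divisor $aD-K_X-\Delta$, we may write $aD-K_X-\Delta\sim_{\QQ} A + E$ with $A$ ample and $E$ effective, and after absorbing a sufficiently small multiple of $E$ into $\Delta$ (which preserves the klt condition), we may as well assume $aD-K_X-\Delta$ is ample. Non-vanishing then proceeds by induction on $\dim X$: one restricts to a general element of a sufficiently ample linear system and applies Kawamata--Viehweg vanishing together with Riemann--Roch on the lower-dimensional base case.

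For the main statement, fix a point $x\in X$; the goal is to show $x\notin \Bs(|mD|)$ for $m\gg 0$. Using non-vanishing, choose $N\gg 0$ and a nonzero section $s\in H^0(X,\O_X(ND))$, and set $F=(s=0)$. Choose $c>0$ rational, maximal subject to $(X,\Delta+cF)$ being log canonical at $x$; then $x$ lies in some minimal log canonical center $Z$. For $m$ sufficiently large,
\begin{equation*}
mD-K_X-(\Delta+cF)\sim_{\QQ}(m-cN-a)D+(aD-K_X-\Delta)
\end{equation*}
is the sum of a nef and a nef-and-big class, hence nef and big. Passing to a log resolution $\mu\co Y\to X$ of $(X,\Delta+cF)$ and applying Kawamata--Viehweg vanishing to the appropriate round-down on $Y$ gives a surjection of the form
\begin{equation*}
H^0(X,\O_X(mD))\twoheadrightarrow H^0(Z,\O_Z(mD)\otimes J),
\end{equation*}
where $J$ is an ideal on $Z$ whose cosupport avoids $x$. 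Exhibiting a section of the right-hand side that does not vanish at $x$ and lifting it then contradicts $x\in \Bs(|mD|)$.

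The main obstacle is the existence of the required section on the lc center $Z$: it must itself be produced inductively, applied to the klt pair $(Z,\Delta_Z)$ obtained by adjunction, together with the divisors $D|_Z$ (still nef) and $(aD-K_X-\Delta)|_Z$ (still nef and big, provided $Z$ is chosen as a minimal lc center). The argument is therefore a simultaneous induction on $\dim X$ of both non-vanishing and basepoint freeness, with the delicate combinatorics of lc centers --- tie-breaking perturbations to ensure a unique minimal center, and compatibility of adjunction with the klt hypothesis --- forming the technical heart of the proof. Once basepoint freeness is established for all sufficiently large divisible $m$, a standard argument comparing $|mD|$ with $|(m+1)D|$ removes the divisibility assumption.
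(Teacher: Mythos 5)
The paper does not prove this theorem; it explicitly refers the reader to \cite[Section 3]{kollar-mori} for the proof and its history, so your sketch must be compared against that standard reference rather than anything in the paper. You correctly identify the main ingredients --- Shokurov non-vanishing, Kodaira's lemma, log canonical thresholds with tie-breaking perturbations, Kawamata--Viehweg vanishing, and descent in dimension --- and the opening Kodaira-lemma reduction (absorbing a small fraction of the effective part into $\Delta$ to make $aD - K_X - \Delta$ ample) is correct and does salvage bigness of the restriction to lower-dimensional loci, a point that would otherwise fail.

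The inductive architecture you describe, however, does not match the standard treatment and glosses over a heavier piece of machinery than the argument actually needs. In Koll\'ar--Mori only \emph{non-vanishing} is proven by induction on $\dim X$; basepoint freeness itself is then deduced by \emph{Noetherian} induction on the stable base locus $\Bs|m!D|$. One picks $x$ in the stable base locus, arranges via tie-breaking a unique divisor $E$ of discrepancy $-1$ over $x$ on a log resolution $\mu\colon Y\to X$, applies Kawamata--Viehweg vanishing on $Y$ to surject $H^0(Y,\cdot)\twoheadrightarrow H^0(E,\cdot)$, and then applies \emph{non-vanishing} (not basepoint freeness) to the smooth projective divisor $E\subset Y$ to produce a section that is not identically zero on $E$, thereby shrinking the base locus. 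The whole step lives on the resolution; no adjunction to an lc center on $X$ is invoked. Your version descends instead to $Z=\mu(E)\subset X$, constructs a klt pair $(Z,\Delta_Z)$ by adjunction, and applies basepoint freeness inductively on $Z$. That is a legitimate modern route, but producing $(Z,\Delta_Z)$ and the moduli part with the positivity you need requires Kawamata's subadjunction theorem (or the Fujino--Mori canonical bundle formula) as a genuine input --- a considerably deeper result than the theorem being proved, which the classical argument sidesteps entirely by staying on $Y$. There is also a loose end in the lifting step: the inductively produced section on $Z$ must land in $\O_Z(mD)\otimes J$ and not merely $\O_Z(mD)$, so one needs an additional argument controlling vanishing along the cosupport of $J$. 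Either run the argument on $Y$, restrict to $E$, and invoke non-vanishing with Noetherian descent as in \cite[Section 3]{kollar-mori}, or make the appeal to subadjunction explicit instead of folding it into ``compatibility of adjunction with the klt hypothesis.''
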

We refer to \cite[Section 3]{kollar-mori} for the definition of klt pairs, the proof of Theorem \ref{T:kawamata}, and the history of this result.

While it is too much to hope, in general, for a chamber decomposition of the entire effective cone of a variety, the theory of higher dimensional geometry, culminating in the recent work of Siu \cite{siu} and Birkar, Cascini, Hacon, and McKernan \cite{BCHM} shows that there \emph{is} a certain region within the effective cone 
where it is possible to give a Mori chamber decomposition. For our purposes, the most relevant result is:

\begin{theorem}
Let $\Delta=\sum_{i\in S}\Delta_i \subset \M_{g,n}$ be the boundary, where $S$ is the indexing set for irreducible boundary components. For a rational positive number $\alpha\in (0,1]$, let $\Delta(\alpha) \subset \N^1(\M_{g,n})$ be the polytope
$$\{ K_{\M_{g,n}}+\sum_{i\in S} \alpha_i\Delta_i \, | \, \alpha_i \in [\alpha,1] \}$$
Then $\Delta(\alpha)$ admits a finite Mori chamber decomposition. More precisely, there exists a finite set of birational contractions $\phi_j\co X \dashrightarrow Y_j$ such that 
\begin{enumerate}
\item $\overline{\Mor(\phi_i)} \cap [0,1]^{\vert S\vert}$ is a piecewise linear polytope.
\item $[\alpha,1]^{\vert S\vert} = \overline{\Mor(\phi_1)} \cup \ldots \cup \overline{\Mor(\phi_k)}$. 
\end{enumerate}
\end{theorem}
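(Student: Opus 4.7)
The plan is to apply the finiteness of log minimal models from \cite{BCHM} to the $|S|$-dimensional polytope of adjoint divisors cut out by the boundary coefficients.  Consider the $|S|$\nb-dimensional affine subspace $V \subset \N^1(\M_{g,n})_\RR$ parameterized by $(\alpha_i)_{i\in S} \mapsto K_{\M_{g,n}} + \sum_{i\in S}\alpha_i\Delta_i$.  Under this map, the box $[\alpha,1]^{|S|}$ is identified with $\Delta(\alpha)$, and property (1) in the conclusion will be read off on the $(\alpha_i)$\nb-side.

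First I would verify the singularity hypothesis.  The boundary $\sum \Delta_i$ pulls back to a simple normal crossing divisor on the moduli stack $\SM_{g,n}$, and $\M_{g,n}$ has at worst finite quotient singularities, so the pair $(\M_{g,n}, \sum_i \alpha_i\Delta_i)$ is klt whenever $\alpha_i < 1$ and log canonical at $\alpha_i = 1$.  After choosing a small $\epsilon>0$, the sub-polytope where $\alpha_i \in [\alpha, 1-\epsilon]$ is therefore a rational polytope of klt boundaries in $V$.  Next, using that $\alpha_i \geq \alpha > 0$, I would express
\[
K_{\M_{g,n}} + \sum_i \alpha_i \Delta_i \;=\; A + B,
\]
where $A$ is a fixed ample $\QQ$\nb-divisor and $B$ is effective; such a presentation is available because we can absorb an ample piece into $\alpha\cdot\sum_i \Delta_i$ at the cost of perturbing the remaining effective part, uniformly over the polytope.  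This puts us in the setup of \cite[Theorem~1.1.5]{BCHM}.

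Applying that theorem to the klt sub-polytope yields a finite collection of birational contractions $\phi_1,\dots,\phi_k\co \M_{g,n} \dashrightarrow Y_j$ such that every $D$ in the sub-polytope with $D$ pseudoeffective admits some $\phi_j$ as its log terminal model; by Lemma \ref{L:Mori-chamber} this is the same as the statement that $D \in \Mor(\phi_j)$.  Each Mori chamber $\overline{\Mor(\phi_j)}$ is itself a rational polyhedral cone (the sum of $\phi_j^*\Nef(Y_j)$ and the effective exceptional face), so its intersection with $[0,1]^{|S|}$ is a piecewise linear polytope, giving (1).  Taking the union of the $\overline{\Mor(\phi_j)}$ over the klt sub-polytope and letting $\epsilon\to 0$ then covers $[\alpha,1]^{|S|}$, giving (2).

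The main obstacle is the boundary locus $\alpha_i=1$, where the pair is merely log canonical and standard BCHM does not directly apply.  I would address this either by invoking the extensions of finite generation and finiteness of models to log canonical pairs (after \emph{loc.~cit.}, work of Hacon--Xu and Fujino), or by a direct continuity argument: the finitely many chambers produced for $[\alpha, 1-\epsilon]^{|S|}$ are rational polytopes whose walls are determined by finitely many linear conditions independent of $\epsilon$, so passing to the closure produces a decomposition of the full closed box.  A secondary subtlety is that some divisors in $\Delta(\alpha)$ may fail to be big and so yield only lower-dimensional models $Y_j$; this is harmless for the statement, since the Mori chamber of such a contraction is still a well-defined rational polyhedral subset of $V$.
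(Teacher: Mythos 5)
Your proposal is correct in spirit and invokes exactly the same tool as the paper, whose entire proof is the one-line citation ``See \cite[Corollary 1.2.1]{BCHM}.''  The lengthy detour through Theorem~1.1.5, klt sub-polytopes, and the $\epsilon$-approximation of the log canonical boundary is thus more elaborate than what the paper offers, but it is headed in the same direction.

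However, one intermediate step in your reduction is a genuine gap.  You claim that because $\alpha_i \ge \alpha > 0$ you can write $K_{\M_{g,n}} + \sum_i \alpha_i\Delta_i = A + B$ with $A$ a fixed ample $\QQ$\nb-divisor and $B$ effective, ``because we can absorb an ample piece into $\alpha\cdot\sum_i\Delta_i$.''  This presupposes that the total boundary $\delta=\sum_i\Delta_i$ is big, which is false for $\M_g$, $g\geq 3$: the semiample class $\lambda$ contracts $\Supp\delta$ under the extended Torelli map $\tau\co \M_g \to \tau(\M_g)\subset\overline{\A}_g$, so any divisor supported on $\delta$ has Iitaka dimension strictly less than $\dim\M_g$; in fact $\delta$ is an extremal ray of the (restricted) effective cone.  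Thus there is no ample divisor hiding inside $\alpha\delta$, and the ``$A+B$'' decomposition you use to enter the hypotheses of \cite[Theorem~1.1.5]{BCHM} does not exist in general.

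The clean way to proceed, and what the paper's citation intends, is to apply \cite[Corollary~1.2.1]{BCHM} as stated.  That corollary is formulated with an auxiliary general ample $\QQ$\nb-divisor $A$ built into the statement (i.e.\ one decomposes the polytope $\mathcal{E}_A(V)$ of boundaries $\Delta$ with $K_X + \Delta + A$ pseudoeffective), and it already allows log canonical pairs on the faces, so both your bigness step and your $\epsilon\to 0$ continuity argument for $\alpha_i=1$ become unnecessary.  One should also be aware that the paper's statement (2) is itself slightly informal: for small $\alpha$ the divisor $K_{\M_{g,n}}+\alpha\delta$ may fail to be big (or even effective), in which case it lies in no Mori chamber of a birational contraction in the strict sense of Definition~\ref{D:contraction}; the honest statement, following BCHM, involves ample models that are permitted to have lower dimension, or a restriction to the big part of the polytope.
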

\begin{proof}
See \cite[Corollary 1.2.1]{BCHM}.
\end{proof}

This theorem raises the question: can we compute this chamber decomposition explicitly? Of course, the Hassett-Keel log minimal model program is just the special case, in which one restricts attention to the ray $\alpha_0=\alpha_1=\ldots=\alpha_{\lfloor g/2\rfloor}$. We will address what is known about this problem in Section \ref{S:log-MMP}.

\subsection{Effective and nef cone of $\M_{g,n}$}\label{S:nef-cone}

\begin{table}[htbp]\label{T:Cones}
\begin{center}
\renewcommand{\arraystretch}{1}
\begin{tabular}{|c|c|c|c|}
\hline
& $\M_{g}$ & $\M_{0,n}$  & $\M_{g,n}$ \\
\hline
\multirow{3}{1.2in}{Rays of restricted effective cone}

& \multirow{3}{1.2in}{$\delta, s\lambda-\delta$, $\frac{60}{g+4}<s\leq 6+\frac{12}{g+1}$} &
 \multirow{3}{1.2in}{$-\frac{n-1}{\lfloor \frac{n}{2}\rfloor (n-\lfloor \frac{n}{2}\rfloor)}\psi+\delta$, $\frac{n-1}{2(n-2)}\psi-\delta$}  &
unknown
\\
&&& \\
&&& \\
\hline
\multirow{2}{1.2in}{Full efffective cone}
& unknown
& \multirow{2}{1.2in}{Castravet-Tevelev Conjecture}
& unknown\\
& & & \\
\hline
\multirow{3}{1.2in}{Rays of restricted nef cone}
& \multirow{2}{1.2in}{$\lambda, 11\lambda-\delta$}
& \multirow{3}{1.2in}{$\psi$ ($n\geq 8$) and $\frac{2}{3}\psi-\delta$ ($n\geq 5$)}
&   \multirow{2}{1.2in}{$\lambda, \psi, 11\lambda+\frac{2}{3}\psi-\delta$ ($n\geq 3)$}
\\
& & & \\
&&&\\
\hline
\multirow{1}{1.2in}{Full nef cone}  & F-conjecture             & F-conjecture                  & F-conjecture     \\
\hline
\multirow{2}{1.2in}{Mori Dream Space?} & \multirow{2}{1.12in}{`Yes' for $g=2$, unknown for $g\geq 3$} & \multirow{2}{1.2in}{`Yes' for $n \leq 6$, unknown for $n \geq 7$}   &     \multirow{2}{1.4 in}{`No' for $g\geq 3$, $n=1$ and for $g\geq 2$, $n\geq 2$.  } \\
& & & \\
& & & \\
& & & \\
\hline

\end{tabular}

\end{center}

\caption{Effective and nef cones of $\M_{g,n}$}
\end{table}

In this section, we describe most of what is known about the nef cone and the effective cone of $\M_{g,n}$. Since much of this material is well-covered in 
the existing surveys (e.g. \cite{farkas-seattle}, \cite{morrison-mori}), we will focus on statements of results, emphasizing mainly those details that are necessary for the results of Section \ref{S:log-MMP} on Mori chamber decompositions of $\Eff(\M_{g,n})$. To begin with, we should emphasize that neither the nef cone nor the effective cone of $\M_{g,n}$ is known in full generality. When seeking partial results, it is common to focus on the \emph{restricted effective cone} $\Eff_\rst(\M_{g,n})$ (respectively, the \emph{restricted nef cone} $\Nef_\rst(\M_{g,n})$), which we define to be the intersection of the effective cone (respectively, the nef cone) with the subspace $\Q\{\lambda, \psi, \delta \} \subset \N^1(\M_{g,n})$. 
Note that in the special cases $n=0$ (resp. $g=0$), we have $\psi=0$ (resp. $\lambda=0$), so these restricted cones are simply two-dimensional convex cones in a two-dimensional vector space. Our present state of knowledge concerning all these cones is displayed in Table 3. 

Before elaborating on the individual entries in Table 3, 
let us briefly recount the historical progression of ideas which preceded and motivated this work. Long before the concepts and vocabulary of Mori theory became the standard approach to birational geometry, the problem of finding rational or unirational parametrizations for $M_{g}$ vexed (and inspired) many mathematicians. Perhaps the first recognizable foray into the birational geometry of $M_{g}$ was made in the context of invariant theory. For example, the fact that every genus two curve admits a unique double cover of $\P^1$ branched at 6 points implies that $M_{2}$ is naturally birational to the orbit space of binary sextic forms under $\SL(2)$. The generators and relations for the corresponding ring of invariants were computed by Clebsch in 1872 \cite{clebsch-book}, who thus gave explicit coordinates on $M_{2}$ (analagous to the Weierstrass $j$-function for $M_{1}$)  and proved the rationality of $M_{2}$ as a by-product (see also \cite{Igusa}).  Similar results for $M_{g}$, $g \leq 6$, have been obtained only recently, with Shepherd-Barron proving rationality of $M_{4}$ and $M_{6}$ \cite{barron-4, barron-6} and Katsylo proving rationality of $M_{3}$ and $M_{5}$ \cite{katsylo-3, katsylo-5}. 

While approaches to the rationality of $M_{g}$ remain essentially algebraic, Severi realized that the weaker problem of proving unirationality (i.e. of exhibiting merely a dominant rational, rather than birational, map $\P^N \dashrightarrow M_{g}$) was accessible by geometric methods \cite{severi-plane}. Severi used the variety $V_{d,g}$ parameterizing plane curves of degree $d$ and geometric genus $g$ (now called \emph{Severi varieties}) to prove the unirationality of $M_{g}$ for $g \leq 10$. Severi's idea was to represent a general curve of genus $g$  as a plane curve of degree $d(g)=\lceil \frac{2g+6}{3}\rceil$ with
exactly $\delta(g)=\binom{d-1}{2}-g$ nodes. For genus $g \leq 10$, a simple dimension count shows the existence of plane curves of degree $d(g)$ with $\delta(g)$ nodes in general position in $\P^2$. Thus, $V_{d,g}$ is birational to a projective bundle over $(\P^2)^{\delta(g)}$, hence rational. The natural dominant rational map $V_{d,g} \dashrightarrow M_{g}$ then proves the unirationality of $M_{g}$. As explained in \cite{arbarello-sernesi}, Severi's argument made implicit use of several unproven assumptions about $V_{d,g}$ -- Arbarello and Sernesi proceed to fill the gaps in Severi's proof thus giving 
a rigorous proof of unirationality of $\M_g$ for $g\leq 10$. 

The question of whether $M_g$ remains unirational for all $g \geq 10$ was open until the pioneering work of Mumford and Harris \cite{HMKodaira}, who 
showed that $\M_g$ is of general type for odd $g \geq 25$ and has Kodaira dimension at least zero for $g=23$. This work was later generalized by Eisenbud and Harris to show that $\M_{g}$ is of general type for \emph{all} $g \geq 24$ \cite{EHKodaira}. In particular, $M_{g}$ is not unirational for any $g \geq 23$. From a historical perspective, this work is a marvelous reflection (and application) of the emerging understanding of the importance of Kodaira dimension in the study of birational classification problems. Note that since Kodaira dimension is meaningful only for projective varieties, this perspective could not be brought to bear on the unirationality problem without the existence of a suitable projective compactification $M_{g} \subset \M_{g}$. The moduli space of stable curves, constructed less than 15 years previously by Deligne and Mumford \cite{DM}, provided the requisite compactification.

Now, let us briefly outline the argument of Harris and Mumford. The two key ingredients are:
\begin{enumerate} 
\item $\M_g$ has canonical singularities for all $g\geq 2$.
\item For odd $g \geq 25$, the canonical divisor $K_{\M_g}$ is big.
\end{enumerate}
(2) says that the canonical divisor has lots of sections, and (1) says that these sections extend to an arbitrary desingularization of $\M_g$. Together, they imply that $\M_{g}$ is of general type for odd $g \geq 25$. 

The proof of (1) is a straight-forward (albeit arduous) application of the Reid-Tai criterion (cf. \cite{reid3folds, tai-kodaira}), which gives an explicit method for determining whether a finite quotient singularity is canonical. To explain the proof of (2), it is useful to make a preliminary definition: If $D \subset \M_{g}$ is any Weil divisor, we define the \emph{slope of $D$} to be 
\begin{align}\label{E:slope}
s(D):=\inf \left\{ \frac{a}{b} \ : \ a\lambda-b\delta-D\geq 0\right\}.
\end{align}
For any irreducible non-boundary effective divisor $D\subset \M_g$, we have 
$$D \equiv a\lambda-\sum_{i=0}^{\lfloor g/2 \rfloor}b_i\delta_i \in \N^1(\M_{g}),$$
where $a$ and $b_i$ are positive rational numbers. Consequently, 
$$s(D)=\frac{a}{\min_{ 0\leq i \leq \lfloor g/2 \rfloor} b_i}.$$

The proof of (2) is connected with the problem of understanding slopes of effective divisors by the following two observations: First, $\lambda$ is a big divisor (indeed, it is semiample and defines the Torelli morphism $\tau\co \M_{g} \rightarrow \overline{\A}_{g}$ to the Satake compactification of principally polarized abelian varieties \cite{namikawa}). Second, since $K_{\SM_{g}}=13\lambda-2\delta$ and $\SM_{g} \rightarrow \M_{g}$ is ramified along $\Delta_1$ (see \cite[Section 2]{HMKodaira}), the canonical class of the coarse moduli space is given by the formula
$$ 
K_{\M_{g}}=13\lambda-2\delta_0-3\delta_1-2\sum_{i=2}^{\lfloor g/2 \rfloor}\delta_i.
$$
It follows that if $D \subset \M_{g}$ is any effective divisor of slope $s(D)<13/2$, we have a numerical proportionality
$$
K_{\M_{g}}\equiv D+a\lambda+\sum_{i}b_i\delta_i,
$$
where $a$ and $b_i$ are positive rational constants. In particular, since $\lambda$ is big and $D+\sum_{i}b_i\delta_i$ is effective, $K_{\M_{g}}$ is big. In other words, to show that $K_{\M_{g}}$ is big, it is sufficient to find an effective divisor of slope less than 13/2.

The effective divisor $D \subset \M_{g}$ originally considered by Mumford and Harris was the closure of the locus of $d$-gonal curves where $d=\frac{g+1}{2}$. Subsequently, Eisenbud and Harris considered the more general {\em Brill-Noether divisors} $D_{r,d} \subset \M_{g}$, defined as the closure of the locus of curves possessing linear series 
$g^{r}_{d}$ satisfying $g-(r+1)(g+r-d)=-1$ (as well as the so-called Gieseker-Petri divisors whose definition we omit). In each case, the key technical difficulty is the computation of the numerical class of these divisors, i.e. the coefficients $a$ and $\{b_i\}_{i=0}^{\lfloor g/2 \rfloor}$ such that $D_{r,d} \equiv a\lambda+\sum_{i=0}^{\lfloor g/2 \rfloor}b_i \delta_i$. One can extract these coefficients by the method of test curves, provided one knows which singular curves in $\M_{g}$ actually lie in $D_{d}$ or $D_{r,d}$. To answer this question, Harris and Mumford developed the theory of admissible covers \cite{HMKodaira}, and Eisenbud and Harris developed the theory of limit linear series \cite{EHLimit}. The final result of this computation is a numerical proportionality
$$
D_{r,d} \sim \left(6+\frac{12}{g+1}\right)\lambda-\delta_0 - \sum_{i=1}^{\lfloor g/2\rfloor} a_i\delta_i, \quad a_i \geq 0.
$$ 
In particular, for $g \geq 24$, $s(D_{r,d})<13/2$ so $\M_{g}$ is of general type as desired.

The discovery that $\M_g$ is of general type for $g\geq 24$ spurred an interest to the intermediate cases of $11\leq g\leq 23$. The $g=12$ had actually been settled by Sernesi prior to the work of Harris and Mumford \cite{sernesi12}, but Chang and Ran used a more general method to establish the unirationality of $\M_g$ for $g=11, 12, 13$ \cite{ChangRan13}. In addition, they showed that the slopes of $\M_{15}$ and 
$\M_{16}$ are greater than $6\frac{1}{2}$, thus
proving that $\M_{15}$ and 
$\M_{16}$ have negative Kodaira dimension \cite{ChangRan15, ChangRan16}. It follows that these spaces are uniruled \cite{bdpp}, and Bruno and Verra proved the stronger result that $\M_{15}$ is rationally connected \cite{bruno-verra}. 
The unirationalty question has been recently revisited by Verra who
proved that $\M_{14}$ is unirational \cite{verra14}. 
As we discuss below, Farkas has shown that $\M_{22}$ is of general type \cite{farkas-aspects}. As far 
as we know, the Kodaira dimension of $\M_g$ is completely unknown for $17\leq g\leq 21$. 

With this historical background in place, let us return to Table 3 
and explain each of the rows in greater detail.
\subsubsection{Effective cones}
The work of Eisenbud, Harris, and Mumford gave rise to an intensive study of the restricted effective of $\M_{g}$. Since the boundary $\delta$ obviously lies on the edge of the effective cone for $g \geq 3$, the key question is: what is the {\em slope} 
$$s_g:=\inf\{s(D) \ : \ D\subset \M_g \text{  irreducible, not contained in the boundary}\}
$$ i.e. the slope of lower boundary of the effective cone $\Eff(\M_{g})$. For a long 
time, the Brill-Noether divisors $D_{r,d}$ were the divisors of the smallest known slope. This led Harris and Morrison 
to formulate the \emph{Slope Conjecture}, namely that \emph{every effective divisor has slope at least $6+\frac{12}{g+1}$} \cite{harris-morrison}. 
The obvious way to test the conjecture is by computing the slopes of various effective divisors in $\M_{g}$, and ultimately Farkas and Popa produced a counterexample by doing just that \cite{farkas-popa}. Their counterexample is given by the divisor $K_{10}\subset \M_{10}$ of curves isomorphic to a hyperplane section of a K3 surface, first introduced by Cukierman and Ulmer \cite{CU}. 
Farkas and Popa computed the class of $K_{10}$ and showed that $s(K_{10})=7<6+\frac{12}{11}$. Subsequently, more counterexamples to the Slope Conjecture were produced by Farkas in 
\cite{farkas-syzygy, farkas-koszul} and Khosla \cite{khosla, khosla2}. 
In particular, in \cite{farkas-koszul} it is shown that all known effective divisors of small slope (Brill-Noether, Gieseker-Petri, $K_{10}$,
Khosla's divisors) can be obtained from a single syzygy-theoretic construction. These so-called {\em Koszul divisors} provide an infinite sequence of 
counterexamples to the Slope Conjecture. Furthermore, when $g=22$, there is a Koszul divisor of slope $17121/2636<6\frac{1}{2}$ making $\M_{22}$ a variety of general type \cite[Theorem 7.1]{farkas-aspects}. Curiously, all Koszul divisors have slopes no smaller than $6+\frac{10}{g}$ 
which leaves open the possibility of a positive genus-independent lower bound for the slopes $s_g$. As far as we know, no one has constructed an effective divisor of slope less than $6$. 

In a different direction, there have been several attempts to produce lower bounds for the slope via the method of moving curves. Here, the key idea is that if $C$ is a numerical curve class whose deformations cover $\M_{g}$ (such curve classes are called \emph{moving}), then evidently $D.C \geq 0$ for any effective divisor $D=a\lambda-\sum_{i=0}^{\lfloor g/2\rfloor} b_i\delta_i$. In particular,
$$
s(D)=\inf_{i=0}^{\lfloor g/2\rfloor} a/b_i \geq \delta.C/\lambda.C.
$$
Thus, if we define the \emph{slope} of a curve $C$ to be the ratio $\delta.C/\lambda.C$, then the slope of any moving curve class gives a lower bound for the slope $s_{g}$. Harris and Morrison exploited this idea by using admissible covers to compute the slopes of one-parameter families of $k$-gonal curves \cite{harris-morrison}. When $k\geq (g+1)/2$, such families 
define moving curves in $\M_g$ and using this the authors establish the Slope Conjecture for $g\leq 5$, and give an asymptotic lower bound $s_g\geq \frac{576}{5g}$ for $g\gg 0$. The idea of using families of admissible covers to obtain lower bounds on $s_g$ was recently revisited by Dawei Chen who writes down infinite Zariski dense families of branched covers of an elliptic curve in \cite{dawei-thesis2, dawei-elliptic} and branched 
covers of $\PP^1$ in \cite{dawei-quadratic}. 
Somewhat surprisingly, the resulting asymptotic lower bound on $s_g$ in 
\cite{dawei-elliptic}
turns out to be $\frac{576}{5g}$ as well, which led Chen to conjecture that $s_g\sim \frac{576}{5g}$ for $g\to \infty$.

There are many other approaches to producing moving curves on $\M_g$. For example, taking linear sections of the Severi variety $V_{d,g}$ gives moving curves on $\M_{g}$ when $d \geq \lfloor \frac{2g+8}{3} \rfloor$. The corresponding curve classes were computed in the first author's thesis \cite{fedorchuk-thesis}, and yield an asymptotic bound of the form $o(1/g)$. In a different vein, if $\pi\co \M_{g,1} \rightarrow \M_{g}$ is the universal curve, the fact that  $\psi_1$ is the limit of ample divisors implies that  $\pi_*(\psi_1^{3g-4})$ is the limit of moving curve classes on $\M_{g}$. Pandharipande \cite{pand-slopes} computed the slope of this 
curve class $\pi_*(\psi_1^{3g-4})$ to be $\frac{60}{g+4}$ using Hodge integrals, thus giving a lower bound on the slope for all $g$. Alas, there is a large gap between $6+\frac{10}{g}$ and $\frac{60}{g+4}$, especially when $g$ is large, and 
 the precise boundary of the (restricted) effective cone remains unknown.

The study of the effective cone of $\M_{0,n}$ has a much different flavor; since $\lambda=0$ on $\M_{0,n}$, the question simply becomes: what linear combinations of boundary divisors are effective? Since every boundary divisor of $\M_{0,n}$ clearly lies on the boundary of the effective cone, it is natural to 
hope that the effective cone is simply spanned by the boundary divisors. However, Keel and Vermeire independently produced divisors in $\M_{0,6}$ which are not effective sums of boundary divisors \cite{vermeire}. Hassett and Tschinkel proved that Keel-Vermeire divisors together with the 
boundary divisors generate $\Eff(\M_{0,6})$ \cite{brendan-yuri}. Recently, Castravet and Tevelev \cite{castravet-tevelev} have constructed a plethora of new 
examples of effective divisors which are not effective sums of boundary divisors -- their so-called \emph{hypertree divisors}. They have conjectured that the hypertree divisors, together with the boundary divisors, should span the entire effective cone. 

As for the restricted effective cone of $\M_{0,n}$, it is an easy consequence of a theorem of Keel and McKernan, who showed that every effective divisor on the symmetrized moduli space $\M_{0,n}/S_{n}$ \emph{is} numerically equivalent to an effective sum of $S_{n}$\nb-invariant boundary divisors \cite[Theorem 1.3]{KMc}. Since 
$$
\psi=\sum_{i=1}^{\lfloor n/2\rfloor} \frac{i(n-i)}{n-1} \Delta_i
$$
is an $S_{n}$-invariant sum of boundary divisors, it follows that any divisor of the form $a\psi+b\delta$ is effective iff the coefficients of $\Delta_i$'s in the sum 
$$
\sum_{i=1}^{\lfloor n/2\rfloor} \left(a+b\frac{i(n-i)}{n-1}\right) \Delta_i
$$
 are positive.  From this, one sees that the restricted effective cone of $\M_{0,n}$ is spanned by the two rays indicated in Table 3. 
 
Finally, while Logan \cite{logan-kodaira} and Farkas \cite{farkas-koszul} have extended the work of Eisenbud, Harris, and Mumford to $\M_{g,n}$ (see \cite[Theorem 1.10]{farkas-koszul} for a comprehensive list of $(g,n)$ where $\M_{g,n}$ is known to be of general type), no analogues of the slope conjecture have been offered or investigated for $\M_{g,n}$. Needless to say, there is no reason to expect the restricted effective cone of $\M_{g,n}$ to be easier to study than that of $\M_{g}$.

\subsubsection{Nef cones} Next, we consider the nef cone of $\M_{g}$. The Hodge class $\lambda$ is well-known to be nef on $\M_{g}$; indeed it is the pullback of the canonical polarization on $\Abar_{g}$ via the
extended Torelli morphism $\tau\co \M_{g} \rightarrow \Abar_{g}$. The nefness of $11\lambda-\delta$ is the result of  Cornalba and Harris \cite{CH}, who exploit the fact that the canonical embedding of a non-hyperelliptic smooth curve of genus $g$ is asymptotically Hilbert stable to deduce 
that 
the divisor $\left(8+\frac{4}{g}\right)\lambda-\delta$ is positive on families with smooth non-hyperelliptic generic fibers. 
Combining this with a separate analysis of the hyperelliptic and 
nodal case gives the following.
\begin{theorem}[\text{\cite[Theorem 1.3]{CH}}] \label{T:CH}
The divisor $s\lambda-\delta$ is ample for $s>11$. Moreover, $11\lambda-\delta$ has degree $0$ precisely on curves whose 
only moving components are elliptic tails.
\end{theorem}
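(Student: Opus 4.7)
The plan follows Cornalba--Harris. By Kleiman's criterion, it is enough to verify positivity of $s\lambda-\delta$ on every proper curve $B\to \Mg{g}$; replacing $B$ by a finite cover if necessary, we pull back a family $f\co \mathcal{C}\to B$ of stable curves. The key tool is the \emph{Cornalba--Harris positivity lemma}: if $L$ is a relatively basepoint-free line bundle on $\mathcal{C}$ whose $m$-th Hilbert points are GIT semistable under $\operatorname{SL}(H^0(L_b))$ on every geometric fiber, then a certain $\QQ$-line bundle built from $\det f_*L$ and $\det f_*(L^{\otimes m})$ has non-negative degree on $B$. The lemma is proved by producing a morphism $B\to \operatorname{Hilb}^{\ss}\gitq \operatorname{SL}$ and pulling back the natural GIT polarization.

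First I would apply the lemma with $L=\omega_{\mathcal{C}/B}$ and $m\gg 0$, invoking Gieseker's theorem that $m$-canonically embedded smooth non-hyperelliptic curves have semistable Hilbert points. A Grothendieck--Riemann--Roch calculation paralleling Proposition \ref{P:Hilbert-polarization} identifies the resulting non-negative combination as $(8g+4)\lambda-g\delta_0-\sum_{i\geq 1}c_i\delta_i$ with $c_i>0$, so that $(8+4/g)\lambda-\delta$ is positive on any family whose generic fiber is smooth and non-hyperelliptic. Next I would handle hyperelliptic families via the admissible double cover construction: such a family factors through $\Mg{0,2g+2}$, and Mumford's hyperelliptic relation reduces positivity of $s\lambda-\delta$ with $s\geq 11$ to an elementary count on branch sections. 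Finally, families contained in a boundary divisor are handled by normalizing at the generic node and inducting on $g$. Combining the three cases yields nefness of $11\lambda-\delta$.

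For the ampleness of $s\lambda-\delta$ with $s>11$ and the degree-zero characterization, I would argue as follows. Tracing the equality case in each of the three regimes shows that $(11\lambda-\delta).B=0$ forces the family to have smooth generic fiber and to become isotrivial on any component whose generic fiber is not an elliptic tail; equivalently, the only moving parts of the fibers are elliptic tails. On such a family, an explicit computation on $\overline{\mathcal{M}}_{1,1}$ (where $12\lambda$ coincides with the class of the nodal cubic locus) verifies simultaneously that $(11\lambda-\delta).B=0$ and $\lambda.B>0$, which yields the final sentence of the theorem and, via Kleiman, the ampleness of $s\lambda-\delta$ for $s>11$.

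The main obstacle is the precise Grothendieck--Riemann--Roch bookkeeping required to extract a numerical inequality with the tight constant $8+4/g$ from the qualitative GIT input of Gieseker's asymptotic stability. A secondary delicacy is matching the boundary coefficients from the admissible-cover analysis of hyperelliptic families with those appearing on $\Mg{g}$ so that the combined nef threshold closes at $s=11$; this requires care with the ramification behavior of the admissible double cover and with Mumford's explicit computation of the hyperelliptic class.
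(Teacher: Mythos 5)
Your proposal reconstructs precisely the Cornalba--Harris argument that the paper is citing: reduce to one-parameter families, apply the GIT-positivity inequality to the canonical embedding (using asymptotic Hilbert stability of $1$-canonically embedded smooth non-hyperelliptic curves) to get $\bigl(8+\frac{4}{g}\bigr)\lambda-\delta\geq 0$ on such families, treat hyperelliptic families via the admissible double cover and the nodal/boundary case by normalization and induction, and then trace equality to identify the elliptic-tail locus. This is exactly the strategy the surrounding text attributes to \cite{CH}, so your route and the paper's are the same.
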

In fact, Cornalba and Harris proved a stronger result regarding generically smooth families of stable curves. We do not 
state it here since it has 
been since sharpened by Moriwaki \cite{moriwaki1} as follows.
\begin{proposition}\label{P:moriwaki}
The divisor
\begin{align*}
\left(8g+4\right)\lambda-g\delta_{0}-\sum_{i=1}^{\lfloor g/2\rfloor} 4i(g-i)\delta_i
\end{align*}
has non-negative degree on every complete one-parameter family of generically smooth curves of genus $g\geq 2$. 
 \end{proposition}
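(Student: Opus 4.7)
The plan is to derive the inequality from a relative Bogomolov-type inequality applied to a sheaf naturally associated with the canonical map. After semistable reduction (which multiplies $\lambda$ and each $\delta_i$ by the same ramification degree and is therefore harmless), I may assume $f\co X\ra B$ is a semistable family with $X$ a smooth projective surface, $B$ a smooth projective curve, generic fiber smooth of genus $g$, and every singular fiber reduced nodal. The key objects are the Hodge bundle $\mathbb{E}=f_*\omega_{X/B}$ with $\lambda=c_1(\mathbb{E})$, the class $L:=c_1(\omega_{X/B})$, and Mumford's Grothendieck--Riemann--Roch relation $f_*(L^2)=12\lambda-\delta$.

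The core computation is Bogomolov's inequality applied to the rank $g-1$ kernel $\mathcal{K}$ of the evaluation morphism $f^*\mathbb{E}\twoheadrightarrow \omega_{X/B}$, which is surjective because $\omega_C$ is base-point-free on every semistable curve of genus $\geq 2$. From the defining sequence one computes $c_1(\mathcal{K})=f^*\lambda-L$ and $c_2(\mathcal{K})=L^2-f^*\lambda\cdot L$, so
\[
\Delta(\mathcal{K}):=2(g-1)c_2(\mathcal{K})-(g-2)c_1(\mathcal{K})^2 = g\,L^2-2f^*\lambda\cdot L,
\]
and, pushing forward via the projection formula and substituting $f_*L=2g-2$ and Mumford's relation, $f_*\Delta(\mathcal{K})=(8g+4)\lambda-g\delta$. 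Thus the \emph{unrefined} Bogomolov inequality $\Delta(\mathcal{K})\geq 0$ on the surface already yields a bound of the asserted shape, but with the uniform coefficient $g$ on every boundary component.

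The hard part, and the essence of Moriwaki's improvement, is sharpening the coefficient at each \emph{disconnecting} boundary from $g$ to $4i(g-i)$. This requires a \emph{relative} Bogomolov inequality: at a disconnecting node whose fiber is $C_1\cup_p C_2$ of genera $i$ and $g-i$, the Hodge bundle splits in an analytic neighborhood as a direct sum of ranks $i$ and $g-i$, and this block structure forces an extra positive local contribution to $\Delta(\mathcal{K})$ of order $\bigl(4i(g-i)-g\bigr)$ per such node; by contrast, at a non-disconnecting node the Hodge bundle does not split, so the coefficient $g$ on $\delta_0$ is already sharp. The technical heart of the proof is therefore the local analysis of the Hodge bundle and the evaluation map in a neighborhood of each disconnecting node, together with the assembly of these local defects into a refined, global relative Bogomolov inequality on $X$, from which the asserted statement follows by the same push-forward computation as above.
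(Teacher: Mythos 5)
The paper does not prove this proposition; it cites Moriwaki's article as a black box, so there is no in-text argument to compare with. Your sketch correctly identifies the broad strategy of Moriwaki's proof --- semistable reduction, the kernel $\mathcal{K}$ of the evaluation map $f^*f_*\omega_{X/B}\to\omega_{X/B}$, the discriminant computation $f_*\Delta(\mathcal{K})=(8g+4)\lambda-g\delta$, and a relative Bogomolov inequality with boundary corrections --- and your Chern-class arithmetic is correct. But there are two substantive gaps. First, the claimed ``unrefined'' step does not stand: Bogomolov's inequality $\Delta(\mathcal{K})\geq 0$ on the surface $X$ requires $\mathcal{K}$ to be $\mu$-semistable with respect to some polarization, which you do not verify and which generally fails when the family has reducible fibers; $\Delta$ can easily be negative for an unstable sheaf (e.g.\ $\mathcal{O}_X(D)\oplus\mathcal{O}_X(-D)$ with $D^2>0$). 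The correct input is semistability of $\mathcal{K}|_C$ on a general \emph{fiber} --- the Paranjape--Ramanan theorem that the dual span bundle $M_{\omega_C}$ of a smooth curve of genus $\geq 2$ is semistable --- fed into a \emph{relative} Bogomolov inequality which does not require semistability on $X$; this yields the Cornalba--Harris slope bound and its refinement at once, not in two separate steps.

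Second, the refinement is only gestured at, and the heuristic offered is not right. The Hodge bundle does not split in a neighborhood of a $\delta_i$-point of $B$: its fiber there decomposes as $H^0(\omega_{C_1})\oplus H^0(\omega_{C_2})$, but the bundle mixes the two summands over a punctured disc. The actual source of the boundary defect is instability of the restriction $\mathcal{K}|_{C_b}$ to the \emph{reducible fiber} $C_b=C_1\cup_p C_2$: since every global section of $\omega_{C_b}$ restricts on $C_1$ to an element of $H^0(\omega_{C_1})$, the trivial rank-$(g-i)$ subsheaf $H^0(\omega_{C_2})\otimes\mathcal{O}_{C_1}\subset\mathcal{K}|_{C_1}$ has slope $0>\mu(\mathcal{K}|_{C_1})=-(2i-1)/(g-1)$ and destabilizes it (symmetrically on $C_2$). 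Converting this into the precise defect $4i(g-i)-g$, establishing the relative Bogomolov inequality with explicit boundary contributions, and checking that the defect vanishes over $\delta_0$ (your appeal to non-splitting of the Hodge bundle is an observation, not an argument) constitutes the substance of Moriwaki's paper; your sketch names the destination but supplies no path.
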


The full nef cone of $\M_{g}$ is not known, but there is a fascinating conjectural description -- the so-called F-conjecture -- which would in principle determine $\Nef(\M_{g,n})$ for all $g$ and $n$. Furthermore, the F-conjecture, which is described in greater detail in Section \ref{S:F-conjecture} below, would imply that the nef cone of $\M_{g,n}$ is finite polyhedral for all $g$ and $n$, which is surprising from a purely Mori-theoretic point of view. Nevertheless, two recent results comprise substantial evidence for the F-conjecture: First, Gibney, Keel, and Morrison have shown that if the F-conjecture holds for $\M_{0,g+n}/S_{g}$, then it holds for $\M_{g,n}$ \cite{GKM}. In particular, it is sufficient to prove the conjecture for $\M_{0,n}$ for all $n$, where it appears somewhat more plausible. Second, Gibney has developed efficient computational methods for checking the conjecture, and has proved that it holds for $\M_{g}$ up to $g=24$ \cite{Gib}.

\subsubsection{Mori dream spaces}

Finally, we briefly address the question of whether $\M_{g,n}$ is a Mori dream space. Keel \cite[Corollary 3.1]{keel-annals}
showed that the divisor class $\psi$ on $\M_{g,1}$ is big and nef but not semiample (in characteristic zero) for $g\geq 3$. The same argument shows that $\psi$ is big and 
nef but not semiample for $(g,n)\geq (2,2)$.
Hence $\M_{g,n}$ cannot be a Mori dream space if $g \geq 3$ and $n=1$ or $g\geq 2$ and $n\geq 2$. This leaves the cases of $\M_{g}$, $\M_{0,n}$, $\M_{1,n}$, $\M_{2,2}$. It is easy to see by a direct computation that $\M_{2}$ and $\M_{0,5}$ are Mori dream spaces (see Sections \ref{S:MMPM2} and \ref{S:MMPM0n}). A direct proof that $\M_{0,6}$ is a Mori dream space was given by Castravet in \cite{castravetM06}. 
That $\M_{0,n}$ is a Mori dream space for $n\leq 6$ also follows from the fact that it is log Fano and
from the results of \cite{BCHM}. The remaining cases are, so far as we know, open.

Obviously, a full treatment of the results mentioned in the preceding paragraphs would require a survey in itself, so we confine ourselves here to just a few highlights: In Section \ref{S:F-conjecture}, we shall describe the F-conjecture in greater detail and explain the partial results which are used in the log MMP for $\M_{g}$. In Section \ref{S:kollar}, we shall sketch a simplified version of the Koll\'{a}r's method of producing nef divisors on $\M_{g, \A}$. These methods do not reproduce the sharp results of Cornalba-Harris (Theorem \ref{T:CH}) and Moriwaki (Proposition \ref{P:moriwaki}), but they do give a feel for the basic ideas involved. Furthermore, the resulting nef divisors allow us to interpret $\M_{g, \A}$ as log canonical models of $\M_{g,n}$ and they also play a key role in the log MMP for  $\M_{0,n}$.

\subsubsection{F-conjecture}\label{S:F-conjecture}
Recall that $\M_{g,n}$ admits a natural stratification by topological type, in which the locus of curves with $m$ nodes form the strata of codimension $m$. Thus, the closure of the locus of curves with $3g-4+n$ nodes comprises a collection of irreducible curves on $\M_{g,n}$, the so-called \emph{F-curves}. The F-conjecture simply asserts

\begin{conjecture}[F-conjecture]\label{Fconjecture}
A divisor $D$ on $\M_{g,n}$ is nef if and only if $D$ is F-nef, i.e. $D$ intersects every F-curve non-negatively.
\end{conjecture}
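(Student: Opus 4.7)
The direction ``nef $\Rightarrow$ F-nef'' is trivial, since each F-curve is represented by an actual integral curve in $\SM_{g,n}$ on which any nef divisor must have non-negative degree. The content of the conjecture lies entirely in the converse, and my plan would split the work into a reduction to genus zero, followed by a combinatorial analysis of $\Mg{0,N}$.

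For the reduction I would follow the Gibney-Keel-Morrison strategy \cite{GKM}. The finite clutching morphisms
\[
\SM_{g_1,n_1+1}\times \SM_{g_2,n_2+1}\ra \SM_{g,n}, \qquad \SM_{g-1,n+2}\ra \SM_{g,n}
\]
pull F-curves back to F-curves, and their images cover the boundary. Given an F-nef divisor $D$ on $\SM_{g,n}$, induction on $3g-3+n$ would show that the restriction of $D$ to each boundary stratum is nef. For the interior, the key tool is Moriwaki's Proposition \ref{P:moriwaki}, which provides a concrete positive class on every generically smooth family; combined with a careful bookkeeping of moving curve classes, this should reduce the statement to the analogous one on $\Mg{0,g+n}/S_g$ for all $g$ and $n$ with $2g-2+n>0$.

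For $\Mg{0,N}$ I would attempt an induction on $N$ using the forgetful morphisms $\pi_i\co \Mg{0,N}\ra \Mg{0,N-1}$ together with the natural contractions to the Hassett weighted spaces $\Mg{0,\A}$ and to $\M_{0,N}[\psi]$ of Corollary \ref{C:psi-stack}, whose pulled-back ample cones account for large sub-cones of $\Nef(\Mg{0,N})$. Given an F-nef class $D$, after subtracting off non-negative combinations of such pullbacks one is reduced to testing nefness of a residual class $D'$ lying, by Keel's relations in $\N^1(\Mg{0,N})$, in an explicit small subspace; the goal is then to show that F-positivity alone pins $D'$ inside the cone generated by the remaining known nef classes. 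Symmetrizing under the $S_N$-action, as in the argument of Keel-McKernan, should simplify the relevant linear algebra considerably.

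The hard part --- and the reason the conjecture remains open in general --- is that establishing nefness of the residual class ultimately requires showing that the closure of the F-curve cone coincides with $\NE(\Mg{0,N})$, which is merely another reformulation of the F-conjecture itself. Concretely, one must rule out the existence of ``hidden'' extremal rays of $\NE(\Mg{0,N})$ that are not spanned by F-curves, and the combinatorics of enumerating and controlling such potentially exotic extremal rays grows rapidly in $N$. This is precisely the obstacle that restricts the computer-assisted methods of Gibney \cite{Gib} to finitely many cases at a time: the inductive strategy splits the problem into many manageable sub-problems but does not in itself close any of them, and a genuinely new structural insight into $\NE(\Mg{0,N})$ appears to be required.
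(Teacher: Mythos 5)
You have correctly recognized that this statement is a \emph{conjecture}, not a theorem: the paper never offers a proof, and no complete proof is known. There is therefore no argument in the paper to compare against, and your write-up is best read as a survey of the state of the art rather than as a proof attempt. With that framing, what you say is essentially accurate and matches the paper's own discussion: the forward implication (nef implies F-nef) is trivial; the Gibney--Keel--Morrison reduction via clutching morphisms and Theorem \ref{T:bridge} reduces the converse to the genus-zero case $\Mg{0,g+n}/S_g$; and the residual genus-zero problem is precisely where the conjecture remains open, with Gibney's computations verifying it only in finitely many cases. You are also right that, as stated, the genus-zero step is circular --- identifying $\NE(\Mg{0,N})$ with the F-curve cone is equivalent to the conjecture itself --- so no amount of bookkeeping with forgetful morphisms, Hassett contractions, or $\M_{0,N}[\psi]$ closes the gap without a new structural input. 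The one place your sketch is slightly off is the role you assign to Moriwaki's Proposition \ref{P:moriwaki} and to moving curves in the reduction step: the GKM argument does not rest on slope inequalities for generically smooth families but on the bridge morphism $i\co \M_{0,g+n}/S_g \to \M_{g,n}$ and a direct analysis of nefness along boundary strata, and the paper's Corollary \ref{C:Nefness} invokes the Keel--McKernan result \cite{KMc} rather than Moriwaki. You should also keep in mind that the stronger Fulton-type statement fails in the form one might naively hope for (Keel--Vermeire divisors on $\M_{0,6}$ are F-nef sums that are not effective combinations of boundary, showing the cycle-theoretic analogue is false in codimension one), which is part of why the combinatorics you allude to is genuinely delicate and not merely tedious. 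In short: you have not proved the conjecture, you correctly say so, and aside from the misattributed use of Moriwaki your account of the known reductions and obstructions is faithful to the literature the paper cites.
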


There are finitely many F-curves, and it is straightforward to compute their intersection numbers with the divisor classes of $\M_{g,n}$. Let us give some explicit examples of F-curves that will be relevant in Section \ref{S:log-MMP}. 

\begin{example}[Elliptic tails and elliptic bridges]\label{E:elliptic-bridges}
We define the F-curve of elliptic tails, denoted $\mathrm{T}_1$, by taking a pencil of plane cubics with a marked base point section --
a degree $12$ cover of the universal family over $\Mg{1,1}$ -- 
 and attaching a fixed maximally degenerate genus $g-1$ curve (i.e. a genus $g-1$ curve with $3g-7$ nodes) along the section. One easily computes the intersection numbers: 
\begin{equation}\label{E:elliptic-tail}
\begin{aligned}
\lambda\cdot \mathrm{T}_1 &=1, & \delta_{0}\cdot \mathrm{T}_1 &=12,  \\
\delta_1\cdot \mathrm{T}_1 &=-1, & \delta_{i}\cdot \mathrm{T}_1&=0, \ i\neq 0,1. 
\end{aligned}
\end{equation}

We define the F-curve of separating elliptic bridges (resp. non-separating elliptic bridges), denoted $\mathrm{EB}_i^{s}$ (resp. $\mathrm{EB}^{ns}$) as follows:
Take a $4$\nb-pointed rational curve
$(\PP^1, p_1,p_2,p_3,p_4)$ and two $1$\nb-pointed maximally degenerate curves $(E_1, q_1)$ of genus $i$
and $(E_2, q_2)$ of genus $g-i-1$ (resp. a $2$\nb-pointed maximally degenerate curve $(C, q_1, q_2)$ of genus $g-2$). Identify $p_1\sim q_1$, $p_4\sim q_2$ and $p_2\sim p_3$, and consider the family obtained by varying the $4$ points on $\P^1$.

\begin{figure}[h]
\scalebox{1}{\includegraphics{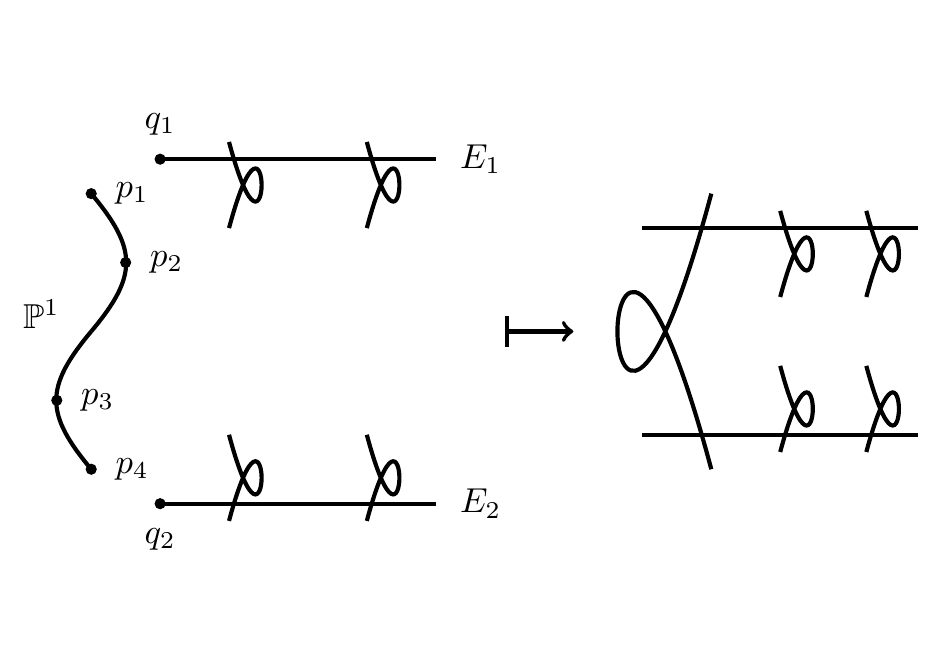}}
\caption{Separating elliptic bridge}
\label{F:elliptic-bridge}
\end{figure}
\noindent
The non-trivial interesection numbers of these F-curves are easily computed as
$$
\quad \lambda\cdot\textrm{EB}^s_i =0, \quad         \delta_1\cdot\textrm{EB}^s_i  =1, \quad        \delta_i\cdot\textrm{EB}^s_i  =-1,   \quad       \delta_{g-i-1}\cdot\textrm{EB}^s_i  = -1 
$$
in the separating case, and 
$$
\lambda\cdot\textrm{EB}^{ns} =0,  \quad \delta_1\cdot\textrm{EB}^{ns}=1, \quad \delta_0\cdot\textrm{EB}^{ns}=-2,
$$ 
in the non-separating case.

Finally, a general {\em family of  elliptic bridges} is obtained by taking a one-parameter
family of $2$-pointed genus $1$ 
curves in $\Mg{1,2}$ and attaching constant curves to obtain a family of genus $g$ curves. 
Because of the relation $\lambda=\delta_0/12$ and $\psi=2\lambda+2\delta_{0,\{1,2\}}$ in 
$\Pic(\Mg{1,2})$, the curve class of every 
one-parameter family of elliptic bridges 
is an effective
linear combination of classes $\textrm{T}_1$ and $\textrm{EB}^s_i$ if the bridge is separating,
and  classes $\textrm{T}_1$ and $\textrm{EB}^{ns}$ if the bridge is non-separaring.

\end{example}

Continuing as in the example, one may assemble a list of the intersection numbers of F-curves. Thus, the
 F-conjecture may be formulated as saying that a divisor $D=a\lambda-\sum_{i=0}^{\lfloor g/2 \rfloor}b_i\delta_i$ is nef if a certain finite set of linear inequalities holds 
 (see \cite[Theorem 2.1]{GKM} for a comprehensive list). In particular, the F-conjecture would imply the cone is finite polyhedral.

As we remarked above, perhaps the most striking evidence in favour of the F-conjecture is the theorem of Gibney, Keel and Morrison which says that if the F-conjecture holds for $\M_{0,g+n}/S_g$, then it holds for $\M_{g,n}$. Their argument proceeds by considering the map
$$
i\co \M_{0,g+n}/S_g \rightarrow \M_{g,n},
$$
obtained by attaching fixed genus $1$ curves onto the first $g$ (unordered) marked points. They then prove \cite[Theorem 0.3]{GKM}:
\begin{theorem}\label{T:bridge}
A divisor $D$ is nef on $\M_{g,n}$ if and only if $D$ is F-nef and $i^*D$ is nef on $\M_{0,g+n}$.
\end{theorem}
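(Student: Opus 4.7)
The forward implication is immediate: every F-curve is a complete curve in $\Mg{g,n}$, so a nef $D$ pairs non-negatively with each of them, and $i^*D$ is nef because the pullback of a nef class is nef.

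For the converse, suppose $D$ is F-nef and $i^*D$ is nef on $\Mg{0,g+n}/S_g$; we must show $D\cdot B\geq 0$ for every irreducible complete curve $B\subset \Mg{g,n}$. After replacing $B$ by the normalization of a finite cover, we may pull back the universal curve and obtain a generically smooth family $\pi\co \mathcal{X}\to B$ of stable $n$-pointed genus $g$ curves. Decompose $\mathcal{X}$ into its irreducible components, each of which is either \emph{moving} (non-isotrivial over $B$) or \emph{fixed} (isotrivial over $B$).

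The core of the argument is a reduction to the case in which every moving component of $\mathcal{X}$ is rational. If $Y\subset\mathcal{X}$ is a moving component of geometric genus $\geq 1$, the Cornalba-Harris-Moriwaki inequality (Proposition \ref{P:moriwaki}), suitably extended to the pointed setting as in \cite{logan-kodaira}, controls the contribution of $Y$ to $D\cdot B$ up to a non-negative multiple of the elliptic-tail F-curve $\mathrm{T}_1$ (whose intersection numbers are given by \eqref{E:elliptic-tail}) together with non-negative multiples of the elliptic-bridge F-curves $\textrm{EB}^s_i$ and $\textrm{EB}^{ns}$ from Example \ref{E:elliptic-bridges}. By F-nefness, each such F-curve pairs non-negatively with $D$, and we may therefore replace each positive-genus moving component by its stable contraction -- that is, by a fixed subcurve -- without decreasing $D\cdot B$.

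After this reduction, all moduli variation of $\mathcal{X}$ is carried by rational moving components, while every higher-genus topological feature lies on fixed subcurves attached along the rational spine. Marking the rational spine at the $g$ attaching nodes of the fixed positive-genus pieces and at the $n$ original sections produces a family of $(g+n)$-pointed rational curves, i.e.\ a curve $\tilde B$ in $\Mg{0,g+n}/S_g$. The discrepancy between the specific genus-$1$ curves prescribed in the definition of $i$ and the fixed pieces appearing in $\mathcal{X}$ is absorbed by elliptic-tail pencils $\mathrm{T}_1$, which again pair non-negatively with $D$. Hence $D\cdot B = (i^*D)\cdot \tilde B + \sum c_j(D\cdot F_j)$, where the $F_j$ are F-curves and the $c_j\geq 0$; both terms are non-negative by hypothesis. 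The hardest part of this plan is executing the reduction step cleanly: one must verify that the Cornalba-Harris-Moriwaki bounds genuinely absorb the higher-genus variation of \emph{pointed} moving components, and that the residual class really lies in $i_*\mathrm{NE}(\Mg{0,g+n}/S_g)$ modulo F-curves rather than in some larger subcone.
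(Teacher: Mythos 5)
The paper cites this result from \cite{GKM} without proof, so there is no in-paper argument to compare against. Your sketch resembles the GKM approach in spirit — reduce to rational variation via a Cornalba--Harris--Moriwaki bound, then pass the residual class through $i$ — but, as you yourself flag, the key reductions are asserted rather than executed, and at least one asserted step fails as stated.

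The most concrete problem is the claim that after the Moriwaki reduction the rational spine carries exactly $g$ attaching nodes for fixed positive-genus pieces, so that adding the $n$ markings produces a curve in $\Mg{0,g+n}/S_g$. This is false in general: a fixed tail of geometric genus $h>1$ attaches at a single node but contributes $h$ to the genus, and dual graphs with cycles contribute genus without contributing any attaching nodes at all. So the spine need not carry $g+n$ markings, and there is no $\tilde B$ against which to compare $i^*D$. The ``absorbed by $\mathrm{T}_1$'' claim also does not survive scrutiny: for a fixed genus-one tail the discrepancy in family class is already zero (the class is insensitive to the $j$-invariant of a non-varying tail), while for a fixed tail of genus $h>1$ the difference is not an F-curve class at all — the two families would even live in different moduli spaces. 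Finally, Proposition \ref{P:moriwaki} gives non-negativity for one specific divisor; converting that into a lower bound for an arbitrary F-nef $D$ on a family with positive-genus moving components requires a coefficient comparison against the full list of F-inequalities (see \cite[Theorem 2.1]{GKM}) together with an induction on the dimension of moduli that handles generically nodal families through boundary strata. You acknowledge this is ``the hardest part'' but do not carry it out, and without that inductive scaffolding the reduction cannot be closed.
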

From this, they obtain two corollaries.
\begin{corollary}
If the F-conjecture holds for all $\M_{0,n}$, then it holds for all $\M_{g,n}$.
\end{corollary}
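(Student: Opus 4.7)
The plan is to let Theorem~\ref{T:bridge} do the heavy lifting. Fix an F-nef divisor $D$ on $\M_{g,n}$; we must prove $D$ is nef. By the theorem, nefness of $D$ is equivalent to the single extra condition that $i^{*}D$ be nef on $\M_{0,g+n}/S_{g}$. Composing with the finite quotient $\pi\co \M_{0,g+n}\ra \M_{0,g+n}/S_{g}$ and writing $\tilde{\imath}:=i\circ\pi$, this is in turn equivalent to the nefness of $\tilde{\imath}^{*}D$ on $\M_{0,g+n}$, and our hypothesis that the F-conjecture holds for $\M_{0,g+n}$ reduces the latter to checking that $\tilde{\imath}^{*}D\cdot F\geq 0$ for every F-curve $F\subset \M_{0,g+n}$.

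By the projection formula, $\tilde{\imath}^{*}D\cdot F = D\cdot \tilde{\imath}_{*}F$, so it suffices to identify $\tilde{\imath}_{*}F$ with a nonnegative (rational) combination of F-curves of $\M_{g,n}$. The key point, and the only content beyond Theorem~\ref{T:bridge}, is that up to a positive integer constant $\tilde{\imath}_{*}F$ is itself an F-curve on $\M_{g,n}$. To see this, recall that $F$ is a one-parameter family in which a single $4$-pointed $\PP^{1}$-component varies in cross-ratio while all other components of the generic fiber are $3$-pointed rational curves. The morphism $\tilde{\imath}$ glues a fixed $1$-pointed genus-one curve to each of the first $g$ marked points; taking these attached tails to be maximally degenerate (so each contributes its own self-node in addition to the attaching node) arranges that every fiber of $\tilde{\imath}_{*}F$ has precisely $(g+n-4)+2g=3g+n-4$ nodes. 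The resulting family has a single modulus (the same cross-ratio on the same $4$-pointed component), all other components being maximally degenerate; this is by definition a $1$-stratum in the topological stratification of $\M_{g,n}$, i.e.\ an F-curve of the ``varying cross-ratio'' type.

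With $\tilde{\imath}_{*}F$ identified as a positive multiple of an F-curve, the desired inequality $D\cdot \tilde{\imath}_{*}F\geq 0$ follows immediately from the F-nefness of $D$, and the corollary is proved. The hard part, namely isolating the exact extra condition on $\M_{0,g+n}/S_{g}$ that must be tested, is already absorbed into Theorem~\ref{T:bridge}; the only real work left for the corollary is the routine bookkeeping in the previous paragraph, whose only non-trivial input is choosing the topological type of the $g$ attached genus-one curves so that the image lies in the deepest stratum of the boundary.
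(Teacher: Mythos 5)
Your proof is correct and takes the same route as the paper's one-line proof, which asserts without justification that F-nefness of $D$ on $\M_{g,n}$ implies F-nefness of $i^*D$ on $\M_{0,g+n}$ and then invokes Theorem~\ref{T:bridge}. You simply make that key implication explicit: by the projection formula it reduces to pushing F-curves forward along $\tilde\imath$, and choosing the attached genus-one tails to be nodal shows the image is again (a positive multiple of) an F-curve.
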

\begin{proof}
If $D$ is F-nef on $\M_{g,n}$, then $i^*D$ is F-nef on $\M_{0,g+n}$. If the F-conjecture holds for $\M_{0,n}$, then $i^*D$ is nef so the theorem implies $D$ is nef.
\end{proof}

We pause here to remark that the F-conjecture for $\M_{0,n}$ is a special case of a conjecture of Fulton stating that any effective $k$-cycle on $\M_{0,n}$ is an {\em effective} combination of $k$-dimensional strata of curves with $n-3-k$ nodes. As we have discussed above, the divisors constructed by Keel and Vermeire show that the conjecture is false for $k=n-4$. However, there is a similar statement, dubbed {\em Fulton's conjecture}, which if true would imply Conjecture \ref{Fconjecture}.
\begin{conjecture}[Fulton's conjecture]\label{Fulton-conjecture}
Every F-nef divisor on $\M_{0,n}$ is an effective combination of boundary divisors.
\end{conjecture}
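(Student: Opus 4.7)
The plan is to proceed by induction on $n$, since Fulton's conjecture is essentially a combinatorial/linear algebra statement about the geometry of $\M_{0,n}$ that we expect to grow in complexity with $n$. For the base cases $n \leq 7$, a direct verification is feasible: one writes out the (finite) set of extremal rays of the F-nef cone and checks by linear programming that each extremal F-nef class lies in the cone spanned by boundary divisors. This reduces the heart of the problem to the inductive step.

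For the inductive step, I would exploit the forgetful morphism $\pi\co \M_{0,n} \to \M_{0,n-1}$. Given an F-nef divisor $D$ on $\M_{0,n}$, the strategy is to write $D \equiv \pi^* D' + E$, where $E$ is a non-negative linear combination of the "new" boundary divisors $\delta_{0,S}$ with $n \in S$, and $D'$ is a divisor on $\M_{0,n-1}$ that remains F-nef so that the inductive hypothesis can be applied to it. The coefficients of the new boundary divisors in $E$ are determined by testing $D$ against F-curves contained in the fibers of $\pi$ (fibers of $\pi$ are themselves curves $\M_{0,n-1}$-parameterized), and one uses the inequalities $D \cdot \mathrm{F} \geq 0$ to pin these down. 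The main nontrivial check is that, after subtracting off the natural guess for $E$, the residual class actually descends under $\pi$ to an F-nef class on $\M_{0,n-1}$; this amounts to verifying that every F-curve on $\M_{0,n-1}$ admits a lift to an F-curve on $\M_{0,n}$ on which the "new" part $E$ has the expected intersection.

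The key obstacle, and the reason the conjecture has resisted a general proof, is the combinatorial explosion in the inductive step. The F-nef cone is cut out by exponentially (in $n$) many linear inequalities indexed by F-curves, and expressing an element of it as an effective boundary sum requires choosing non-negative coefficients for exponentially many boundary generators $\delta_{0,S}$. The residue term $D - \pi^* D' - E$ can easily fail to be a pullback from $\M_{0,n-1}$ on the nose, and one is forced to use relations in $\Pic(\M_{0,n})$ (which, via Keel's theorem, are generated by pullbacks of the single cross-ratio relation on $\M_{0,4}$) to rewrite it. Controlling this rewriting so that non-negativity is preserved is precisely where the direct inductive argument breaks down. I expect that a successful proof would require either a fundamentally different viewpoint—a tropical/polyhedral interpretation making the effective decomposition manifest, the Kapranov blow-up presentation $\M_{0,n} \to \P^{n-3}$ to organize the combinatorics, or a GIT/VGIT approach identifying boundary decompositions with chambers of a wall-and-chamber structure—or a clever symmetrization exploiting the $S_n$-action to reduce the number of independent variables.

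By Theorem \ref{T:bridge} and the reduction of \cite{GKM}, any progress here would immediately yield the full F-conjecture for all $\M_{g,n}$, and so I would view even the partial results (e.g., proving the conjecture for F-nef divisors of a restricted numerical shape, such as symmetric ones or those of bounded "complexity") as valuable intermediate targets.
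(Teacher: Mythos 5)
This statement is labeled \emph{Conjecture} in the paper and is, to date, an open problem; the paper offers no proof, only a survey of partial results (the verifications for $n=5,6$ by Farkas--Gibney, $n=7$ by Larsen, and the $S_m$-invariant case by Fontanari). So there is no ``paper's own proof'' to compare against, and your text should not be read as a candidate proof: you yourself identify where your inductive step breaks, and that breakdown is real.

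To be concrete about the gap. Your plan is to write an F-nef divisor $D$ on $\M_{0,n}$ as $\pi^*D' + E$ with $E$ a nonnegative combination of boundary divisors containing the $n$th marking in their index, and $D'$ F-nef on $\M_{0,n-1}$. The problem is that this decomposition is not canonical: $\pi^*$ is far from surjective on $\N^1$, the kernel of $\pi_*$ is large, and the ``new'' boundary classes $\delta_{0,S}$ with $n\in S$ together with $\pi^*\Pic(\M_{0,n-1})$ span $\Pic(\M_{0,n})$ only modulo the Keel relations. Any attempt to pin down the coefficients of $E$ by intersecting with F-curves in fibers of $\pi$ only constrains $E$ along a subvariety, and there is no reason the residual $D-E$ should lie in the image of $\pi^*$ — let alone have an F-nef preimage. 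One would need to show that some choice of rewriting via the Keel relations simultaneously preserves nonnegativity of the $E$-coefficients and produces a pullback, and no mechanism for this is supplied. This is exactly the obstruction you flag, but flagging it does not close it; as written the ``proof'' establishes nothing beyond the base cases already known in the literature. What you have is a reasonable research sketch, correctly situated (including the observation, via Theorem \ref{T:bridge}, that Fulton's conjecture would imply the F-conjecture for all $\M_{g,n}$), but it should be presented as such rather than as an argument.
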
 
This conjecture was verified for $n=5, 6$ by Farkas and Gibney \cite{FG} and for $n=7$ by Larsen 
\cite{larsen}. In the special case of $S_m$-invariant ($m\geq n-3$) divisors it was proved by Fontanari  \cite{fontanari}.

We now return to Theorem \ref{T:bridge} and deduce a result that enables first several steps of the 
log minimal model program of $\M_{g}$.
\begin{corollary}[\text{\cite[Proposition 6.1]{GKM}}]\label{C:Nefness} Suppose that a divisor
$D=a\lambda-\sum_{i=0}^{\lfloor g/2\rfloor} b_i\delta_i$
is F-nef on $\M_g$, and for each $i\geq 1$, either $b_i=0$ or $b_i\geq b_0$, then $D$ is nef.
\end{corollary}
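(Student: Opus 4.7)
The plan is to deduce the corollary from Theorem~\ref{T:bridge}: since $D$ is assumed F-nef, it suffices to prove that $i^*D$ is nef on $\M_{0,g}/S_g$, where $i\co \M_{0,g}/S_g \to \M_g$ is the attaching map obtained by gluing fixed $1$-pointed curves of genus one onto the $g$ marked points.

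The first step is to compute the pullbacks of $\lambda,\delta_0,\delta_1,\ldots,\delta_{\lfloor g/2\rfloor}$. Because attaching fixed elliptic tails creates no moduli of the Hodge bundle and produces only separating nodes, $i^*\lambda=0$ and $i^*\delta_0=0$. For $2\le j\le\lfloor g/2\rfloor$, the divisor $\delta_j$ meets the image of $i$ transversely along the locus where the rational backbone has a node separating $j$ from $g-j$ attaching points, so $i^*\delta_j=\Delta_j^{\mathrm{sym}}$, the symmetric boundary class on $\M_{0,g}/S_g$. The class $i^*\delta_1$ is the subtle one, because the image of $i$ lies generically in $\delta_1$; a normal-bundle computation at each of the $g$ attaching nodes yields $i^*\delta_1=-\sum_{k=1}^{g}\psi_k$. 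Combining these gives
\[
i^*D \;=\; b_1\sum_{k=1}^{g}\psi_k \;-\; \sum_{j=2}^{\lfloor g/2\rfloor} b_j\,\Delta_j^{\mathrm{sym}},
\]
and substituting the Keel--McKernan identity $\sum_{k=1}^{g}\psi_k=\sum_{j=2}^{\lfloor g/2\rfloor}\tfrac{j(g-j)}{g-1}\Delta_j^{\mathrm{sym}}$ rewrites $i^*D$ as an explicit linear combination of symmetric boundary divisors, with coefficient $b_1\tfrac{j(g-j)}{g-1}-b_j$ on $\Delta_j^{\mathrm{sym}}$.

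The remaining task is to show that this combination is nef on $\M_{0,g}/S_g$. I would proceed by verifying F-nefness of $i^*D$ directly: for each F-curve $C$ on $\M_{0,g}/S_g$, the pushforward $i_*C$ decomposes in $\N_1(\M_g)$ as a sum of an F-curve of type $\mathrm{EB}^{s}_{j}$ or $\mathrm{EB}^{ns}$ on the backbone plus the $g$ constant elliptic-tail sections (which contribute copies of the $\mathrm{T}_1$ class, scaled by $b_0$ through the relation $\lambda\cdot\mathrm{T}_1=1$, $\delta_0\cdot\mathrm{T}_1=12$, $\delta_1\cdot\mathrm{T}_1=-1$). Hence $i^*D\cdot C=D\cdot i_*C$ is a linear combination of F-nef numbers on $\M_g$ together with correction terms that depend only on $b_0$ and on the $b_j$'s appearing above. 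The hypothesis ``$b_i=0$ or $b_i\ge b_0$'' for $i\ge 1$ is precisely what is needed to absorb the correction: in each case one checks that F-nefness of $D$ on $\M_g$ together with the numerical inequalities on the $b_i$'s forces the coefficient $b_1\tfrac{j(g-j)}{g-1}-b_j$ to intersect the F-curve non-negatively.

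The main obstacle is the final passage from F-nefness to nefness on $\M_{0,g}/S_g$, since the F-conjecture for $\M_{0,n}/S_n$ is not known in full. I would handle this by induction on $g$: the restriction of $i^*D$ to each boundary component of $\M_{0,g}/S_g$ is again a class of the same numerical shape on a product of smaller symmetric quotients $\M_{0,g'}/S_{g'}$ with $g'<g$, to which the inductive hypothesis (and the F-nef conditions already established) apply; Gibney's computational verification of the F-conjecture in the cases needed provides the base. This is the step that will require the most bookkeeping, but it is the only place where new ideas beyond Theorem~\ref{T:bridge} and the pullback computation are needed.
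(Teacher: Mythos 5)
Your opening move is correct — the reduction via Theorem~\ref{T:bridge} is exactly right, and your pullback computations (with fixed \emph{smooth} elliptic tails) of $\lambda$, $\delta_0$, $\delta_j$ for $j\geq 2$, and $\delta_1$ look plausible, leading to
$i^*D = b_1\sum_k\psi_k - \sum_{j\geq 2}b_j\Delta_j^{\mathrm{sym}}$.
The genuine gap is in the last step. You are asking to deduce nefness of $i^*D$ on $\M_{0,g}/S_g$ from F-nefness plus an induction on genus, with Gibney's computational checks as a base case. This does not go through: the F-conjecture for $\M_{0,n}/S_n$ is open in general, and the inductive scheme you describe (restricting to boundary components and invoking the inductive hypothesis there) only controls curves lying entirely in the boundary. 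A putative $D$-negative curve could sweep out the interior of $\M_{0,g}/S_g$, and knowing that $i^*D$ restricts to a nef class on every boundary component does not rule this out. This is precisely the obstruction that makes the F-conjecture hard — so as written, your proof tacitly assumes the result it needs to prove.

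The paper sidesteps this issue by a more clever choice of attaching map. Instead of gluing smooth elliptic tails, it uses the finite morphism $f\co \M_{0,2g}\to\M_g$ that identifies $g$ pairs of marked points — equivalently, it attaches a fixed \emph{nodal} rational elliptic curve at each of the $g$ marked points. Since the image of $f$ includes the image of your $i$ (just take the elliptic tail to be the nodal cubic), Theorem~\ref{T:bridge} still says it suffices to prove $f^*D$ is nef. The payoff is that $f^*D$ lands precisely in the form $K_{\M_{0,2g}}+\sum_i a_i\Delta_i$ with $a_i\in[0,1]$ under the hypothesis $b_i\geq b_0$ for all $i\geq 1$ (the nodal tails feed the $b_0$-coefficient into the computation, which is where your choice of smooth tails loses the needed numerics). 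Nefness of such log-adjoint divisors on $\M_{0,n}$ is an \emph{unconditional} theorem of Keel--McKernan (Theorem~1.2(2) of \cite{KMc}), requiring no F-conjecture input whatsoever. That substitution — nodal tails instead of smooth ones, Keel--McKernan adjunction instead of F-nefness — is the missing idea. (The case where some $b_i=0$ requires a separate argument and is deferred to \cite{GKM} in the paper.)
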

\begin{proof}
Consider the natural morphism $f\co \M_{0,2g} \ra \M_g$ defined by identifying $g$ pairs of points. The image of $f$ includes the locus of genus $g$ curves 
obtained from $\M_{0,g}$ by gluing on a fixed {\em nodal} elliptic curve. Hence by Theorem \ref{T:bridge} it suffices to show that $f^*D$ is nef. Suppose that $b_i\geq b_0$, for all $i\geq 1$. Then the divisor $f^*D$ is of the form $K_{\M_{0,2g}}+\sum_{i} a_i \Delta_i$ with $a_i\in [0,1]$. Applying \cite[Theorem 1.2(2)]{KMc}, we conclude that
$f^*D$ is nef. We refer to \cite{GKM} for details of the proof in the case $b_i=0$ for some $i$.
 \end{proof}

This result forms the main input into the following Proposition, which will be used in Section \ref{S:log-MMP}.
\begin{proposition}\label{P:first-F-curves} We have that in $\N^1(\M_g)$:
\begin{enumerate}
\item \label{Fcurves1} The divisor $11\lambda-\delta$ is nef  and has degree $0$ precisely on the 
families of elliptic tails, i.e. the F\nb-curve $\mathrm{T}_1$ of Example \ref{E:elliptic-bridges}.
\item \label{Fcurves2} The divisor $10\lambda-\delta-\delta_1$ is nef  and has degree $0$ precisely 
on the families of elliptic bridges. These are effective linear combinations of F\nb-curves $\mathrm{T}_1$, $\mathrm{EB}^{ns}$, and $\mathrm{EB}^{s}_i$ by Example \ref{E:elliptic-bridges}.
\end{enumerate}
\end{proposition}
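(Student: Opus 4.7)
The plan is to reduce both nefness assertions to F-nefness via Corollary~\ref{C:Nefness}, verify F-nefness by a finite intersection-number computation against the standard list of F-curves on $\M_g$, and then pin down the zero loci using Example~\ref{E:elliptic-bridges} and Theorem~\ref{T:CH}.

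For part~\eqref{Fcurves1}, I would write $D_1 := 11\lambda - \sum_{i=0}^{\lfloor g/2\rfloor}\delta_i$; all boundary coefficients are equal to $1$, so the hypothesis ``$b_i = 0$ or $b_i \geq b_0$'' of Corollary~\ref{C:Nefness} holds automatically, and it remains only to verify F\nb-nefness. This is a finite mechanical check against the F-curve list recorded in \cite[Theorem~2.1]{GKM}. The critical case is the elliptic-tail F-curve $\mathrm{T}_1$ of Example~\ref{E:elliptic-bridges}, where one reads off $D_1 \cdot \mathrm{T}_1 = 11 - 12 + 1 = 0$; strict positivity on the other F-curve types follows by similarly short computations. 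The sharp identification of the zero locus as the family of elliptic tails is then precisely the content of Theorem~\ref{T:CH} of Cornalba--Harris.

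For part~\eqref{Fcurves2}, set $D_2 := 10\lambda - \delta_0 - 2\delta_1 - \sum_{i \geq 2}\delta_i$. Here $b_0 = 1$, $b_1 = 2$, and $b_i = 1$ for $i \geq 2$, so $b_i \geq b_0$ for every $i \geq 1$, and Corollary~\ref{C:Nefness} again reduces nefness to F\nb-nefness. The intersection numbers recorded in Example~\ref{E:elliptic-bridges} yield
\[
D_2 \cdot \mathrm{T}_1 = 10 - 12 + 2 = 0, \qquad D_2 \cdot \mathrm{EB}^{ns} = 2 - 2 = 0, \qquad D_2 \cdot \mathrm{EB}^{s}_i = -2 + 1 + 1 = 0,
\]
with a standard edge-case check needed when the boundary indices $1$, $i$, $g-i-1$ coincide, while the remaining F-curve types have strictly positive intersection with $D_2$. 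The last paragraph of Example~\ref{E:elliptic-bridges} records that the class of any one-parameter family of elliptic bridges is an effective combination of $\mathrm{T}_1$ together with either $\mathrm{EB}^{s}_i$ or $\mathrm{EB}^{ns}$, so $D_2$ vanishes on every such family. For the converse, I would use the decomposition $D_2 = (11\lambda - \delta) - (\lambda - \delta_1)$, invoke part~\eqref{Fcurves1} to see that the first summand vanishes only on families of elliptic tails, and then analyze how the correction $-(\lambda - \delta_1)$ interacts with such degenerations to force the moving components to be elliptic bridges rather than merely elliptic tails.

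The genuine obstacle is this converse direction in~\eqref{Fcurves2}: the F\nb-nefness check itself is mechanical bookkeeping, and the forward direction of the zero-locus description falls out of Example~\ref{E:elliptic-bridges}, but excluding unexpected zero-degree families requires controlling complete curves in $\M_g$ that are not themselves F-curves. The sharp Cornalba--Harris theorem (Theorem~\ref{T:CH}), supplemented by Moriwaki's positivity (Proposition~\ref{P:moriwaki}) to separately bound $\lambda - \delta_1$ on families with higher-genus moving components, provides the extra input needed to complete this analysis.
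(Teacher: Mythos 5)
Your outline is correct through the nefness and forward direction of the zero-locus computation: Corollary~\ref{C:Nefness} reduces nefness to F-nefness, the F-nefness checks against $\mathrm{T}_1$, $\mathrm{EB}^{ns}$, $\mathrm{EB}^s_i$ are arithmetic, part~\eqref{Fcurves1} follows from Theorem~\ref{T:CH}, and the last paragraph of Example~\ref{E:elliptic-bridges} gives the forward containment in part~\eqref{Fcurves2}. This matches the paper.

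The converse of~\eqref{Fcurves2}, which you rightly flag as the substantive step, is where your proposed route does not work. First, a sign error: $(11\lambda-\delta)-(\lambda-\delta_1)=10\lambda-\delta+\delta_1$, not $10\lambda-\delta-\delta_1$; you would need to subtract $\lambda+\delta_1$. But even after correcting the sign, writing a nef divisor as a nef divisor minus an effective one conveys no control over the zero locus: if $(10\lambda-\delta-\delta_1)\cdot B=0$ you only learn $(11\lambda-\delta)\cdot B=(\lambda+\delta_1)\cdot B$, and there is no reason for the left side to vanish, so part~\eqref{Fcurves1} cannot be invoked. Moriwaki's inequality (Proposition~\ref{P:moriwaki}) bounds $\lambda$ from below against boundary classes on generically smooth families; it does not ``bound $\lambda-\delta_1$'' in the way the plan needs. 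In short, the decomposition idea supplies no mechanism to exclude unexpected zero-degree curves.

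The paper's argument instead works directly with an arbitrary complete curve $B\subset\M_g$, decomposes the family over $B$ into moving components, and runs a case analysis on the genus $h$ of the generic fiber of each moving component. The restriction of $10\lambda-\delta-\delta_1$ to the corresponding $\M_{h,n}$ is expressed as $10\lambda-\delta+\psi-\delta_{1,\varnothing}+\psi'$, and then: for $h\geq 2$ one compares against the unpointed $10\lambda-\delta_0-\delta_1$ via Exercise~\ref{E:forgetting} and applies Moriwaki's Proposition~\ref{P:moriwaki}; for $h=0$ the term $\delta_{1,\varnothing}$ vanishes and the restriction is a sum of $\psi-\delta$ (ample on $\M_{0,n}$) and a nef $\psi'$; and for $h=1$ one uses the explicit Picard relations on $\M_{1,n}$ to see the restricted class is a nonnegative combination of $\lambda$ and boundary divisors that can vanish only when $n=2$, i.e.\ on elliptic bridges. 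That case analysis, not a decomposition of the divisor, is the missing ingredient in your sketch.
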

\begin{proof}
These divisors satisfy the assumptions of Corollary \ref{C:Nefness}, hence are nef. Note also that \eqref{Fcurves1} follows from Theorem \ref{T:CH}. We proceed
to prove  \eqref{Fcurves2}.
One easily checks that the listed are the only F\nb-curves on which $10\lambda-\delta-\delta_1$ is zero. 
It remains to show that for
any curve $B\subset \Mg{g}$, the only moving components of the stable family over $B$ on which $10\lambda-\delta-\delta_1$ has degree zero are elliptic tails or elliptic bridges. 

Let $\X\ra B$ be a moving component with 
a smooth generic fiber of genus $h$ and $n$ sections. The divisor $10\lambda-\delta-\delta_1$ 
restricts to the divisor $$D:=10\lambda-\delta+\psi-\delta_{1, \varnothing}+\psi'$$ 
on $\Mg{h, n}$, where 
$\psi'$ corresponds to attaching sections of elliptic tails. 
If $h\geq 2$, we apply to Exercise \ref{E:forgetting} below to observe that the degree 
of $D$ is greater than the degree of $10\lambda-\delta_{0}-\delta_{1}$ 
on an unpointed family of stable curves of genus $h$. 
It then follows from Proposition \ref{P:moriwaki} that 
$(10\lambda-\delta_{0}-\delta_{1})\cdot B\geq 0$, and we are done. 
If $h=0$, then $\delta_{1, \varnothing}=0$ and so 
$D$ is positive on $B$ because it is a sum of $\psi-\delta$ and 
$\psi'$ which are, respectively, 
ample and nef on $\M_{0,n}$. Finally, suppose that $h=1$. By \eqref{E:elliptic-tail},
$10\lambda-\delta-\delta_1$ has degree $0$ on a family of elliptic tails. Suppose now $n\geq 2$, 
then, as before, $10\lambda-\delta-\delta_1$
restricts to $D=10\lambda-\delta+\psi-\delta_{1, \varnothing}+\psi'$, where $\psi'\cdot B\geq 0$.
 Using the following
relations in $\Pic(\Mg{1,n})$ \cite[Theorem 2.2]{AC}
\begin{align*}
\lambda =\delta_{0}/12, \qquad
\psi&=n\lambda+\sum_{S} |S|\delta_{0,S},
\end{align*}
and noting that $\delta_{1, \varnothing}=\delta_{0,\{1,\dots,n\}}$, we rewrite
\[
10\lambda-\delta+\psi-\delta_{1, \varnothing}=(n-2)\lambda+\sum_{S: \, 2\leq |S|<n}(|S|-1)\delta_{0,S}+(n-2)\delta_{0,\{1,\dots,n\}}.
\]
Evidently this can be zero only if $n=2$, i.e. when $\X \ra B$ is a family of elliptic bridges.
\end{proof}
\begin{exercise}\label{E:forgetting}
Suppose $B\subset \Mg{g,n}$ is a one-parameter family of generically smooth $n$\nb-pointed ($n\geq 1$) curves of genus $g\geq 2$, and let $B'$ be the family in $\Mg{g}$ obtained by forgetting marked points and stabilizing.
Then 
\[
(10\lambda-\delta-\delta_{1,\varnothing}+\psi)\cdot B > (10\lambda-\delta-\delta_{1})\cdot B'.
\]
\end{exercise}
For later reference, we record the loci swept out by curves numerically equivalent to an effective 
combination of the F-curves $\mathrm{T}_1$, $\mathrm{EB}^{ns}$, and $\mathrm{EB}^{s}_i$.
\begin{corollary}\label{C:Fcurves}
\begin{enumerate}
\item[]
\item A curve $C \subset \M_{g}$ is numerically equivalent to $\mathrm{T}_1$ iff every moving component of $C$ is an elliptic tail.
\item A curve $C \subset \M_{g}$
 is an effective linear combination of $\mathrm{T}_1$, $\mathrm{EB}^{ns}$, $\mathrm{EB}^{s}_i$ iff every moving component of $C$ is an elliptic tail or an elliptic bridge.
 \end{enumerate}
\end{corollary}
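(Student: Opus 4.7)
The plan is to deduce both parts from Proposition \ref{P:first-F-curves} together with the numerical facts about one-parameter families of elliptic tails and elliptic bridges recorded in Example \ref{E:elliptic-bridges}.

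For the ``only if'' direction of each part, I would first record the vanishings
\[
(11\lambda-\delta)\cdot \mathrm{T}_1 = 0, \qquad (10\lambda-\delta-\delta_1)\cdot \mathrm{T}_1 = (10\lambda-\delta-\delta_1)\cdot \mathrm{EB}^{ns} = (10\lambda-\delta-\delta_1)\cdot \mathrm{EB}^s_i = 0,
\]
which follow directly from the intersection numbers displayed in \eqref{E:elliptic-tail} and in Example \ref{E:elliptic-bridges}. Thus if $C$ is numerically proportional to $\mathrm{T}_1$, then $(11\lambda-\delta)\cdot C = 0$, and Proposition \ref{P:first-F-curves}\eqref{Fcurves1} forces every moving component of $C$ to be an elliptic tail. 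Similarly, if $C$ is an effective linear combination of $\mathrm{T}_1$, $\mathrm{EB}^{ns}$, and $\mathrm{EB}^s_i$, then $(10\lambda-\delta-\delta_1)\cdot C = 0$, and Proposition \ref{P:first-F-curves}\eqref{Fcurves2} forces every moving component to be an elliptic tail or an elliptic bridge.

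For the ``if'' direction, I would break any family $\X \to B$ representing $C$ into its moving and fixed components and observe that only the moving components contribute to the numerical class in $\N_1(\Mg{g})$. For (1), each moving elliptic tail is the pullback of a complete curve in $\Mg{1,1}$ under the gluing morphism to $\Mg{g}$ that attaches a fixed maximally degenerate genus $g-1$ curve; since $\Pic(\Mg{1,1})\otimes \QQ$ has rank one, any such complete curve is a positive rational multiple of the standard pencil of plane cubics, hence its pushforward is a positive rational multiple of $\mathrm{T}_1$. Summing over the moving components, $C$ itself is a positive multiple of $\mathrm{T}_1$. For (2), the same argument covers the elliptic tail moving components, while the elliptic bridge moving components are handled by the last paragraph of Example \ref{E:elliptic-bridges}: the relations $\lambda = \delta_0/12$ and $\psi_1+\psi_2 = 2\lambda+2\delta_{0,\{1,2\}}$ in $\Pic(\Mg{1,2})$ force the class of any one-parameter family of separating (resp. non-separating) elliptic bridges to be an effective combination of $\mathrm{T}_1$ and $\mathrm{EB}^s_i$ (resp. $\mathrm{T}_1$ and $\mathrm{EB}^{ns}$).

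The main obstacle is the last step, namely verifying that every one-parameter family of elliptic bridges is numerically an \emph{effective} combination of the listed F-curves, rather than merely a $\QQ$-linear combination. This amounts to a direct (if slightly tedious) computation in $\Pic(\Mg{1,2})$: write the numerical class of the family in $\Mg{1,2}$ in terms of $\lambda, \psi_1, \psi_2, \delta_0, \delta_{0,\{1,2\}}$, use the relations above to eliminate $\psi_i$ and $\lambda$, push forward along the appropriate gluing morphism to $\Mg{g}$, and check that the resulting coefficients of $\mathrm{T}_1$ and $\mathrm{EB}^s_i$ or $\mathrm{EB}^{ns}$ are non-negative because the corresponding boundary divisors restrict non-negatively to the family. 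Once this computation is in hand, the conclusion in both cases follows by summing the contributions of the moving components.
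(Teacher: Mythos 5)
Your proposal is correct and is essentially the detailed version of what the paper has in mind: the paper's proof is a one-line ``This is a restatement of Proposition \ref{P:first-F-curves},'' and your argument unpacks exactly the two ingredients that restatement relies on — the nef divisors $11\lambda-\delta$ and $10\lambda-\delta-\delta_1$ from Proposition \ref{P:first-F-curves} for the ``only if'' directions, and the $\Pic(\Mg{1,1})$ and $\Pic(\Mg{1,2})$ computations recorded in Example \ref{E:elliptic-bridges} for the ``if'' directions. The ``obstacle'' you flag at the end (effectivity, not just $\QQ$-linearity, of the combination for a family of elliptic bridges) is precisely the content of the final paragraph of Example \ref{E:elliptic-bridges}, which the paper takes as already established; your plan to verify it by the $\Pic(\Mg{1,2})$ relations is the intended route.
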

\begin{proof}
This is a restatement of Proposition \ref{P:first-F-curves}.
\end{proof}

\subsubsection{Koll\'{a}r's Semipositivity and nef divisors on $\M_{g,\A}$}\label{S:kollar}
Here, we explain a simple method for producing nef and ample divisors on spaces $\M_{g, \A}$ introduced in Definition \ref{D:A-stable}, which will be essential for describing the Mori chambers of $\M_{g,n}$ corresponding to these models. The main idea, due to Koll\'{a}r \cite{kollar-projectivity}, is to exploit the positivity of the canonical polarization. More concretely, if 
$(f\co \C \rightarrow B, \{\sigma_i\}_{i=1}^n)$ is any one-parameter family of $\A$-stable curves, Koll\'{a}r deduces that 
$\omega_{\C/B}(\sum a_i\sigma_i)$ is nef from the semipositivity of 
$f_*\bigl((\omega_{\C/B}(\sum a_i\sigma_i))^{\ell}\bigr)$, where $\ell\geq 2$ is such that $\ell a_i\in \ZZ$; see \cite[Corollary 4.6 and Proposition 4.7]{kollar-projectivity}. 
By interesecting with other curve classes on $\C$ and pushing forward, 
one gets a variety of nef divisor classes on $\M_{g,n}$, as we shall see in Corollary
\ref{C:nef-divisors-weighted}. 
 \begin{proposition}\label{P:nef-divisors-weighted}
Let
 $\pi\co (\C; \sigma_1, \dots, \sigma_n) \ra \Mg{g,\A}$ be the universal family. 
 Then the line bundle $\omega_{\pi}(\sum_{i=1}^n a_i\sigma_i)$ is nef on $\C$.
 \end{proposition}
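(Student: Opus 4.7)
The strategy, signposted by the paragraph preceding the statement, is to apply Kollár's semipositivity theorem to $L := \omega_{\pi}(\sum_{i=1}^n a_i \sigma_i)$. To check that $L$ is nef on the universal family $\mathcal{C}$, it suffices to check nefness after pulling back along every morphism $B \to \overline{\mathcal{M}}_{g,\A}$ from a smooth projective curve $B$ (since $\overline{\mathcal{M}}_{g,\A}$ is Deligne--Mumford, any proper irreducible curve in $\mathcal{C}$ arises in this way after a finite cover). So fix such a $B$, let $f\co \mathcal{X}\to B$ be the pulled-back family with sections $\{\tau_i\}_{i=1}^n$, and set $L_B := \omega_{\mathcal{X}/B}(\sum a_i \tau_i)$; it is enough to show that $L_B$ is nef on $\mathcal{X}$.

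Choose an integer $\ell\geq 2$ with $\ell a_i \in \ZZ$ for every $i$. By the definition of $\A$-stability, $\omega_{X_b}(\sum a_i\tau_i(b))$ is ample on every geometric fiber $X_b$, and hence, possibly after replacing $\ell$ by a positive multiple, $L_B^{\ell}$ is $f$\nb-very ample, $R^1f_*L_B^{\ell}=0$, and the evaluation map
\begin{equation*}
f^{*}f_{*}L_B^{\ell} \twoheadrightarrow L_B^{\ell}
\end{equation*}
is surjective. Thus $L_B^{\ell}$ is a line bundle quotient of the pullback of the vector bundle $f_{*}L_B^{\ell}$ on $B$.

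Now apply Kollár's semipositivity theorem, \cite[Proposition 4.7 and Corollary 4.6]{kollar-projectivity}, to the log pair $(\mathcal{X}, \sum a_i\tau_i)$. Its hypotheses hold: the surface $\mathcal{X}$ has at worst $A_n$\nb-singularities (hence rational double points, which are canonical) at the nodes of fibers; the sections $\tau_i$ are smooth Cartier divisors on $\mathcal{X}$; and the condition $\sum_{j:\, \tau_j(b) = p} a_j \leq 1$ imposed by $\A$\nb-stability guarantees that $(\mathcal{X}, \sum a_i\tau_i)$ is log canonical at every point where two sections cross. The theorem then implies that $f_{*}(L_B^{\ell})$ is a nef vector bundle on $B$. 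Pullback preserves nefness, and a line bundle quotient of a nef bundle is nef, so $L_B^{\ell}$ is nef on $\mathcal{X}$; dividing by $\ell$, $L_B$ itself is nef, completing the proof.

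The main technical ingredient is of course Kollár's semipositivity, which we are quoting as a black box; the only thing one needs to verify directly is that the pair $(\mathcal{X}, \sum a_i\tau_i)$ is log canonical with $K_{\mathcal{X}/B} + \sum a_i\tau_i$ being $f$\nb-ample, and both are immediate from the three clauses of Definition \ref{D:A-stable}. The remaining steps---reducing nefness on the stack to nefness on pulled-back one-parameter families, and extracting nefness of $L_B$ from the evaluation surjection---are formal.
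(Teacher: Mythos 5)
Your proof is correct as far as it goes, but it is worth noting that it fleshes out the Koll\'ar semipositivity argument that the paper \emph{sketches in the paragraph preceding the proposition} (citing the same \cite[Corollary 4.6 and Proposition 4.7]{kollar-projectivity}), whereas the proof environment itself cites \cite{FedAmple} for a genuinely \emph{different} argument: a direct, elementary, intersection-theoretic proof modeled on \cite[Theorem 0.4]{keel-annals}. The two routes have different flavors. Your Koll\'ar route reduces nefness to the surjection $f^*f_*L_B^\ell\twoheadrightarrow L_B^\ell$ together with semipositivity of the pushforward bundle, a statement whose proof ultimately rests on Hodge-theoretic input (variations of Hodge structure); it is conceptually uniform and generalizes to higher-dimensional families, but it is a black box. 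The \cite{FedAmple}/Keel route instead argues by direct positivity estimates on one-parameter families, avoiding the Hodge-theoretic machinery, and has the advantage of being more transparent and portable to positive characteristic. Your verification of the hypotheses of Koll\'ar's theorem---rational double points at fiber nodes, sections lying in the smooth locus (hence smooth Cartier divisors), and log canonicity where sections collide coming from the $\A$-stability inequality $\sum a_{i_j}\leq 1$---is accurate; the one step you elide is that a curve $C\subset\C$ lying inside a single fiber of $\pi$ is also handled, trivially, by the fiberwise ampleness of $\omega_{X}(\sum a_i p_i)$ guaranteed by Definition \ref{D:A-stable}(3), but this is a one-line remark.
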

\begin{proof}
See \cite{FedAmple} for a direct, elementary proof in the spirit of \cite[Theorem 0.4]{keel-annals}.
\end{proof}

Using Proposition \ref{P:nef-divisors-weighted}, we can describe several nef divisor classes on $\M_{g,\A}$. First, however, we need a bit of notation. For each $i, j$ such that $a_i+a_j\leq 1$, there is an irreducible boundary divisor $\Delta_{i,j} \subset \M_{g,\A}$ parameterizing curves where the marked point $p_i$ coincides with the marked point $p_j$. Indeed, if $\pi\co \C \rightarrow \M_{g,\A}$ is the universal curve with universal sections $\{\sigma_i\}_{i=1}^{n}$, then $\Delta_{i,j}:=\pi_*(\sigma_i \cap \sigma_j)$. 
The remaining boundary divisors of $\Mg{g,\A}$ parameterize nodal curves and for this reason we denote the total 
class of such divisors by $\delta_{nodal}$. Note that Mumford's relation gives $\kappa:=\omega_{\pi}^2=12\lambda-\delta_{nodal}$  \cite{AC}.
 \begin{corollary}\label{C:nef-divisors-weighted}
The following divisors 
\begin{align*}
A &=A(a_{1},\dots,a_{n})= 12\lambda-\delta_{nodal}+\psi+\sum_{i<j}(a_i+a_j)\Delta_{ij},  \\
B &=B(a_{1},\dots,a_{n}) = 12\lambda-\delta_{nodal}+\sum (2a_i-a_i^2)\psi_i+\sum_{i<j}(2a_ia_j) \Delta_{ij},\\
C &=C(a_{1},\dots,a_{n}) = \sum (1-a_{i})\psi_{i}+\sum_{i<j}(a_i+a_j)\Delta_{ij},
\end{align*}
are nef on $\Mg{g,\A}$. Moreover, the divisor $A$ is ample.
 \end{corollary}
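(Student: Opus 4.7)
The plan is to realize $A$, $B$, and $C$ as pushforwards (or section pullbacks) of natural classes built from the nef line bundle
\[
L := \omega_\pi\bigl(\textstyle\sum_{i=1}^n a_i\sigma_i\bigr)
\]
provided by Proposition~\ref{P:nef-divisors-weighted}, and then to deduce nefness from two standard principles: the restriction of a nef class to a section is nef, and the pushforward of the self-intersection of a nef class under a family of curves is nef.

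First I would assemble the standard intersection identities on $\pi\co \C\ra \Mg{g,\A}$, namely $\pi_*(\omega_\pi^2)=12\lambda-\delta_{nodal}$, $\pi_*(\omega_\pi\cdot\sigma_i)=\psi_i$, $\sigma_i^*\sigma_i=-\psi_i$ (adjunction along the section), and $\pi_*(\sigma_i\cdot\sigma_j)=\Delta_{ij}$ for $i\neq j$. A direct expansion then yields the clean identifications
\[
B=\pi_*(L^2), \qquad \sigma_i^*L=(1-a_i)\psi_i+\sum_{j\neq i}a_j\Delta_{ij}, \qquad C=\sum_{i=1}^n \sigma_i^*L,
\]
together with
\[
A=\pi_*(L^2)+\sum_{i=1}^n(1-a_i)\,\sigma_i^*L.
\]

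Nefness of the three divisors will then follow rapidly. Since each section $\sigma_i\co \Mg{g,\A}\hookrightarrow \C$ is an isomorphism onto its image, each $\sigma_i^*L$ is nef, and hence $C$ and $A-B$ are non-negative combinations of nef classes. For $B$, given any irreducible curve $\gamma\subset \Mg{g,\A}$, the projection formula and nefness of $L|_S$ on the surface $S:=\pi^{-1}(\gamma)$ give $B\cdot\gamma=(L|_S)^2\geq 0$. Hence $A$, $B$, and $C$ are all nef.

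For the ampleness of $A$, the plan is to upgrade the above non-negativity arguments to strict positivity. The additional input is that $L$ is $\pi$-ample: $L|_{C_t}=\omega_{C_t}(\sum a_ip_i)$ is ample on every fiber by the definition of $\A$-stability. For any irreducible curve $\gamma\subset \Mg{g,\A}$, this makes $L|_S$ both nef and $\pi$-ample on the fibered surface $S$; since $\gamma$ corresponds to a non-isotrivial moduli family, one expects $L|_S$ to be big, giving $B\cdot\gamma=(L|_S)^2>0$ and hence $A\cdot\gamma>0$. Kleiman's criterion then promotes nefness of $A$ to ampleness. The main obstacle is precisely this last step: ruling out the degenerate possibility that $L|_S$ is nef with vanishing self-intersection requires a careful analysis of families over $\gamma$, and this is exactly the content of the elementary argument carried out in \cite{FedAmple} in the spirit of Keel's \cite[Theorem~0.4]{keel-annals}.
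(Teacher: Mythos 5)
Your proposal is correct and follows essentially the same approach as the paper: both realize $B$, $C$, and $A$ as (pushforwards or section-pullbacks of) intersections of the nef class $L=\omega_\pi(\sum a_i\sigma_i)$ from Proposition~\ref{P:nef-divisors-weighted} with effective curve classes on the universal family, deriving $B$ from $L^2$, $C$ from $L\cdot\sum\sigma_i$, and $A=B+\sum(1-a_i)\sigma_i^*L=(\omega+\sum\sigma_i)\cdot L$, and both defer the ampleness of $A$ to the Kleiman-criterion argument in \cite{FedAmple}. The only cosmetic difference is that the paper phrases the computation for an arbitrary complete one-parameter family $\X\to T$ rather than via pushforward classes on the universal family, but the underlying intersection computations are identical.
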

 \begin{proof}
Let $f\co \X \ra T$ be a family over a complete smooth curve. The divisor 
$L:=\omega_{\X/T}+\sum a_i \sigma_i$ is nef by Proposition \ref{P:nef-divisors-weighted}, hence
pseudoeffective and has a non-negative 
 self-intersection. We will show that the intersection 
 numbers of $T$ with
 $A$, $B$ and $C$ are non-negative by expressing each of them as an intersection of $L$ 
 with an effective curve class on $\X$. 
 For $A$, we note that $\omega+\sum_{i=1}^n\sigma_i$ is an effective combination of $L$ and $\sigma_i$, $1\leq i\leq n$. 
 Therefore,
\[
0\leq (\omega+\sum_{i=1}^n \sigma_i)\cdot L=\kappa+\psi+\sum_{i<j}(a_i+a_j)\Delta_{ij}.
\]
 For $B$, we have
 \[
 0\leq L^2=(\omega+\sum_{i=1}^n a_i\sigma_i)^2=\kappa+\sum_{i=1}^n (2a_i-a_i^2)\psi_i+\sum_{i<j}(2a_ia_j) \Delta_{ij}. 
 \]
For $C$, we have
\[
0\leq L\cdot \sum_{i=1}^n\sigma_{i}=\sum (1-a_{i})\psi_{i}+\sum_{i<j}(a_j+a_j)\Delta_{ij}.
\]
For the proof of ampleness in the case of $A$, we refer to \cite{FedAmple}, where it is established using
Kleiman's criterion on $\M_{g,\A}$.
 \end{proof}

Now we can describe all $\Mg{g,\A}$ as log canonical models of $\Mg{g, n}$. While these models do not (except in the case $g=0$) appear in the Hassett-Keel minimal model program for $\Mg{g,n}$, they do correspond to fairly natural Mori chambers. 

\begin{corollary}\label{C:MgA-canonical}
\[
\overline{M}_{g,\A}=\proj \bigoplus_{m\geq 0}\HH^{0}\bigl(\Mg{g,n}, \lfloor m(K_{\Mg{g,n}}+
\sum_{i<j: a_i+a_j\leq 1}(a_{i}+a_{j}-1)\Delta_{0, \{i,j\}}+\delta)\rfloor \bigr),
\]
where, $\delta$ is the total boundary of $\Mg{g,n}$.
\end{corollary}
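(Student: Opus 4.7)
\emph{Plan.} The idea is to realize the divisor
\[
D := K_{\Mg{g,n}}+\sum_{i<j:\, a_i+a_j\leq 1}(a_i+a_j-1)\Delta_{0,\{i,j\}}+\delta
\]
as a pullback of an ample divisor along Hassett's birational reduction morphism $\rho\co \Mg{g,n}\to \Mg{g,\A}$, modulo an effective $\rho$-exceptional correction. More precisely, I will show
\[
D=\rho^{*}(A+\lambda)+E,
\]
where $A=A(a_1,\dots,a_n)$ is the ample divisor on $\Mg{g,\A}$ from Corollary \ref{C:nef-divisors-weighted} and $E\geq 0$ is supported on the exceptional locus of $\rho$. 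Granting this, $A+\lambda$ is ample on $\Mg{g,\A}$ (ample $+$ nef), and the identity $\rho_{*}\mathcal{O}_{\Mg{g,n}}=\mathcal{O}_{\Mg{g,\A}}$ together with the $\rho$-exceptional nature of $E$ yields $\HH^{0}(\Mg{g,n},\lfloor mD\rfloor)=\HH^{0}(\Mg{g,\A},\lfloor m(A+\lambda)\rfloor)$ for every sufficiently divisible $m$. Taking $\proj$ on both sides and using the ampleness of $A+\lambda$ then gives the stated equality.

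Verifying the divisor identity amounts to a direct computation. Let $\Sigma$ denote the collection of subsets $S\subseteq\{1,\dots,n\}$ with $|S|\geq 2$ and $\sum_{i\in S}a_i\leq 1$; these are precisely the indices $S$ for which $\Delta_{0,S}$ is contracted by $\rho$. Hassett's pullback formulas read
\[
\rho^{*}\lambda=\lambda,\qquad \rho^{*}\psi_i=\psi_i-\sum_{S\in\Sigma,\, i\in S}\Delta_{0,S},
\]
\[
\rho^{*}\delta_{nodal}=\delta-\sum_{S\in\Sigma}\Delta_{0,S},\qquad \rho^{*}\Delta_{ij}=\sum_{S\in\Sigma,\, \{i,j\}\subseteq S}\Delta_{0,S}.
\]
Since every $S\in\Sigma$ satisfies $a_i+a_j\leq 1$ for every pair $\{i,j\}\subseteq S$, one has the elementary identity $\sum_{\{i,j\}\subseteq S}(a_i+a_j)=(|S|-1)\sum_{i\in S}a_i$. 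Substituting into the expression for $A$ gives
\[
\rho^{*}A=12\lambda-\delta+\psi+\sum_{S\in\Sigma}(|S|-1)\Bigl(\sum_{i\in S}a_i-1\Bigr)\Delta_{0,S}.
\]
Combining this with $K_{\Mg{g,n}}=13\lambda-2\delta+\psi$ and $\rho^{*}\lambda=\lambda$, one obtains
\[
D-\rho^{*}(A+\lambda)=\sum_{S\in\Sigma,\, |S|\geq 3}(|S|-1)\Bigl(1-\sum_{i\in S}a_i\Bigr)\Delta_{0,S},
\]
which is manifestly effective and supported on $\rho$-exceptional divisors, as required.

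The main obstacle is the combinatorial bookkeeping forced by subsets $S\in\Sigma$ of cardinality $\geq 3$, since the statement only features the pair divisors $\Delta_{0,\{i,j\}}$. The calculation above shows that these higher-order contractions are absorbed harmlessly into $E$: the coefficients $(|S|-1)(1-\sum_{i\in S}a_i)$ they contribute are non-negative, and the corresponding divisors map to strata of codimension $\geq 2$ in $\Mg{g,\A}$, so they are invisible to $\proj$. Beyond this bookkeeping, one needs the construction of $\rho$ and the tautological pullback formulas from Hassett's work on weighted stable curves, together with the standard fact that adding an effective $\rho$-exceptional divisor does not affect the section ring of a pulled-back ample class.
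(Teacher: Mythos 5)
Your proof is correct and takes essentially the same route as the paper: decompose $D$ as $\rho^*(\phi_*D) + E$ with $\phi_*D = A+\lambda$ ample (ample $+$ semiample) and $E$ an effective $\rho$-exceptional divisor, then pass to section rings. The only difference is in how the discrepancy coefficients $a_S = (|S|-1)(1-\sum_{i\in S}a_i)$ are extracted — the paper intersects with test curves, you expand $\rho^*A$ via the Hassett pullback formulas for $\lambda$, $\psi_i$, $\delta_{nodal}$, $\Delta_{ij}$ — and both methods arrive at the same expression.
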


\begin{proof} 
Let $D:=K_{\Mg{g,n}}+
\sum_{i<j: a_i+a_j\leq 1}(a_{i}+a_{j}-1)\Delta_{0, \{i,j\}}+\delta$. Consider the birational reduction morphism $\phi \co \Mg{g,n} \ra \Mg{g,\A}$; see \cite[Section 4]{Hweights}.
Then
$$
\phi_*D:=K_{\Mg{g,\A}}+\sum\limits_{i<j: a_i+a_j\leq 1}(a_{i}+a_{j})\Delta_{i,j}+\delta_{nodal}
$$
may be expressed as a sum of tautological classes using the Grothendieck-Riemann-Roch formula 
(see \cite[Section 3.1.1]{Hweights})
$
K_{\Mg{g,\A}}=13\lambda-2\delta_{nodal}+\psi.
$

 We must show that
\begin{enumerate}
\item $\phi_*D$ is ample on $\Mg{g,\A}$,
\item $D-\phi^*\phi_*D$ is effective.
\end{enumerate}
Indeed, (1) implies $\Mg{g,\A}=\proj \bigoplus_{m\geq 0}\HH^{0}\bigl(\Mg{g,\A}, \lfloor m\phi_*D \rfloor)$, and (2) implies $$\bigoplus_{m\geq 0}\HH^{0}\bigl(\Mg{g,n}, \lfloor mD \rfloor)=\bigoplus_{m\geq 0}\HH^{0}\bigl(\Mg{g,\A}, \lfloor m\phi_*D \rfloor),$$ so together they yield the desired statement.

For (1), simply observe that $D=A+\lambda$, where $A$ is as in Corollary \ref{C:nef-divisors-weighted}. It follows that $D$ is a sum of an ample divisor $A$ and a semiample divisor $\lambda$, so is ample. 

For (2),  the morphism $\phi$ sends a stable $n$\nb-pointed curve to an $\A$\nb-stable curve
obtained by collapsing all rational components on which $\omega_{\pi}(\sum a_i \sigma_i)$ has non-positive degree. From this, one easily sees that the exceptional divisors of $\phi$ are given by
$\Exc(\phi)=\bigcup \Delta_{0,S}$ for all $S\subset \{1,\dots, n\}$ such that $|S|\geq 3$ and $\sum_{i\in S} a_{i}\leq 1$. Thus, we may write
$$D-\phi^*\phi_*D =\sum_{|S| \geq 3}a_{S}\delta_{0,S},$$
where $a_{S} \in \Q$ is the \emph{discrepancy} of $\Delta_{0,S}$. A simple computation with test curves shows that $a_{S}=\left(|S|-1\right)\left(1-\sum_{i\in S} a_i\right)\geq 0$, which completes the proof of (2).
\end{proof}

\section{Log minimal model program for moduli spaces of curves}\label{S:log-MMP}
In this section, we will describe what is currently known regarding the Mori chamber decomposition of
 $\Eff(\M_{g,n})$, with special emphasis on those chambers corresponding to modular birational models of $\M_{g,n}$.  As we have seen in Section \ref{S:nef-cone}, the full effective cone of 
$\M_{g,n}$ is completely unknown, so it is reasonable to focus attention on the \emph{restricted effective cone} $\Eff_\rst(\M_{g,n})$, i.e. the intersection of the effective cone with the subspace $\Q\{\lambda, \psi, \delta \} \subset \N^1(\M_{g,n})$. Consequently, 
most the results of this section will concern the restricted effective cone.

In Section \ref{S:MgIntro}, we focus on the Mori chamber decomposition of $\M_{g}$. In Sections \ref{S:MMPM2} and \ref{S:MMPM3}, we give complete Mori chamber decompositions for the restricted effective cones of $\M_{2}$ and $\M_{3}$, following Hassett and Hyeon-Lee \cite{Hgenus2}, \cite{HL}. In Section \ref{S:MMPMg}, we describe two chambers of the restricted effective cone of $\M_{g}$ (for all $g \geq 3$), corresponding to the first two steps of the log minimal model program for $\M_{g}$, as carried out by Hassett and Hyeon \cite{HH1, HH2}.

In Section \ref{S:MMPM0n}, we turn our attention to the Mori chamber decomposition of $\M_{0,n}$. We will give a complete Mori chamber decomposition for half the restricted effective cone (divisors of the form $s\psi-\delta$) while the other half (divisors of the form $s\psi+\delta$) remains largely mysterious. We will see that every chamber in the first half-space corresponds to a modular birational model of $\M_{0,n}$, namely $\M_{0,\A}$ for a suitable weight vector $\A$.
Finally, in Section \ref{S:MMPM1n}, we will tackle $\M_{1,n}$, the first example where the restricted effective cone is three-dimensional. As with $\M_{0,n}$, we will give a complete Mori-chamber decomposition for half the restricted effective cone. We will see that every chamber corresponds to a modular birational model of $\M_{1,n}$, with both $\M_{1,\A}$, the moduli spaces of weighted stable curves, and $\M_{1,n}(m)$, the moduli spaces of $m$-stable curves, making an appearance. 

Before proceeding, let us indicate the general strategy of proof which is essentially the same in each of the cases considered. Given a divisor $D \in \N^1(\M_{g,n})$ and a 
birational 
contraction 
$\phi\co \M_{g,n} \dashrightarrow \M$, where $\M$ is some alternate modular compactification, to prove that $D \in \Mor(\phi)$ requires two calculations.
First, one must show that
$D-\phi^*\phi_*D \geq 0$, so that $\HH^0(\M_{g,n}, mD)=\HH^0(\M, mD)$ for all $m \geq 0$. Second, one must show that $\phi_*D$ is ample on $\M$. Together, these two facts immediately imply that
\[
\M=\Proj R(\M, \phi_*D)=\Proj R(\M_{g,n}, D),
\]
so $D \in \Mor(\phi)$ as desired.

To carry out the first step, one typically uses the method of test curves: Write
$$
D-\phi^*\phi_*D= \sum a_iD_i,
$$
where $D_i$ are generators for the Picard group of $\M_{g,n}$ and $a_i \in \Q$ are undetermined coefficients. If $C$ is any curve which is both contained in the locus where $\phi$ is regular and is contracted by $\phi$, then we necessarily have $\phi^*\phi_*D\cdot C=0$. Thus, if the intersection numbers $D.C$ and $D_i.C$ can be determined, one may solve for the coefficients $a_i$.

In order to show that $\phi_*D$ is ample, there are essentially two strategies. If $\phi$ is regular, one may consider the pull-back $\phi^*\phi_*D$ to $\M_{g,n}$. If $\phi^*\phi_*D$ is nef, has degree zero only on $\phi$\nb-exceptional curves, and can be expressed as $K_{\M_{g,n}}+\sum D_i$ with 
$(\M_{g,n}, \sum D_i)$ a klt pair, then one may conclude $\phi_*D$ is ample using the Kawamata basepoint freeness theorem (Theorem \ref{T:kawamata}). 
If $\phi$ is not regular, one may show $\phi_*D$ is ample using Kleiman's criterion, i.e. by proving that $\phi_*D$ is positive on every curve in $\M$. The key point is that $\M$ is modular, so one can typically write $\phi_*D$ as a linear combination of tautological classes on $\M$ whose intersection with one-parameter families can be evaluated by geometric methods.


\subsection{Log minimal model program for $\M_{g}$}\label{S:MgIntro}
The restricted effective cone of $\M_{g}$ is two-dimensional, and we have
\begin{align*}
\Eff_\rst(\M_{2}) &= \Q\{\delta_1, \delta_{0}\}\\
\Eff_\rst(\M_{g}) &=\Q\{\delta, s_{\eff}\lambda-\delta\}, \ \text{for } g \geq 3,
\end{align*}
where the slope of the effective cone $s_{\eff}$ satisfies $\frac{60}{g+4}<s_{\eff}\leq 6+\frac{12}{g+1}$ (as we have discussed in Section \ref{S:nef-cone}, $s_{\eff}$ remains unknown for all but finitely many $g$).

Let us note that one chamber of $\Eff(\M_{g})$ is easily accounted for. By \cite{namikawa}, the Torelli morphism $\tau\co M_{g} \hookrightarrow A_{g}$ extends to give a birational map
$$
\tau\co \M_{g} \rightarrow \tau(\M_{g}) \subset \Abar_{g},
$$
where $\Abar_{g}$ is the Satake compactification of the moduli space of polarized abelian varieties
of dimension $g$. The map $\tau$ contracts the entire boundary of $\M_{g}$ (resp. $\delta_{0}$) when $g \geq 3$ (resp. $g=2$), and satisfies $\tau^*\O_{\Abar_g}(1)=\lambda$, where $\O_{\Abar_g}(1)$ is the canonical polarization on $\Abar_{g}$ coming from its realization as the $\Proj$ of the graded algebra of Siegel modular forms \cite{namikawa}. It follows immediately that
\begin{align*}
\Mor(\tau)&=\Q\{\lambda, \delta_{0}\}, \ g=2, \\
\Mor(\tau)&=\Q\{\lambda, \delta\}, \ \ g\geq 3.
\end{align*}
Thus, the only interesting part of the restricted effective cone lies in the quadrant spanned by $\lambda$ and $-\delta$. Any divisor in this quadrant is proportional to a uniquely defined divisor of the form $s\lambda-\delta$,
where $s$ is the {\em slope} of the divisor. 
On the other hand, since $K_{\Mg{g}}=13\lambda-2\delta,$ we also have
$$
s\lambda-\delta=K_{\SM_{g}}+\alpha\delta
$$
for $\alpha=2-13/s$, so we may use either the parameter $\alpha$ or $s$ to express the boundary thresholds in the Mori chamber decomposition of $\Eff_\rst(\M_{g})$. From the point of view of higher-dimensional geometry, $\alpha$ is more natural, while from the point of view of past developments in moduli of curves, the slope is more natural. In order to have all information available, we will label our figures with both.

\subsubsection{Mori chamber decomposition for $\M_{2}$.}
\label{S:MMPM2}
The geometry of $\M_{2}$ is sufficiently simple that it is possible to give a complete Mori chamber decomposition of $\Eff(\M_{2})$. We have encountered precisely two alternate birational models of $\M_{2}$ so far, $\Abar_2$ and $\M_2[A_2]\simeq \M_2^{\, ps}$. 

\begin{remark}\label{R:M2}
While the construction of $\M_{g}^{\, ps}=\M_{g}[A_2]$ in Corollary \ref{C:pseudostable-stack} was restricted to $g \geq 3$, one may still consider the stack $\SM_{2}[A_2]$ of $A_2$-stable curves of genus two. This stack is not separated as one can see from the fact that one point of $\SM_2[A_2]$, namely
the unique rational curve with cusps at $0$ and $\infty$, has automorphism group $\GG_m$. Nevertheless, $\SM_{2}[A_2]$ gives rise to a weakly modular compactification $\M_{2}[A_2]$. The good moduli map $\SM_{2}[A_2] \rightarrow \M_{2}[A_2]$ maps all cuspidal curves to a single point $p \in \M_{2}[A_2]$, and the aforementioned rational bicuspidal curve is the unique closed point in $\phi^{-1}(p)$. For a full discussion of these matters, we refer the reader to \cite{Hgenus2}. For our purposes, the essential fact we need is the analogue of Proposition \ref{P:birational-maps}, i.e. 
the existence of a birational contraction $\eta\co\M_{2} \rightarrow \M_{2}[A_2]$ with $\Exc(\eta)=\Delta_1$ and $\eta(\Delta_1)=p$. 
\end{remark}

We proceed to describe the Mori chambers associated to the models $\M_{2}[A_2]$ and $\Abar_{2}$.
Recall that $\N^1(\M_{2})$ is two-dimensional, generated by $\delta_{0}$ and $\delta_1$, with the relation $\lambda=\frac{1}{10}\delta_{0}+\frac{1}{5}\delta_1$ \cite{AC}. Thus, the Mori chamber associated to any $\Q$\nb-factorial rational contraction will be a two-dimensional polytope, spanned by two extremal rays.
\begin{lemma}
$\Mor(\Abar_2)$ is spanned by $\lambda$ and $\delta_{0}$.
\end{lemma}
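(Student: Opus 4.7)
The plan is to compute $\Mor(\tau)$ directly from its definition, applied to the Torelli contraction $\tau\co \M_2 \ra \Abar_2$, using the properties of $\tau$ already recorded in the paragraphs preceding the lemma.

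First, I would observe that $\Exc(\tau)=\Delta_0$. On the complement of $\Delta_0$ we are looking at curves of compact type, whose (possibly reducible) Jacobians remain abelian surfaces and vary nontrivially with the curve; in particular, $\Delta_1$ is not contracted since it maps onto the image of $A_1\times A_1\hookrightarrow \Abar_2$. The discussion preceding the lemma already states that in the $g=2$ case only $\delta_0$ is contracted by $\tau$. This accounts for one generator of the claimed Mori chamber.

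Next, I would argue that $\Abar_2$ is $\QQ$-factorial of Picard rank one, so that $\Amp(\Abar_2)=\RR_{>0}[\O_{\Abar_2}(1)]$. Rank one follows, for instance, from Igusa's identification of $\Abar_2$ with an explicit weighted projective space \cite{Igusa}, or more cheaply from the fact that $\dim \N^1(\M_2)=2$ and $\tau$ contracts precisely the single divisor $\Delta_0$ while being quasi-finite and generically injective elsewhere, so the pushforward $\tau_*\co \N^1(\M_2)\ra \N^1(\Abar_2)$ is surjective with one-dimensional kernel spanned by $[\Delta_0]$. Either route also yields $\QQ$-factoriality of $\Abar_2$, which is needed in order for $\tau$ to qualify as a birational contraction in the sense of Definition~\ref{D:contraction}.

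Combining these two observations with the already noted identity $\tau^*\O_{\Abar_2}(1)=\lambda$ and the definition
\[
\Mor(\tau)=\tau^*\Amp(\Abar_2)+\RR_{\geq 0}\,\Delta_0,
\]
we immediately obtain $\Mor(\tau)=\RR_{\geq 0}\lambda+\RR_{\geq 0}\,\Delta_0$, which is the claimed cone. The only genuinely substantive step is the verification that $\rho(\Abar_2)=1$ together with $\QQ$\nb-factoriality; once that is in hand the rest of the argument is bookkeeping with numerical classes already identified in the text.
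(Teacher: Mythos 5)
Your proof is correct and follows essentially the same route as the paper's: identify $\Exc(\tau)=\Delta_0$, note $\rho(\Abar_2)=1$, and conclude via $\tau^*\O_{\Abar_2}(1)=\lambda$. The only difference is that the paper dispatches $\rho(\Abar_2)=1$ with the single phrase ``since $\tau$ is a divisorial contraction,'' while you spell out two independent justifications (Igusa's weighted-projective-space description and the surjectivity-with-one-dimensional-kernel argument for $\tau_*$) together with the $\QQ$-factoriality of $\Abar_2$, which the paper leaves implicit.
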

\begin{proof}
Since $\tau\co \M_{2} \rightarrow \Abar_{2}$ is a divisorial contraction, 
$\N^1(\Abar_2)$ is one-dimensional, generated by $\O_{\Abar_{2}}(1)$. Since $\tau^*\O_{\Abar_{2}}(1)=\lambda$ and $\delta_{\irr}$ is $\tau$-exceptional, $\Mor(\Abar_2)$ is spanned by $\lambda$ and $\delta_{\irr}$.
\end{proof}
\begin{lemma}
$\Mor(\M_{2}[A_2])$ is spanned by $11\lambda-\delta$ and $\delta_1$.
\end{lemma}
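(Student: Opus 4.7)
The plan is to identify $\Mor(\M_2[A_2])$ with the Mori chamber of the contraction $\eta\co \M_2 \to \M_2[A_2]$ of Remark \ref{R:M2}. Since $\Exc(\eta)=\Delta_1$ is irreducible and $\eta(\Delta_1)$ is the single point $p$, the pushforward $\eta_*\co \N^1(\M_2)\to \N^1(\M_2[A_2])$ is surjective with kernel $\QQ\delta_1$, so $\N^1(\M_2[A_2])$ is one-dimensional. Consequently
\[
\Mor(\eta) \;=\; \eta^*\Amp(\M_2[A_2]) \,+\, \QQ_{\geq 0}\delta_1
\]
is a two-dimensional cone with one extremal ray $\QQ_{\geq 0}\delta_1$. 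It therefore remains to identify $\eta^*\Amp(\M_2[A_2])$ with the ray $\QQ_{\geq 0}(11\lambda-\delta)$.

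First I would verify that $D:=11\lambda-\delta$ is nef on $\M_2$ and vanishes on every $\eta$-contracted curve. Nefness is Proposition \ref{P:first-F-curves}\eqref{Fcurves1}, whose proof via Corollary \ref{C:Nefness} is valid for all $g\geq 2$. For the vanishing, every curve $C\subset \Delta_1$ parametrizes reducible curves of the form $E_1\cup E_2$ with $E_1,E_2$ of arithmetic genus one joined at a node. Whichever of the two components varies in the family is, from the perspective of the ambient genus-two curve, an elliptic tail attached at a single point. Hence every moving component of $C$ is an elliptic tail, and Corollary \ref{C:Fcurves}(1) yields $D\cdot C=0$.

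Next I would promote these properties to the statement that $D=\eta^*L$ for some ample $L$ on $\M_2[A_2]$. Using $K_{\M_2}=13\lambda-2\delta_0-3\delta_1$, one writes
\[
D \;=\; K_{\M_2} + \tfrac{1}{10}\delta_0 + \tfrac{6}{5}\delta_1 - 12\lambda + 11\lambda \;=\; K_{\M_2}+\Delta
\]
for an effective boundary $\Delta$ with coefficients in $[0,1)$ making $(\M_2,\Delta)$ klt (all boundary coefficients needed are well within the klt range after absorbing a small multiple of the semiample $\lambda$). Since $D$ is nef and $aD-(K_{\M_2}+\Delta)$ is nef and big for $a\gg 0$ (being an effective positive multiple of the big class $\lambda$), the Kawamata basepoint freeness theorem (Theorem \ref{T:kawamata}) guarantees that $D$ is semiample and defines a birational contraction $\phi\co \M_2\to Y$ to a normal projective variety $Y$, contracting precisely the curves $C$ with $D\cdot C=0$. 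By the preceding paragraph, these are exactly the curves contracted by $\eta$; since both $Y$ and $\M_2[A_2]$ are normal projective and $\eta$ is likewise a birational contraction with the same contracted locus, the universal property of the Stein factorization yields $Y\simeq \M_2[A_2]$ and $\phi=\eta$. Thus $D=\eta^*L$ for some ample $L$ on $\M_2[A_2]$, completing the identification of the extremal ray.

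The main technical point is verifying klt-ness of the required boundary and the applicability of Kawamata vanishing on the coarse moduli space $\M_2$. Should this be inconvenient, it can be bypassed: because $D$ has degree zero on every $\eta$-contracted curve and $\N^1(\M_2[A_2])$ has rank one, $D$ automatically descends to a numerical class $L=\eta_*D$ on $\M_2[A_2]$, and $L$ is ample by Kleiman's criterion since $\eta^*L=D$ has positive degree on any curve meeting $\M_2\setminus\Delta_1$ (equivalently, any curve with a non-elliptic-tail moving component, by Proposition \ref{P:first-F-curves}\eqref{Fcurves1}).
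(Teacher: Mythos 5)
Your overall plan — identify $\Mor(\M_2[A_2])$ with the chamber of $\eta$, show $D=11\lambda-\delta$ is nef and vanishes exactly on $\eta$-contracted curves, and conclude that $D$ descends to an ample class on $\M_2[A_2]$ — is the same as the paper's. One small difference: the paper records the explicit discrepancy computation $\eta^*\lambda=\lambda+\delta_1$, $\eta^*\delta=\delta_0+12\delta_1$, hence $\eta^*(11\lambda-\delta)=11\lambda-\delta$, and it establishes $\QQ$-factoriality of $\M_2[A_2]$ via Koll\'ar--Mori Corollary~3.18 before pushing forward classes; you use both facts but don't state them.

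Your klt decomposition, however, is arithmetically wrong and the remark about absorbing $\lambda$ does not repair it. Expanding $K_{\M_2}+\tfrac{1}{10}\delta_0+\tfrac{6}{5}\delta_1-\lambda$ with $K_{\M_2}=13\lambda-2\delta_0-3\delta_1$ gives $12\lambda-\tfrac{19}{10}\delta_0-\tfrac{9}{5}\delta_1$, which is not $11\lambda-\delta_0-\delta_1$. The correct computation, using the relation $\lambda=\tfrac{1}{10}\delta_0+\tfrac{1}{5}\delta_1$ on $\M_2$, is
\[
(11\lambda-\delta)-K_{\M_2}=-2\lambda+\delta_0+2\delta_1=8\lambda,
\]
and rewriting $8\lambda$ as a boundary divisor yields $\tfrac{4}{5}\delta_0+\tfrac{8}{5}\delta_1$ with $\delta_1$-coefficient strictly greater than $1$; subtracting $\epsilon\lambda$ still leaves $\delta_1$-coefficient $\tfrac{8-\epsilon}{5}>1$. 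So you cannot express $D-K_{\M_2}$ as an effective klt boundary this way. Two correct fixes exist: (i) work with $K_{\SM_2}=13\lambda-2\delta$ on the stack and note $11\lambda-\delta\sim K_{\SM_2}+\tfrac{9}{11}\delta$, coefficient $\tfrac{9}{11}<1$, which is exactly the paper's route; or (ii) take $\Delta=0$, observe $(\M_2,0)$ is klt (quotient singularities) and $D-K_{\M_2}=8\lambda$ is nef and big (Torelli), and apply Theorem~\ref{T:kawamata} directly.

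Your fallback argument is correct and is actually the cleaner of the two: once you know $\QQ$-factoriality of $\M_2[A_2]$ (which you should cite rather than assume), $\N^1(\M_2[A_2])$ has rank one, $\eta_*D$ is positive on every curve since $\eta^*\eta_*D=D$ and $D\cdot C>0$ for any $C\not\subset\Delta_1$ by Proposition~\ref{P:first-F-curves}\eqref{Fcurves1}, and on a projective variety of Picard number one a class positive on some curve is ample. This avoids basepoint freeness entirely, unlike the paper's proof.
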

\begin{proof}
By Remark \ref{R:M2}, there is a divisorial contraction $\eta\co \M_{2} \rightarrow \M_{2}[A_2]$  contracting $\delta_1$, so it is sufficient to show that $11\lambda-\delta$ is semiample, pulled back from an ample divisor on $\M_{2}[A_2]$. 

Since $\eta$ is an extremal divisorial contraction, $\M_{2}[A_2]$ is $\Q$-factorial \cite[Corollary 3.18]{kollar-mori}. 
Thus, $\lambda:=\eta_*\lambda$ and $\delta:=\eta_*\delta$ make sense as numerical divisor classes. We claim that
\begin{equation} \label{E:M2-discrepancy}
\begin{aligned}
\eta^*\lambda &=\lambda+\delta_1,\\
\eta^*\delta &=\delta_{0}+12\delta_1, \\
\eta^*(11\lambda-\delta)&=11\lambda-\delta.
\end{aligned}
\end{equation}
To see this, note that the morphism $\eta$ contracts a family $E$ of elliptic tails whose
intersection numbers (see Example \ref{E:elliptic-bridges}) are $E\cdot \lambda=1$, 
$E\cdot \delta_{0}=12$, and $E\cdot \delta_1=-1$.
Writing $\eta^*(\lambda)=\lambda+a\delta_1$ and intersecting with $E$, we obtain $a=1$. The second formula is proved in similar fashion. The third
follows immediately from the first two.

Now the divisor $11\lambda-\delta$ is nef and has degree zero only on curves lying in $\Delta_1$ by Proposition \ref{P:first-F-curves}. Since
$$
11\lambda-\delta \sim K_{\Mg{2}}+\frac{9}{11}\delta,
$$
the Kawamata basepoint freeness theorem \ref{T:kawamata} implies that $11\lambda-\delta$ is semiample. In other words, $\eta^*(11\lambda-\delta)$ is semiample and contracts only $\eta$-exceptional curves. It follows that $11\lambda-\delta$ is ample on $\M_2[A_2]$ as desired.
\end{proof}

 \begin{corollary}\label{C:M2-eff-nef}
 \begin{enumerate}
 \item[]
 \item\label{C:M2-eff}
 $\Eff(\M_{2})$ is spanned by $\delta_{0}$ and $\delta_1=\frac{10}{13}(K_{\Mg{2}}+\frac{7}{10}\delta)$.
\item\label{C:M2-nef}  $\Nef(\M_{2})$ is spanned by $11\lambda-\delta$ and $\lambda$.
\end{enumerate}
\end{corollary}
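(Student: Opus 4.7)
The idea is to establish the nef cone first, then deduce the effective cone by combining the two Mori chambers identified in the preceding lemmas with the extremality of $\delta_0$ and $\delta_1$.

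For part \eqref{C:M2-nef}, the inclusion $\Nef(\M_{2})\supseteq \mathrm{span}(\lambda,\,11\lambda-\delta)$ is already in hand: $\lambda=\tau^{*}\mathcal{O}_{\Abar_{2}}(1)$ is nef, and $11\lambda-\delta$ is nef by Proposition \ref{P:first-F-curves}. Since $\N^{1}(\M_{2})$ is two-dimensional, to prove the reverse I would pin down both extremal rays of $\Nef(\M_{2})$ using F-curves from Example \ref{E:elliptic-bridges}. On the slope side, the elliptic-tail curve $\mathrm{T}_1$ has $\lambda\cdot\mathrm{T}_1=1$ and $\delta\cdot\mathrm{T}_1=12-1=11$, so $(s\lambda-\delta)\cdot\mathrm{T}_1=s-11<0$ for $s<11$, ruling out slope below $11$. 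On the other side, the non-separating elliptic bridge $\mathrm{EB}^{ns}$ (which exists for $g=2$) has $\lambda\cdot\mathrm{EB}^{ns}=0$, $\delta_{0}\cdot\mathrm{EB}^{ns}=-2$, and $\delta_{1}\cdot\mathrm{EB}^{ns}=1$, hence $\delta\cdot\mathrm{EB}^{ns}=-1$; consequently $(\lambda+\epsilon\delta)\cdot\mathrm{EB}^{ns}=-\epsilon<0$ for $\epsilon>0$, ruling out any divisor ``past $\lambda$'' with positive $\delta$-coefficient. These two bounds force equality.

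For part \eqref{C:M2-eff}, the inclusion $\supseteq$ is immediate since $\delta_{0}$ and $\delta_{1}$ are effective. For the reverse, the two preceding lemmas exhibit $\delta_{0}$ (respectively $\delta_{1}$) as the unique irreducible exceptional divisor of the divisorial contraction $\tau\co\M_{2}\ra\Abar_{2}$ (respectively $\eta\co\M_{2}\ra\M_{2}[A_{2}]$), each with connected fibers. By a standard rigidity argument, such an exceptional divisor spans an extremal ray of $\PEff(\M_{2})$: indeed, for sufficiently divisible $m$ the pushforward of $\O_{\M_2}(m\delta_i)$ is supported in codimension at least two on the target, so $h^{0}(\M_{2},\,m\delta_{i})=1$ and $\delta_i$ is rigid. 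Because $\N^{1}(\M_{2})$ is a plane, the convex cone $\PEff(\M_{2})$ has exactly two extremal rays, which must therefore be $\delta_{0}$ and $\delta_{1}$. The identity $\delta_{1}=\tfrac{10}{13}(K_{\Mg{2}}+\tfrac{7}{10}\delta)$ then follows by plugging $K_{\Mg{2}}=13\lambda-2\delta$ together with the Picard relation $\lambda=\tfrac{1}{10}\delta_{0}+\tfrac{1}{5}\delta_{1}$, which gives $\delta_{1}=10\lambda-\delta$ and a short calculation.

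The main obstacle is justifying extremality of $\delta_{0}$ and $\delta_{1}$ in $\PEff(\M_{2})$. One can either invoke the standard rigidity of an irreducible exceptional divisor of a divisorial contraction, as above, or cross-check via explicit moving curves (e.g.\ a general pencil of double covers of $\PP^{1}$ branched at six points gives a moving curve disjoint from $\delta_{1}$, which forces the $\delta_{0}$-coefficient of any effective class to be non-negative). Everything else is numerical bookkeeping with the intersection numbers tabulated in Example \ref{E:elliptic-bridges}.
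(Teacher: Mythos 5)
Your proof is correct and follows essentially the same route as the paper: $\delta_0$ and $\delta_1$ span $\Eff(\M_2)$ because each is the (rigid) exceptional divisor of a divisorial contraction, and the two non-ample nef classes $\lambda$ and $11\lambda-\delta$ span $\Nef(\M_2)$ since that cone is two-dimensional. You merely make the extremality of the nef rays explicit via the F-curves $\mathrm{T}_1$ and $\mathrm{EB}^{ns}$ (note in passing that for $g=2$ the $\mathrm{EB}^{ns}$ construction requires contracting the unstable genus-$0$ attaching component, yielding the curve in $\Delta_{0}\cap\Delta_{0}$ with the same intersection numbers), whereas the paper leaves this implicit in the fact that $\lambda$ and $11\lambda-\delta$ are pulled back from ample classes under $\tau$ and $\eta$.
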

\begin{proof} \par

(1) Since $\delta_{0}$ and $\delta_1$ are each contracted by a divisorial contraction, they are extremal rays of the effective cone. That $\delta_1=\frac{10}{13}(K_{\Mg{2}}+\frac{7}{10}\delta)$ 
follows from relations $K_{\Mg{2}}=13\lambda-2\delta$
and $\lambda=\frac{1}{10}\delta_0+\frac{1}{5}\delta_1$. 

(2) We have seen that $11\lambda-\delta$ and $\lambda$ are semiample, and since $\Nef(\M_{2})$ is two-dimensional, they span the nef cone.
 \end{proof}
 
 \begin{corollary}
 The Mori chamber decomposition has precisely three chambers.
 \end{corollary}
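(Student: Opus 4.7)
The plan is to assemble the three Mori chambers that have already been exhibited in the preceding two lemmas together with Corollary~\ref{C:M2-eff-nef}, and then to argue that they tile the effective cone. Concretely, the three candidate chambers are
\[
\Mor(\tau) = \Q_{\geq 0}\langle \lambda,\, \delta_0\rangle, \qquad
\Nef(\M_2) = \Mor(\id_{\M_2}) = \Q_{\geq 0}\langle \lambda,\, 11\lambda-\delta\rangle, \qquad
\Mor(\M_2[A_2]) = \Q_{\geq 0}\langle 11\lambda-\delta,\, \delta_1\rangle,
\]
each of which is two-dimensional in $\N^1(\M_2)\otimes\Q$ (as $\N^1$ of each target is one-dimensional and each contraction has exactly one exceptional divisor, except for $\id_{\M_2}$ whose chamber is the nef cone itself, described in Corollary~\ref{C:M2-eff-nef}(2)).

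Next, I would check by an elementary convex geometry computation that these three chambers cover $\Eff(\M_2)$ without overlap in their interiors. Using the relation $\lambda=\tfrac{1}{10}\delta_0+\tfrac{1}{5}\delta_1$ from \cite{AC}, both boundary rays $\lambda$ and $11\lambda-\delta$ lie strictly inside the cone $\Q_{\geq 0}\langle \delta_0,\delta_1\rangle$, which by Corollary~\ref{C:M2-eff-nef}(1) is exactly $\Eff(\M_2)$. Proceeding counter-clockwise from $\delta_0$ to $\delta_1$, the three chambers meet consecutively along the two rays $\Q_{\geq 0}\lambda$ and $\Q_{\geq 0}(11\lambda-\delta)$, so their union is all of $\Eff(\M_2)$.

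Finally, I would invoke Lemma~\ref{L:Mori-chamber}(2) to exclude the possibility of any further chamber. Since $\dim \N^1(\M_2)\otimes\Q = 2$, every Mori chamber of a $\Q$-factorial contraction of $\M_2$ is itself two-dimensional; hence any purported fourth chamber would have interior in $\Eff(\M_2)$, intersect the interior of one of the three chambers above, and by Lemma~\ref{L:Mori-chamber}(2) coincide with it. I do not anticipate any serious obstacle: all the hard work -- showing that $\lambda$ is semiample via the Torelli morphism, and that $11\lambda-\delta$ is semiample via the Kawamata basepoint freeness theorem on $\M_2[A_2]$ as in \eqref{E:M2-discrepancy} -- was carried out in the preceding two lemmas, so this corollary is essentially a packaging statement that assembles their conclusions.
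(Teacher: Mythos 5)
Your proof is correct and takes essentially the same approach as the paper, which simply observes that the three chambers $\Mor(\tau)$, $\Nef(\M_2)$, and $\Mor(\M_2[A_2])$ span the effective cone. You have spelled out the details behind that "evidently": the coordinate computation showing the three chambers tile $\Eff(\M_2)=\Q_{\geq 0}\langle\delta_0,\delta_1\rangle$, and the appeal to Lemma~\ref{L:Mori-chamber}(2) together with the fact that every Mori chamber of a $\Q$-factorial contraction of a surface-like (Picard rank two) space is full-dimensional, to rule out any further chamber.
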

 \begin{proof}
 Evidently, the Mori chambers of $\Abar_2$, $\M_{2}[A_2]$, and $\M_{2}$ span the effective cone.
 \end{proof}
 
 The full Mori chamber decomposition of $\M_{2}$ is displayed in Figure \ref{F:M2-chamber}. For the sake of future comparison, let us formulate this result in terms of the log minimal model program for $\M_{2}$. This is, of course, equivalent to listing the Mori chamber decomposition for the first quadrant, using the parameter $\alpha$. We have
 
\begin{equation}
 \M_{2}(\alpha)=
\begin{cases}
\M_{2} 		& \text{ iff } \alpha \in (9/11, 1],\\
\M_{2}[A_2] 	& \text{ iff } \alpha \in (7/10,9/11],\\
\text{point}         & \text{ iff } \alpha = 7/10.
\end{cases}
\end{equation}

\begin{figure}
\scalebox{0.7}{\includegraphics{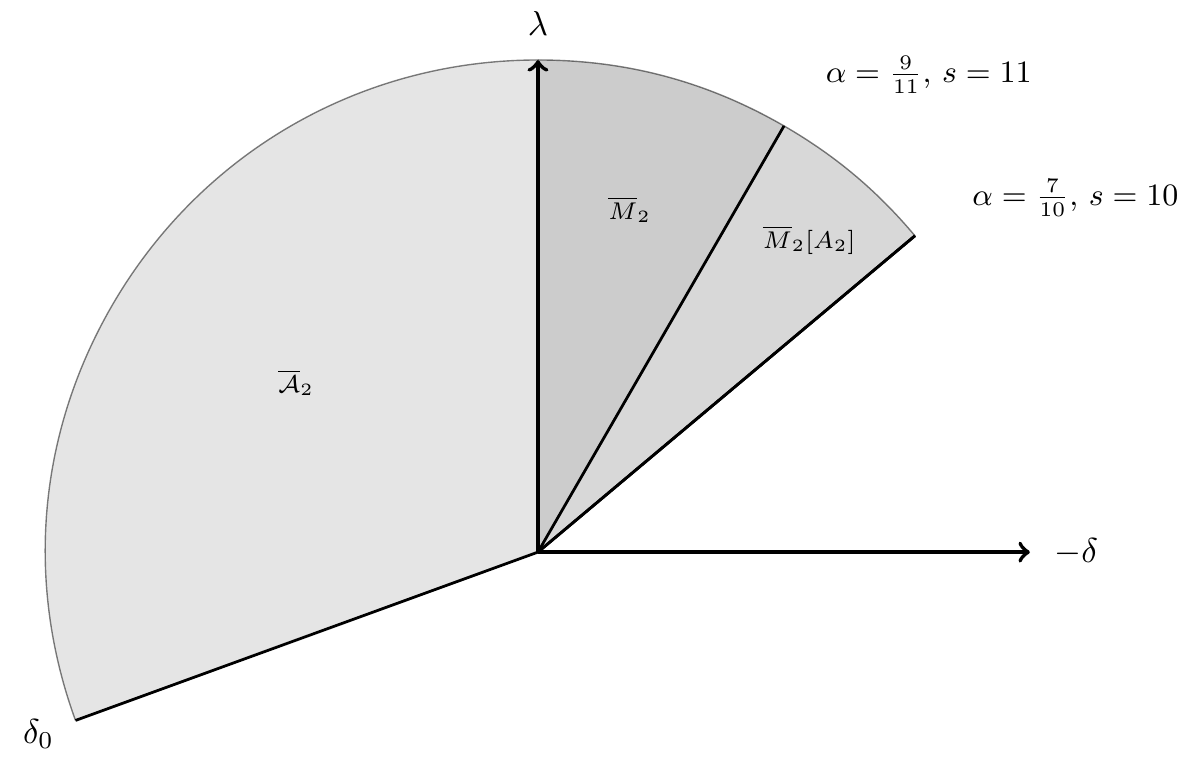}}
\caption{Restricted effective cone of $\M_2$.}
\label{F:M2-chamber}
\end{figure}

\subsubsection{Mori chamber decomposition for $\M_{3}$.}
\label{S:MMPM3}
The birational geometry of $\M_3$ is far more intricate than that
of $\M_2$. For example, as we mentioned in Section \ref{S:nef-cone}, the rationality of $\M_3$ was not established until 1996 \cite{katsylo-3}. Even before the advent of the Hassett-Keel program, several alternate birational models of $\M_3$ had been constructed and studied, including the Satake compactification
$\tau(\M_3)$, Mumford's GIT quotient of the space of plane quartics, and Schubert's space of pseudostable curves.  A systematic study of the Mori chamber decomposition of $\M_3$ was undertaken by Rulla \cite{rulla}, who showed (among other things) the existence of a divisorial contraction $\M_{3} \rightarrow X$ contracting $\Delta_1$, which is \emph{not} isomorphic to the contraction $\M_3\ra\M_3^{\, ps}$. Here, $X$ can be realized as the image of $\M_{3}$ under the natural map to Alexeev's space of semi-abelic pairs \cite{alexeev-annals}. Around the same time, Kondo proved that $M_3$
is birational to a complex ball quotient \cite{kondo-3}. This gives rise to a proper birational model of $\M_3$ by taking the Baily-Borel compactification 
of the ball quotient. Using the theory of $K3$ surfaces, Artebani constructed an alternative compactification of $M_3$ that admits a regular morphism to the Kondo's space \cite{artebani}. 
These developments were rounded off by Hyeon and Lee who described
all the log canonical models $\M_3(\alpha)$ and their relation with 
previously known birational models of $\M_3$. In particular, their work implies that the Artebani's compactification is isomorphic to $\M_{3}[A_3]$ and that the Kondo's compactification is isomorphic to $\M_{3}[A_3^*]$.

To describe where the Mori chambers of the known birational models of $\M_3$ fall inside the effective cone, we begin with the well-known fact that the effective cone inside $\N^1(\M_{3})=\Q\{\lambda, \delta_{0}, \delta_1\}$ is generated by $9\lambda-\delta_{0}-3\delta_1$, $\delta_{0}$, and $\delta_1$, where $9\lambda-\delta_{0}-3\delta_1$ is the class of the hyperelliptic divisor (c.f. \cite{rulla}). In particular, the restricted effective cone is spanned by $\delta$ and $9\lambda-\delta$, or, equivalently, by $\delta$ and $K_{\SM_{3}}+\frac{5}{9}\delta$. The following proposition, due to Hyeon and Lee \cite{HL}, shows that weakly modular birational models account for a complete Mori chamber decomposition of the restricted effective cone of $\M_3$ (see Figure \ref{F:M3-chamber}).
\begin{proposition}\label{P:M3}
 $$
 \M_{3}(\alpha)=
\begin{cases}
\M_{3} 		&\text{ iff } \alpha \in (9/11, 1], \\
\M_{3}[A_2] 	& \text{ iff } \alpha \in (7/10,9/11], \\
\M_{3}[A_3^*] 	&\text{ iff } \alpha=7/10, \\
\M_{3}[A_3] 	&\text{ iff } \alpha=(17/28, 7/10), \\
\M_{3}[Q] 	&\text{ iff } \alpha=(5/9, 17/28], \\
\text{point}			&\text{ iff } \alpha = 5/9.
\end{cases}
$$
These log canonical models and the morphisms between them fit into the following diagram
\begin{equation}\label{E:M3-maps}
\xymatrix{
\M_{3}\ar[d]^{\eta}&&\\
\M_{3}[A_2]\ar[rd]^{\phi^-}&&\M_{3}[A_3] \ar[ld]_{\phi^+} \ar[rd]^{\psi}\\
&\M_{3}[A_3^*]&&\M_{3}[Q]\\
}
\end{equation}
\end{proposition}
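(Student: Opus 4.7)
The plan is to realize each of the six regions of the parameter $\alpha$ as a Mori chamber of the restricted effective cone of $\Mg{3}$, matching the claimed birational model. Since $K_{\Mg{3}}=13\lambda-2\delta$, we have $K_{\Mg{3}}+\alpha\delta=(2-\alpha)\bigl(\tfrac{13}{2-\alpha}\lambda-\delta\bigr)$, so the slope of this divisor ranges from $s=13$ at $\alpha=1$ down to $s=13/2$ at $\alpha=0$, crossing the critical values $s=11$ at $\alpha=9/11$, $s=10$ at $\alpha=7/10$, $s=28/3$ at $\alpha=17/28$, and $s=9$ at $\alpha=5/9$. The strategy for each region is to exhibit, for some known compactification $\phi\co \Mg{3}\dra \M$, the two pieces of data required by the general machinery: effectivity of the difference $D-\phi^*\phi_*D$, and ampleness of $\phi_*D$ on $\M$.

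First I would dispense with the outer regions. For $\alpha\in(9/11,1]$, the divisor $K_{\Mg{3}}+\alpha\delta$ is proportional to $s\lambda-\delta$ with $s>11$, hence ample by Theorem \ref{T:CH}, and $\Mg{3}(\alpha)=\Mg{3}$. At $\alpha=9/11$, $11\lambda-\delta$ is nef with degree-zero locus equal to families of elliptic tails (Proposition \ref{P:first-F-curves}), and since $(\Mg{3},\tfrac{9}{11}\delta)$ is klt, Kawamata basepoint freeness (Theorem \ref{T:kawamata}) implies semiampleness; the associated contraction is exactly $\eta\co \Mg{3}\to\Mg{3}[A_2]$ from Proposition \ref{P:birational-maps}. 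For $\alpha\in(7/10,9/11)$, I would use the discrepancy computation (as in Equation \eqref{E:M2-discrepancy} for $\Mg{2}$) giving $\eta^*\lambda=\lambda+\delta_1$ and $\eta^*\delta=\delta_0+12\delta_1$, from which $\eta^*(K_{\Mg{3}[A_2]}+\alpha\delta)=K_{\Mg{3}}+\alpha\delta+(\tfrac{9}{11}-\alpha)\cdot(\text{positive multiple of }\delta_1)^{-1}$-type relation shows the difference $D-\eta^*\eta_*D$ is an effective multiple of $\delta_1$ throughout this range, while ampleness on $\Mg{3}[A_2]$ follows from Kleiman's criterion applied via the nefness of $10\lambda-\delta-\delta_1$ (Proposition \ref{P:first-F-curves}) together with the fact that the elliptic bridge families contracted by $\phi^-$ become degree-positive as soon as $\alpha>7/10$.

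At $\alpha=7/10$, the divisor $10\lambda-\delta-\delta_1$ is nef and vanishes precisely on families of elliptic bridges, so $\phi^-$ is the associated small contraction to $\Mg{3}[A_3^*]$. For the flip region $\alpha\in(17/28,7/10)$, one must perform the flip across the locus of elliptic bridges, replacing them with tacnodes: the identification $\Mg{3}(\alpha)=\Mg{3}[A_3]$ follows from the existence of the small contraction $\phi^+\co \Mg{3}[A_3]\to\Mg{3}[A_3^*]$ of Proposition \ref{P:birational-maps} together with positivity of the $\phi^+$-ample class on the tacnodal locus; since $\phi^\pm$ are small, $D-\phi^*\phi_*D=0$ automatically, and the key task is to verify ampleness of $\phi^+_*(K+\alpha\delta)$ on $\Mg{3}[A_3]$ throughout the open interval, using the GIT polarization furnished by Theorem \ref{T:GIT-construction}(4) and the explicit formula of Proposition \ref{P:Hilbert-polarization}.

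Finally, for the last phase $\alpha\in(5/9,17/28]$, the threshold $\alpha=17/28$ corresponds to slope $s=28/3$, at which the hyperelliptic divisor $9\lambda-\delta_0-3\delta_1$ becomes proportional (modulo boundary) to $K+\alpha\delta$; beyond this value the hyperelliptic locus ought to be contracted, and $\Mg{3}[Q]$ is identified with Mumford's GIT quotient of plane quartics (equivalently, the Artebani compactification, which may also be described as the Baily–Borel compactification of Kondo's ball quotient model for $M_3$). The final contraction $\psi\co \Mg{3}[A_3]\to\Mg{3}[Q]$ should be verified to be the divisorial contraction of the hyperelliptic locus, and its target should be identified with the GIT quotient by comparing polarizations: the class $(28/3)\lambda-\delta$ on $\Mg{3}[A_3]$ pulls back from the natural linearization of the plane quartic GIT problem, and ampleness on $\Mg{3}[Q]$ is automatic from GIT. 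At $\alpha=5/9$ the slope drops to $9$, meeting the edge of the effective cone along the hyperelliptic divisor, and $\Mg{3}(\alpha)$ collapses to a point. The hardest step by far is this last phase: verifying that the hyperelliptic divisor is genuinely contracted at exactly $\alpha=17/28$, and matching the resulting projective model with the GIT quotient of plane quartics, requires detailed stable reduction analysis of hyperelliptic limits together with a careful intersection-theoretic comparison of the Hilbert polarization against $K+\alpha\delta$, for which Proposition \ref{P:Hilbert-polarization} and Principle \ref{principle-git} are the essential tools.
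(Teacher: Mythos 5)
Your outline matches the paper through $\alpha=7/10$: Cornalba--Harris for $\alpha>9/11$, Kawamata basepoint freeness to produce $\eta$ at $\alpha=9/11$, the discrepancy formula plus nefness of $10\lambda-\delta-\delta_1$ to cover $(7/10,9/11]$, and the identification of the small contraction $\phi^-$ at $\alpha=7/10$. (A small slip: your expression ``$(\tfrac{9}{11}-\alpha)\cdot(\text{positive multiple of }\delta_1)^{-1}$'' is garbled; what the discrepancy computation actually yields is $(K_{\Mg{3}}+\alpha\delta)-\eta^*\eta_*(K_{\Mg{3}}+\alpha\delta)=(9-11\alpha)\delta_1$, manifestly effective for $\alpha<9/11$.)

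The substantive divergence is in the flip chamber $\alpha\in(17/28,7/10)$. You propose to establish ampleness of $K_{\Mg{3}[A_3]}+\alpha\delta$ throughout this interval by invoking the GIT polarization from Theorem \ref{T:GIT-construction}(4) and Proposition \ref{P:Hilbert-polarization}. This does not work as stated: the only proved finite-Hilbert stability result in the bicanonical case is for $m\gg0$, which (by the polarization formula) only gives ampleness of $\Lambda_{2,m}\sim K+(7/10-\epsilon)\delta$ for small $\epsilon$. To push down to $\alpha=17/28$ by GIT alone, you would need the conjectural entries of Table 1 (e.g.\ $\overline{\Hilb}^{\,\ss,9/4}_{g,2}$), which are not established. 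The paper explicitly notes that ``knowing the ample cone of $\Mg{3}$ can no longer help us here'' and instead uses a genus-3-specific ad-hoc argument: one shows that $K_{\Mg{3}[A_3]}+\tfrac{7}{10}\delta$ is nef with degree zero exactly on the tacnodal locus, that $K_{\Mg{3}[A_3]}+\tfrac{17}{28}\delta$ is nef with degree zero exactly on the hyperelliptic divisor (via the identity $K_{\Mg{3}[A_3]}+\tfrac{17}{28}\delta=\tfrac{5}{42}(K_{\Mg{3}[A_3]}+\tfrac{7}{10}\delta)+\tfrac{13}{52}H$ and an explicit computation on $H$ through the map $\Mg{0,8}\to H$), and then observes that these two loci meet in a single point so any proper positive combination is positive on all curves, hence ample by Kawamata basepoint freeness. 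That observation is the key missing idea in your write-up; without it, your chamber $(17/28,7/10)$ is not justified by currently available tools.

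For the final chamber, your discussion is reasonable but should be sharpened: the paper's argument is simply that $\N^1(\Mg{3}[Q])$ is one-dimensional (GIT quotient with Picard number one), that $9\lambda-\delta$ vanishes there because $\psi$ contracts the hyperelliptic divisor, and hence $K_{\Mg{3}[Q]}+\alpha\delta\sim(\alpha-5/9)\delta_0$, giving ampleness exactly for $\alpha>5/9$ and collapse at $\alpha=5/9$. Your slope bookkeeping for the hyperelliptic divisor is slightly off: with $H\equiv 9\lambda-\delta_0-3\delta_1$ the relevant slope is $9$, not $28/3$; the number $28/3$ is the slope of $K+\tfrac{17}{28}\delta$, and the match to $H$ comes through the identity above, not through equality of slopes.
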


\begin{figure}[t]
\includegraphics{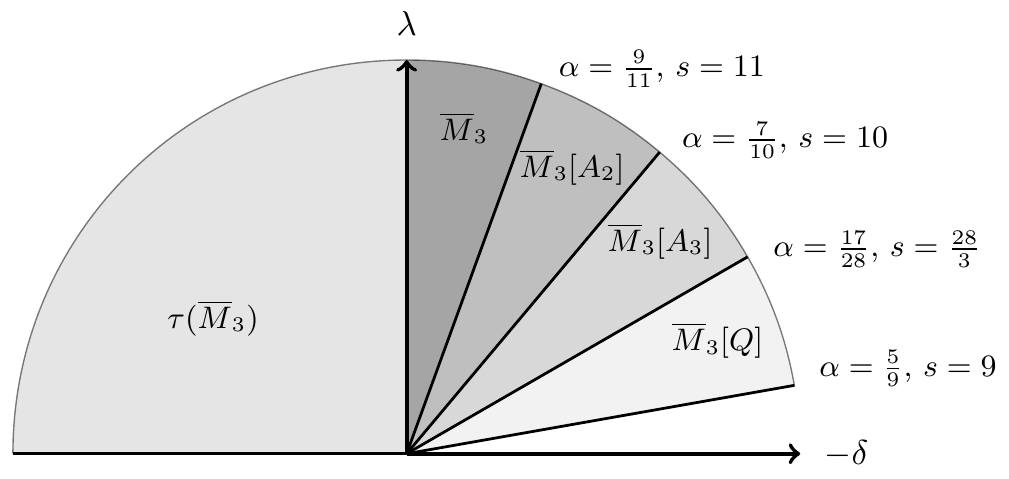}
\caption{Restricted effective cone of $\M_3$.}
\label{F:M3-chamber}
\end{figure}

Recall that the spaces $\M_{3}[A_2]$, $\M_{3}[A_3^*]$, $\M_{3}[A_3]$ were described using 
GIT in Section \ref{S:GIT} and 
the maps $\eta$, $\phi$, $\phi^{+}$ have been described in Proposition \ref{P:birational-maps}.
By contrast, the space $\M_{3}[Q]$, where $Q$ stands for `quartic,' has not yet been described. It is the GIT quotient of the space of degree $4$ plane curves, i.e. $\Hilb_{3,1}^{\ss} \gitq \SL(3)= \PP \HH^0(\PP^2,\O_{\PP^2}(4))\gitq \SL(3)$,
with the uniquely determined linearization. Whereas the GIT analysis of semistable points for the Hilbert scheme of canonically embedded curves has never been carried out for all $g$, the genus three case is a classic example, and the stable and semistable points $\Hilb_{3,1}^{\ss} \subset \PP \HH^0(\PP^2,\O_{\PP^2}(4))$ are described in \cite[Chapter 3.2]{GIT}. The nonsemistable points are precisely quartics with a point of multiplicity $3$ and quartics which are 
a union of a plane cubic with its flex line. The stable points are plane quartics with at worst cusps. The rest are semistable: these are plane quartics with at best $A_3$ and worst $A_7$ singularities, as well as double (smooth) conics. Note that since all strictly semistable curves specialize isotrivially to a double conic, and any two double conics are projectively equivalent, all strictly semistable points correspond to a unique point $p \in \M_{3}[Q]$.

It is straightforward to see that the natural rational map $\psi\co \M_{3}[A_3] \dashrightarrow \M_{3}[Q]$ is regular, contracts the hyperelliptic locus in $\M_{3}[A_3]$  to $p$, and is an isomorphism elsewhere. That it is an isomorphism away from the hyperelliptic locus is an immediate consequence of the fact that any $A_3$-stable curve which is not hyperelliptic is still $Q$-stable, i.e. contained in the semistable locus $\Hilb_{3,1}^{\ss}$.  To see that $\psi$ maps the hyperelliptic divisor to $p$, it suffices to observe that the stable limit of any family of 
quartics degenerating to a {\em stable} quartic does not lie in the hyperelliptic locus.
Therefore, when the generic genus $3$ curve specializes to the hyperelliptic locus, the semistable limit in 
$\M_3[Q]$ is $p$.

\begin{proof}[Proof of Proposition \ref{P:M3}]
Throughout the proof, we will abuse notation by using the symbols $\lambda$, $\delta_{0}$, and $\delta_1$ to denote the divisor classes obtained by pushing-forward to each of the birational models the divisor classes of the same name on $\M_{3}$.

We proceed chamber by chamber. First, observe that
$$
K_{\SM_{3}}+\alpha\delta=13\lambda-(2-\alpha)\delta \sim \frac{13}{2-\alpha}\lambda-\delta.
$$
For $\alpha>9/11$, this divisor has slope greater than $11$, hence is ample by Theorem \ref{T:CH}. 
This immediately implies that
$$
\M_{3}(\alpha)=\M_{3} \text{ for $\alpha \in \bigl(9/11, 1\bigr]$}.
$$

At $\alpha=\frac{9}{11}$, the divisor is numerically proportional to $11\lambda-\delta$, which is nef and has degree zero precisely on $\mathrm{T}_1$ -- the curve class of elliptic tails (see Proposition \ref{P:first-F-curves}). Applying the Kawamata basepoint freeness theorem \ref{T:kawamata} to $K_{\SM_{3}}+\frac{9}{11}\delta$, we obtain a birational contraction\footnote{Since $\M_{3}$ is mildly singular, it is not completely obvious that $(\M_{3}, \frac{9}{11}\Delta)$ is a klt pair; this is verified in \cite{HL}. In order to simplify exposition, we will omit standard discrepancy calculations needed to justify the use of the basepoint freeness theorem.}
$$
\M_{3} \rightarrow \M_{3}\bigl(9/11\bigr),
$$
contracting the curve class $T_1$. Since this is precisely the curve class contracted by the map $\eta\co \M_{3} \rightarrow \M_{3}[A_2]$ (Proposition \ref{P:birational-maps}), we may identify  $\M_{3}(\frac{9}{11}) \simeq \M_{3}[A_2]$. In particular, $K_{\Mg{3}[A_2]}+\frac{9}{11}\delta$ is Cartier and ample on $\M_{3}[A_2]$. In order to determine the chamber of $\M_{3}[A_2]$, the key question is: As we scale $\alpha$ down from $\frac{9}{11}$, how long does $K_{\Mg{3}[A_2]}+\alpha\delta$ remain ample? The simplest way to answer this question is to compute the pull back
\begin{equation}\label{E:discrepancy}
\eta^*(K_{\Mg{3}[A_2]}+\alpha\delta)=13\lambda-(2-\alpha)\delta_{0}-(11-12\alpha)\delta_1
=K_{\Mg{3}}+\alpha\delta+(11\alpha-9)\delta_1
\end{equation}
using \eqref{E:M2-discrepancy} (which holds for all $g\geq 2$), and observe that $13\lambda-(2-\alpha)\delta_{0}-(11-12\alpha)\delta_{1}$ is a positive linear combination of $11\lambda-\delta$ and $10\lambda-\delta-\delta_1$ for $\alpha \in (\frac{7}{10}, \frac{9}{11})$. It follows from Proposition \ref{P:first-F-curves} that
for  $\alpha \in (\frac{7}{10}, \frac{9}{11})$, the divisor $\eta^*(K_{\M_{3}[A_2]}+\alpha\delta)$ is nef and has degree zero precisely on the curve class $\mathrm{T}_1$, hence descends to an ample divisor on $\M_{3}[A_2]$. We conclude that
\[
\M_{3}(\alpha)=\M_{3}[A_2] \text{  for $\alpha \in \bigl(7/10, 9/11\bigr]$}.
\]
\noindent
Furthermore, at $\alpha=\frac{7}{10}$,
$$
\eta^*(K_{\Mg{3}[A_2]}+\alpha\delta)=10\lambda-\delta-\delta_1.
$$
This divisor has degree zero on the curve classes described in Proposition \ref{P:first-F-curves} (2),
namely the class $\mathrm{T}_1$ of the family of elliptic tails and the curve classes 
$\mathrm{EB}^{s}_i$, $\mathrm{EB}^{ns}$ of families of elliptic bridges. 
Thus,  $K_{\Mg{3}[A_2]}+\frac{7}{10}\delta$ is nef on $\M_{3}[A_2]$ and has degree zero precisely on the curve classes $\eta(\mathrm{EB}^{s}_i)$ and $\eta(\mathrm{EB}^{ns})$. Applying the Kawamata basepoint freeness to $K_{\Mg{3}[A_2]}+\frac{7}{10}\delta$,  we obtain a map $\M_{3}[A_2] \rightarrow \M_{3}(\frac{7}{10})$ which contracts these curve classes. By Corollary \ref{C:Fcurves}, these curve classes sweep out precisely the locus of elliptic bridges, i.e. the image of the natural gluing map $\M_{1,2} \times \M_{1,2} \rightarrow \M_{3} \rightarrow \M_{3}[A_2]$. Thus, $\M_{3}[A_2] \rightarrow \M_{3}(\frac{7}{10})$ contracts the locus of the elliptic bridges to a point, and is an isomorphism elsewhere.

Now consider the map $\phi^-:\M_{3}[A_2] \rightarrow \M_{3}[A_3^*]$. By Proposition \ref{P:birational-maps}, $\phi^-$ also contracts precisely the locus of elliptic bridges, so we obtain an identification
$$\M_{3}(7/10) \simeq \M_{3}[A_3^*].$$

Next, we must compute the Mori chamber of $\M_{3}[A_3]$, i.e. we must show
$$
\M_{3}(\alpha)=\M_{3}[A_3] \text{  for $\alpha \in (17/28, 7/10)$}.
$$
Note that knowing the ample cone of $\M_3$ can no longer help us here, since the map $\M_{3} \dashrightarrow \M_{3}[A_3]$ is only rational. Instead, we use a slighly ad-hoc argument, applicable only in genus three. We claim that it is sufficient to prove $K_{\Mg{3}[A_3]}+\frac{7}{10}\delta$ is nef, with degree zero precisely on the locus of tacnodal curves, and that $K_{\Mg{3}[A_3]}+\frac{17}{28}\delta$ is nef, with degree zero precisely on the locus of hyperelliptic curves. Indeed, one can check that the intersection of the locus of tacnodal curves and the closure of the hyperelliptic curves on $\M_{3}[A_3]$ consists of a single point, so that a positive linear combination of $K_{\Mg{3}[A_3]}+\frac{7}{10}\delta$ and $K_{\Mg{3}[A_3]}+\frac{17}{28}\delta$ has positive degree on all curves, hence is ample.\footnote{To check that a klt divisor is ample, it suffices (by the Kawamata basepoint freeness theorem \ref{T:kawamata}) to prove positivity on curves. } Furthermore, for any $\alpha < \frac{9}{11}$, we have 
$$
R(\Mg{3}[A_3], K_{\Mg{3}[A_3]}+\alpha\delta)=R(\Mg{3}[A_2], K_{\Mg{3}[A_2]}+\alpha\delta)=R(\Mg{3}, K_{\Mg{3}}+\alpha\delta),
$$
the first equality holding because $\M_{3}[A_2] \dashrightarrow \M_{3}[A_3]$ is an isomorphism in codimension one, and the second holding by the discrepancy computation in Equation \eqref{E:discrepancy}. Together these observations immediatley imply $\M_{3}(\alpha)=\M_{3}[A_3]$ for $\alpha \in (17/28, 7/10)$.

Now the fact that $K_{\Mg{3}[A_3]}+\frac{7}{10}\delta$ is nef, with degree zero precisely on the locus of tacnodal curves is immediate from the fact that, since $\phi^+$ is small, $(\phi^+)^*(K_{\Mg{3}[A_3^*]}+\frac{7}{10}\delta)=K_{\Mg{3}[A_3]}+\frac{7}{10}\delta$. To obtain the nefness of $K_{\Mg{3}[A_3]}+\frac{17}{28}\delta$, we write
$$
K_{\Mg{3}[A_3]}+\frac{17}{28}\delta=\frac{5}{42}\left(K_{\Mg{3}[A_3]}+\frac{7}{10}\delta\right)+\frac{13}{52}H,
$$
where $H\equiv 9\lambda-\delta_{0}$ is the hyperelliptic divisor. It follows that if $K_{\Mg{3}[A_3]}+\frac{17}{28}\delta$ has negative degree on any curve, that curve must lie in the hyperelliptic locus. On the other hand, using the natural map $\M_{0,8} \rightarrow H \subset \M_{3}[A_3]$ one can verify by a direct calculation that 
$K_{\Mg{3}[A_3]}+\frac{17}{28}\delta$ restricts to a trivial divisor on $H$ \cite[Proposition 19]{HL}. 

Finally, since $K_{\Mg{3}[A_3]}+\frac{17}{28}\delta$ is trivial on the hyperelliptic divisor, applying the Kawamata basepoint freeness theorem \ref{T:kawamata} to $K_{\Mg{3}[A_3]}+\frac{17}{28}\delta$ yields a map $\M_{3}[A_3] \rightarrow \M_{3}(\frac{17}{28})$, contracting the hyperelliptic divisor to a point. Since this is precisely the locus contracted by $\M_{3}[A_3] \rightarrow \M_{3}[Q]$, we conclude that $\M_{3}[Q] \simeq \M_{3}(\frac{17}{28})$. To complete the proof of the proposition, we make the following two observations: Since $\M_{3}[Q]$ has Picard number one by its GIT construction, 
$\N^{1}(\M_{3}[Q])$ is generated by $\delta$. Second, since $\psi$ contracts the hyperelliptic divisor, we must have $9\lambda - \delta= 0 \in \N^{1}(\M_{3}[Q])$. It follows that
$$
K_{\Mg{3}[Q]}+\alpha\delta \sim (\alpha-5/9)\delta_{0},
$$
so that $K_{\Mg{3}[Q]}+\alpha\delta$ is ample iff $\alpha>\frac{5}{9}$ and becomes trivial at 
$\alpha=\frac{5}{9}$.
\end{proof}

\begin{figure}[t]\label{F:FullM3}
\includegraphics{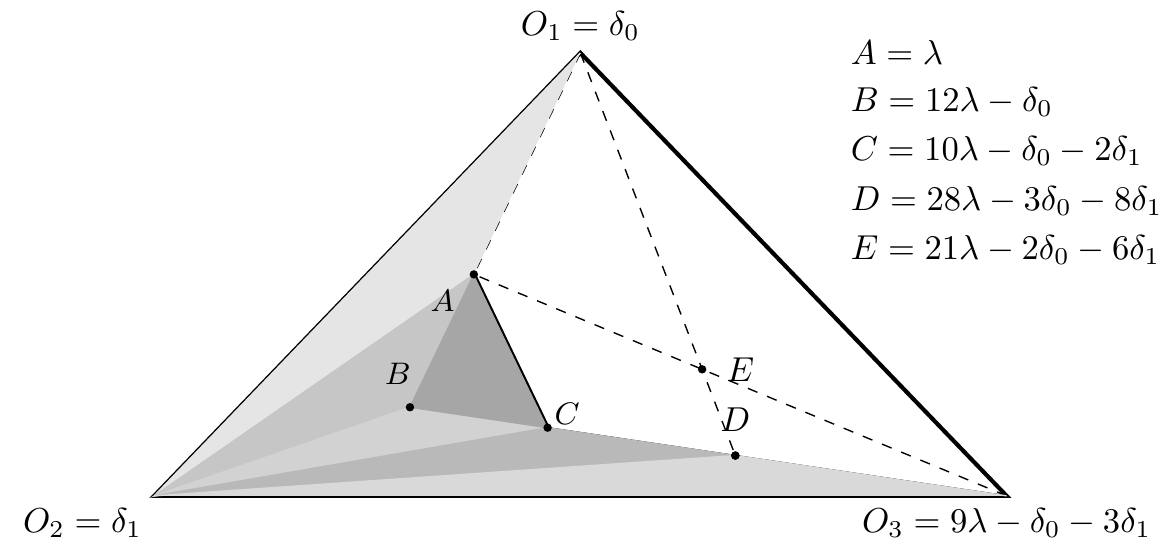}
\caption{Effective cone of $\M_3$}
\end{figure}

The preceding proposition shows that the modular birational models of $\M_{3}$ give
a complete Mori chamber decomposition of the restricted effective cone. Since we actually know the full 
effective cone of $\M_{3}$, it is interesting to see what parts of the effective cone are not accounted for by 
these birational models. In Figure \ref{F:FullM3}, we have drawn a cross section of the effective cone, 
spanned by $\delta_{0}$, $\delta_1$, and $9\lambda-\delta_{0}-3\delta_1$. 
By \cite[Section 2.4]{rulla}, the nef cone is ABC and the moving cone is $ABDE$.
The chambers $O_1AO_2$, $O_2BC$, $O_2CD$ and $O_2DO_3$ correspond to $\tau(\M_3)$, $\M_3[A_2]$, $\M_3[A_3]$ and $\M_3[Q]$ respectively. However, we do not know much  about the birational models accounting for the remaining portion of the effective cone: what are the 
small modifications of $\M_3$ coming from the divisors in $ACDE$ and do they have a
modular meaning? In particular, the question of whether $\M_{3}$ is a Mori dream space remains open.

\subsubsection{Mori chamber decomposition for $\M_{g}$.}\label{S:MMPMg}
Hassett and Hyeon have proved that the first two steps of the log MMP for $\M_{g}$ proceed exactly as in the case $g=3$, i.e. we have
\begin{theorem}[\text{\cite{HH1} and \cite{HH2}}]
 $$
 \M_{g}(\alpha)=
\begin{cases}
\M_{g} 		&\text{ iff } \alpha \in (9/11, 1],\\
\M_{g}[A_2] 	& \text{ iff } \alpha \in (7/10,9/11],\\
\M_{g}[A_3^*] 	&\text{ iff } \alpha=7/10,\\
\M_{g}[A_3] 	&\text{ iff } \alpha=(7/10-\epsilon, 7/10].\\
\end{cases}
$$
\end{theorem}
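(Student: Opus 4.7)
The plan is to follow the template established in the genus three analysis (Proposition~\ref{P:M3}), treating the four values of $\alpha$ in sequence. Throughout, we work with the numerical proportionality
\[
K_{\SM_g}+\alpha\delta \sim \tfrac{13}{2-\alpha}\lambda-\delta,
\]
so that the threshold slopes $s=11,\, 10$ of Proposition~\ref{P:first-F-curves} correspond exactly to $\alpha=9/11$ and $\alpha=7/10$. For $\alpha\in(9/11,1]$, the slope exceeds $11$, so the Cornalba--Harris Theorem~\ref{T:CH} yields ampleness of $K_{\SM_g}+\alpha\delta$ and $\M_g(\alpha)=\M_g$.

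For the second chamber, at $\alpha=9/11$ the class $11\lambda-\delta$ is nef with zero locus the F-curve $T_1$ of elliptic tails (Proposition~\ref{P:first-F-curves}(1)). After the standard klt verification, Kawamata basepoint freeness (Theorem~\ref{T:kawamata}) produces a birational contraction whose exceptional locus is swept by $T_1$ (Corollary~\ref{C:Fcurves}). This must coincide with $\eta\co \M_g\to \M_g[A_2]$ from Proposition~\ref{P:birational-maps}, identifying $\M_g(9/11)=\M_g[A_2]$. For $\alpha\in(7/10,9/11)$, the discrepancy computation~\eqref{E:M2-discrepancy} gives
\[
\eta^*(K_{\M_g[A_2]}+\alpha\delta)=K_{\M_g}+\alpha\delta+(11\alpha-9)\delta_1,
\]
which rewrites as a positive combination of the two nef divisors $11\lambda-\delta$ and $10\lambda-\delta-\delta_1$ from Proposition~\ref{P:first-F-curves}; it is therefore nef with zero locus $T_1$, hence descends to an ample class on $\M_g[A_2]$.

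At the wall $\alpha=7/10$, the pullback $\eta^*(K_{\M_g[A_2]}+7/10\,\delta)=10\lambda-\delta-\delta_1$ has zero locus on exactly the F-curves $T_1, \mathrm{EB}^{ns}, \mathrm{EB}^s_i$ of Example~\ref{E:elliptic-bridges}. Basepoint freeness on $\M_g[A_2]$ now contracts the locus of elliptic bridges (Corollary~\ref{C:Fcurves}), which is precisely the exceptional locus of the small map $\phi^-\co\M_g[A_2]\to \M_g[A_3^*]$ of Proposition~\ref{P:birational-maps}. Hence $\M_g(7/10)=\M_g[A_3^*]$. For the last chamber, note that the rational map $\M_g\dashrightarrow\M_g[A_3]$ is no longer regular, so I would pass instead through the small map $\phi^+\co \M_g[A_3]\to \M_g[A_3^*]$: since $\phi^+$ is an isomorphism in codimension one, pullback gives $(\phi^+)^*(K_{\M_g[A_3^*]}+7/10\,\delta)=K_{\M_g[A_3]}+7/10\,\delta$, which is nef on $\M_g[A_3]$ and vanishes precisely on the tacnodal flipping locus. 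Combined with the codimension-one identifications and the discrepancy computation above, this yields $R(\M_g,K+\alpha\delta)=R(\M_g[A_3],K+\alpha\delta)$ for all $\alpha<9/11$, so once ampleness is established on the $\M_g[A_3]$-side, the identification $\M_g(\alpha)=\M_g[A_3]$ for $\alpha$ slightly below $7/10$ follows.

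The main obstacle is the final ampleness statement: we must show that $K_{\M_g[A_3]}+\alpha\delta$ is actually ample (not merely nef) for $\alpha\in(7/10-\epsilon,7/10)$, which is equivalent to verifying that $\M_g[A_3^*]\dashrightarrow\M_g[A_3]$ is the log flip of $K+7/10\,\delta$. By Kleiman's criterion, this amounts to producing, for every irreducible curve in the tacnodal (flipping) locus of $\M_g[A_3]$, a lower bound showing $(K_{\M_g[A_3]}+(7/10-\epsilon)\delta)\cdot C>0$. Because the tacnodal locus is high-dimensional for large $g$, this requires a systematic inventory of one-parameter families of tacnodal curves, producing generators of the $\phi^+$-contracted cone. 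The positivity would then be checked by combining Moriwaki's semipositivity (Proposition~\ref{P:moriwaki}) on families of generically smooth tacnodal curves with the Kollár-type positivity results for weighted pointed curves (Proposition~\ref{P:nef-divisors-weighted}, Corollary~\ref{C:nef-divisors-weighted}) applied to the normalizations at the tacnodes; this is precisely the technical heart of \cite{HH2}.
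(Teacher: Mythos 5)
Your treatment of the first two chambers ($\alpha\in(9/11,1]$ and $\alpha\in(7/10,9/11]$) tracks the paper exactly and is correct. The divergences come at the last two chambers, and one of them contains a real gap that the paper explicitly singles out.

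At $\alpha=7/10$, you assert that the Kawamata contraction of $\M_g[A_2]$ ``contracts the locus of elliptic bridges, which is precisely the exceptional locus of the small map $\phi^-$,'' and from this conclude $\M_g(7/10)=\M_g[A_3^*]$. This is exactly the step the paper flags as non-obvious for $g\gg 0$. Corollary~\ref{C:Fcurves} tells you that the curves on which $10\lambda-\delta-\delta_1$ vanishes are those whose moving components are elliptic tails or elliptic bridges, and Proposition~\ref{P:birational-maps} tells you such curves are contracted by $\phi^-$; but the converse --- that \emph{every} curve contracted by $\phi^-$ is an effective combination of $T_1$, $\mathrm{EB}^{ns}$, $\mathrm{EB}^s_i$ --- is not clear a priori, because $\Exc(\phi^-)$ becomes geometrically intricate for large $g$. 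The paper sidesteps this entirely: it produces, from the Chow GIT construction of $\M_g[A_3^*]$, an ample class numerically equivalent to $10\lambda-\delta$ (the $m\to\infty$ limit of $\Lambda_{2,m}$ in Corollary~\ref{C:GIT-ample}), then combines this with the codimension-one section-ring equality and the discrepancy computation to conclude $\M_g(7/10)=\M_g[A_3^*]$. You would need either to carry out the delicate characterization of $\phi^-$-contracted curves, or to switch to the GIT polarization argument.

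For the final chamber $\alpha\in(7/10-\epsilon,7/10)$, you propose verifying ampleness on $\M_g[A_3]$ by Kleiman's criterion, i.e. bounding $K_{\M_g[A_3]}+\alpha\delta$ from below on all one-parameter families of tacnodal curves via Moriwaki and Koll\'ar positivity. That is a coherent alternative strategy in principle, but it is not what the paper (or Hassett--Hyeon) actually does, and you do not supply the inventory of families it would require; as written it is a sketch of a different and substantially harder argument. The paper again gets ampleness for free from GIT: Corollary~\ref{C:GIT-ample} shows $\Lambda_{2,m}$ is ample on the finite-$m$ Hilbert quotient, and Equation~\eqref{E:git-hilbert-polarizations} identifies $\Lambda_{2,m}$ for large $m$ with $(10-\epsilon)\lambda-\delta\sim K_{\M_g[A_3]}+(7/10-\epsilon)\delta$. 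So the ``technical heart'' of \cite{HH2} is the semistability analysis of Theorem~\ref{T:GIT-construction}, not a Kleiman-criterion computation; your proposal relocates the difficulty rather than matching the paper's resolution of it.
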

\begin{proof}
The first steps of the proof proceed exactly as in the genus three case. By Proposition \ref{P:first-F-curves},
 $K_{\Mg{g}}+\frac{9}{11}\delta=11\lambda-\delta$ is nef and has degree zero precisely on the F-curve of elliptic tails. But this is precisely the curve class contracted by $\eta\co \M_{g} \rightarrow \M_{g}[A_2]$, so we conclude $\M_{g}(\frac{9}{11})=\M_{g}[A_2]$. Exactly as in the proof of Proposition \ref{P:M3}, one shows that $K_{\SM_{g}}+\alpha\delta$ remains ample for $\alpha \in (\frac{7}{10}, \frac{9}{11})$, and that at $\alpha=\frac{7}{10}$, we have 
$$
\eta^*(K_{\SM_{g}[A_2]}+\frac{7}{10}\delta)=10\lambda-\delta-\delta_1.
$$
Applying the Kawamata basepoint freeness theorem \ref{T:kawamata} to $K_{\M_{g}[A_2]}+\frac{7}{10}\delta$ gives a birational morphism $\M_{g}[A_2] \rightarrow \M_{g}(\frac{7}{10})$, and Proposition \ref{P:first-F-curves} implies that the curves contracted by this morphism are precisely the curves in which every moving component is an elliptic bridge.

If we knew that the map $\M_{g}[A_2] \rightarrow \M_{g}[A_3^*]$ contracted the same curves, we would immediately conclude that  $\M_{g}(\frac{7}{10})=\M_{g}[A_3^*]$. But here, the case $g\gg 0$ is more complicated than the case $g=3$. While it is clear from Proposition \ref{P:birational-maps} that $\Exc(\phi^-)$ is the locus of the elliptic bridges and that any curve in which every moving component is an elliptic bridge is contracted by $\phi^-$, it is not clear \emph{a priori} that every curve contracted by $\phi^-$ is simply a curve in which elliptic bridges are varying in moduli. The problem is that the geometry of the exceptional locus $\Exc(\phi^-)$ is extremely complicated and does not admit any simple description as in the genus three case. While it should, in principle, be possible to precisely characterize the contracted curves of $\phi^-$ by refining the analysis in Proposition \ref{P:birational-maps}, we can avoid this altogether by taking advantage of the natural polarization coming from the GIT construction of $\M_{g}[A_3^*]$.

Since $\M_{g}[A_3^*]$ was constructed as the Chow quotient of bicanonically embedded curves, 
the Chow analogue of Corollary \ref{C:GIT-ample} implies that 
$\M_{g}[A_3^*]$ possesses an ample line bundle (cf. Equation \eqref{E:git-chow-polarizations})
numerically equivalent to
$$
\lim_{m \rightarrow \infty} \Lambda_{2,m} = 10\lambda-\delta=K_{\Mg{g}[A_3^*]}+\frac{7}{10}\delta.
$$
Furthermore, 
$$
R(\Mg{g}[A_3^*], K_{\SM_{g}[A_3^*]}+\frac{7}{10}\delta)=R(\Mg{g}[A_2], K_{\SM_{g}[A_2]}+\frac{7}{10}\delta)=R(\Mg{g}, K_{\SM_{g}}+\frac{7}{10}\delta)
$$
The first equality holds because $\Mg{g}[A_3^*]$ is isomorphic to $\Mg{g}[A_2]$ in codimension one (Proposition \ref{P:birational-maps}), and the second by a simple discrepancy calculation (see Equation\eqref{E:discrepancy}). It follows immediately $\M_{g}(\frac{7}{10})=\M_{g}[A_3^*]$.

Finally, to show that $\M_{g}(\alpha)=\M_{g}[A_3]$ for $\alpha \in (\frac{7}{10}-\epsilon, \frac{7}{10})$, it suffices to prove that $K_{\Mg{g}[A_3]}+(\frac{7}{10}-\epsilon)\delta$ is ample on $\M_{g}[A_3]$ for some $\epsilon > 0$. Indeed, since $\phi^+$ is an isomorphism in codimension one, we have 
$(\phi^{+})^*(K_{\Mg{g}[A_3^*]}+\frac{7}{10}\delta)=K_{\Mg{g}[A_3]}+\frac{7}{10}\delta$ is nef. 
Furthermore, for any $\alpha \leq 7/10$
$$
R(\Mg{g}[A_3^*], K_{\SM_{g}[A_3^*]}+\alpha\delta)=R(\SM_{g}[A_2], K_{\SM_{g}[A_2]}+\alpha\delta)=R(\Mg{g}, K_{\SM_{g}}+\alpha\delta)
$$
for precisely the same reasons as above. To obtain the requisite ample divisor, we simply use the construction of $\M_{g}[A_3]$ as the GIT quotient of the asymptotically linearized Hilbert scheme 
of bicanonically embedded curves. Corollary \ref{C:GIT-ample} implies that the line bundle $\Lambda_{2,m}$ is ample for sufficiently large $m$, and Equation \eqref{E:git-hilbert-polarizations} shows that for large $m$, 
$\Lambda_{2,m}=(10-\epsilon)\lambda-\delta \sim K_{\Mg{g}[A_3]}+(\frac{7}{10}-\epsilon)\delta$, as desired.
\end{proof}

\subsection{Log minimal model program for $\M_{0,n}$}\label{S:MMPM0n}

In this section, we turn our attention to $\M_{0,n}$, which has a much different flavor that $\M_{g}$. As noted in Section \ref{S:nef-cone}, the restricted effective cone is spanned by $-\frac{n-1}{\lfloor n/2\rfloor (n-\lfloor n/2\rfloor)}\psi+\Delta$ and 
$\frac{n-1}{2(n-2)}\psi-\Delta$. As in the case of $\M_{g}$, the log canonical divisors
$$K_{\M_{0,n}}+\alpha \Delta=\psi-(2-\alpha)\Delta$$ fill out the first quadrant of the restricted effective cone (see Figure
\ref{F:M0n-chamber}). But whereas in the case $\M_{g}$, the Mori chamber decomposition of the second quadrant was easily disposed of using the Torelli morphism while the Mori chamber decomposition of the first quadrant is still largely unknown, the case of $\M_{0,n}$ is exactly the opposite. While we have no understanding of the chamber decomposition of the second quadrant, we can give a complete description of the log MMP for $\M_{0,n}$. As usual, we set
\begin{align*}
\M_{0,n}(\alpha)= \Proj \bigoplus_{m \geq 0} \HH^0(\M_{0,n}, m (K_{\M_{0,n}}+\alpha\Delta)).
\end{align*}
The following result, pictured in Figure \ref{F:M0n-chamber} and proved in \cite{AS, FedSmyth}, gives a complete description of the models $\M_{0,n}(\alpha)$.
\begin{theorem}\label{T:logMMP-M0n}
\hfill
\begin{enumerate}
\item If $\alpha \in \Q \cap (\frac{2}{k+2},\frac{2}{k+1}]$ for some $k=1, \ldots, \lfloor \frac{n-1}{2} \rfloor$, then $\M_{0,n}(\alpha) \simeq \M_{0,\A}$,
the moduli space of $\A$-stable curves, with $\A=\bigl(\underbrace{1/k, \ldots, 1/k}_{n}\bigr)$.
\item  If $\alpha \in \Q \cap(\frac{2}{n-1},\frac{2}{ \lfloor n/2 \rfloor+1}]$, then $\M_{0,n}(\alpha)=(\PP^{1})^n \gitq \SL(2)$.
\end{enumerate}
\end{theorem}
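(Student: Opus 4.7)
The plan is to realize each log canonical model $\M_{0,n}(\alpha)$ as a weighted Hassett space $\M_{0,\A_k}$ via the reduction morphism $\phi_k\co \M_{0,n} \to \M_{0,\A_k}$ of \cite{Hweights}, where $\A_k = (1/k, \ldots, 1/k)$. The goal is to verify that $K_{\M_{0,n}} + \alpha\delta$ lies in the Mori chamber of $\phi_k$ precisely when $\alpha \in (\frac{2}{k+2}, \frac{2}{k+1}]$, which amounts to checking (a) the pushforward $\phi_{k*}(K_{\M_{0,n}} + \alpha\delta) = K_{\M_{0,\A_k}} + \alpha\bar\delta$ is ample on $\M_{0,\A_k}$, and (b) the divisor $(K_{\M_{0,n}} + \alpha\delta) - \phi_k^*\phi_{k*}(K_{\M_{0,n}} + \alpha\delta)$ is effective and supported on the $\phi_k$-exceptional divisors $\{\Delta_{0,S} : 3 \leq |S| \leq k\}$. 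Together, (a) and (b) immediately yield
\[
\M_{0,n}(\alpha) = \Proj R(\M_{0,n}, K+\alpha\delta) = \Proj R(\M_{0,\A_k}, \phi_{k*}(K+\alpha\delta)) = \M_{0,\A_k}.
\]

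For the discrepancy underlying (b), the crucial input is the pullback formula
\[
\phi_k^*\bar\delta = \delta + \sum_{3 \leq |S| \leq k}\left(\binom{|S|}{2} - 1\right)\Delta_{0,S},
\]
which records the multiplicity with which each pairwise coincidence divisor $\Delta_{ij}$ on $\M_{0,\A_k}$ picks up along a $\phi_k$-exceptional bubble $\Delta_{0,S}$: the contracted bubble carries all $|S|$ marked points to a single point, forcing the $\binom{|S|}{2}$ pairwise coincidences to meet there. Combining this with the ramification formula $R_{\phi_k} = \sum_{3 \leq |S| \leq k}(|S|-2)\Delta_{0,S}$, which can be extracted from the computation of $D_{\A_k} - \phi_k^*\phi_{k*}D_{\A_k}$ in the proof of Corollary \ref{C:MgA-canonical}, yields the clean factorization
\[
(K+\alpha\delta) - \phi_k^*\phi_{k*}(K+\alpha\delta) = \sum_{3 \leq |S| \leq k}(|S|-2)\left(1 - \frac{\alpha(|S|+1)}{2}\right)\Delta_{0,S}.
\]
This divisor is effective precisely when $\alpha \leq 2/(|S|+1)$ for every $3 \leq |S| \leq k$, i.e.\ when $\alpha \leq 2/(k+1)$---matching the chamber's upper endpoint---and the $|S|=k+1$ analogue first becomes non-negative at $\alpha = 2/(k+2)$, marking the wall into the $\phi_{k+1}$-chamber.

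For the ampleness step (a), I would leverage the ample class $\phi_{k*}D_{\A_k} = K_{\M_{0,\A_k}} + \bar\delta + (2/k - 1)\sum_{i<j}\Delta_{ij}$ from Corollary \ref{C:MgA-canonical} together with the nef divisors $A, B, C$ of Corollary \ref{C:nef-divisors-weighted}. This is the step I expect to be the main technical obstacle: the coefficient of $\sum\Delta_{ij}$ in $K_{\M_{0,\A_k}} + \alpha\bar\delta$ is negative for every $\alpha < 2$, so the pushforward does not lie in the simplicial cone spanned by $A, B, C$, and a finer argument---Kleiman's criterion applied directly to the F-curve classes on $\M_{0,\A_k}$, together with a case analysis on the deepest coincidence strata---is needed to certify positivity throughout the open chamber. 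Finally, for part (2), the observation is that once $k \geq \lceil n/2 \rceil - 1$ the $\A_k$-stability condition forbids all nodal curves, since a nodal component would demand more than $k$ marked points on each side, impossible when $2k \geq n$; consequently $\M_{0,\A_k}$ reduces to the symmetric GIT quotient $(\PP^1)^n \gitq \SL(2)$, and rerunning the chamber analysis of part (1) for each $k \in \{\lfloor n/2 \rfloor, \ldots, n-2\}$ shows that $\M_{0,n}(\alpha)$ stays constant throughout the combined range $(\frac{2}{n-1}, \frac{2}{\lfloor n/2 \rfloor + 1}]$.
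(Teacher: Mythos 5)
Your overall architecture is the same as the paper's: realize $\M_{0,n}(\alpha)$ as $\M_{0,\A_k}$ by checking that the pushforward of $K+\alpha\delta$ is ample on $\M_{0,\A_k}$ and that the discrepancies are effective. Your discrepancy formula $\bigl(K+\alpha\delta\bigr) - \phi_k^*\phi_{k*}(K+\alpha\delta) = \sum_{3\leq |S|\leq k}(|S|-2)\bigl(1 - \tfrac{\alpha(|S|+1)}{2}\bigr)\Delta_{0,S}$ agrees with what the paper obtains by intersecting with the test curve $T_m$ of Exercise \ref{E:test-curve-m}, although your derivation via an explicit pullback formula for $\bar\delta$ is somewhat loosely stated (in particular the intermediate ``ramification'' identity $R_{\phi_k}=\sum(|S|-2)\Delta_{0,S}$ is not what the computation in Corollary \ref{C:MgA-canonical} yields, which gives $(|S|-1)(1-|S|/k)$ there---you would need to derive the relevant relation directly rather than ``extract'' it).

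The genuine gap is in the ampleness step, and the obstacle you cite does not actually exist. You assert that the coefficient of $\sum_{i<j}\Delta_{ij}$ in $K_{\M_{0,\A_k}} + \alpha\bar\delta$ is negative for all $\alpha<2$, and conclude that the pushforward lies outside the cone spanned by the nef classes $A,B,C$. But $K_{\M_{0,\A_k}} = \psi - 2\delta_{nodal}$ (the genus-zero case of $K_{\Mg{g,\A}}=13\lambda-2\delta_{nodal}+\psi$ from the proof of Corollary \ref{C:MgA-canonical}), which carries no $\Delta_{ij}$ term; consequently $K_{\M_{0,\A_k}}+\alpha\bar\delta$ has coefficient $\alpha>0$ on $\sum\Delta_{ij}$ and $\alpha-2<0$ only on $\delta_{nodal}$. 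You appear to have confused $\Delta_{ij}$ with $\delta_{nodal}$, possibly by pushing forward $K_{\M_{0,n}}=\psi-2\delta$ term-by-term while implicitly treating $\phi_*\psi$ as the $\psi$-class on $\M_{0,\A_k}$, which it is not. Writing $c=1/(2-\alpha)$ one finds $\phi_*(K+\alpha\delta)\sim c\psi + (2c-1)\sum\Delta_{ij}-\delta_{nodal}$; at the two chamber endpoints $c=\tfrac{k+1}{2k}$ and $c=\tfrac{k+2}{2(k+1)}$ this is an explicit \emph{nonnegative} linear combination of the Koll\'ar-semipositivity divisors $B$ and $C$ from Corollary \ref{C:nef-divisors-weighted}, so nefness holds across the chamber by convexity, and ampleness follows from Kawamata basepoint freeness applied to the klt pair $(\M_{0,\A_k},\alpha\bar\delta)$. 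No Kleiman-criterion case analysis on coincidence strata is needed. A further minor point: for part (2) with $n$ even, $(\PP^1)^n\gitq\SL(2)$ has strictly semistable points and is \emph{not} isomorphic to any Hassett space $\M_{0,\A_k}$, so the reduction you describe holds only for $n$ odd; the even case requires a separate argument comparing the section rings directly with the GIT quotient.
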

Here, $(\PP^{1})^n \gitq \SL(2)$ is the classical GIT quotient of $n$ points on $\PP^1$, taken with the symmetric linearization \cite[Chapter 3]{GIT}; in the case of odd $n$, it is still of the form $\M_{0,\A}$. 
 \begin{figure}[thb]
\scalebox{0.9}{\includegraphics{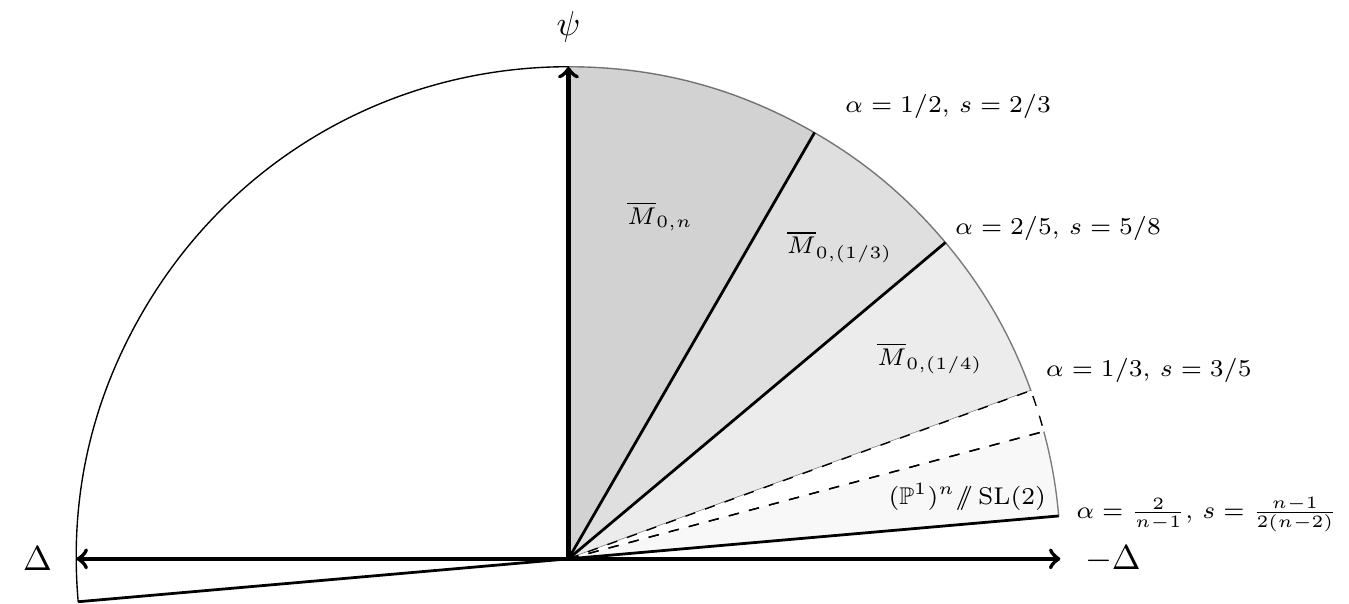}}
\caption{Mori chamber decomposition for $\M_{0,n}$.}
\label{F:M0n-chamber}
\end{figure}

\begin{proof}[Proof of Theorem \ref{T:logMMP-M0n}:]
The proof of (2) is elementary, and given the results of Section \ref{S:kollar}, the proof of (1) is fairly quick as well. Fixing $\A=\left( 1/k, \dots, 1/k\right)$ and $\alpha \in \Q \cap (\frac{2}{k+2},\frac{2}{k+1}]$, we must show that:
\begin{enumerate}
\item $(K_{\M_{0,n}}+\alpha\Delta)-\phi^*\phi_*(K_{\M_{0,n}}+\alpha\Delta)\geq 0$,
\item $\phi_*(K_{\M_{0,n}}+\alpha\Delta)$ is ample on $\M_{0,\A}$,
\end{enumerate}
where $\phi\co \M_{0,n}\ra \M_{0,\A}$ is the natural reduction morphism.
For (1), since the exceptional divisors of $\phi$ are the boundary divisors 
$\Delta_{m}$ with $3\leq m\leq k$ (see discussion preceding Corollary \ref{C:nef-divisors-weighted}), we have 
\[
(K_{\M_{0,n}}+\alpha\Delta)-\phi^*\phi_*(K_{\M_{0,n}}+\alpha\Delta)=\sum_{3\leq m\leq k} a_i\Delta_i,
\]
\noindent
for some undetermined coefficients $a_i$. In Exercise \ref{E:test-curve-m} below, we construct a curve $T_m \subset \M_{0,n}$ which is contracted by $\phi$, has the intersection numbers $T_m\cdot \psi=m(m-2)$, $T_m\cdot \Delta_2=
\binom{m}{2}$, $T_m\cdot \Delta_m=-1$, and avoids all other boundary divisors. 
Intersecting $T_m$ with the above equation gives
\[
a_m=-(K_{\M_{0,n}}+\alpha\delta)\cdot T_m=(m-2)(1-\frac{m+1}{2}\alpha)\geq 0,
\]
since $m \leq k$ and $\alpha \in (\frac{2}{k+2},\frac{2}{k+1}]$. It remains to show that $\phi_*(K_{\M_{0,n}}+\alpha\Delta)$ is ample on 
$\M_{0,\A}$. 

Here, the key point is that the divisor $\phi_*(K_{\M_{0,n}}+\alpha\Delta)$ lies in the convex hull of the nef divisors constructed in  Corollary \ref{C:nef-divisors-weighted}. Defining $\Delta_{i,j}$ and $\Delta_{nodal}$ as in the discussion preceding Corollary \ref{C:nef-divisors-weighted}, we may rewrite 
$$\phi_*(K_{\M_{0,n}}+\alpha\Delta)\sim c\psi+(2c-1)\sum_{i,j}\Delta_{i,j}-\Delta_{nodal},$$ 
where $c=1/(2-\alpha)\in \left(\frac{k+2}{2(k+1)}, \frac{k+1}{2k}\right].$ 
Setting $D(c):=c\psi+(2c-1)\sum_{i,j}\Delta_{i,j}-\Delta_{nodal}$, one easily checks that, in the notation of Corollary \ref{C:nef-divisors-weighted}, we have
\begin{align*}
D\left(\frac{k+1}{2k}\right) &=B \left(\frac{1}{k},\dots, \frac{1}{k}\right)+\frac{k-2}{2k}C \left(\frac{1}{k},\dots, \frac{1}{k}\right), \\
D\left(\frac{k+2}{2(k+1)}\right) &=B \left(\frac{1}{k+1},\dots, \frac{1}{k+1}\right)+\frac{k-1}{2(k+1)}C \left(\frac{1}{k+1},\dots, \frac{1}{k+1}\right).
\end{align*}
Now Corollary \ref{C:nef-divisors-weighted} implies that $D(c)\sim K_{\M_{0,\A}}+\alpha \Delta$ is nef and has positive degree on every complete curve. To see that $K_{\M_{0,\A}}+\alpha \Delta$ is actually ample (not merely nef), simply observe that $(\M_{0,\A}, \alpha\Delta)$
is a big and nef klt pair, so the Kawamata basepoint freeness theorem \ref{T:kawamata} says that $K_{\M_{0,\A}}+\alpha \Delta$ is semiample. Since $K_{\M_{0,\A}}+\alpha \Delta$ has
positive degree on every curve, it is ample.
\end{proof}

\begin{exercise}\label{E:test-curve-m} Let $m\geq 3$.
Construct a curve $T_m \subset \M_{0,n}$ contracted by the morphism $\M_{0,n} \ra
\M_{0,(1/k,\dots, 1/k)}$ for all $k\geq m$ as follows: Consider $\PP^2$ blown up at a point and marked by $m$ lines of self-intersection $1$. A curve
 $T_m\subset \M_{0,n}$ is obtained by attaching a constant family of $(n-m)$\nb-pointed curves along the 
$(-1)$\nb-curve. Compute that 
\begin{align}
T_m\cdot \psi=m(m-2), & & T_m\cdot \Delta_2=
\binom{m}{2}, & & T_m\cdot \Delta_m=-1.
\end{align}
\end{exercise}

\subsubsection{The contraction associated to $\psi$}
In this subsection, we make a small digression to discuss the geometry of the contraction associated to $\psi$. As seen in Figure \ref{F:M0n-chamber}, $\psi$ lies on the left-most side of the restricted nef cone of $\M_{0,n}$. Here, we will show that $\psi$ is semiample and defines a morphism $\M_{0,n} \rightarrow \M_{0,n}[\psi]$, where $\M_{0,n}[\psi]$ is the moduli space of $\psi$-stable curves constructed in Corollary \ref{C:psi-stack}, obtaining the projectivity of $\M_{0,n}[\psi]$ as a byproduct.

By Remark \ref{R:psi-contraction}, there is a morphism $\psi\co \M_{0,n}\ra \M_{0,n}[\psi]$ contracting the locus of curves possessing an unmarked
component with at least $4$ nodes. In the following lemma, we will show that this morphism is actually projective.
\begin{lemma}
$\psi$ is semiample and descends to an ample line bundle on $\M_{0,n}[\psi]$.
\end{lemma}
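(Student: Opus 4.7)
The plan is to prove (A) $\psi$ is semiample on $\M_{0,n}$, and (B) the associated contraction coincides with the morphism $\rho\co \M_{0,n}\ra\M_{0,n}[\psi]$ from Remark \ref{R:psi-contraction}, from which the claimed descent and ampleness follow.

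For (A), I would invoke Kapranov's realization of $\M_{0,n}$ as an iterated blow-up of $\PP^{n-3}$ based at the $i$-th marked point, under which $\psi_i$ is the pull-back of $\O_{\PP^{n-3}}(1)$. In particular, each $\psi_i$ is semiample. A positive sum of semiample $\QQ$-divisors is semiample: if $m\psi_i$ is basepoint-free for every $i$, then $m\psi = \sum_i m\psi_i$ is basepoint-free as well, since at any point $x\in\M_{0,n}$ one can choose sections $s_i\in\HH^0(\M_{0,n},m\psi_i)$ with $s_i(x)\neq 0$, and their product is a section of $m\psi$ not vanishing at $x$. Thus $\psi$ is semiample and defines a morphism $\phi\co\M_{0,n}\ra X$ to a projective variety $X$.

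For (B), I would identify the contracted curves of $\phi$ with those of $\rho$. A curve $C\subset\M_{0,n}$ is contracted by $\phi$ iff $\psi_i\cdot C = 0$ for every $i$, since each $\psi_i$ is nef. Analyzing each Kapranov morphism, the locus where $\psi_i$ has degree zero consists of families in which the $i$-th section is constant and moduli variation takes place on subcurves separated from $\sigma_i$; intersecting over all $i$, the simultaneous null locus is precisely the class of families $\C\ra C$ in which every section $\sigma_i$ is constant and all nontrivial variation occurs on unmarked subcurves. By Remark \ref{R:psi-contraction}, these are exactly the curves contracted by $\rho$. Since $\phi$ and $\rho$ are proper morphisms with connected fibers to normal projective targets contracting the same curves in $\M_{0,n}$, two applications of the rigidity lemma yield $X\simeq\M_{0,n}[\psi]$ and identify $\phi$ with $\rho$. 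Pulling back the ample polarization of $X$ then gives the required descent of $\psi$ from an ample line bundle on $\M_{0,n}[\psi]$.

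The main technical obstacle will be the precise geometric identification in (B): matching the simultaneous null locus of the divisors $\psi_i$ with curves whose only moving components are unmarked. One could alternatively bypass this analysis by applying Kleiman's criterion directly on $\M_{0,n}[\psi]$: set $\bar\psi := \sum_i \sigma_i^*\omega_{\bar\pi}$ on the universal family $\bar\pi\co\bar\C\ra\M_{0,n}[\psi]$, which is well-defined because the sections pass through smooth points (even though $\bar\C$ has non-Gorenstein rational $m$-fold singularities away from them); observe $\rho^*\bar\psi = \psi$ tautologically, and verify $\bar\psi\cdot B>0$ for every complete curve $B\subset\M_{0,n}[\psi]$ by lifting $B$, after a finite base change, to a curve $\tilde B\subset\M_{0,n}$ not contracted by $\rho$ and using the projection formula together with the already-established characterization of $\psi$-null curves in $\M_{0,n}$.
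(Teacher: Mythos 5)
Your main argument follows essentially the same route as the paper: establish semiampleness of each $\psi_i$ via the Kapranov-style morphism $\M_{0,n}\to\M_{0,\A}\simeq\PP^{n-3}$ (this is the content of Exercise~\ref{E:psi-exercise}), deduce that $\psi=\sum_i\psi_i$ is semiample, and then match the $\psi$-trivial curves with those contracted by the reduction morphism of Remark~\ref{R:psi-contraction}. The only stylistic difference is in the last step: you invoke rigidity to abstractly identify $\Proj R(\M_{0,n},\psi)$ with $\M_{0,n}[\psi]$ and obtain the descended ample bundle, whereas the paper writes down the descended bundle explicitly as $\L=\otimes_{i=1}^n\sigma_i^*\O_{\C}(-\sigma_i)$ on the universal $\psi$-stable family and observes $\psi=\rho^*\L$ tautologically; both routes work, though one should be aware that the paper only concludes that $\Proj R(\M_{0,n},\psi)$ is the \emph{normalization} of $\M_{0,n}[\psi]$, a subtlety your rigidity argument also needs to address since a priori $\rho$ and the contraction associated to $\psi$ could differ by a normalization. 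One caution on your proposed ``alternative'' via Kleiman's criterion: positivity of $\bar\psi$ on every curve $B\subset\M_{0,n}[\psi]$ does not by itself yield ampleness --- Kleiman's criterion requires positivity on the closure of the cone of curves (or one must invoke Nakai--Moishezon, which requires controlling $\bar\psi^{\dim V}\cdot V$ for all subvarieties), and moreover applying it presumes projectivity of $\M_{0,n}[\psi]$, which is part of what one is trying to prove. So the alternative, as stated, has a circularity; the first argument (which is the paper's) is the right one.
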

\begin{proof}
First, note that the morphism $\psi\co \M_{0,n} \ra \M_{0,n}[\psi]$ contracts precisely those families curves on which 
the line bundle 
$\psi$ has degree $0$, namely those families in which every moving component is unmarked. Moreover, $\psi=\psi^*\L$, where $\L$ is the analogue of the $\psi$ class for $\M_{0,n}[\psi]$, i.e. the line bundle $\otimes_{i=1}^{n}\sigma_i^*\O_{\C}(-\sigma_i)$, where $\sigman$ are the sections of the universal curve $\C \rightarrow \M_{0,n}[\psi]$. Finally, $\psi$ is semiample on $\M_{0,n}$ by Exercise \ref{E:psi-exercise} below. Since the semiample line bundle $\psi$ has degree zero on the same curves contracted by 
the morphism $\psi\co \M_{0,n}\ra
\M_{0,n}[\psi]$ of Remark \ref{R:psi-contraction}, we conclude that $\M_{0,n}[\psi]$ is projective
with normalization equal to $\proj R(\M_{0,n}, \psi)$.
\end{proof}

\begin{exercise}\label{E:psi-exercise}
Let $\A=(1,1/(n-2), \dots, 1/(n-2))$. 
Show that 
\begin{enumerate}
\item $\M_{0,\A}\simeq \PP^{n-3}$ and $\psi_1\simeq \O_{\PP^{n-3}}(1)$.
\item Show that if $\xi\co \M_{0,n} \ra \M_{0,\A}$ is the natural morphism, then 
$\psi_1=\xi^*(\psi_1)$. 
\end{enumerate}
Conclude that $\psi_i$ is semiample for each $i=1,\dots, n$, hence that $\psi$ is semiample.
\end{exercise}
\begin{remark} The morphism $\xi\co \M_{0,n} \ra \M_{0,\A}\simeq \PP^{n-3}$ of Exercise \ref{E:psi-exercise}
factors into a sequence of blow-ups with smooth centers and constitutes the Kapranov's construction of $\M_{0,n}$; see \cite{kap1, kap2} and \cite[Section 6.1]{Hweights}.
\end{remark}
Note that since $\M_{0,n}[\psi]$ is not $\Q$-factorial, the chamber corresponding to the contraction defined by $\psi$ is not full dimensional; within the restricted effective cone of $\M_{0,n}$, it is simply the single extremal ray spanned by $\psi$. However, if $\M_{0,n}$ is a Mori dream space, it should be possible to flip this contraction to obtain a $\Q$-factorial birational model whose chamber lies on the opposite side of this ray. This motivates the question whether $\psi$ can be flipped.
\begin{question}  What are the normal, projective, $\QQ$\nb-factorial varieties 
$X$ admitting a morphism $\psi^{+}\co X \ra \M_{0,n}[\psi]$ such that 
$\M_{0,n} \dashrightarrow X$ is an isomorphism in codimension one and the line bundle $\psi$ is nef on 
$X$? 
\end{question}

\subsection{Log minimal model program for $\M_{1,n}$}\label{S:MMPM1n}
In this section, we present a complete Mori chamber decomposition for a restricted effective cone of $\M_{1,n}$. In contrast to the case of $\M_{g}$ and $\M_{0,n}$, $\{K_{\SM_{1,n}}+\alpha\delta\,: \,\alpha \geq 0\}$ is not the most natural ray of the restricted effective cone to consider, so we will adopt slightly different notation for the log canonical models of $\M_{1,n}$. Set
\begin{equation}
\begin{aligned}
&D(s,t):=s\lambda+t\psi-\delta, \\
&R(s,t):=\oplus_{m \geq 0}\HH^0(\M_{1,n}, mD(s,t)),\\
&\M_{1,n}^{s,t}:=\Proj R(s,t).
\end{aligned}
\end{equation}

Evidently, any divisor in the half-space $\{s\lambda+t\psi+u\delta \,:\, u \leq 0 \}$ is numerically proportional to a divisor of the form $D(s,t)$. Since $K_{\SM_{1,n}}+\alpha\delta=13\lambda+\psi - (2-\alpha)\delta$, we have $\M_{1,n}(\alpha)=\M_{1,n}\left(\frac{13}{2-\alpha}, \frac{1}{2-\alpha}\right),$ and the birational models of the Hassett-Keel log minimal model program are a special class of the models that we study. As we shall see, however, $s$ and $t$ are the most convenient parameters for describing the thresholds in the chamber decomposition.

The half-space $\Eff_\rst(\M_{1,n}) \cap \{s\lambda+t\psi+u\delta \,:\, u \leq 0 \}$ is determined in Part (1) of the proposition, while the Mori chamber decomposition of this half-space is determined in Part (2).
\begin{proposition}\label{P:elliptic}
\begin{enumerate}
\item[]
\item $D(s,t)$ is effective iff $s+nt -12 \geq 0$ and $t \geq 1/2$.
\item $\M_{1,n}^{s,t}=\M_{1,\A_{n}^{k}}(m)$ for all $(s,t) \in B_{k,m}$,
where the polytope $B_{k,m} \subset \Q\{s,t\}$ is defined by:
\end{enumerate}
\end{proposition}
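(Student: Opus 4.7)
Proof proposal.

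For part (1), my plan is to reduce the question to a completely explicit formula. Using Mumford's relation $12\lambda = \delta_0$ on $\SM_{1,n}$ and the expression $\psi = n\lambda + \sum_{|S|\geq 2}|S|\,\delta_{0,S}$ recalled in the proof of Proposition \ref{P:first-F-curves}, I would rewrite
\[
D(s,t) = s\lambda + t\psi - \delta = (s+nt-12)\lambda + \sum_{|S|\geq 2}(t|S|-1)\,\delta_{0,S}.
\]
Since $\lambda = \delta_0/12$ is effective and each $\delta_{0,S}$ is effective and irreducible, this immediately gives the sufficiency: the two stated inequalities ensure every coefficient on the right-hand side is nonnegative. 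For necessity, I would pair $D(s,t)$ with two natural test curves. The F-curve of elliptic tails $\mathrm{T}_1$ from Example~\ref{E:elliptic-bridges}, pulled back along a suitable forgetful map, forces $s + nt - 12 \geq 0$ (it isolates $\lambda$ and $\delta_0$ while killing every $\delta_{0,S}$). For the bound $t\geq 1/2$, I would use a test curve which fixes everything except a two-pointed rational tail attached at a node to a fixed elliptic component, and in which the two marked points on the tail collide --- this family has $\lambda\cdot C = 0$, $\psi\cdot C$ equal to the sum of the two $\psi_i$'s which is $2$, and meets only the boundary divisor $\delta_{0,\{i,j\}}$, yielding $2t - 1 \geq 0$.

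For part (2), I would imitate the outline used for the proof of Theorem~\ref{T:logMMP-M0n} and of the genus-$g$ log MMP in Section~\ref{S:MgIntro}. Let $\phi\co \SM_{1,n} \dashrightarrow \SM_{1,\A_n^k}(m)$ be the natural reduction morphism (which exists by combining the weighted reduction of \cite{Hweights} with the alternating blow-up/contraction/stabilization procedure of Section \ref{S:modular-combinatorics} used to produce $m$-stable limits). To show $\SM_{1,n}^{s,t} \simeq \SM_{1,\A_n^k}(m)$ it suffices to verify the two standard properties
\begin{enumerate}
\item $D(s,t) - \phi^*\phi_* D(s,t) \geq 0$ on $\SM_{1,n}$,
\item $\phi_* D(s,t)$ is ample on $\SM_{1,\A_n^k}(m)$.
\end{enumerate}
Property (1) is a discrepancy calculation along the exceptional divisors of $\phi$, which are boundary components parameterizing either low-weight collisions of marked points or subcurves of arithmetic genus one meeting the rest in few points/few marked points. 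For each such exceptional divisor there is a natural one-parameter test family contracted by $\phi$ --- e.g.\ a Kapranov-type pencil on an unstable weighted component, or a varying elliptic subcurve parametrized by $\Mg{1,1}$ --- whose intersection numbers with $\lambda$, $\psi$, and the boundary are already computed in Section~\ref{S:nef-cone}. Pairing $D(s,t)$ with these test families and using the polytope defining $B_{k,m}$ should make every discrepancy coefficient nonnegative.

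Property (2), ampleness of $\phi_*D(s,t)$, is the main obstacle and where I expect the bulk of the work to lie. My plan is to write $\phi_*D(s,t)$, after using the tautological expansions on $\SM_{1,\A_n^k}(m)$, as a positive linear combination of the Koll\'ar-type semipositive classes $A$, $B$, $C$ produced in Corollary~\ref{C:nef-divisors-weighted} (suitably extended to the $m$-stable setting), plus a nef contribution coming from $\lambda$. Nefness then follows at once; to upgrade nefness to ampleness one observes that $(\SM_{1,\A_n^k}(m), \text{boundary})$ is a klt pair, so after checking strict positivity on every F-curve the Kawamata basepoint freeness theorem \ref{T:kawamata} yields ampleness. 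Verifying strict positivity on the finitely many F-curves of $\SM_{1,\A_n^k}(m)$ reduces to a finite linear calculation governed by the polytope $B_{k,m}$, and checking that the walls of $B_{k,m}$ are precisely the walls at which some F-curve becomes orthogonal to $D(s,t)$ is what pins down the chamber decomposition.

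The principal difficulty is that the semipositivity argument of Koll\'ar in Section~\ref{S:kollar} uses only the relative dualizing sheaf $\omega_\pi(\sum a_i\sigma_i)$ on a family of $\A$-stable (nodal) curves, whereas $\SM_{1,n}(m)$ parameterizes curves with elliptic $l$-fold singularities, for which a direct semipositivity statement is not on the books. One either has to adapt Proposition~\ref{P:nef-divisors-weighted} to this singular setting (using the Gorenstein property of elliptic $m$-fold points from Definition~\ref{D:ellipticmfold}), or, alternatively, transfer the ampleness statement back to $\SM_{1,n}$ through the regular morphism $\SM_{1,n} \to \SM_{1,n}(m)$ and argue there. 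I would pursue the second route: pull back $\phi_*D(s,t)$ to $\SM_{1,n}$, verify that it is nef with zero locus exactly the $\phi$-exceptional curves (already produced in step (1)), and then apply Kawamata basepoint freeness upstairs, concluding by general nonsense that the induced section ring coincides with that of $\phi_*D(s,t)$ on $\SM_{1,\A_n^k}(m)$.
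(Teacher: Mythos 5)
Your part (1) argument is sound: after substituting $12\lambda=\delta_0$ and $\psi=n\lambda+\sum_{|S|\geq 2}|S|\delta_{0,S}$, the identity $D(s,t)=(s+nt-12)\lambda+\sum_{|S|\geq 2}(t|S|-1)\delta_{0,S}$ gives sufficiency immediately, and pairing with moving test curves gives necessity. The paper does not write out a proof of either part; for part (2) it only remarks that the strategy is to prove $s\lambda+t\psi-\delta$ is ample on $\M_{1,\A}(m)$ by showing positivity on every curve, citing \cite{SmythEII} for the intersection theory on $\M_{1,n}(m)$ (where the $t=1$ case is proved) and Section~\ref{S:kollar} for the weighted-pointed side. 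You correctly identify the two-step plan and the genuine obstacle: Koll\'ar's semipositivity (Proposition~\ref{P:nef-divisors-weighted}) is stated only for families of nodal $\A$-stable curves, whereas $\M_{1,n}(m)$ parametrizes curves with elliptic $l$-fold singularities, so no off-the-shelf positivity statement is available on the target.

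However, the route you commit to for the ampleness step would not work. For $m\geq 2$ there is no regular morphism $\SM_{1,n}\to\SM_{1,n}(m)$: the valuative-criterion procedure of Figure~\ref{F:valuativecriterion} alternates blow-ups and contractions of the special fiber, so the birational map $\M_{1,n}(1)\dashrightarrow\M_{1,n}(2)$ is a flip (exactly as $\M_g[A_2]\dashrightarrow\M_g[A_3]$ is a flip in Proposition~\ref{P:birational-maps}), and for larger $m$ the situation only worsens. Consequently $\M_{1,n}\dashrightarrow\M_{1,n}(m)$ is not a morphism for $m\geq 2$, and you cannot literally pull $\phi_*D(s,t)$ back to $\SM_{1,n}$. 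Even if you define a formal pullback on $\N^1$ via the common open set, checking ``nef on $\M_{1,n}$ with zero locus exactly the $\phi$-exceptional curves'' is the wrong target: a divisor that becomes ample after a flip is \emph{negative} on the flipping curves upstairs, not zero, so nefness upstairs would actually fail precisely when the chamber table demands it. The correct path is the one you mention but set aside, and it is what the paper indicates: argue intrinsically on $\M_{1,n}(m)$ by Kleiman's criterion, developing the needed positivity of $\lambda$, $\psi$, and $\delta$ on families of $m$-stable curves directly (as in \cite{SmythEII}), rather than attempting to transfer the statement from $\M_{1,n}$.
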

\begin{tabular}{| l | c | c | c |} \hline
				&$k=1$ & $k=1, \ldots, n-1$ 				& $k=n$			\\ \hline
				&&&\\
$m=0$ 			& $(11, \infty) \times (\frac{3}{4}, \infty)$ & $(11, \infty) \times (\frac{k+2}{2k+2}, \frac{k+1}{2k}]$ &$(11, \infty) \times (\frac{1}{2}, \frac{n+1}{2n})$\\
				&&&\\ \hline
				&&&\\
$m=1$			&$(10, 11] \times (\frac{3}{4}, \infty)$ & $(10, 11] \times (\frac{k+2}{2k+2}, \frac{k+1}{2k}]$ & $(10, 11] \times (\frac{1}{2}, \frac{n+1}{2n})$\\
				&&&\\ \hline
				&&&\\
$m \geq 2$	& \parbox{1in}{\centering $(11-m, 12-m)$ \par $\times$ \par $(\frac{3}{4}, \infty)$} & 
\parbox{1in}{\centering $(11-m, 12-m)$ \par $\times$ \par
 $\bigl(\frac{k+2}{2k+2}, \frac{k+1}{2k}\bigr]$}   &   \parbox{1in}{\centering $(11-m, 12-m)$ \par $\times$ \par $(\frac{1}{2}, \frac{n+1}{2n})$}  \\
				&&&\\ \hline
\end{tabular}\\

We will not discuss the proof of this result, except to note that the strategy is, as usual, to prove that $s\lambda+t\psi-\delta$ becomes ample on the appropriate model $\M_{1,\A}[m]$ by showing that it has positive intersection on every curve. Methods for proving the positivity of such divisor classes on $\M_{1,n}[m]$ are developed in \cite{SmythEII} -- in which this Proposition is proved in the special case $t=1$. These can be combined with the methods of Section \ref{S:kollar} for proving positivity of certain divisor classes on $\M_{1,\A}$ to obtain the result.

Our main purpose in stating this result is to indicate how the results of the log MMP for $\M_{g}$ (in which successive singularities are introduced into the moduli functors $\SM_{g}(\alpha)$) and the log MMP for $\M_{0,n}$ (in which successively more marked points are allowed to collide) may be expected to blend when considering the Mori chamber decomposition for $\M_{g,n}$. In the case of $\M_{1,n}$, we see that scaling $\lambda$ relative to $\psi-\delta$ has the effect of introducing singularities (at the same slope thresholds as in the log MMP for $\M_{g}$), while scaling $\psi$ relative to $\psi-\delta$ has the effect of allowing marked points to collide (at the same slope thresholds as in the log MMP for $\M_{0,n}$.) More generally, we expect to the same pattern to hold for the Mori chamber decomposition of $\M_{g,n}$.

\subsection{Heuristics and predictions}\label{S:heuristic}
In this section, we will discuss a heuristic method for predicting which singularities should arise in future stages of the log MMP for $\M_{g}$, as well as the critical $\alpha$-values at which these various singularities appear. As we shall see, these predictions are connected with a number of fascinating problems around the geometry and deformation theory of curves, many of which are worth of exploration in their own right. 

The heuristic operates as follows: Let $X$ be an irreducible proper 
curve of arithmetic genus $g \geq 2$ with a single isolated singularity $p\in X$. Since $X$ has a 
finite automorphism group (cf. proof of Theorem \ref{T:Construction}), there is 
a universal deformation space $\Def(X)$. Under very mild assumptions on the singularity, $\Def(X)$ 
is irreducible of dimension $3g-3$ and the generic point of the versal family corresponds to a smooth curve of genus $g$. Thus, we obtain a rational map $\Def(X) \dashrightarrow \Mg{g}$. 
Now we define $\Tl{X,p}$, the \emph{space of stable limits of $X$}, as follows: Consider a birational resolution
\[
\xymatrix{
&W \ar[rd]^{q} \ar[ld]_{p}&\\
\Def(X) \ar@{-->}[rr]&&\Mg{g}
}
\]
and let $\Tl{X,p}:=q(p^{-1}(0))$. Intuitively, $\Tl{X,p}$ is simply 
the locus of stable curves appearing as stable limits of smoothings of $X$. Note that if 
$m(p)$ is the number of branches of $p\in X$ and $\delta(p)$ is the $\delta$\nb-invariant,
then the curves lying in $\Tl{X,p}$ have form $\tilde{X} \cup T$, 
where $(\tilde{X}, q_1, \dots, q_{m(p)})$ is the pointed normalization of $X$ and 
$(T,p_1,\dots, p_{m(p)})$ is an $m(p)$-pointed curve attached to $\tilde{X}$ by identifying $p_i$ with $q_i$. Clearly,
$p_a(T)=\delta(p)-m(p)+1$.
(We call $(T, p_1,\dots, p_{m(p)})$ the tail of the stable limit.)
We set
$$
\alpha(X):=\sup_{\, \alpha \in \Q}\{\Tl{X,p} \text{ is covered by $(K_{\Mg{g}}+\alpha\delta)$-negative curves}\, \}.
$$
\begin{remark} Even though the set of tails of stable limits of a given curve $(X,p)$ depends only on $\hat{\O}_{X,p}$, the variety $\Tl{X,p}$ may depend on the global geometry of $X$. See Remark \ref{R:dangling} below.
\end{remark}
We can now state our main heuristic principle:
\begin{principle}\label{principle}
If $\M_{g}(\alpha)$ is weakly modular, then the curve $X$ appears in the corresponding moduli functor $\SM_{g}(\alpha)$ at $\alpha = \alpha(X)$.
\end{principle}
The intuition behind this heuristic is that once $X$ appears in the moduli functor, the unique limit property dictates that all stable limits associated to $X$ should \emph{not} appear. 
Hence $\Tl{X,p}$ should be in the exceptional locus of the map
$$|m(K_{\Mg{g}}+\alpha\delta)|\co \M_{g} \dashrightarrow \M_{g}(\alpha), \quad m\gg 0.$$
Under mild assumptions, this implies that $\Tl{X,p}$ is covered by curves which $K_{\Mg{g}}+\alpha\delta$ intersects non-positively.

Assuming this heuristic is reliable, we may predict when a given singularity will appear, provided that we know:
\begin{enumerate}
\item The variety $\Tl{X,p}$ of stable limits.
\item The extremal curve classes on $\Tl{X,p}$, i.e. covering families of maximal slope, where the slope of a curve class is defined by $s(C)=\delta\cdot C/\lambda\cdot C$.
\end{enumerate}
Note the usual conversion between slope and $\alpha$. If a family of curves covers $\Tl{X,p}$ with slope $s$, then $\Tl{X,p}$ must lie in the stable base locus of $K_{\Mg{g}}+\alpha\delta$ for $\alpha \leq 2-13/s$.

In what follows, we will apply Principle \ref{principle} to make three kinds of predictions. First, we will describe which rational $m$-fold points (Exercise \ref{E:rationalmfold}) and elliptic $m$-fold points (Definition \ref{D:ellipticmfold}) should appear in the log MMP. This calculation raises several interesting questions regarding the qualitative nature of the stacks $\SM_{g}(\alpha)$ which should appear in the course of this program. Second, we will give a formula for the expected $\alpha$-value of an arbitrary quasitoric singularity $x^{p}=y^{q}$. In particular, we will see that we expect only two 
infinite families of singularities, namely $A$ and $D$ singularities, to appear before the hyperelliptic threshold 
$\alpha=\frac{3g+8}{8g+4}$, as well as a finite number of others, including $E_6, E_7$
and $E_8$.
Finally, we will explain why certain non-reduced schemes are expected to arise in 
$\Mg{g}(\frac{3g+8}{8g+4})$.

\subsubsection{Rational $m$-fold points and elliptic $m$-fold points}
As a warm-up application of our heuristic, let us show that, for $m \geq 3$, the rational $m$-fold points do not appear in the moduli problems $\SM_{g}(\alpha)$ for any $\alpha$. 

\begin{lemma}\label{L:rationalmfold}
Let $X$ be an irreducible curve with a single rational $m$-fold point $p \in X$. Then the variety of stable limits of $X$ is:
$$
i\co \Tl{X,p} \simeq \M_{0,m} \hookrightarrow \M_{g},
$$
where the map $i$ is defined by mapping a curve $(C, \pm)$ to the point $\tilde{X} \cup (C, \pm)$, 
where the points $\pm$ are attached to $\tilde{X}$ at the $m$ points lying above $p \in X$.
\end{lemma}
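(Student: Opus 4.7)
The plan is to prove the equality $\Tl{X,p} = i(\M_{0,m})$ of subvarieties of $\M_g$ by establishing both containments, and to observe simultaneously that $i$ is a closed immersion. The latter is transparent once the set-theoretic description is in place: from a stable curve of the form $\tilde{X}\cup T$ one recovers the tail $T$ as the closure of the complement of the image of $\tilde{X}$, with marked points recorded by the $m$ attachment nodes, and this reconstruction is functorial in families.

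For the inclusion $\Tl{X,p}\subseteq i(\M_{0,m})$, I would begin with an arbitrary smoothing $\Y\to\Delta$ of $X$. After a finite base change, one can, by resolving singularities of the total space, produce a family $\C\to\Delta$ with smooth total space, nodal special fiber $C_0$, and a proper birational morphism $f\co \C\to \Y$ that is an isomorphism away from $p$. The exceptional locus $T:=f^{-1}(p)_{\mathrm{red}}$ is a connected subcurve of $C_0$, and $C_0=\tilde{X}\cup T$, where $\tilde{X}$ is identified with the normalization of $X$ and meets $T$ transversely in the $m$ nodes lying over the branches of $p$. Applying the arithmetic genus balance $p_a(C_0)=p_a(\tilde{X})+p_a(T)+m-1$, together with $p_a(\tilde{X})=g-\delta(p)=g-m+1$, forces $p_a(T)=0$. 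Taking $\C\to\Delta$ to be the stable model of $\Y\to\Delta$ (which is possible by Proposition \ref{P:LimitStable}), the pair $(T,q_1,\dots,q_m)$ is Deligne--Mumford stable, hence defines a point of $\M_{0,m}$.

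For the reverse inclusion $i(\M_{0,m})\subseteq \Tl{X,p}$, let $(T,q_1,\dots,q_m)\in\M_{0,m}$ and form the stable curve $\tilde{X}\cup T$ by gluing $q_i$ to the $i$-th preimage of $p$ in $\tilde{X}$. Choose any smoothing $\C\to\Delta$ of this nodal curve, and apply the Contraction Lemma \ref{L:Contraction} to the subcurve $T\subset C_0$. This produces a family $\C'\to\Delta$ whose special fiber has an isolated singularity with $m$ branches and $\delta$-invariant $p_a(T)+m-1=m-1$; by Exercise \ref{E:rationalmfold} this singularity is analytically a rational $m$-fold point, and the special fiber of $\C'$ is obtained from $\tilde{X}$ by identifying the points $q_i$ to such a point. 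Since a curve with a rational $m$-fold point is determined up to isomorphism by its pointed normalization, this special fiber is isomorphic to $X$, and so $\tilde{X}\cup T$ appears as the stable limit of the smoothing $\C'\to\Delta$ of $X$.

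The main obstacle I anticipate is in the reverse direction: one must know that the contraction procedure genuinely recovers the original $X$ rather than some other curve with a rational $m$-fold point obtained by a different identification. This reduces precisely to the local uniqueness assertion of Exercise \ref{E:rationalmfold}, which pins down both the analytic isomorphism type of the singularity and the fact that the data of $X$ is captured by $(\tilde X,q_1,\dots,q_m)$. A minor secondary point is the choice of bijection between the marked points of $T$ and the branches of $p$: fixing such an ordering once and for all produces a well-defined morphism $i$, and the assumption $g\geq 2$ ensures that the resulting map from $\M_{0,m}$ does not collapse distinct points (equivalently, that the finite automorphism group of $X$ does not cause $i$ to identify non-isomorphic $T$'s).
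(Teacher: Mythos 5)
Your proof follows essentially the same route as the paper's: the forward inclusion is settled by genus considerations (which you make explicit via the arithmetic genus balance $p_a(T)=0$), and the reverse inclusion is obtained by applying the Contraction Lemma to a smoothing of $\tilde{X}\cup T$ and invoking Exercise \ref{E:rationalmfold} to identify the resulting singularity as a rational $m$-fold point. Your closing remark correctly identifies the hidden step — that a rational $m$-fold point, unlike a tacnode, carries no crimping data, so the contracted curve is determined by the pointed normalization and really is $X$ — which the paper's proof leaves implicit.
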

\begin{proof}
It is clear, by genus considerations, that any stable limit of $X$ must be of the form $\tilde{X} \cup (C, \pm)$ with $(C, \pm) \in \M_{0,m}$. The fact that \emph{any} $(C, \pm) \in \M_{0,m}$ does arise as a stable limit is an easy consequence of Exercise \ref{E:rationalmfold}.
 Indeed, given any curve of the form  $\tilde{X}
 \cup (C, \pm)$, we may consider a smoothing $\C \rightarrow \Delta$, and the Contraction 
 Lemma \ref{L:Contraction} in conjunction with the exercise 
 shows that contracting $C$ produces a special fiber isomorphic to $X$. 
 Thus, $\tilde{X} \cup (C, \pm) \in \Tl{X,p}$. 
\end{proof}
To analyze the associated invariant $\alpha(X)$, we observe that
\begin{align*}
i^*\lambda& =0,\\
i^*(-\delta) &=\psi-\delta,
\end{align*}
and that $\psi-\delta$ is ample on $\M_{0,m}$. It follows that 
$K_{\Mg{g}}+\alpha\delta \sim \frac{13}{2-\alpha}\lambda-\delta$ has positive degree on every curve in $\Tl{X,p}$ for all values of $\alpha$! Hence, $X$ should never appear in the moduli functors $\SM_{g}(\alpha)$.

For our next example, let us compute the $\alpha$-invariants associated to elliptic $m$-fold points. To begin, we have the following analogue to Lemma \ref{L:rationalmfold}.

\begin{lemma}\label{L:ellipticmfold}
Let $X$ be an irreducible curve with a single elliptic $m$-fold point $p \in X$. Then the variety of stable limits of $X$ is:
$$
i\co \Tl{X,p} \simeq \M_{1,m} \hookrightarrow \M_{g},
$$
where the map $i$ is defined by mapping a curve $(C, \pm)$ to the point $\tilde{X} \cup (C, \pm)$, where the points $\pm$ are attached to $\tilde{X}$ at the $m$ points lying above $p \in X$.
\end{lemma}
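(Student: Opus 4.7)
The proof will follow the two-step template of Lemma \ref{L:rationalmfold}, modified to account for the Gorenstein nature of the elliptic $m$-fold point. First I would show that any stable limit of $X$ has the stated form; second, that every $(T,\{p_i\}) \in \M_{1,m}$ is actually realized as such a limit. The map $i$ is the standard gluing morphism, which is a closed immersion onto its image in $\M_g$, so identifying $\Tl{X,p}$ with its preimage in $\M_{1,m}$ will yield the stated isomorphism.

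For the inclusion $\Tl{X,p} \subseteq i(\M_{1,m})$, the plan is a genus computation on a semistable reduction. Given an arbitrary smoothing of $X$, apply semistable reduction to obtain a family with smooth total space and nodal special fiber $\tilde{X} \cup N$, where $N$ is the exceptional subcurve attached to $\tilde{X}$ at the $m$ points lying over $p$. The elliptic $m$-fold point satisfies $m(p) = m$ and $\delta(p) = m$ (the latter being forced by the arithmetic-genus-one condition on its local ring), so $p_a(X) = p_a(\tilde{X}) + m$. Combining this with the formula $p_a(\tilde{X} \cup N) = p_a(\tilde{X}) + p_a(N) + m - 1$ forces $p_a(N) = 1$. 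Stabilizing $N$ then produces a stable $m$-pointed curve $T$ of arithmetic genus one, so the stable limit is $\tilde{X} \cup T \in i(\M_{1,m})$.

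For the reverse inclusion, given $(T,\{p_i\}) \in \M_{1,m}$, I would form $Y = \tilde{X} \cup T$ by gluing $q_i$ to $p_i$ and choose a one-parameter smoothing $\W \to \Delta$ of $Y$. Contracting $T$ via the Contraction Lemma \ref{L:Contraction} produces a family $\W' \to \Delta$ whose special fiber $Y'$ has an isolated singularity $p'$ with $m$ branches and $\delta(p') = m$. The crucial step is to verify, for a suitably chosen smoothing, that $p'$ is \emph{Gorenstein}: by the classification of Gorenstein curve singularities of arithmetic genus one in \cite[Appendix A]{SmythEI}, this forces $(Y', p') \cong (X, p)$, so $\tilde{X} \cup T$ is realized as the stable limit of this smoothing of $X$.

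The main obstacle is the Gorenstein verification after contraction, which directly generalizes Proposition \ref{P:BridgeContraction}. As in that case, Gorensteinness reduces to exhibiting an effective Cartier divisor $D$ supported on $T$ such that $\omega_{\W/\Delta}(D)$ restricts trivially to $T$ and is semiample in a neighborhood of $T$; one then descends $\omega_{\W/\Delta}(D)$ to a line bundle on $\W'$ and identifies it, via its $S_2$ extension property away from $p'$, with $\omega_{\W'/\Delta}$. In the elliptic-bridge case ($m = 2$), this condition collapses to equality of the two local node multiplicities $m_1 = m_2$; for general $m$, an analogous numerical compatibility between the local multiplicities of $\W$ at the attaching nodes $p_i$ and the component structure of $T$ must be arranged. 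After passing to a suitable finite base change (and, if necessary, performing blow-ups and contractions to adjust the attaching multiplicities), this compatibility can always be achieved, completing the proof.
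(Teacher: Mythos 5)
Your forward direction (genus computation showing $p_a(N) = 1$) is correct and matches the paper's reasoning. For the reverse inclusion, you correctly identify the key issue -- showing the contraction produces a Gorenstein singularity, which by the classification in \cite[Appendix A]{SmythEI} must then be the elliptic $m$-fold point -- and you correctly recognize this as the generalization of Proposition \ref{P:BridgeContraction}. But there is a gap in the final step: you assert that for an arbitrary $(T,\{p_i\}) \in \M_{1,m}$ the required numerical compatibility ``can always be achieved'' by adjusting base changes and multiplicities, and this is not justified. In fact, when $T$ has rational tails the necessary Cartier divisor $D$ supported on $T$ with $\omega_{\W/\Delta}(D)|_T \simeq \O_T$ generally fails to exist over a smooth total space (a direct computation with, say, $T = E \cup R \in \M_{1,3}$, $E$ a marked genus-one tail and $R$ a doubly-marked $\PP^1$, yields a fractional coefficient), and arranging the right singularities on the total space is precisely the nontrivial content of \cite[Lemma 2.12]{SmythEI}, which the paper cites rather than reproves.

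What your argument is missing is the observation that $\Tl{X,p} = q(p^{-1}(0))$ is \emph{closed} in $\M_g$, being the image of a proper morphism. This makes the reverse inclusion much cheaper: it suffices to treat $(T,\{p_i\})$ with $T$ a \emph{smooth} elliptic curve, where $\omega_T \simeq \O_T$ and a smoothing of $\tilde{X} \cup T$ with smooth total space gives $\omega_{\C/\Delta}(T)|_T \simeq \omega_T\bigl(\sum p_i\bigr)\bigl(-\sum p_i\bigr) \simeq \O_T$, so the contraction is Gorenstein exactly as in the $m_1 = m_2 = 1$ case of Proposition \ref{P:BridgeContraction}. Since smooth curves are dense in $\M_{1,m}$ and $\Tl{X,p}$ is closed and contained in $i(\M_{1,m})$ (by your genus argument), equality follows. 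The paper's proof, compressed as it is, is relying on this or on the full strength of \cite[Lemma 2.12]{SmythEI}; your proposal should either invoke the closedness argument or cite that lemma explicitly rather than assert the compatibility is achievable.
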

\begin{proof}
It is clear, by genus considerations, that any stable limit of $X$ must be of the form $\tilde{X} \cup (C, \pm)$ with $(C, \pm) \in \M_{1,m}$, and we only need to check that every $(C, \pm)$ arises as a tail. For tacnodes, we can check this using Proposition \ref{P:BridgeContraction}. Indeed, Proposition \ref{P:BridgeContraction} shows that if $\C \rightarrow \Delta$ is any smoothing of $\tilde{X} \cup (C, \pm)$ with smooth total space, then contracting $C$ produces a special fiber with a tacnode. Essentially the same argument works for the general elliptic $m$-fold point (see \cite[Lemma 2.12]{SmythEI}).
\end{proof}

Now let us analyze the associated invariant $\alpha(X)$. We have
\begin{align*}
i^*\lambda &=\lambda,\\
i^*(-\delta) &=\psi-\delta,\\
i^*(s\lambda-\delta) &=s\lambda+\psi-\delta.
\end{align*}
Using Proposition \ref{P:elliptic}, we see that $s\lambda+\psi-\delta$ is big until $s=12-m$, so that any covering family of $\M_{1,m}$ must have slope less than $12-m$. On the other hand, it is easy to see that $\M_{1,m}$ is covered by curves of slope $12-m$. (For $m \leq 9$, one can construct these families explicitly using pencils of cubics.) Since slope $12-m$ corresponds to $\alpha=\frac{11-2m}{12-m}$, we conclude that $\M_{1,m}$ is covered by $(K_{\Mg{g}}+\alpha\delta)$-non-positive curves for the first time at $\alpha=\frac{11-2m}{12-m}$, i.e. \emph{elliptic $m$-fold points should arise in the moduli functor $\SM_{g}(\alpha)$ at  $\alpha=\frac{11-2m}{12-m}$}. Note that this is consistent with the result for cusps and tacnodes. Furthermore, we see that $\alpha>0$ iff $m \leq 5$, i.e. we only expect elliptic $m$-fold points to appear for $m \leq 5$.

It is natural to wonder whether there is some intrinsic property of these singularities which determines whether or not they appear in the moduli functors $\SM_g(\alpha)$. An interesting observation in this regard is that rational $m$-fold points are not Gorenstein (for $m \geq 3$), while elliptic $m$-fold points are. Furthermore, an elliptic $m$-fold point has unobstructed deformations iff $m \leq 5$. This raises the following questions:
\begin{question}
\begin{enumerate}
\item[]
\item Is there any reason the moduli stacks $\SM_{g}(\alpha)$ should involve only Gorenstein singularities?
\item Is there any reason that the moduli stacks $\SM_{g}(\alpha)$ should be smooth for $\alpha>0$. 
\end{enumerate}
\end{question}
\subsubsection{$A_k$ and $D_k$ singularities}
\label{S:test-families}
In this section, we apply our heuristic to compute the threshold $\alpha$-values at which $A_k$ ($y^2=x^{k+1}$) and $D_k$ ($xy^2=x^{k-1}$) singularities should appear. The results are displayed in Table 4. 
Note, in particular, that all $A_k$ and $D_k$ singularities are expected to appear before the hyperelliptic threshold $\alpha=\frac{3g+8}{8g+4}$.

The variety of stable limits associated to these singularities is described in \cite{hassett-stable}
and we recall this description below.
\begin{proposition}[Varieties $\Tl{A_k}$ and $\Tl{D_k}$]\label{P:stable-reduction-AD}
\begin{enumerate}
\item[]
\item Let $X$ be an irreducible curve of genus $g$ with a single $A_k$ singularity. 
Then the stable limits of $X$ are as follows: 
\begin{enumerate}
\item If $k$ is even, 
a curve $\tilde{X} \cup (T,p)$ with $T$ hyperelliptic of arithmetic genus $\lfloor k/2\rfloor$, $p$ 
a Weierstrass point.
\item If $k$ is odd, 
a curve $\tilde{X} \cup (T,p,q)$ with $T$ hyperelliptic of arithmetic genus $\lfloor k/2\rfloor$, $p$ and $q$ 
conjugate points.
\end{enumerate}
\item Let $X$ be an irreducible curve of genus $g$ with a single $D_k$ singularity. Then the stable limits of $X$ are as follows: 
\begin{enumerate}

\item If $k$ is odd, 
a curve $\tilde{X} \cup (T,p,q)$ with $T$ hyperelliptic of arithmetic  genus $\lfloor (k-1)/2\rfloor$, $p$ a 
Weierstrass point, $r$ a free point.

\item If $k$ is even, 
a curve $\tilde{X} \cup (T,p,q,r)$ with $T$ hyperelliptic of arithmetic  genus $\lfloor (k-1)/2\rfloor$, $p$ and $q$
conjugate points, $r$ a free point.
\end{enumerate}
\end{enumerate}
\end{proposition}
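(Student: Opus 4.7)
The plan is to carry out explicit stable reduction for each singularity, directly generalizing the computation of Example \ref{E:stable-reduction-A2k}. For any singular curve $(X,p)$, I would write down a one-parameter unfolding $\C\ra \Delta$, locally realize it as a branched double cover of a smoother scheme, perform stable reduction on the base by blow-ups and (if necessary) a degree-$2$ base change, pull back to the cover, and read off the stable tail. Surjectivity in each case will follow because the unfolding parameters determine the cross-ratios of the colliding sections --- equivalently, the branch points of the hyperelliptic double cover $T\to\PP^1$ --- and these cross-ratios can be prescribed arbitrarily.

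\emph{Case $A_k$.} After a finite base change, any unfolding takes the form $y^2=\prod_{i=1}^{k+1}(x-\sigma_i(t))$, realizing the family as a double cover of $\AA^1_x\times \Delta$ branched along $k+1$ sections colliding at the origin at $t=0$. Blowing up the origin of the base produces an exceptional $\PP^1$ meeting the strict transforms of the sections in $k+1$ distinct points. When $k$ is odd, the branch divisor on the blow-up is already $2$-divisible; the double cover yields a smooth hyperelliptic component of genus $(k-1)/2$ over the exceptional $\PP^1$, and its two points above the attaching node are exchanged by the hyperelliptic involution, giving the conjugate pair $(p,q)$. When $k$ is even, the recipe of Example \ref{E:stable-reduction-A2k} applies verbatim: a degree-$2$ base change followed by a blow-up resolving the resulting $A_1$-singularity renders the branch divisor $2$-divisible, producing a hyperelliptic tail of genus $k/2$ whose attaching node lies above a branch point --- the Weierstrass point $p$ of the proposition.

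\emph{Case $D_k$.} The local equation $x(y^2-x^{k-2})=0$ exhibits $D_k$ as a transverse union of a smooth branch $\{x=0\}$ with the $A_{k-3}$-cusp $\{y^2=x^{k-2}\}$. The cleanest approach is to resolve the $D_k$ surface singularity of the total space of a generic smoothing: the minimal resolution has exceptional configuration given by the $D_k$ Dynkin diagram of $(-2)$-curves --- a chain of length $k-2$ with two additional edges at one end --- and after a sufficiently high base change one obtains a semistable family whose tail is built from this Dynkin configuration via a suitable double cover. Contracting the resulting $(-1)$- and $(-2)$-curves and analyzing the parity of $k$ exactly as in the $A$-case produces a hyperelliptic tail $T$ of genus $\lfloor(k-1)/2\rfloor$, meeting the rest of the curve at a Weierstrass point (for $k$ odd) or a conjugate pair (for $k$ even) together with an additional free point $r$ corresponding to the smooth branch $\{x=0\}$.

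The main obstacle is the $D_k$ case. The tail genus $\lfloor(k-1)/2\rfloor$ exceeds the naive $A_{k-3}$-tail genus $\lfloor(k-3)/2\rfloor$ by exactly one, reflecting a non-trivial interaction between the smooth branch and the $A_{k-3}$-part during stabilization: the smooth branch does not merely contribute a separate rational component but gets absorbed into the hyperelliptic tail through the trivalent vertex of the $D_k$ Dynkin resolution. Justifying this precisely, and matching the parity of $k$ to the Weierstrass-versus-conjugate-pair dichotomy, requires careful bookkeeping of self-intersections and involution structure along the exceptional divisors of the minimal resolution; either weighted blow-ups or a direct analysis of the $D_k$ Dynkin chain via its natural hyperelliptic double cover can be used to carry this out.
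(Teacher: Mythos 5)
The paper itself gives no proof of Proposition~\ref{P:stable-reduction-AD}; it is cited directly from Hassett \cite{hassett-stable}. Your $A_k$ analysis is correct and follows Example~\ref{E:stable-reduction-A2k} closely: for even $k$ it is literally that example, and for odd $k$ (so $k+1$ branch sections, an even number) the observation that no extra base change is needed and the double cover is unramified over the attaching node of the exceptional $\PP^1$ correctly produces the conjugate pair $(p,q)$. The surjectivity argument via freely prescribing the cross-ratios of the colliding branch sections is also sound in this case.

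For $D_k$ there is a genuine gap. The claim that ``the total space of a generic smoothing'' has a $D_k$ surface singularity is false: if $f(x,y)=0$ is the $D_k$ plane curve singularity and $F(x,y,t)=f(x,y)+t\,g(x,y,t)$ is any smoothing with $g(0,0,0)\neq 0$ --- which is the generic situation --- then $\partial F/\partial t\neq 0$ at the origin and $\{F=0\}$ is \emph{smooth} there. One can choose a special smoothing, e.g. $F=f(x,y)+t^2$, whose total space is the $D_k$ surface singularity, but the exceptional configuration of its minimal resolution consists entirely of rational $(-2)$-curves, and semistabilizing this particular family produces only a maximally degenerate nodal tail --- a boundary point of $\Tl{D_k}$ --- not a smooth hyperelliptic curve of genus $\lfloor(k-1)/2\rfloor$. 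There is no visible ``natural hyperelliptic double cover'' of the $D_k$ Dynkin chain that closes this gap: the exact analogue for $A_k$ would be resolving the surface singularity $\{y^2-x^{k+1}+t^2=0\}$, which likewise yields only the all-rational boundary tail, and for $A_k$ you avoided exactly this trap by passing to the double-cover picture with freely varying branch sections. The $D_k$ curve singularity is not a double cover of a disc (it is a smooth branch meeting a double cover), so neither the construction of the tail nor the surjectivity argument via cross-ratios transfers. Part (2) is where the substance of the proposition lies, and the proposal leaves it open.
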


In order to understand when the varieties $\T_{A_k}$ and $\T_{D_k}$ fall into the base locus of $K_{\Mg{g}}+\alpha\delta$, we must construct covering families for these varieties of maximal slope. The key step is the following construction of hyperelliptic families.

\begin{proposition}[Families of hyperelliptic curves]
\label{P:hyperelliptic-families}
Let $k\geq 1$ be an integer. 
\begin{enumerate}
\item There exists a complete one-parameter family $T_k$ of $1$\nb-pointed curves of genus $k$ such that the generic fiber is a smooth hyperelliptic curve with a 
marked Weierstrass point. Furthermore, 
\begin{equation}
\begin{aligned}
\lambda\cdot T_k &=k^2, \\
\delta_{0}\cdot T_k &=8k^2+4k, \\
\psi\cdot T_k &=1, \\
\delta_1\cdot T_k &=\cdots =\delta_{\lfloor k/2\rfloor} \cdot T_k=0.
\end{aligned} 
\end{equation}
\item There exists a complete one-parameter family $B_k$ of $2$\nb-pointed curves of genus $k$ such that the generic fiber is a smooth hyperelliptic curve with marked points conjugate under the hyperelliptic involution. Furthermore,
\begin{equation}
\begin{aligned}
\lambda\cdot B_k &=(k^2+k)/2, \\
\delta_{0}\cdot B_k &=4k^2+6k+2, \\
\psi_1\cdot B_k&= \psi_2\cdot B_k =1, \\
\delta_1\cdot B_k &=\cdots =\delta_{\lfloor k/2\rfloor} \cdot B_k=0.
\end{aligned} 
\end{equation}

\end{enumerate}
\end{proposition}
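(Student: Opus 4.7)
The strategy is to exhibit both $T_k$ and $B_k$ as explicit pencils of hyperelliptic curves on rational surfaces and to compute their intersection numbers by combining adjunction on the total space with Mumford's relation $12\lambda=\omega_\pi^2+\delta$. For $T_k$, I would construct the family as a pencil $\Lambda\subset|\mathcal{O}(2,k+1)|$ of bidegree $(2,k+1)$ curves on $\PP^1\times\PP^1$, whose smooth members are hyperelliptic of genus $k$ via the second projection $\pi_2$; the pencil is arranged to have a common base point $p$ at which every member is tangent to the $\pi_2$-fiber through $p$, forcing $p$ to be a Weierstrass point on each smooth member. After blowing up the base locus of $\Lambda$ (including the higher-order base locus at $p$) to resolve indeterminacy and obtain a smooth total space $\tilde{S}\to\PP^1$, the exceptional divisor over $p$ provides the marked Weierstrass section $\sigma$. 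For $B_k$ the analogous construction takes a pencil of $(2,k+1)$ curves with two base points $p,p'$ lying on a common $\pi_2$-fiber, so that $p$ and $p'$ are conjugate under the hyperelliptic involution of each smooth member, and the two exceptional sections provide the marked pair.

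The intersection-number computations then proceed as follows. The self-intersection $\omega_\pi^2$ is computed via adjunction on $\tilde{S}$ together with the explicit classes of the fiber and the exceptional divisors. The boundary class $\delta$ is computed by enumerating singular fibers: these are the nodal irreducible members of $\Lambda$, arising when two branch points of $\pi_2|_C$ collide. All singular members are irreducible---the pencil is chosen generically enough to avoid reducible $(2,k+1)$-curves---so $\delta_i\cdot T_k=\delta_i\cdot B_k=0$ for all $i\geq 1$. Mumford's relation $12\lambda=\omega_\pi^2+\delta$ then recovers $\lambda$, and $\psi_j=\sigma_j^*\omega_\pi$ is evaluated by restricting $\omega_\pi$ to the marked section(s), which are $(-1)$-curves coming from the final blowup.

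As a sanity check, both predicted ratios $\lambda\cdot T_k/\delta_0\cdot T_k=k/(8k+4)$ and $\lambda\cdot B_k/\delta_0\cdot B_k=k/(8k+4)$ agree with the classical Cornalba-Harris relation $(8k+4)\lambda=k\delta_0$ valid on any family of hyperelliptic curves with only irreducible nodal singular fibers. This suggests a more efficient route: once it has been verified that $\delta_i=0$ for $i\geq 1$, it suffices to compute one of $\lambda$ or $\delta_0$ (the other then follows from Cornalba-Harris), together with the elementary $\psi$-computations, reducing the problem to counting singular fibers of the pencil. This count can be performed by intersecting $\Lambda$ with the discriminant hypersurface in $|\mathcal{O}(2,k+1)|$ via a standard Plücker-type calculation, yielding the claimed $\delta_0\cdot T_k=8k^2+4k$ and $\delta_0\cdot B_k=4k^2+6k+2$.

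The main obstacle, and the point where the construction requires the most care, is the arrangement of the common base locus: for $T_k$, every pencil member must be tangent at $p$ to a prescribed line (the $\pi_2$-fiber), which is a higher-order incidence condition, and the sequence of blowups needed to resolve this tangency must be tracked carefully in order to correctly identify the marked section and the exceptional contributions to $\omega_\pi$. A secondary technical point is to verify the genericity of $\Lambda$ (outside the imposed incidence conditions) which ensures no reducible singular fibers appear, keeping all higher boundary classes $\delta_i$ with $i\geq 1$ from contributing; this is a codimension-count argument inside the linear system $|\mathcal{O}(2,k+1)|$ that can be carried out by exhibiting an explicit pencil satisfying the open conditions.
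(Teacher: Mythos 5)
Your overall strategy---realize the family explicitly on a rational surface and compute via adjunction plus Mumford's relation, using Cornalba--Harris as a consistency check---is the right circle of ideas, and it is in fact the spirit of the paper's argument. But the specific construction you propose produces a \emph{different} family from the one the proposition asserts to exist, and the claimed intersection numbers fail for it.

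Concretely: a pencil $\Lambda\subset|\O_{\PP^1\times\PP^1}(2,k+1)|$, once its base locus of total degree $L^2=4k+4$ is resolved, yields a fibration $\tilde S\to\PP^1$ with $\chi(\tilde S)=4+(4k+4)=4k+8$; since the generic fiber has $\chi=2-2k$, the number of nodal fibers is $(4k+8)-2(2-2k)=8k+4$, not $8k^2+4k$. Equivalently, the degree of the discriminant of a general pencil in $|\O(a,b)|$ is $c_2(S)+3L^2+2L\cdot K_S$, which for $(a,b)=(2,k+1)$ equals $8k+4$. Feeding this into $12\lambda=\kappa+\delta$ (with $\kappa=\omega_\pi^2=(2,2k)^2-(4k+4)=4k-4$) gives $\lambda\cdot T_k=k$, not $k^2$. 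Your sanity check via Cornalba--Harris passes precisely because that relation only constrains the \emph{ratio} $\lambda/\delta_0$, so it cannot detect that both numbers are off by a common factor of roughly $k$; and the assertion that ``a standard Pl\"ucker-type calculation yields the claimed $\delta_0\cdot T_k=8k^2+4k$'' is where the error is made (the Pl\"ucker count gives $8k+4$). The same problem occurs for $B_k$: a pencil gives $\delta_0=8k+4$ and $\lambda=k$ rather than $4k^2+6k+2$ and $(k^2+k)/2$. (For $k=1$ the two agree, which is why the discrepancy is easy to overlook.) This is not cosmetic: after gluing onto a constant curve, your family has slope $(8k+3)/k$, strictly smaller than the paper's $(8k^2+4k-1)/k^2$ for $k\geq 2$, so it does not realize the maximal-slope covering family that the heuristic in Section 4.4 requires.

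The paper's construction is essentially a ``degree-$k$ twist'' of yours: rather than a pencil on $\PP^1\times\PP^1$, one takes the double cover of the Hirzebruch surface $\mathbb F_2\to B\simeq\PP^1$ branched over $E+S_1+\cdots+S_{2k+1}$, where $E$ is the $(-2)$-section and $S_i\in|E+2F|$ are general sections of self-intersection $2$. Each pair $S_i,S_j$ meets in $S_i\cdot S_j=2$ points (and $E\cdot S_i=0$), producing $2\binom{2k+1}{2}=4k^2+2k$ nodes of the branch divisor, each yielding an $A_1$ singularity of the total space and hence contributing $2$ to $\delta_0$---this is how the quadratic growth arises. One can then compute $\lambda=\deg\pi_*\omega_\pi=k^2$ directly via the eigensheaf decomposition of the double cover, and the preimage of $E$ gives the Weierstrass section with $\psi=1$. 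The $B_k$ case runs parallel with $\mathbb F_1$ and $2k+2$ sections in $|E+F|$. If you want to retain your approach, the fix is not to change the surface but to change the degree of the base curve---though then the $\psi$-computation no longer comes for free from an exceptional $(-1)$-curve, and you must also verify that the larger family still avoids all the higher boundary strata.
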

\begin{proof}
There is an elementary way to write down a family of hyperelliptic curves marked with a Weierstrass section. 
One begins
with the Hirzebruch surface $\mathbb{F}_2$ realized as a $\PP^1$\nobreakdash-bundle
 $\mathbb{F}_2 \ra B$ over $B\simeq \PP^1$. Denote by 
$E$ the unique $(-2)$\nb-section. Next, 
choose $2k+1$ general divisors $S_1,\dots, S_{2k+1}$ in the linear system 
$\vert E+2F\vert$ (these are sections of $\mathbb{F}_2\ra B$ of self-intersection $2$).
The divisor $E+\sum_{i=1}^{2k+1} S_i$ is divisible by $2$ in $\Pic(\mathbb{F}_2)$ 
and so there is a cyclic degree $2$
branched cover $X\ra \mathbb{F}_2$ branched over $E+\sum_{i=1}^{2k+1} S_i$. Denote by $\Sigma$ the 
preimage of $E$. Then $(X,\Sigma) \ra B$ is a family of at worst nodal hyperelliptic curves of genus $k$ with a marked Weierstrass point.
It is easy to verify that the constructed family has 
the required intersection numbers.

The proof of the second part proceeds in an analogous manner.
The only modification being is that one needs to consider
the double cover of $\mathbb{F}_1$ branched
over $2k+2$ sections of self-intersection $1$. 
\end{proof}

If $g\geq k+1$, we abuse notation by using $\mathrm{T}_k$ to denote the family of 
stable curves obtained by gluing the family constructed in Part (1) 
of Proposition \ref{P:hyperelliptic-families} to a constant
family of $1$\nb-pointed genus $g-k$ curves. 
We call
the resulting family of stable genus $g$ curves the 
{\em hyperelliptic Weierstrass tails} of genus $k$. 
Using the intersection numbers in Proposition \ref{P:hyperelliptic-families}, one easily computes the slope of $T_k$ to be
 $\dfrac{\delta\cdot \mathrm{T}_k}{\lambda\cdot \mathrm{T}_k}=\dfrac{8k^2+4k-1}{k^2}.$
 
If $g\geq k+2$,
we abuse notation by using $B_k$ to denote the family of stable curves obtained by 
gluing the family of 2-pointed hyperelliptic genus $k$ curves of Proposition \ref{P:hyperelliptic-families} (2) to a constant $2$\nb-pointed curve of genus $g-k-1$. We call the resulting family of stable genus $g$ curves the
{\em hyperelliptic conjugate bridges} of genus $k$. Using Proposition \ref{P:hyperelliptic-families}, one easily computes
$\dfrac{\delta\cdot \mathrm{B}_k}{\lambda\cdot \mathrm{B}_k}=\dfrac{8k+12}{k+1}.$
\begin{remark}
By exploiting the existence of a regular birational morphism from $\Tl{A_k}$ to a variety of Picard number one, constructed in \cite[Main Theorem 1]{FedHyp}, it is easy to deduce  that the family $T_k$ (resp. $B_k$) of Proposition \ref{P:hyperelliptic-families} is a covering family of 
$\Tl{A_{2k}}$ (resp. $\Tl{A_{2k+1}}$) of the maximal slope. 
\end{remark}
This elementary construction of hyperelliptic tails and bridges can be modified 
to produce the following families (see \cite[Section 6.1]{afs}):
\begin{enumerate}
\item The {\em hyperelliptic bridges} attached at a Weierstrass and a non-Weierstrass point. These arise from stable reduction of a $D_{2k+1}$\nb-singularity. We denote this family by $\mathrm{BW}_{k}$.
\item The {\em hyperelliptic triboroughs} attached at a free point and two conjugate points. These arise from stable reduction 
of a $D_{2k+2}$\nb-singularity. We denote this family by $\mathrm{Tri}_{k}$.
\item The {\em hyperelliptic tails} attached at generically non-Weierstrass points. 
These arise from stable reduction of a curve with a dangling $A_{2k+1}$-singularity (also dubbed $A_{2k+1}^{\dag}$; see Definition \ref{D:danglingA}). We denote this family by $\textrm{H}_{k}$.
\end{enumerate}
\begin{remark}\label{R:dangling} The distinction between a curve having an $A_{2k+1}$ or an $A_{2k+1}^\dag$ singularity is seen only at the level of a complete curve. The reason that $A_{2k+1}^\dag$ singularities must be treated separately is that they give rise to a different variety of stable limits. The key point is that the normalization 
of a curve with $A_{2k+1}^\dag$ singularity has a $1$\nb-pointed $\PP^1$ which disappears after stabilization. Consequently, the stable limit is simply a
 nodal union of hyperelliptic genus $k$ curve and a genus $g-k$ curve, in contrast with Proposition \ref{P:stable-reduction-AD} (1.b).
\end{remark}
The intersection numbers of these curves can also be computed, and are displayed in Table 4. 

 \begin{table}[htb]\label{T:Families}
 \renewcommand{\arraystretch}{1.5}
\begin{tabular}{| l | c | c | c | c | c |} 
\hline

  Curve in $\N_1(\Mg{g})$   & $\lambda$ &  $\delta$ 	 & Singularity   & $\alpha$  \\  
\hline
$\textrm{T}_{k}$               &  $k^2$                         & $8k^2+4k-1$ & $A_{2k}$    & $\frac{3k^2+8k-2}{8k^2+4k-1}$  \\
\hline
$\textrm{B}_{k}$                 &   $(k+1)k/2$                &  $k(4k+6)$ & $A_{2k+1}$ &  $ \frac{3k+11}{8k+12}$\\
\hline
 $\textrm{H}_{k}\ (k\geq 2)$                   & $(k+1)k/2$      & $2(k+1)(2k+1)-1$ & $A_{2k+1}^\dag$ & $ \frac{3k^2+11k + 4}{8k^2+12k+2}$ \\
\hline
$\textrm{BW}_{k}$                           & $k^2$         & $8k^2+2k$ & $D_{2k+1}$     & $\frac{3k+4}{8k+2}$  \\
\hline
$\textrm{BH}_{k} \ (k\geq 2)$                  & $(k+1)k/2$         & $(k+1)(4k+1)$ & $D^\dag_{2k+2}$ & $\frac{3k+4}{8k+2}$ \\
\hline
$\textrm{Tri}_{k}$                         & $(k+1)k/2$         & $k(4k+5)$ & $D_{2k+2}$ &  $\frac{3k+7}{8k+10}$\\
\hline
\end{tabular}
\smallskip
\caption{Families of hyperelliptic tails}
\end{table}


\subsubsection{Toric singularities}
In this section, we observe that similar methods can be used to compute expected $\alpha$-values for toric singularities. Let $1<q<p$ be the coprime integers. 
Consider a genus $g$ curve $X$ with an isolated toric singularity $x^{pb}-y^{qb}=0$. The space of stable limits of $X$ has been described by Hassett \cite{hassett-stable}.
\begin{proposition}
\label{P:tails-description}
The variety of stable limits of $X$ are 
of the form $\tilde{X} \cup T$, 
where the tail $(T,p_1,\dots, p_b)$ is a $b$-pointed curve of genus 
$g=(pqb^2-pb-qb-b+2)/2$. Moreover, 
\begin{enumerate}
\item $K_T=(pqb-p-q-1)(p_1+\dots+p_b)$.
\item $T$ is $qb$-gonal with $g^1_{qb}$ given by $|q(p_1+\dots+p_b)|$.
\end{enumerate}
\end{proposition}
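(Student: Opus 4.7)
My plan is to carry out an explicit semistable reduction for a one\nb-parameter smoothing of $X$ via a toric weighted blowup, generalizing the construction in Example \ref{E:stable-reduction-A2k}, and then to read off the three claims from adjunction on the exceptional weighted projective plane. The key combinatorial data is that $x^{pb}-y^{qb}=0$ is quasi\nb-homogeneous of weighted degree $pqb$ with respect to weights $(q,p)$; in particular, a direct calculation using the formula $\delta(P)=\sum_i \delta(X_i)+\sum_{i<j}(X_i\cdot X_j)$ gives $\delta(P)=(pqb^2-pb-qb+b)/2$, which already forces $p_a(T)=\delta(P)-b+1=(pqb^2-pb-qb-b+2)/2$ from the arithmetic genus formula for $\widetilde X\cup T$ in Lemma \ref{L:Contraction}.

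First I would reduce, after a finite base change, to the case where the smoothing has local equation $x^{pb}-y^{qb}=t^{pqb}\subset\mathbb{A}^3_{(x,y,t)}$ near $P$. Then I would apply the toric weighted blowup $\pi\co \widetilde{\mathbb{A}^3}\to\mathbb{A}^3$ at the origin with weights $(q,p,1)$. Since the defining polynomial $f=x^{pb}-y^{qb}-t^{pqb}$ is itself weighted\nb-homogeneous of weight $pqb$, the strict transform of $\mathcal{X}$ meets the exceptional divisor $E\cong\mathbb{P}(q,p,1)$ (with weighted coordinates $[x:y:z]$) precisely in the curve
\begin{equation*}
T=\{\,x^{pb}-y^{qb}=z^{pqb}\,\}\subset \mathbb{P}(q,p,1).
\end{equation*}
In the smooth toric chart $(x,y,t)=(s^q u,s^p v,s)$, the strict transform becomes $u^{pb}-v^{qb}=1$, which is smooth and meets $E$ transversely, and the Jacobian criterion shows that $T$ itself is smooth and disjoint from the two cyclic quotient points of $\mathbb{P}(q,p,1)$. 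In the two remaining toric charts, the total space acquires $\tfrac{1}{q}$\nb- and $\tfrac{1}{p}$\nb-cyclic quotient singularities whose standard continued\nb-fraction toric resolution inserts only chains of semistable $(-2)$\nb-curves into the central fibre. After contracting these chains via the stabilization morphism, the stable limit is $\widetilde X\cup T$ attached along the $b$ points $T\cap\{z=0\}$, which correspond bijectively to the $b$ branches of $(X,P)$.

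The three stated properties of $T$ now follow from adjunction on $\mathbb{P}(q,p,1)$. Since $\omega_{\mathbb{P}(q,p,1)}\cong\mathcal{O}(-p-q-1)$ and $T$ has weighted degree $pqb$, we obtain $\omega_T\cong\mathcal{O}(pqb-p-q-1)|_T$. The attaching points are cut out by the weight\nb-one section $z$, so that $\mathcal{O}(1)|_T=p_1+\cdots+p_b$ as a divisor class and therefore $K_T=(pqb-p-q-1)(p_1+\cdots+p_b)$, proving~(1). Similarly, the pencil $\langle x,z^q\rangle\subset H^0\bigl(\mathbb{P}(q,p,1),\mathcal{O}(q)\bigr)$ restricts to a base\nb-point\nb-free pencil on $T$ defining a $g^1_{qb}$ with divisor class $\mathcal{O}(q)|_T=q(p_1+\cdots+p_b)$, which gives~(2).

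The main technical obstacle will be verifying that the auxiliary chains introduced by resolving the cyclic quotient singularities in the two non\nb-smooth toric charts are entirely swept out by the stabilization morphism, so that no extra components appear in the central fibre of the stable model. This is a careful but routine continued\nb-fraction computation carried out in \cite{hassett-stable}. The same local analysis, run in reverse, also yields the converse: every $(T,p_1,\dots,p_b)$ of the above form arises as a tail in the stable reduction of some smoothing of $X$, since one can realise the deformation $\widetilde X\cup T \rightsquigarrow X$ by contracting $T$ in an embedded smoothing inside the versal deformation space of $(X,P)$.
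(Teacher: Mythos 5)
The paper itself does not prove this proposition; it simply records Hassett's result with a citation to \cite{hassett-stable}. Your proposal reconstructs exactly the argument one would find there: an explicit semistable reduction by a weighted toric blowup at the quasi-homogeneous singularity, with the tail read off as a weighted-degree-$pqb$ curve in $\PP(q,p,1)$ and its numerical data extracted by adjunction. Your $\delta$-invariant computation $\delta(P)=(pqb^2-pb-qb+b)/2$ is correct (each of the $b$ branches is a $(p,q)$-cusp contributing $(p-1)(q-1)/2$ and each pair meets with multiplicity $pq$), and the adjunction on $\PP(q,p,1)$ for $K_T$ and $\O(q)|_T$ checks out, including that $T$ misses both orbifold points so that ordinary adjunction applies.

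The one soft spot is the opening reduction ``after a finite base change, the smoothing has local equation $x^{pb}-y^{qb}=t^{pqb}$.'' A finite base change on $\Delta$ cannot, by itself, kill the other coefficients $a_{ij}(t)x^iy^j$ appearing in a general embedded smoothing, and for many such terms the condition $qi+pj+\mathrm{ord}(a_{ij})>pqb$ fails, so those terms would survive on the weighted blowup. What the argument actually requires is either (i) a genericity statement --- that for the \emph{general} one\nb-parameter arc in $\Def(X)$ the exceptional curve of the $(q,p,1)$-blowup is still a smooth member of the stated linear system --- combined with irreducibility of $\Tl{X}$, or (ii) running the reverse Contraction Lemma argument to exhibit the entire claimed family of tails inside $\Tl{X}$ and then a dimension count. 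You gesture at (ii) in your last paragraph, and you appropriately defer the cyclic-quotient resolution bookkeeping to \cite{hassett-stable}, so this is more a matter of stating the reduction precisely than of a wrong idea. With that rewording, this is a faithful reconstruction of Hassett's proof.
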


The construction of covering families 
for this locus is much more delicate than in the case of hyperelliptic tails and hyperelliptic bridges
and appears in \cite[Proposition 6.6]{afs}. The final result is
\begin{proposition}\label{P:toric-family} There is a family of tails $(T, p_1, \dots, p_b)$ inside $\Tl{X}\subset \Mg{g,b}$ whose intersection numbers are
\begin{multline*}
\begin{aligned}
\lambda &=\frac{b}{12}\left( (pqb-p-q)^2+pq(pqb^2-pb-qb+1)-1\right),\\
\delta_{0} &=pqb(pqb^2-pb-qb+1), \\
\psi &=b.
\end{aligned}
\end{multline*}
\end{proposition}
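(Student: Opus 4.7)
The plan is to construct the desired family explicitly as a cyclic cover of a ruled surface, directly generalizing the double-cover construction of Proposition \ref{P:hyperelliptic-families}. By Proposition \ref{P:tails-description}, the target tail $T$ admits the $g^1_{qb}=|q(p_1+\cdots+p_b)|$, so the natural model for $T$ is as a $qb$\nb-gonal cover of $\PP^1$ in which the $b$ marked points all lie over a single base point, each with ramification index $q$. A one\nb-parameter family of such covers arises from a $qb$\nb-cyclic cover of a Hirzebruch surface $\mathbb{F}_n \to B = \PP^1$, branched along a divisor that meets each fiber in the prescribed ramification pattern.

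First I would fix an appropriate $n$ and construct a $qb$\nb-cyclic cover $\pi \co X \to \mathbb{F}_n$ whose branch divisor $\Sigma$ consists of the negative section $E \subset \mathbb{F}_n$ (taken so as to produce ramification of index $q$ along $E$, which forces the covering line bundle $L$ to satisfy $qL \equiv qE+\cdots$) together with a collection of general sections in a linear system of the form $|aE+cF|$, where $F$ is the fiber class. The integers $n,a,c$ must be chosen so that (i) $qb\cdot L \sim \Sigma$ in $\Pic(\mathbb{F}_n)$ (so the cover exists), and (ii) the total ramification computed by Riemann-Hurwitz produces smooth fibers of genus $g=(pqb^2-pb-qb-b+2)/2$ with canonical divisor $(pqb-p-q-1)\sum p_i$ in accordance with Proposition \ref{P:tails-description}. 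The marked sections $p_1,\dots,p_b$ are then taken to be the $b$ irreducible components of $\pi^{-1}(E)$.

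To compute the intersection numbers, I would exploit the decomposition $\pi_*\O_X = \bigoplus_{i=0}^{qb-1}\O_{\mathbb{F}_n}(-iL)$ afforded by the cyclic cover. The Hodge bundle of the induced family $f\co X\to B$ is computed via the Leray spectral sequence for $f$ as a sum of higher direct images of these line bundles along $\mathbb{F}_n \to B$, and its Chern class yields $\lambda$ as an explicit polynomial expression in $p,q,b$. The boundary contribution $\delta_0$ counts nodal fibers, which occur precisely at points of $B$ where two general branch sections meet transversally in $\mathbb{F}_n$; this reduces to an intersection calculation of sections in $|aE+cF|$. Finally, $\psi = -\sum_i p_i^2$ is computed from the fact that each $p_i$ is the $q$\nb-fold ramified preimage of the section $E$ with $E^2=-n$, so adjunction on $X$ determines $p_i^2$ in closed form.

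The hard part will be pinning down the branch data consistently so that all three of the following hold simultaneously: (a) the divisibility in $\Pic(\mathbb{F}_n)$ required for the $qb$\nb-cyclic cover to exist, (b) the generic fiber actually realizes a general point of $\Tl{X}$ rather than a proper subvariety, and (c) the singular fibers of $X\to B$ are nodal, so that the family maps into $\Mg{g,b}$ and meets only the divisor $\Delta_0$. Once the correct choice of $n$ and branch linear system is identified, I expect the formulas for $\lambda$, $\delta_0$, and $\psi$ to fall out mechanically from Riemann-Hurwitz and the cyclic cover formula, with the apparent complexity of the numerical coefficients reflecting only the combinatorial complexity of the branch profile. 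The detailed verification along these lines is carried out in \cite[Proposition~6.6]{afs}.
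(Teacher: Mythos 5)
The paper itself offers no proof of this proposition; it simply points to \cite[Proposition 6.6]{afs}, as do you, so strictly speaking there is no in-paper argument to compare against. Your cyclic-cover sketch is, however, in exactly the right spirit: generalizing Proposition \ref{P:hyperelliptic-families}, one does want a $\mu_{qb}$-cyclic cover $X \to \mathbb{F}_n$ branched along the negative section $E$ (whose preimage supplies the $b$ marked sections, each with ramification index $q$, since $\gcd(qb,pb)=b$) together with $pb$ general sections with full ramification. In fact one can pin down more than you do: matching the ramification profile forced by Proposition \ref{P:tails-description} to the Riemann--Hurwitz count recovers $g=(pqb^2-pb-qb-b+2)/2$, and requiring $\psi\cdot T=-\sum p_i^2=b$ with $\pi^*E=q\sum p_i$ forces $E^2=-q$, i.e.\ $n=q$; for $q=2,b=1$ this collapses to the $\mathbb{F}_2$ double-cover construction of Proposition \ref{P:hyperelliptic-families}(1), and I checked that the closed formulas in the statement specialize correctly to $\lambda=k^2$, $\delta_0=8k^2+4k$, $\psi=1$ there.

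Where I think your sketch genuinely undersells the difficulty is the assertion that the formulas ``fall out mechanically'' once the branch data is fixed, and that the singular fibers are automatically nodal. For $qb>2$, a transverse crossing of two branch sections produces a fiber singularity of type $A_{qb-1}$ (locally $y^{qb}=x_1x_2$), not a node, so the raw cyclic-cover family does not map to $\Mg{g,b}$; one must first carry out a semistable reduction (base change plus resolution, in the spirit of Example \ref{E:stable-reduction-A2k}) and track how the exceptional chains contribute to $\delta_0$ and $\lambda$. This is precisely the sense in which the paper warns that the toric case is ``much more delicate'' than the hyperelliptic one, and it is the step your outline does not engage with. Separately, your desideratum (b) — that the generic fiber be a \emph{general} point of $\Tl{X}$ — is not actually part of the statement being proved: the proposition only asks for a family \emph{inside} $\Tl{X}$, and cyclic covers may well trace out a proper subvariety; the genuine-covering question only becomes relevant for the heuristic use of Corollary \ref{C:toric-family}, where the paper is itself careful to say only that one ``expects'' these families to be maximal-slope covering families.
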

\begin{corollary}\label{C:toric-family} Suppose $p$ and $q$ are coprime. Then there is a family of tails of stable limits of $x^p=y^q$ of slope
\[
s=12\frac{pq(p-1)(q-1)-1}{(p-1)(q-1)(2pq-p-q-1)}.
\]
\end{corollary}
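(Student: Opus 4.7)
The plan is to specialize Proposition \ref{P:toric-family} to the coprime case $b=1$ (which is exactly when $x^p - y^q$ defines a unibranch singularity) and to convert the tautological intersection numbers on $\Mg{g_T,1}$ supplied there into intersection numbers against $\lambda$ and $\delta$ on $\Mg{g}$ via the standard boundary-inclusion formulas. Concretely, with $b=1$ the proposition provides a one-parameter family $B \to \Mg{g_T,1}$ of $1$-pointed tails satisfying
\[
12\lambda^{T} = (pq-p-q)^2 + pq(pq-p-q+1) - 1, \quad \delta_0^{T} = pq(pq-p-q+1), \quad \psi^{T} = 1,
\]
where $g_T = (pq-p-q-1)/2$ is the genus of the tail. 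To obtain a family in $\Mg{g}$, I glue the tail along its marked section to a fixed $1$\nb-pointed curve $(\tilde X, r)$ of complementary genus $g - g_T$, producing a map $B \to \Delta_{g_T} \subset \Mg{g}$.

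Next I compute $\lambda\cdot B$ and $\delta\cdot B$ on $\Mg{g}$ by pulling back through the normalization $\nu\co \Mg{g_T,1}\times \Mg{g-g_T,1}\to \Mg{g}$ of $\Delta_{g_T}$. Since $\tilde X$ is constant in the family, all tautological classes coming from the second factor restrict to zero on $B$, so that $\nu^*\lambda = \lambda^{T}$ and $\nu^*\delta = \delta^{T} - \psi^{T} - \psi^{\tilde X}$, the $-\psi$ terms being the well-known self-intersection of the boundary divisor $\Delta_{g_T}$ on its normalization. Restricting to $B$ and using $\psi^{\tilde X}\cdot B = 0$, I obtain
\[
\lambda\cdot B = \tfrac{1}{12}\bigl[(pq-p-q)^2 + pq(pq-p-q+1) - 1\bigr], \qquad \delta\cdot B = pq(pq-p-q+1) - 1.
\]

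The only remaining work is algebraic. Writing $A = pq-p-q$, so that $A+1 = (p-1)(q-1)$, one rewrites
\[
12\lambda\cdot B = A^{2} + pq(A+1) - 1 = (A-1)(A+1) + pq(A+1) = (p-1)(q-1)(2pq-p-q-1),
\]
and similarly $\delta\cdot B = pq(p-1)(q-1) - 1$. The slope is then
\[
s = \frac{\delta\cdot B}{\lambda\cdot B} = 12\cdot\frac{pq(p-1)(q-1)-1}{(p-1)(q-1)(2pq-p-q-1)},
\]
exactly as claimed. The main conceptual point is the boundary-pullback identity $\nu^*\delta = \delta^{T} - \psi^{T} - \psi^{\tilde X}$, but this is standard and easy to justify; there is no real obstacle here beyond bookkeeping. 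The only mildly delicate step is ensuring that the family $B$ from Proposition \ref{P:toric-family} avoids boundary strata other than $\delta_0$ of the tail, so that $\delta^{T}\cdot B = \delta_0^{T}\cdot B$; this however is built into the construction of the family in \cite[Proposition 6.6]{afs} and may be invoked directly.
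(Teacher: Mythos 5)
Your proof is correct and follows exactly the computation the paper intends (the corollary is stated without explicit proof, but the identical bookkeeping appears for the families $T_k$ and $B_k$ of Proposition~\ref{P:hyperelliptic-families}, where $\delta\cdot T_k = \delta_0^T\cdot T_k - \psi^T\cdot T_k$ after gluing). One small slip that does not affect the slope: with $b=1$, Proposition~\ref{P:tails-description} gives $g_T = (pq-p-q+1)/2 = (p-1)(q-1)/2$, not $(pq-p-q-1)/2$.
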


We expect these families to provide maximal slope covering families, in which case the expected $\alpha$-values at which the singularity $x^p=y^q$ should arise is:
\begin{align}\label{E:toric-prediction}
\alpha=\frac{(p-1)(q-1)(-2pq+13p+13q+13)-24}{12(pq(p-1)(q-1)-1)}.
\end{align}
This formula suggests that there are only finitely many infinite families of toric singularities that should
make an appearance in the intermediate log canonical models $\M_g(\alpha)$ for 
$\alpha\in [0,1]$. This raises the question: is there an intrinsic characterization of those singularities for which the above $\alpha$\nb-invariant 
is non-negative?

\subsubsection{The hyperelliptic locus and ribbons}
A fascinating question is what happens to $\SM_{g}(\alpha)$ at $\alpha=\frac{3g+8}{8g+4}$, at which point the closure of the hyperelliptic locus falls into the base locus of $K_{\SM_{g}}+\alpha\delta$. (This is an immediate consequence of the fact that the maximal slope of a covering family of the hyperelliptic locus is $8+\frac{4}{g}$, as proved in \cite{CH}.) If smooth hyperelliptic curves must be removed from the moduli functor $\SM_{g}(\alpha)$, what should replace them? One likely candidate, suggested to us by Joe Harris, is {\em ribbons}, i.e. non-reduced curves of multiplicity two whose underlying reduced curve is the rational normal curve in $\PP^{g-1}$ \cite{bayer-eisenbud}. This prediction is certainly consonant with the final step of the log MMP in genus three, where we saw hyperellitic curves replaced by a double plane conic. More generally, ribbons arise as the flat limit of canonically embedded smooth curves specializing (abstractly) to smooth hyperelliptic curves \cite{fong}. 
Using this it is easy to see that the variety of stable limits of a ribbon in 
$\PP^{g-1}$ is precisely the locus of hyperelliptic curves in $\M_{g}$, which makes the suggestion plausible. 

However, Example \ref{E:NCriterion} shows that in genus four ribbons alone 
cannot be the whole story: the canonical genus $4$ ribbon has nonsemistable $m^{th}$ Hilbert point for every 
$m\geq 3$. We can get insight into other candidates by considering the GIT quotient 
$\vert \O_{\PP^1\times\PP^1}(3,3)\vert^{\ss} \gitq (\SL(2)\times\SL(2))\rtimes \ZZ_2$, which is in fact equal 
to $\Mg{4}(29/60)$ \cite{genus4-final-model}.
A simple application of the Hilbert-Mumford 
numerical criterion (Proposition \ref{P:numerical}) shows that 
the union of a $(2,1)$-curve and a $(1,2)$-curve on 
$\PP^1\times \PP^1$ meeting at an $A_{9}$-singularity ($y^2=x^{10}$) 
and an irreducible $(3,3)$-curve with a unique $A_{8}$-singularity ($y^2=x^{9}$) are 
{\em stable}. These examples suggest that for higher
genera the singular curves which replace 
hyperelliptic curves in $\Mg{g}(\alpha)$ with $\alpha<\frac{3g+8}{8g+4}$
should be, in the order of generality: rational curves with an $A_{2g}$ singularity,
two rational normal curves meeting in an $A_{2g+1}$ singularity, 
and certain semistable ribbons. 
All these curves arise as flat limits of canonically embedded smooth curves specializing abstractly to smooth hyperelliptic curves.
We should also remark that there is no reason to preclude the possibility of non-reduced curves 
showing up prior to $\alpha=\frac{3g+8}{8g+4}$.
Needless to say, the prospect of constructing and classifying modular birational models involving non-reduced curves raises a host of questions regarding degenerations 
and deformations of non-reduced schemes, pushing far beyond the methods of this survey.

\bibliography{Bib}
\bibliographystyle{alpha}
\end{document}